\newtheorem{thm}{Theorem}[section]
\newtheorem{defn}[thm]{Definition}
\newtheorem{lem}[thm]{Lemma}
\newtheorem{prop}[thm]{Proposition}
\newtheorem{rmk}[thm]{Remark}
\newtheorem{proposition}[thm]{Proposition}
\newtheorem{remark}[thm]{Remark}
\newtheorem{example}[thm]{Example}
\newcommand{\id}{\mathop{\mathrm{id}}}
\newcommand{\Conj}{\mathop{\mathrm{Conj}}}
\numberwithin{equation}{section}
\begin{document}

\title[Domains of existence of slice regular functions]{Domains of existence of slice regular functions in one quaternionic variable}
\author{Xinyuan Dou}
\email[Xinyuan Dou]{douxinyuan@ustc.edu.cn}
\address{Department of Mathematics, University of Science and Technology of China, Hefei 230026, China}
\address{Institute of Mathematics, AMSS, Chinese Academy of Sciences, Beijing 100190, China}
\author{Ming Jin}
\email[Ming Jin]{mjin@must.edu.mo}
\address{Faculty of Innovation Engineering, Macau University of Science and Technology, Macau, China}
\author{Guangbin Ren}
\email[Guangbin Ren]{rengb@ustc.edu.cn}
\address{Department of Mathematics, University of Science and Technology of China, Hefei 230026, China}
\author{Irene Sabadini}
\email[Irene Sabadini]{irene.sabadini@polimi.it}
\address{Dipartimento di Matematica, Politecnico di Milano, Via Bonardi, 9, 20133 Milano, Italy}

%\date{\today}
\keywords{Domains of holomorphy; quaternions; slice regular functions; representation formula; slice topology}
\thanks{This work was supported by the China Postdoctoral Science Foundation (BX20220327) and the NNSF of China (12171448). The fourth author, member of GNSAGA, INdAM, is partially supported by PRIN 2022 {\em Real and Complex Manifolds: Geometry and Holomorphic Dynamics}. The authors express their gratitude to Xiangyu Zhou, Xieping Wang, and Zhuo Liu for engaging in insightful discussions on the topic of several complex variables.  }

\subjclass[2020]{Primary: 30G35; Secondary: 32A30, 32D05}

\begin{abstract}
	Recently, we introduced domains of slice regularity in the space  $\mathbb{H}$ of quaternions and also proved that domains of slice regularity satisfy a symmetry with respect to paths, called $2$-path-symmetry. In this paper, we give a full characterization by showing that all $2$-path-symmetric slice-open sets are domains of slice regularity. In fact, we will prove a counterpart of the Cartan-Thullen theorem for slice regular functions, namely that a slice-open set is a domain of existence for some slice regular function if and only if it is a domain of slice regularity, if and only if it is slice-regularly convex, if and only if it is $2$-path-symmetric. As a tool, we also prove an interpolation theorem of independent interest.
\end{abstract}

\maketitle

\section{Introduction}

The objective of this paper is to present a version of the Cartan-Thullen theorem adapted to the case of slice regular functions in one quaternionic variable. The Cartan-Thullen theorem \cite{Cartan1932001} is a fundamental result in the theory of several complex variables, providing a powerful characterization of domains of holomorphy in complex analysis. This theorem establishes an equivalence between holomorphically convex domains, domains of holomorphy and the domains of existence of holomorphic functions

In the context of one complex variable, all open sets are domains of holomorphy. However, in  several complex variables, Hartogs' phenomenon, see \cite{Hartogg1906001} or e.g. \cite{krantz}, demonstrates that not all open sets are necessarily domains of holomorphy and so the study of domains of holomorphy plays a crucial role in this context.

This paper aims to provide a topological equivalent characterization of the domains of existence of slice regular functions in one quaternionic variable on non-axially symmetric slice-open sets with respect to slice topology. In fact, we prove a Cartan-Thullen theorem and an interpolation theorem for such functions.
 The study is not a straightforward generalization of the case of one complex variable, although slice regular functions are one of the possible counterparts of holomorphic functions in the case of one quaternionic variable, and indeed we appeal more to techniques from the case of several complex variables.

Slice regular functions were introduced by Gentili and Struppa \cite{Gentili2007001} in case of one quaternionic variable, elaborating an idea of Cullen \cite{Cullen1965001} with the aim of defining a class of quaternionic "holomorphic" functions including polynomials and power series. Roughly speaking, slice regular functions are functions that are holomorphic with respect to imaginary unit $I$ when restricted to the complex plane $\mathbb{C}_I:=\mathbb{R}+\mathbb{R}I$. Slice regular functions were then extended to the case of functions defined on Clifford algebras \cite{Colombo2009002}, octonions \cite{Gentili2010001}, real alternative $*$-algebras \cite{Ghiloni2011001} and, more in general, to even-dimensional Euclidean spaces \cite{Dou2023002} and also to vector-valued functions, see \cite{Alpay2016001B}. This class of functions is particularly successful \cite{Alpay2013001,Colombo2010003,Stoppato2012001,Gentili2014001,Colombo2015001}, especially in view of the applications in operator theory \cite{Alpay2015001,Alpay2016001B, Gantner2020001,Colombo2011001,Colombo2011001B,Colombo2022001,MR3887616,MR3967697}.

For over a decade, the research on slice regular functions mainly focused on axially symmetric slice domains, where the representation formula shows that the values of a slice regular function are determined by the restriction to a complex plane (or restrictions to two different complex semi-planes).  Interested readers can refer to the books  \cite{Colombo2011001B,bookentire,Gentili2013001B,Alpay2016001B} and the references therein to learn more about slice regular functions.

Slice analysis on non-axially symmetric domains is systematically studied after the introduction of slice topology, see \cite{Dou2020001, Dou2021001} although it is also studied in \cite{GeSto2021, Gesto2021B}, but with a substantially different approach still relying on the Euclidean topology. In \cite{Dou2020001, Dou2021001} notions such as the path-representation formula and path-slice functions, are introduced to deal with non-axially symmetric domains.

To further develop the global theory of slice analysis, it is necessary to understand the properties of domains of slice regularity, an analogue of domains of holomorphy to this setting. Proposition 9.7 in \cite{Dou2020001} shows that all the domains of holomorphy satisfy a certain topology condition, called $2$-path-symmetry. In this paper we will prove the converse, i.e. all $2$-path-symmetric open sets in slice topology are domains of holomorphy. We will also  show that a open set in slice topology is a domain of holomorphy if and only if it is the domain of existence of some slice regular function, if and only if it satisfies certain convexity, called slice regular convexity. These equivalent conditions form the Cartan-Tullen theorem for slice regular functions.

As we already pointed out, the proof of Cartan-Thullen theorem for slice regular functions has some similarities to that one given in complex analysis in several variables, for example the use of an interpolation theorem, but also relevant differences and it can be summarized as follows. We firstly prove that for any $2$-path-symmetric slice-open set $\Omega$, there is $I\in\mathbb{S}$ such that the cardinality of $J\in\mathbb{S}$ such that $\Psi_J^I(\Omega_J)\neq\Omega_I$, where $\Psi_J^I:x+yJ\mapsto x+yI$, is countable, see Proposition \ref{pr-osmba}. This allows us to construct a Riemann domain $\breve{\Omega}$ over $\mathbb{C}$, see Theorem \ref{thm-Riemann doamin}. Then we use some holomorphic maps on certain subsets $\breve{\Omega}^{\scriptscriptstyle[I]}$, $I\in\mathbb{S}$ of the Riemann domain $\breve{\Omega}$, to construct some suitable slice regular functions, see Proposition \ref{prop-construct slice regular functions}. By the properties of these  holomorphic functions, we deduce that the corresponding slice regular functions have the good properties in Lemma \ref{pr-loitkso}. This result implies that there is a slice regular function such that its absolute value is smaller than $\epsilon>0$ on an invariant subset $K$ of $\Omega$ (an analogue of compact set $K$ with $K=\hat{K}_\Omega$ in complex analysis) and $f=1$ at a given point $q\in\Omega\backslash K$. Using these ``good'' slice regular functions, we prove an interpolation theorem, see Theorem \ref{th-it} useful in the proof part of the Cartan-Thullen theorem in this framework, see Section \ref{sec-Cartan-Thullen}.

It is important to point out that to prove the full statement of the Cartan-Thullen theorem, we need some notions that are specific to the quaternionic case, and deeply differ from those in the complex analysis case.

We provide here a brief account of some of these differences to show that the results in this paper are far from being trivial or easy. We consider a continuous function $r^+$ on $\breve{\Omega}$ (see Proposition \ref{pr-r+}) corresponding to the $\sigma$-distance in $\Omega$ (see \eqref{eq-cvsrc}). The function $r^+$ plays a similar role as the distance between the compact set and the boundary of the domain of holomorphy in complex analysis (see the proof of Proposition \ref{pr-increasing subsets} and Proposition \ref{eq-oits}). Since a Riemann domain $\breve{\Omega}$ is axially symmetric, we define conjugation on $\breve{\Omega}$ (see \eqref{eq-conjugation}); note that the values of $r^+$ on conjugated points coincide (see Proposition \ref{pr-rrcb}). We then introduce stem discrete sets (see Definition \ref{def-stem discrete}) which, in the proof of the interpolation theorem,  play a role similar to discrete points in complex analysis. Note that stem discrete points may not be discrete in $\Omega$, and their corresponding set may also not be discrete in $\breve{\Omega}$. They are discrete in the sense of $\sigma$-distance, that is, the corresponding set $\psi^\mathbb{S}_\Omega(A\cap\Omega^r)$ in $\breve{\Omega}$ of intersection of a stem discrete set $A$ and $\Omega^r=\{q\in\Omega:r\le \sigma(q,\mathbb{H}\backslash\Omega)\}$ is discrete for each $r>0$. An $\Omega$-invariant set (see Definition \ref{def-invariant}) plays a role similar to a compact set $K$ with $\widehat{K}_{\Omega}=K$. Moreover, a constant $C_x$ depending on the point $x\in\Omega$ appears in the definition of slice regular convex hull $\widehat{K}_{\Omega}$ of $K$ (see Definition \ref{def-slice regularly convex hull}), but this constant does not have a counterpart in complex analysis.

The structure of the paper is as follows.
In section \ref{sec-main results} we introduce the basic notions useful to state and prove the main results of this paper.
In section \ref{sec-2ps slice domains} we give some properties of 2-path-symmetric st-domains, among which the crucial property of countability of distinct slices.
In section \ref{sec-slice piecewise paths} we introduce slice-piecewise paths to describe path-connectedness of
st-domains.
In section \ref{sec-2ps Riemann domain} we introduce a notion of $2$-path-symmetric Riemann domain associated with a $2$-path-symmetric st-domain, via a suitably defined equivalence relation.
In Section \ref{sec-properties of 2ps Riemann domains} we study the correspondence between 2-path-symmetric st-domains of $ \mathbb H $ and the associated 2-path-symmetric Riemann domains over $ \mathbb C $ by proving a number of properties.
In Section \ref{Stem discrete sets} we introduce stem discrete sets which serve as prescribed points for interpolation theorem in the case of slice analysis.
In Section \ref{sec-interpolation}, we prove a couple of extension results of functions and an interpolation theorem.
Finally, in Section \ref{sec-Cartan-Thullen} we summarize what we prepared in the preceding sections and we  prove the Cartan-Thullen theorem in this framework.
At the end, we added a list of symbols an notions to help the reader to navigate the paper.

\section{Main results}\label{sec-main results}

In this section, we state our main results.  To this end, we recall some definitions and results  from \cite{Dou2020001}.  Denote by
\begin{equation*}
	\mathbb{H}:=\mathbb{R}+i\mathbb{R}+j\mathbb{R}+k\mathbb{R}
\end{equation*}
the quaternion number system where
\begin{equation*}
	i^2=j^2=k^2=ijk=-1.
\end{equation*}
The set of imaginary units of quaternions $\mathbb{H}$ is denoted by
\begin{equation*}\index{$\mathbb{S}$}
	\mathbb{S}:=\{I\in\mathbb{H}:I^2=-1\},
\end{equation*}
and let
\begin{equation*}\index{$\mathbb{S}^2_*$}
	\mathbb{S}^2_*:=\{(I,J)\in\mathbb{S}^2:I\neq J\}.
\end{equation*}

Below, we will explore the extension of slice regularity along paths, akin to what is done in the holomorphic setting. If the slice domain $\Omega$ in $\mathbb{H}$ does not intersect the real axis, then, by the definition of the slice topology, the theory mirrors that of the holomorphic setting. However, if $\Omega$ intersects the real axis, we encounter various challenges and, consequently, below we consider continuous paths starting from points lying on the real axis.

Let $U\subset\mathbb{C}$ and set
\begin{equation*}\index{$\mathscr{P}_0(U)$}
	\mathscr{P}_0(U):=\{\delta:[0,1]\rightarrow U:\delta\mbox{ is continuous and }\delta(0)\in\mathbb{R}\}.
\end{equation*}

{\begin{defn}\label{def-2ps}
	$\Omega\subset\mathbb{H}$ is called \textit{\textbf{$2$-path-symmetric}}\index{$2$-path-symmetric} if
	\begin{equation*}
		\gamma^K\subset\Omega,
	\end{equation*}
	for any $K\in\mathbb{S}$, $\gamma\in\mathscr{P}_0(\mathbb{C})$ and $(I,J)\in\mathbb{S}^2_*$ with $\gamma^I,\gamma^J\subset\Omega$, where $\gamma^I:=\Psi_i^I\circ\gamma$ \index{$\gamma^I$},
	\begin{equation*}\index{$\Psi_i^I$}
		\begin{split}
			\Psi_i^I:\quad\mathbb{C}\quad &\xlongrightarrow[\hskip1cm]{}\quad \mathbb{C}_I,
			\\ x+yi\ &\shortmid\!\xlongrightarrow[\hskip1cm]{}\ x+yI,
		\end{split}
	\end{equation*}
	and $\gamma^K\subset\Omega$ is short for $\gamma^K([0,1])\subseteq\Omega$.
\end{defn}}

The slice topology \index{Slice topology} on $\mathbb{H}$ is defined by
\begin{equation*}\index{$\tau_s$}
	\tau_s(\mathbb{H}):=\big\{U\subset\mathbb{H}:U_I\in\tau(\mathbb{C}_I), \forall\ I\in\mathbb{S}\big\},
\end{equation*}
where $\tau(\mathbb{C}_I)$ the Euclidean topology on the complex plane $\mathbb{C}_I:=\mathbb{R}+\mathbb{R}I$, and $U_I:= U\cap\mathbb{C}_I$.

{\bf Conventions}:\index{Slice-connected, slice-open, st-domain} We call $U\subset\mathbb{H}$ is slice-connected, if $U$ is connected in $\tau_s(\mathbb{H})$; we call $U\subset\mathbb{H}$ is slice-open, if $U$ is open in $\tau_s(\mathbb{H})$ and so on. A domain in $\tau_s(\mathbb{H})$ will be called st-domain since the terminology `slice domain' is used in the literature for a different concept. Note also that the symbol $\subset$ denotes the inclusion of sets, allowing equality.

Let $z\in\mathbb{C}$, $U\subset\mathbb{C}$, $I\in\mathbb{S}$ and $\mathbb{S}'\subset\mathbb{S}$. Denote
\begin{equation*}\index{$z^I$, $z^{\mathbb{S}'}$, $U^I$, $U^{\mathbb{S}'}$}
	z^I:=\Psi_i^I(z),\qquad \mbox{and}\qquad\qquad U^{\mathbb{S}'}:=\left\{w^J\in\mathbb{H}:w\in U,\ \mbox{and}\ J\in\mathbb{S}'\right\}.
\end{equation*}
Moreover, we write $z^{\mathbb{S}'}:=\{z\}^{\mathbb{S}'}$ and $U^I:=U^{\{I\}}$ for short.

\begin{example}\label{exa-e1}
	Let $I,J\in\mathbb{S}$ with $I\neq\pm J$.
Set
\begin{equation*}
		U_{a,b}:=\{\lambda e^{i\theta}+3i\in\mathbb{C}:\ 2<\lambda<4,\ a<\theta<b\}.
	\end{equation*}
 Then
	\begin{equation*}
		E_1:=\left(U_{-\frac{\pi}{2},\frac{3\pi}{4}}\right)^I
		\bigcup \left(U_{\frac{\pi}{4},\frac{3\pi}{2}}\right)^J
		\bigcup \left[U_{0,3\pi}\cap{\Conj}_{\mathbb{C}}\left(U_{0,3\pi}\right)\right]^\mathbb{S}
	\end{equation*}
	is a $2$-path-symmetric st-domain, where $\Conj_\mathbb{C}:\mathbb{C}\rightarrow\mathbb{C},\ z\mapsto\overline{z}$.
	Note that $U_{0,3\pi}\cap{\Conj}_{\mathbb{C}}\left(U_{0,3\pi}\right)$ is the connected component of $U_{-\frac{\pi}{2},\frac{3\pi}{4}}\cap U_{\frac{\pi}{4},\frac{3\pi}{2}}$ which intersects the real axis. The intersection of $E_1$ with the complex planes $\mathbb C_I$, $\mathbb C_J$, $\mathbb C_K$, $K\not=\pm I, \pm J$ is shown in Figure 1.

\begin{figure}[h]\includegraphics[width=12.5cm]{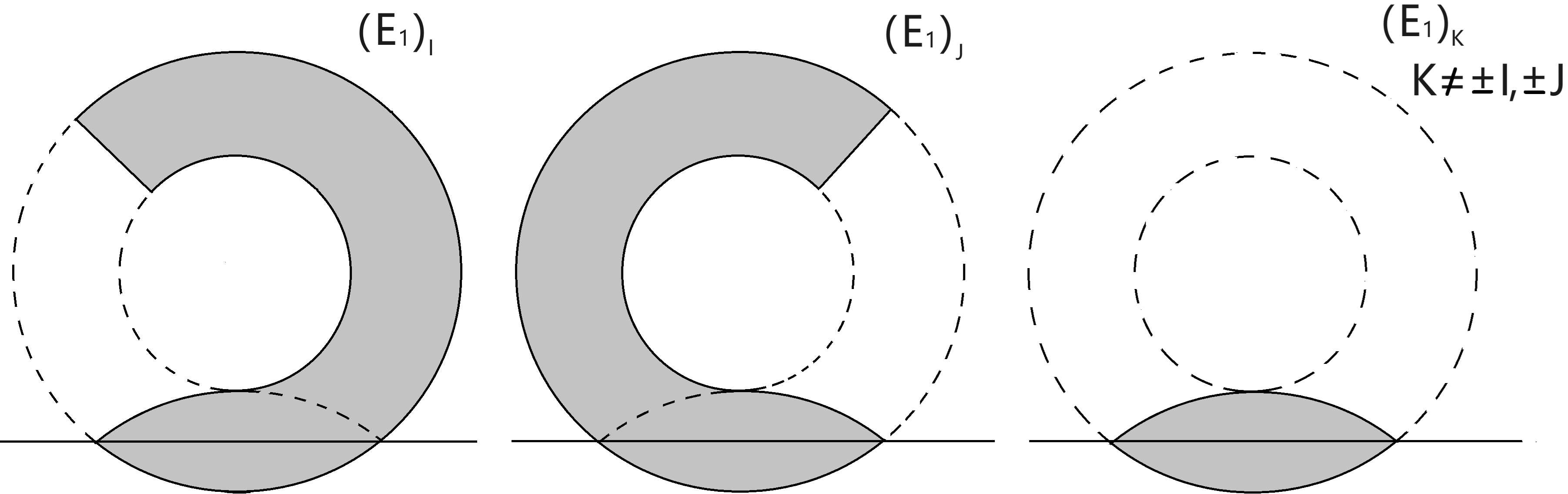}\caption{}\end{figure}
\end{example}

\begin{example}\label{exa-e2}
	Let $U$ be an open set in $\mathbb{R}$ with infinite connected components. Then there are $a_1,a_2,...,b_1,b_2,...\in\mathbb{R}$ such that
	\begin{equation*}
		U=\bigcup_{\ell=1}^{+\infty}(a_\ell,b_\ell)
		\qquad\mbox{with}\qquad (a_\imath,b_\imath)\cap(a_\jmath,b_\jmath)=\varnothing,\qquad\forall\ \imath\neq\jmath.
	\end{equation*}
	Let $I_1,I_2,...\in\mathbb{S}$ with $I_\imath\neq \pm I_\jmath$ for each $\imath\neq\jmath$. Then
	\begin{equation*}
		E_2:=\bigcup_{\ell=1}^{+\infty} \left[(V_\ell)^\mathbb{S}\cup (W_\ell)^{I_\ell}\right]
	\end{equation*}
	is a $2$-path-symmetric st-domain, where
	\begin{equation*}
		\begin{cases}
			V_\ell:=\left\{x+yi\in\mathbb{C}:\ a_\ell<x<b_\ell,\ |y|<1\right\},
			\\W_\ell:=W_\ell^1\cup W_\ell^2\cup W_\ell^3,
			\\W_\ell^1:=\left\{x+yi\in\mathbb{C}: \left(x-\frac{a_\ell+b_\ell}{2}\right)\big(x-\frac{a_{\ell+1}+b_{\ell+1}}{2}\big)<0,\ 2<y<3\right\},
			\\W_\ell^2:=\left\{x+yi\in\mathbb{C}:\frac{a_\ell+b_\ell}{2}<x<b_\ell,\ 0<y<3\right\},
			\\W_\ell^3:=\left\{x+yi\in\mathbb{C}:a_{\ell+1}<x<\frac{a_{\ell+1}+b_{\ell+1}}{2},\ 0<y<3\right\}.
		\end{cases}
	\end{equation*}
See Figure 2.
\begin{figure}[h]\includegraphics[width=12.5cm]{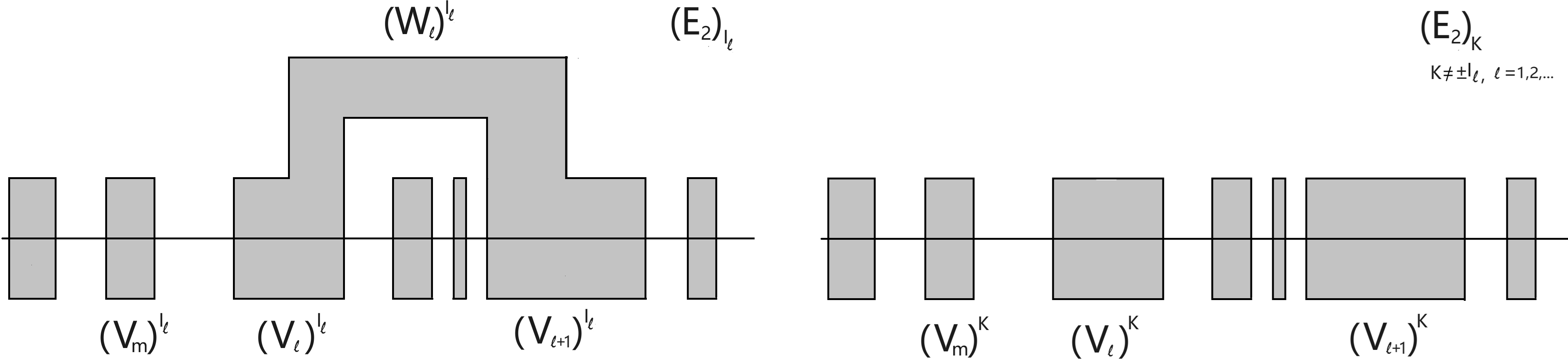}\caption{}\end{figure}
\end{example}

\begin{defn}
	Let $I\in\mathbb{S}$ and $U\in\tau(\mathbb{C}_I)$. A function $f:U\rightarrow\mathbb{H}$ is called \textit{\textbf{($I$-)holomorphic}}\index{($I$-)holomorphic}, if $f$ is real differentiable on $U\cap\mathbb{C}_I$ and
	\begin{equation*}
		\frac{1}{2}\left(\frac{\partial}{\partial x}+I\frac{\partial}{\partial y}\right)f(x+yI)=0,\qquad\forall\  x+yI\in U.
	\end{equation*}
	Denote by $\mathscr{O}_I(U)$\index{$\mathscr{O}_I(U)$} the class of $I$-holomorphic functions defined on $U$.
\end{defn}

\begin{defn}\label{df-sr}
	Let $\Omega\in\tau_s(\mathbb{H})$. A function $f:\Omega\rightarrow\mathbb{H}$ is called \textit{\textbf{slice regular}}\index{Slice regular}, if for each $I\in\mathbb{S}$, $f_I:=f|_{\Omega_I}$ is $I$-holomorphic.
	Denote by $\mathcal{SR}(\Omega)$\index{$\mathcal{SR}(\Omega)$} the class of slice regular functions defined on $\Omega$.
\end{defn}

\begin{defn}
	Let $\Omega\in\tau_s(\mathbb{H})$ and $\mathcal{F}\subset\mathcal{SR}(\Omega)$. $\Omega$ is called an \textit{\textbf{$\mathcal{F}$-domain of existence}}\index{$\mathcal{F}$-domain of slice existence} if there are no $\Omega_1,\Omega_2\in\tau_s(\mathbb{H})$ with the following properties:
	\begin{enumerate}[label=(\roman*)]
		\item $\varnothing\neq \Omega_1\subset\Omega_2\cap\Omega$.
		\item $\Omega_2$ is $\tau_s$-connected and $\Omega_2\nsubseteq \Omega$.
		\item For each $f\in\mathcal{F}$, there is $\widetilde{f}\in\mathcal{SR}(\Omega_2)$ such that $f=\widetilde{f}$ in $\Omega_1$.
	\end{enumerate}
	
	If there are slice-open sets $\Omega$, $\Omega_1$, $\Omega$ satisfying (i)-(iii), then we call $(\Omega,\Omega_1,\Omega_2)$ an \textit{\textbf{$\mathcal{F}$-triple}} \index{Slice-triple}. Moreover, we call $(\Omega,\Omega_1,\Omega_2)$ \textit{\textbf{special}}\index{Special slice-triple}, if $\Omega_1$ is a slice-connected component of $\Omega\cap\Omega_2$.
	
	$\Omega$ is called a \textit{\textbf{domain of slice regularity}}\index{Domain of slice regularity} if it is an $\mathcal{SR}(\Omega)$-domain of existence and $\Omega$ is called the \textit{\textbf{domain of existence}}\index{Domain of existence} of $f\in\mathcal{SR}(\Omega)$ if $\Omega$ is $\{f\}$-domain of existence.
\end{defn}

We now extend holomorphic convexity to the theory of slice regular functions:
\begin{defn}\label{def-slice regularly convex hull}
	Let $\Omega\in\tau_s(\mathbb{H})$ and $K\subset\Omega$. We call
	\begin{equation*}\index{$\widehat{K}_\Omega$}
		\widehat{K}_\Omega:=\left\{x\in\Omega:\exists\ C_x>0,\ s.t.\ |f(x)|\le C_x\sup_{y\in K} |f(y)|,\ \forall\ f\in\mathcal{SR}(\Omega)\right\},
	\end{equation*}
	the \textit{\textbf{slice regularly convex hull}}\index{Slice regularly convex hull} of $K$.
\end{defn}

For any $\Omega\subset\mathbb{H}$, denote
\begin{equation}\label{eq-o2ps1}\index{$\Omega_{2ps}$}
	\begin{split}
		\Omega_{2ps}:=\big\{&z\in\mathbb{C}:\exists\ \gamma\in\mathscr{P}_0(\mathbb{C})\mbox{ and }(I,J)\in\mathbb{S}^2_*,\ \\&s.t.\ \gamma^I,\gamma^J\subset\Omega \mbox{ and }\gamma(1)=z \big\},
	\end{split}
\end{equation}
and
\begin{equation}\label{eq-s omega 2ps}\index{$\mathbb{S}_\Omega^{2ps}$}
	\mathbb{S}_\Omega^{2ps}:=\{I\in\mathbb{S}:\Omega_I\neq (\Omega_{2ps})^I\}.
\end{equation}

\begin{defn} $K\subset\mathbb{H}$ is called \textit{\textbf{finite-non-symmetric compact}},\index{Finite-non-symmetric compact}  if $K_I$ is compact in $\tau(\mathbb{C}_I)$ and $|\mathbb{S}_{K}^{2ps}|<+\infty$.
\end{defn}

\begin{defn}
$\Omega\in\tau_s(\mathbb{H})$ is called \textit{\textbf{slice regularly convex}}\index{Slice regularly convex}, if $$(\widehat{K}_\Omega)_I\Subset\Omega_I,$$ for each finite-non-symmetric compact set $K\subset\Omega$ and $I\in\mathbb{S}$.
\end{defn}

The main result is as follows:

\begin{thm} (Cartan-Thullen)\label{thm-ct}
	Let $\Omega\in\tau_s(\mathbb{H})$. Then the following statements are equivalent:
	\begin{enumerate}[label=(\roman*)]
		\item $\Omega$ is the domain of existence of some slice regular function.
		\item $\Omega$ is a domain of slice regularity.
		\item $\Omega$ is slice regularly convex.
		\item $\Omega$ is $2$-path-symmetric.
	\end{enumerate}
\end{thm}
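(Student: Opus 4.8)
The plan is to prove the four conditions equivalent through the cycle $(i)\Rightarrow(ii)\Rightarrow(iv)\Rightarrow(iii)\Rightarrow(i)$, so that the two substantial arguments are isolated in the steps $(iv)\Rightarrow(iii)$ and $(iii)\Rightarrow(i)$. The arrow $(i)\Rightarrow(ii)$ is formal: if $\Omega$ is the domain of existence of one function $f\in\mathcal{SR}(\Omega)$, then no $\{f\}$-triple exists, and since condition (iii) in the definition of an $\mathcal{F}$-domain of existence becomes weaker as $\mathcal{F}$ shrinks, every $\mathcal{SR}(\Omega)$-triple is in particular an $\{f\}$-triple; hence no $\mathcal{SR}(\Omega)$-triple exists and $\Omega$ is a domain of slice regularity. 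The arrow $(ii)\Rightarrow(iv)$ is exactly Proposition~9.7 of \cite{Dou2020001}, which I take as an input.

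For $(iv)\Rightarrow(iii)$ I would first exploit $2$-path-symmetry to set up the global objects: by Proposition~\ref{pr-osmba} only countably many slices $\Omega_J$ differ from the generic one, which is precisely what allows the associated Riemann domain $\breve{\Omega}$ over $\mathbb{C}$ to be built (Theorem~\ref{thm-Riemann doamin}) and the continuous boundary-distance function $r^+$ to be defined on it (Proposition~\ref{pr-r+}), with $r^+$ invariant under conjugation (Proposition~\ref{pr-rrcb}). Fixing a finite-non-symmetric compact $K\subset\Omega$ and $I\in\mathbb{S}$, I must show $(\widehat{K}_\Omega)_I\Subset\Omega_I$. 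Boundedness of the hull follows by testing the defining inequality of $\widehat{K}_\Omega$ against the slice regular functions produced in Proposition~\ref{prop-construct slice regular functions}. To keep the hull away from the boundary I would use the ``good'' functions of Lemma~\ref{pr-loitkso}: for a point $q$ with small $r^+(q)$ one constructs $f\in\mathcal{SR}(\Omega)$ with $\sup_K|f|<\epsilon$ and $|f(q)|=1$, so the ratio $|f(q)|/\sup_K|f|$ is unbounded over such $f$ and no constant $C_q$ can exist, forcing $q\notin\widehat{K}_\Omega$. Carried out uniformly via the continuity of $r^+$ (as in Proposition~\ref{pr-increasing subsets} and Proposition~\ref{eq-oits}), this yields $\inf_{\widehat{K}_\Omega}r^+>0$ and hence $(\widehat{K}_\Omega)_I\Subset\Omega_I$.

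The arrow $(iii)\Rightarrow(i)$ is the constructive heart of the Cartan--Thullen argument. Using $r^+$ (equivalently the $\sigma$-distance) I would exhaust $\Omega$ by an increasing family of $\Omega$-invariant sets (Definition~\ref{def-invariant}) playing the role of compacta $K$ with $\widehat{K}_\Omega=K$, and then choose a stem discrete set $A$ (Definition~\ref{def-stem discrete}) whose points crowd toward the boundary in the $\sigma$-distance sense, arranged so that every slice-connected $\Omega_2\nsubseteq\Omega$ meets the accumulation locus of $A$. Feeding prescribed values that blow up toward the boundary into the interpolation theorem (Theorem~\ref{th-it}) produces a single $f\in\mathcal{SR}(\Omega)$ that admits no slice regular extension across any boundary point, i.e.\ $\Omega$ is the domain of existence of $f$, which is condition $(i)$.

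The main obstacle I anticipate is $(iii)\Rightarrow(i)$, and specifically the verification of non-extendability. Unlike the complex case, the prescribed interpolation nodes are discrete only in the $\sigma$-distance sense---not in $\Omega$ nor even in $\breve{\Omega}$---and an extension domain $\Omega_2$ may wander through different slices, so one must ensure the stem discrete set obstructs every such $\Omega_2$ simultaneously; the conjugation symmetry of $\breve{\Omega}$ and the slice-distance geometry encoded in $r^+$ are the tools that make this accounting possible. A secondary source of difficulty is that the point-dependent constant $C_x$ appearing in Definition~\ref{def-slice regularly convex hull}, which has no complex-analytic counterpart, must be handled carefully so that the exclusion argument in the convexity step $(iv)\Rightarrow(iii)$ remains valid despite $C_x$ being allowed to depend on $x$.
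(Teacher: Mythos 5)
Your cycle $(i)\Rightarrow(ii)\Rightarrow(iv)\Rightarrow(iii)\Rightarrow(i)$ has a structural flaw in its last arrow, and it is a genuine gap, not a presentational one. Every tool you invoke for $(iii)\Rightarrow(i)$ --- the Riemann domain $\breve{\Omega}$ of Theorem \ref{thm-Riemann doamin}, the function $r^+$ of Proposition \ref{pr-r+}, $\Omega$-invariant sets (Definition \ref{def-invariant}), stem discrete sets (Definition \ref{def-stem discrete}), the exhaustion of Proposition \ref{pr-increasing subsets}, and above all the Interpolation Theorem \ref{th-it} --- is constructed in the paper only under the hypothesis that $\Omega$ is $2$-path-symmetric; none of these objects is even defined for a general slice-open set. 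In your cycle, when you prove $(iii)\Rightarrow(i)$ the only hypothesis available is slice regular convexity, so the argument as written silently assumes $(iv)$ and is circular. To close the cycle you must first establish $(iii)\Rightarrow(iv)$, an implication your proposal never states, proves, or cites. It is not free: the paper proves it as half of Proposition \ref{eq-oits}, by contraposition. If $\Omega$ is not $2$-path-symmetric, choose $\gamma\in\mathscr{P}_0(\mathbb{C})$, $I_1\neq I_2$ and $J$ with $\gamma^{I_1},\gamma^{I_2}\subset\Omega$ but $\gamma^J\nsubseteq\Omega$; then $K:=\gamma^{I_1}([0,1])\cup\gamma^{I_2}([0,1])$ is a finite-non-symmetric compact, the representation formula forces $\gamma^J([0,t))\subset\widehat{K}_\Omega$ up to the first exit time $t$ (and here the constant $C_x$ you worry about is handled uniformly, as $|c_1|+|c_2|$), and since $\gamma^J(t)\notin\Omega$ the slice $(\widehat{K}_\Omega)_J$ is not relatively compact in $\Omega_J$. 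Once you add this arrow, your scheme becomes $(i)\Rightarrow(ii)\Rightarrow(iv)$, $(iii)\Leftrightarrow(iv)$, and the constructive step is carried out under hypothesis $(iv)$ rather than $(iii)$ --- which is exactly the paper's organization, so the corrected proposal is no longer a different route.

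Two secondary points. First, in your $(iv)\Rightarrow(iii)$ sketch, Lemma \ref{pr-loitkso} and Proposition \ref{pr-l2psia} require the exclusion set to be \emph{invariant}, not merely finite-non-symmetric compact; the paper therefore first replaces $K$ by $K'':=\varphi_{\breve{\scriptscriptstyle\Omega}}^{r^+(K')}(K')$ with $K'=\left[\psi_{\scriptscriptstyle\Omega}^{\mathbb{S}}(K)\right]^{\land}_{\breve{\scriptscriptstyle\Omega}}$, shows $(K'')_I$ is compact, and concludes $\widehat{K}_\Omega\subset K''$; your two-step plan (boundedness via Proposition \ref{prop-construct slice regular functions}, plus $\inf_{\widehat{K}_\Omega}r^+>0$) does not by itself produce such an invariant enclosure and leaves the boundedness mechanism unspecified. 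Second, even granting $(iv)$, your non-extendability argument is far lighter than what is needed: the paper's nodes are taken only on the countably many distinct slices $I\in\mathbb{S}_\Omega^{2ps}\cup\{J_0\}$ (Proposition \ref{pr-osmba}), each $B_\imath$ is taken to be the whole class $[b_\imath]_{\approx_\Omega}$ precisely when $\mathbb{S}^+_{[(b_\imath,I_\imath)]_{\sim_\Omega}}=\mathbb{S}$ so the blow-up is visible on every slice, and a hypothetical extension across $z^J$ is killed by a two-case analysis ($z^I\in\Omega$ for some $I$, via the representation formula; $z^I\notin\Omega$ for all $I$, via a sequence $d_{v_\imath}\to z^{J_1}$ on which $f$ is unbounded). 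Your phrase about $A$ meeting ``the accumulation locus'' names the goal but supplies no mechanism.
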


\section{$2$-path-symmetric st-domains}\label{sec-2ps slice domains}

In this section, we prove various properties of $2$-path-symmetric st-domains. In particular, next Proposition \ref{pr-osmba} shows that $2$-path-symmetric st-domains have at most countable distinct slices, where distinct is meant in the sense of some symmetry. To begin with, we define a binary relation which is an equivalence relation on $2$-path-symmetric open sets.

\begin{defn}
Define a binary relation $\approx_{\scriptscriptstyle \Omega}$ \index{$\approx_{\scriptscriptstyle \Omega}$} on $\Omega\subset\mathbb{H}$: Let $p,q\in\Omega$. Then $p\approx_{\scriptscriptstyle \Omega} q$ if and only if there is $\gamma\in\mathscr{P}_0(\mathbb{C})$ and $I,J\in\mathbb{S}$ such that
\begin{equation}\label{eq-gigj}
	\gamma^I,\gamma^J\subset\Omega,\qquad\gamma^I(1)=p\qquad\mbox{and}\qquad\gamma^J(1)=q.
\end{equation}
Let us set
\begin{equation*}
	[p]_{\approx_\Omega}:=\{w\in\Omega: w \approx_{\scriptscriptstyle \Omega} p\}.
\end{equation*}

\end{defn}

\begin{prop}\label{pr-losmt}
	Let $\Omega\subset\mathbb{H}$. Then the following statements holds:
	\begin{enumerate}[label=(\roman*)]
		\item\label{it-lp} Let $p,z^L\in\Omega$, with $z\in\mathbb{C}$, $L\in\mathbb S$ and $p\approx_{\scriptscriptstyle \Omega} z^L$. Then $p=z^K$ for some $K\in\mathbb{S}$.
		\item If $\Omega$ is $2$-path-symmetric, then $\approx_{\scriptscriptstyle \Omega}$ is an equivalence relation on $\Omega$.
		\item\label{lzz} Let $z^I,z^J\in\Omega$ with $z^I\approx_{\scriptscriptstyle \Omega} z^J$. Then there is $\gamma\in\mathscr{P}_0(\mathbb{C})$ such that
		\begin{equation*}
			\gamma(1)=z\qquad\mbox{and}\qquad \gamma^I,\gamma^J\subset\Omega.
		\end{equation*}
		\item\label{it-lzii} Let $z^{I_\imath}\in\Omega$ with $z^{I_\imath}\in [z^{I_1}]_{\approx_\Omega}$, $\imath=1,...,k$. If $\Omega$ is $2$-path-symmetric, then there is $\gamma\in\mathscr{P}_0(\mathbb{C})$ such that
		\begin{equation*}
			\gamma(1)=z,\qquad\mbox{and}\qquad \gamma^{I_\imath}\subset\Omega,\qquad \imath=1,...,k.
		\end{equation*}
		\item\label{it-lzio} Let $z^I\in\Omega$ with $|[z^I]_{\approx_\Omega}|>1$. If $\Omega$ is $2$-path-symmetric, then $[z^I]_{\approx_\Omega}=z^{\mathbb{S}}$.
	\end{enumerate}
\end{prop}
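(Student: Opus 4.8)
The plan is to establish part \ref{it-lp} as a purely algebraic lemma and then bootstrap everything else from it, together with a conjugate-path construction and the defining property of $2$-path-symmetry. For \ref{it-lp}, I would take a witnessing path $\gamma$ and units $I,J$ with $\gamma^I(1)=p$ and $\gamma^J(1)=z^L$, write $\gamma(1)=a+bi$ and $z=c+di$ with $a,b,c,d\in\mathbb{R}$, and compare the two sides of $\Psi_i^J(\gamma(1))=\Psi_i^L(z)$. Matching real parts gives $a=c$, while the imaginary parts give $bJ=dL$; taking quaternionic norms forces $|b|=|d|$, so $\gamma(1)\in\{z,\bar z\}$. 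Applying $\Psi_i^I$ to $\gamma(1)$ then yields $p=z^I$ or $p=z^{-I}$, hence $p=z^K$ for a suitable $K\in\mathbb{S}$, the real case $b=d=0$ being immediate. This part uses no symmetry of $\Omega$.

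For part \ref{lzz} the same computation is the engine, now combined with the conjugate path. Given a witness $\delta$ with units $M,N$ and $\delta^M(1)=z^I$, $\delta^N(1)=z^J$, the analysis above shows $\delta(1)\in\{z,\bar z\}$, with $(M,N)=(I,J)$ in the first case and $(M,N)=(-I,-J)$ in the second. In the first case I take $\gamma=\delta$; in the second I take the conjugate path $\bar\delta$, using the identity $\bar\delta^{\,K}=\delta^{-K}$ (verified by writing $\delta(t)=a(t)+b(t)i$ and computing both sides). Either way $\gamma(1)=z$ and $\gamma^I,\gamma^J\subset\Omega$; the degenerate case $z\in\mathbb{R}$ is handled by the constant path at $z$. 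Again no $2$-path-symmetry is needed.

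The observation powering parts \ref{it-lzii}, \ref{it-lzio}, and transitivity in part (ii) is that $2$-path-symmetry upgrades a doubly represented path to a fully represented one: if $\gamma^I,\gamma^J\subset\Omega$ with $I\neq J$, then $\gamma^K\subset\Omega$ for every $K\in\mathbb{S}$, so the whole sphere $\gamma(1)^{\mathbb{S}}$ lies in $\Omega$ and its points are mutually $\approx_\Omega$-related. For \ref{it-lzio}, part \ref{it-lp} gives $[z^I]_{\approx_\Omega}\subseteq z^{\mathbb{S}}$; since the class contains two distinct points, part \ref{lzz} produces a path ending at $z$ with two distinct representations, and the upgrade yields $z^{\mathbb{S}}\subseteq[z^I]_{\approx_\Omega}$. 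For \ref{it-lzii}, I would apply \ref{lzz} to each pair $z^{I_\imath}\approx_\Omega z^{I_1}$; as soon as one $I_\imath\neq I_1$ the resulting path is fully represented and hence serves all slices at once, while if all $I_\imath$ coincide the single case of \ref{lzz} suffices. For transitivity, from $p\approx_\Omega q$ with $p\neq q$ the upgrade places $q$ on a sphere contained in $[p]_{\approx_\Omega}$, and part \ref{it-lp} applied to $q\approx_\Omega r$ forces $r$ onto that same sphere; the case $p=q$ is trivial.

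This leaves reflexivity and symmetry in part (ii). Symmetry is immediate by exchanging $(p,I)$ and $(q,J)$ in the definition. Reflexivity is the delicate point and is where I expect the main obstacle: to obtain $p\approx_\Omega p$ one must produce a path in $\mathscr{P}_0(\mathbb{C})$ whose $K$-slice joins the real axis to $p$ inside $\Omega$, i.e. $p$ must be joined to $\mathbb{R}$ within its own slice $\Omega_K$. I would derive this from slice-connectivity: since off the real axis the slice topology is locally confined to a single complex plane, any slice-path in $\Omega$ from $p$ to a real point must stay in $\mathbb{C}_K$ until it first meets $\mathbb{R}$, which produces exactly the required within-slice path. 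This is precisely the step that uses that $\Omega$ is a slice-connected st-domain meeting $\mathbb{R}$, rather than an arbitrary subset, and it is the place where I would be most careful to state the standing hypotheses correctly.
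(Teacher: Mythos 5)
Your treatment of (i), (iii), (iv), (v) and of symmetry and transitivity in (ii) is correct and is essentially the paper's own argument: the comparison of real and imaginary parts forcing $\gamma(1)\in\{z,\overline{z}\}$ in (i), the conjugate-path device in (iii) (the paper writes it as $\gamma:=\Conj_{\mathbb{C}}^{\lambda}(\beta)$ with $\gamma^{I}=\beta^{(-1)^{\lambda}I}$, which is your identity $\overline{\delta}^{\,K}=\delta^{-K}$), and the upgrade of a doubly represented path to a fully represented one via $2$-path-symmetry, which drives (iv), (v) and transitivity exactly as in the paper. On one point you are in fact more careful than the paper: in (iv), when all the $I_{\imath}$ coincide, the paper takes the constant path $\gamma\equiv z$, which lies in $\mathscr{P}_{0}(\mathbb{C})$ only when $z\in\mathbb{R}$; your route through (iii), which uses the hypothesis $z^{I_{1}}\approx_{\scriptscriptstyle\Omega}z^{I_{1}}$, avoids that slip.

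The genuine divergence is reflexivity in (ii). The paper disposes of it in one sentence ("reflexivity and symmetry hold directly by definition"), whereas you flag it as the delicate step and import hypotheses --- $\Omega$ a slice-connected st-domain meeting $\mathbb{R}$ --- that are not in the statement, which assumes only that $\Omega\subset\mathbb{H}$ is $2$-path-symmetric. Your suspicion is mathematically justified: $p\approx_{\scriptscriptstyle\Omega}p$ demands a path from $\Omega\cap\mathbb{R}$ to $p$ inside the slice of $p$, and this can fail; for instance a disc contained in a single slice $\mathbb{C}_{I}$ and disjoint from $\mathbb{R}$ is slice-open and vacuously $2$-path-symmetric, yet $\approx_{\scriptscriptstyle\Omega}$ is the empty relation on it. What the definition does give for free is reflexivity at every point that is related to \emph{something}: if $(\gamma,I,J)$ witnesses $p\approx_{\scriptscriptstyle\Omega}q$ with $\gamma^{I}(1)=p$, then $(\gamma,I,I)$ witnesses $p\approx_{\scriptscriptstyle\Omega}p$. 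So under $2$-path-symmetry the relation is always symmetric, transitive, and reflexive on its domain (a partial equivalence relation), and it is a full equivalence relation precisely under the joinability condition you isolate --- which your half-slice argument, or equivalently Proposition \ref{pr-los}, does establish for st-domains meeting the real axis. In short: where your proof is airtight it coincides with the paper's, and where you deviate, the deviation is a legitimate repair of a point the paper's proof glosses over; just be aware that, as written, you prove (ii) under stronger hypotheses than those stated.
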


\begin{proof}
	(i) Since $p \approx_{\scriptscriptstyle \Omega} z^L$, there is $\gamma\in\mathscr{P}_0(\mathbb{C})$ and $I,J\in\mathbb{S}$ such that \eqref{eq-gigj} holds. Then $\gamma(1)=\Conj_{\mathbb{C}}^\lambda(z)$ for some $\lambda\in\{0,1\}$, where $\Conj_\mathbb{C}:\mathbb{C}\rightarrow\mathbb{C},\ z\mapsto\overline{z}$ and $\Conj_\mathbb{C}^\lambda:=(\Conj_\mathbb{C})^\lambda$ (i.e. we set $\Conj_\mathbb{C}^0$=Identity and $\Conj_\mathbb{C}^1=\Conj_\mathbb{C}$). By \eqref{eq-gigj},
	\begin{equation*}
		q=\gamma^J(1)=\left[{\Conj}_\mathbb{C}^\lambda(z)\right]^J=z^{(-1)^\lambda J}=z^K,
	\end{equation*}
	where $K:=(-1)^\lambda L\in\mathbb{S}$.
	
	(ii) The reflexivity and symmetry of $\approx_{\scriptscriptstyle \Omega}$ hold directly by definition. We only need to prove the transitivity. Let $u,v=x+yL,w\in\Omega$ with $u \approx_{\scriptscriptstyle \Omega} v$ and $v \approx_{\scriptscriptstyle \Omega} w$. By hypothesis, there is $\gamma\in\mathscr{P}_0(\mathbb{C})$ and $I,J\in\mathbb{S}$ such that
	\begin{equation*}
		\gamma^I,\gamma^J\subset\Omega,\qquad \gamma^I(1)=u,\qquad\mbox{and}\qquad \gamma^J=v.
	\end{equation*}
	According to (i) and since $v \approx_{\scriptscriptstyle \Omega} w$, we have that $w=x+y K=\gamma^K(1)$ for some $K\in\mathbb{S}$. It implies that $u\approx_{\scriptscriptstyle \Omega} w$.
	
	(iii) If $z\in\mathbb{R}$, then \ref{lzz} holds by taking $\gamma\equiv z$. Otherwise, $z\notin\mathbb{R}$. By definition, there is $\beta\in\mathscr{P}_0(\mathbb{C})$ and $I',J'\in\mathbb{S}$ such that $\beta^{I'},\beta^{J'}\subset\Omega$,
	\begin{equation*}
		\beta^{I'}(1)=z^I,
		\qquad\mbox{and}\qquad
		\beta^{J'}(1)=z^J.
	\end{equation*}
	It follows that $\beta(1)=\Conj_{\mathbb{C}}^\lambda(z)$ for some $\lambda\in\{0,1\}$. Then
	\begin{equation*}
		z^I=\beta^{I'}(1)=\left[{\Conj}_{\mathbb{C}}^\lambda(z)\right]^{I'}=z^{(-1)^\lambda I'}.
	\end{equation*}
	It implies that $I'=(-1)^\lambda I$ and $J'=(-1)^\lambda I$. Take $\gamma:=\Conj_{\mathbb{C}}^\lambda(\beta)$. Then
	\begin{equation*}
		\gamma(1)
		=\left[{\Conj}_{\mathbb{C}}^\lambda(\beta)\right](1)
		={\Conj}_{\mathbb{C}}^\lambda\left(\beta(1)\right)
		={\Conj}_{\mathbb{C}}^\lambda\left({\Conj}_{\mathbb{C}}^\lambda(z)\right)=z,
	\end{equation*}
	and
	\begin{equation*}
		\gamma^I=\left[{\Conj}_\mathbb{C}^\lambda(\beta)\right]^I=\beta^{(-1)^\lambda I}=\beta^{I'}\subset\Omega.
	\end{equation*}
	Similarly, $\gamma^J\subset\Omega$. It implies that \ref{lzz} holds.
	
	(iv) If $I_1=\cdots I_k$, then \ref{it-lzii} holds by taking $\gamma\equiv z$. Otherwise, without loss of generality, we suppose $I_1\neq I_2$. By \ref{lzz}, there is $\gamma\in\mathscr{P}_0(\mathbb{C})$ such that $\gamma(1)=z$ and $\gamma^{I_1},\gamma^{I_2}\subset\Omega$. Since $\Omega$ is $2$-path-symmetric, $\gamma^{J}\subset\Omega$ for each $J\in\mathbb{S}\supset\{I_3,...,I_k\}$. It implies that \ref{it-lzii} holds.
	
	(v) It follows from \ref{it-lp} that $[z^I]_{\approx_\Omega}\subset z^\mathbb{S}$. We only need to prove that
	\begin{equation}\label{eq-zms}
		z^\mathbb{S}\subset [z^I]_{\approx_\Omega}.
	\end{equation}Since $|[z^I]_{\approx_\Omega}|>1$, there is $w^L\approx_{\scriptscriptstyle \Omega} q$ for some $w\in\mathbb{C}$ and $L\in\mathbb{S}$ with $w^L\neq q$. By \ref{it-lp}, there is $J\in\mathbb{S}\backslash\{I\}$ such that $w^L=z^J$. According to \ref{it-lzii}, there is $\gamma\in\mathscr{P}_0(\mathbb{C})$ such that $\gamma(1)=z$ and $\gamma^I,\gamma^J\subset\Omega$. Since $\Omega$ is $2$-path-symmetric, we have $\gamma^K\subset\Omega$ for each $K\in\mathbb{S}$. It implies that $z^K=\gamma^K(1)\in[z^I]_{\approx_\Omega}$ for each $K\in\mathbb{S}$. Therefore \eqref{eq-zms} holds.
\end{proof}
\begin{remark}
Let $\Omega\subset\mathbb{H}$. It is easy to check by Proposition \ref{pr-losmt} \ref{lzz} that
\begin{equation}\label{eq-o2ps}
	\Omega_{2ps}=\left\{z\in\mathbb{C}:z^I,z^J\in\Omega\mbox{ with }z^I\approx_{\scriptscriptstyle \Omega} z^J,\mbox{ for some }(I,J)\in\mathbb{S}_*^2\right\}.
\end{equation}
Note also that, by definition,
\begin{equation}\label{eq-omr}\index{$\Omega_\mathbb{R}$}
\Omega_\mathbb{R}:=\Omega\cap\mathbb R\subset\Omega_{2ps}.
\end{equation}
Moreover, if $\Omega_\mathbb{R}=\varnothing$, then $\Omega_{2ps}=\varnothing$.
\end{remark}
Denote by
\begin{equation*}\index{$\mathcal{L}_z^w$}
	\begin{split}
		\mathcal{L}_z^w:\quad[0,1]\quad &\xlongrightarrow[\hskip1cm]{}\qquad \mathbb{C},
		\\ t\quad\ \ &\shortmid\!\xlongrightarrow[\hskip1cm]{}\ tw+(1-t)z.
	\end{split}
\end{equation*}
the segment from $z\in\mathbb{C}$ to $w\in\mathbb{C}$.

\begin{prop}\label{pr-2psopen}
	Let $\Omega\in\tau_s(\mathbb{H})$ be $2$-path-symmetric. Then $\Omega_{2ps}\in\tau(\mathbb{C})$.
\end{prop}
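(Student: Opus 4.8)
The plan is to prove openness directly from the definition: I will show that every $z_0\in\Omega_{2ps}$ admits a Euclidean neighborhood in $\mathbb{C}$ contained in $\Omega_{2ps}$, obtained by a local path-extension argument. By definition of $\Omega_{2ps}$ there exist $\gamma\in\mathscr{P}_0(\mathbb{C})$ and $(I,J)\in\mathbb{S}^2_*$ with $\gamma^I,\gamma^J\subset\Omega$ and $\gamma(1)=z_0$; in particular $z_0^I=\gamma^I(1)\in\Omega_I$ and $z_0^J=\gamma^J(1)\in\Omega_J$. The strategy is to keep the path $\gamma$ fixed and simply append a short straight segment reaching any nearby endpoint.

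Since $\Omega\in\tau_s(\mathbb{H})$, the slices $\Omega_I=\Omega\cap\mathbb{C}_I$ and $\Omega_J=\Omega\cap\mathbb{C}_J$ are Euclidean-open in $\mathbb{C}_I$ and $\mathbb{C}_J$ respectively, so there is $r>0$ such that the open balls of radius $r$ centered at $z_0^I$ (in $\mathbb{C}_I$) and at $z_0^J$ (in $\mathbb{C}_J$) lie in $\Omega_I$ and $\Omega_J$. The geometric input I would use is that $\Psi_i^I$ and $\Psi_i^J$ are isometries of $\mathbb{C}$ onto $\mathbb{C}_I$ and $\mathbb{C}_J$, whence $|z^I-z_0^I|=|z^J-z_0^J|=|z-z_0|$ for every $z\in\mathbb{C}$, and they carry the segment $\mathcal{L}_{z_0}^z$ onto the segments from $z_0^I$ to $z^I$ and from $z_0^J$ to $z^J$.

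Now set $U:=\{z\in\mathbb{C}:|z-z_0|<r\}$ and fix $z\in U$. I form the concatenation $\gamma_z:=\gamma\ast\mathcal{L}_{z_0}^z$, reparametrized on $[0,1]$; it is continuous because $\gamma(1)=z_0=\mathcal{L}_{z_0}^z(0)$, it satisfies $\gamma_z(0)=\gamma(0)\in\mathbb{R}$ so that $\gamma_z\in\mathscr{P}_0(\mathbb{C})$, and $\gamma_z(1)=z$. Applying $\Psi_i^I$ pointwise gives $(\gamma_z)^I=\gamma^I\ast(\mathcal{L}_{z_0}^z)^I$; the first piece lies in $\Omega$ by hypothesis, while the second piece is the segment from $z_0^I$ to $z^I$, which stays inside the ball of radius $r$ about $z_0^I$ by convexity (as $|z^I-z_0^I|=|z-z_0|<r$), hence inside $\Omega_I\subset\Omega$. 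The identical reasoning yields $(\gamma_z)^J\subset\Omega$. Thus $\gamma_z$ and the same pair $(I,J)\in\mathbb{S}^2_*$ witness $z\in\Omega_{2ps}$, so $U\subset\Omega_{2ps}$ and $\Omega_{2ps}\in\tau(\mathbb{C})$.

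This argument is essentially routine, and I do not expect a genuine obstacle; the only point deserving care is the bookkeeping that $\Psi_i^I$ is an isometry, which is exactly what guarantees that a small Euclidean perturbation of $z$ produces a correspondingly small perturbation of $z^I$ and $z^J$, keeping the connecting segments inside fixed balls contained in $\Omega_I$ and $\Omega_J$. I would also note that $2$-path-symmetry is in fact not used here—only the slice-openness of $\Omega$—but since $2$-path-symmetry is the standing hypothesis of the proposition, there is no harm in retaining it.
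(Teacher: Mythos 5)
Your proof is correct and follows essentially the same route as the paper's: both arguments fix the witnessing path $\gamma$ and the pair $(I,J)\in\mathbb{S}^2_*$, use slice-openness of $\Omega$ to obtain a radius $r>0$ with $[B_\mathbb{C}(z_0,r)]^I\subset\Omega_I$ and $[B_\mathbb{C}(z_0,r)]^J\subset\Omega_J$, and then concatenate $\gamma$ with the segment $\mathcal{L}_{z_0}^z$ to witness $z\in\Omega_{2ps}$ for every $z$ in that ball. Your side remark is also accurate: the paper's proof likewise uses only slice-openness, not the $2$-path-symmetry hypothesis.
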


\begin{proof}
	Let $z\in\Omega_{2ps}$. By definition, there is a $\gamma\in\mathscr{P}_0(\mathbb{C})$ and $(I,J)\in\mathbb{S}_*^2$ such that $\gamma^I,\gamma^J\subset\Omega$ and $\gamma(1)=z$. It implies that $z^I\in\Omega_I$ and $z^J\in\Omega_J$. Therefore there is $r>0$ such that
	\begin{equation*}
		\left[B_\mathbb{C}(z,r)\right]^I,\left[B_\mathbb{C}(z,r)\right]^J\subset\Omega,
	\end{equation*}
	where $\left[B_\mathbb{C}(z,r)\right]^I$ is the disc in the plane $\mathbb C_I$ with center at $z^I$ and with radius $r>0$.
	For each $w\in B_{\mathbb{C}}(z,r)$, it is easy to check that
	\begin{equation*}
		\gamma\circ\mathcal{L}_z^w\in\mathscr{P}_0(\mathbb{C}),\quad (\gamma\circ\mathcal{L}_z^w)^I\subset\Omega_I,\quad
		(\gamma\circ\mathcal{L}_z^w)^J\subset\Omega_J
		\quad\mbox{and}\quad\gamma\circ\mathcal{L}_z^w(1)=w.
	\end{equation*}
	It implies by definition that $w\in\Omega_{2ps}$. Then $B_{\mathbb{C}}(z,r)\subset\Omega_{2ps}$, and $z$ is an interior point of $\Omega_{2ps}$. Therefore $\Omega_{2ps}$ is open.
\end{proof}

\begin{prop}
	Let $\Omega\in\tau_s(\mathbb{H})$ be $2$-path-symmetric. Then
	\begin{equation}\label{eq-2ps}
		\partial(\Omega_{2ps})\bigcap\Psi_I^i(\Omega_I)\bigcap\Psi_J^i(\Omega_J)=\varnothing, \qquad\forall\ (I,J)\in\mathbb{S}^2_*,
	\end{equation}
	where $\Psi_I^i:=\left(\Psi_i^I\right)^{-1}:\ \Omega_I\to\Omega$, $\Psi_I^i(x+Iy)=x+iy$.
\end{prop}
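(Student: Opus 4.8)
The plan is to argue by contradiction: I will suppose that some $z$ lies in the triple intersection for a pair $(I,J)\in\mathbb{S}^2_*$ and then show that $z$ must in fact lie in $\Omega_{2ps}$, which is impossible since $\Omega_{2ps}$ is open by Proposition \ref{pr-2psopen} and the boundary of an open set is disjoint from the set itself. First I would unwind the notation: the condition $z\in\Psi_I^i(\Omega_I)\cap\Psi_J^i(\Omega_J)$ says exactly that $z^I,z^J\in\Omega$. Because $\Omega$ is slice-open, I may then fix $r>0$ with $[B_\mathbb{C}(z,r)]^I,[B_\mathbb{C}(z,r)]^J\subset\Omega$, and because $z\in\partial(\Omega_{2ps})$ I may choose a point $w\in B_\mathbb{C}(z,r)\cap\Omega_{2ps}$. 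Observe that $w^I,w^J\in\Omega$ automatically, being lifts of a point of the disc.

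The core of the argument is to manufacture a single path in $\mathscr{P}_0(\mathbb{C})$ ending at $z$ whose $I$- and $J$-lifts both stay in $\Omega$, since by definition this forces $z\in\Omega_{2ps}$. The segment $\mathcal{L}_w^z$ lies entirely in $B_\mathbb{C}(z,r)$, as its points have distance $(1-t)|w-z|<r$ from the centre $z$, so both $(\mathcal{L}_w^z)^I$ and $(\mathcal{L}_w^z)^J$ remain in $\Omega$. Hence it is enough to find a path $\beta\in\mathscr{P}_0(\mathbb{C})$ from the real axis to $w$ with $\beta^I,\beta^J\subset\Omega$ and then pass to the concatenation $\beta\circ\mathcal{L}_w^z$, which ends at $z$ and whose $I$- and $J$-lifts lie in $\Omega$. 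Such a $\beta$ is provided by Proposition \ref{pr-losmt} \ref{lzz} precisely when $w^I\approx_{\scriptscriptstyle \Omega} w^J$.

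The genuine obstacle is therefore to verify $w^I\approx_{\scriptscriptstyle \Omega} w^J$ for the \emph{specific} pair $(I,J)$ coming from the hypothesis; this is delicate because the pair witnessing $w\in\Omega_{2ps}$ need not be $(I,J)$, while the disc only controls the $I$- and $J$-lifts. I would split into two cases. If $w\in\mathbb{R}$, then $w^I=w^J=w$ and reflexivity of $\approx_{\scriptscriptstyle \Omega}$ gives the relation immediately. If $w\notin\mathbb{R}$, then by \eqref{eq-o2ps} there is a pair $(I',J')\in\mathbb{S}^2_*$ with $w^{I'}\approx_{\scriptscriptstyle \Omega} w^{J'}$, and since $w\notin\mathbb{R}$ these are distinct points, so $|[w^{I'}]_{\approx_\Omega}|>1$. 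Proposition \ref{pr-losmt} \ref{it-lzio} then forces the equivalence class to be the entire sphere, $[w^{I'}]_{\approx_\Omega}=w^{\mathbb{S}}$; as $w^I,w^J\in\Omega$ both lie on $w^{\mathbb{S}}$, transitivity of the equivalence relation yields $w^I\approx_{\scriptscriptstyle \Omega} w^J$.

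Assembling these pieces produces the desired path and hence $z\in\Omega_{2ps}$, which contradicts $z\in\partial(\Omega_{2ps})$ together with the openness of $\Omega_{2ps}$; since $(I,J)$ was arbitrary, the triple intersection is empty for every $(I,J)\in\mathbb{S}^2_*$. I expect the sphere-filling step via Proposition \ref{pr-losmt} \ref{it-lzio} to be the essential quaternionic ingredient, since it is what lets me transfer the $\approx_{\scriptscriptstyle \Omega}$-relation from an arbitrary witnessing pair to the prescribed pair $(I,J)$, a phenomenon with no analogue in the purely local complex reasoning.
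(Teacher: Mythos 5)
Your proof is correct, and its skeleton coincides with the paper's: argue by contradiction, use slice-openness to place a ball $B_\mathbb{C}(z,r)$ whose $I$- and $J$-lifts lie in $\Omega$, pick $w\in B_\mathbb{C}(z,r)\cap\Omega_{2ps}$, produce a path ending at $w$ whose $I$- and $J$-lifts stay in $\Omega$, and concatenate with the segment $\mathcal{L}_w^z$ to force $z\in\Omega_{2ps}$, contradicting $z\in\partial(\Omega_{2ps})$ via the openness of $\Omega_{2ps}$ (Proposition \ref{pr-2psopen}). The one place where you diverge is the transfer of the witnessing pair to the prescribed pair $(I,J)$. The paper takes the path $\alpha$ furnished by the definition \eqref{eq-o2ps1} — whose lifts along \emph{some} pair $(I',J')\in\mathbb{S}^2_*$ lie in $\Omega$ — and (implicitly) applies the definition of $2$-path-symmetry once to that path to conclude $\alpha^K\subset\Omega$ for every $K\in\mathbb{S}$, in particular for $K=I,J$; this is a one-line step. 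You instead route the transfer through the equivalence relation: from \eqref{eq-o2ps} you get $w^{I'}\approx_{\scriptscriptstyle\Omega} w^{J'}$, the sphere-filling statement of Proposition \ref{pr-losmt} \ref{it-lzio} gives $[w^{I'}]_{\approx_\Omega}=w^{\mathbb{S}}$, transitivity (valid by Proposition \ref{pr-losmt} (ii) since $\Omega$ is $2$-path-symmetric) yields $w^{I}\approx_{\scriptscriptstyle\Omega} w^{J}$, and Proposition \ref{pr-losmt} \ref{lzz} re-extracts a path adapted to $(I,J)$. This is heavier machinery for the same step, but it is sound, and it has the merit of making explicit a subtlety the paper elides, namely that the definition of $\Omega_{2ps}$ only supplies lifts along some pair, not the given one. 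Note also that your argument needs no separate treatment of the case $\Omega_\mathbb{R}=\varnothing$, which the paper handles first but which is vacuous for the contradiction argument anyway.
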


\begin{proof}
	If $\Omega_\mathbb{R}=\varnothing$, then $\Omega_{2ps}=\varnothing$, so is $\partial(\Omega_{2ps})$. It implies that \eqref{eq-2ps} holds.
	
	Otherwise $\Omega_\mathbb{R}\neq\varnothing$. Suppose that \eqref{eq-2ps} does not hold. Then there is a point
	\begin{equation*}
		z\in\partial(\Omega_{2ps})\bigcap\Psi_I^i(\Omega_I)\bigcap\Psi_J^i(\Omega_J).
	\end{equation*}
	Since $\Omega\in\tau_s(\mathbb{H})$, by definition $\Omega_I$ is open in $\mathbb{C}_I$ and then $\Psi_I^i(\Omega_I)$ is open in $\mathbb{C}$. Thence there is $r>0$ such that the open ball $\mathbb{B}:=B_{\mathbb{C}}(z,r)$ in $\mathbb{C}$ is contained in $\Psi_I^i(\Omega_I)\bigcap\Psi_J^i(\Omega_J)$.
	
	It follows from $z\in\mathbb{B}\cap\partial(\Omega_{2ps})$ that $\mathbb{B}\cap\Omega_{2ps}\neq\varnothing$. Let $u\in\mathbb{B}\cap\Omega_{2ps}$. By \eqref{eq-o2ps1}, there is $\alpha\in\mathscr{P}_0(\mathbb{C})$, such that $\alpha^I,\alpha^J\subset\Omega$ and $\alpha(1)=u$. Then by
	\begin{equation*}
			\left(\mathcal{L}_u^z\right)^K\subset\mathbb{B}^K\subset\left(\Psi_K^i(\Omega)\right)^K=\Omega_K,\qquad K=I,J,
	\end{equation*}
	we have
	\begin{equation*}
		\left(\alpha\circ\mathcal{L}_u^z\right)^K=\alpha^K\circ	\left(\mathcal{L}_u^z\right)^K\subset\Omega_K,\qquad K=I,J.
	\end{equation*}
	It implies that
	\begin{equation*}
		z=\left(\alpha\circ\mathcal{L}_u^z\right)(1)\in \Omega_{2ps}
	\end{equation*}
	which contradicts $z\in\partial(\Omega_{2ps})$.
\end{proof}

\begin{prop}\label{pr-los}
	Let $\Omega\subset\mathbb{H}$ be an st-domain with $\Omega_\mathbb{R}:=\Omega\cap\mathbb{R}\not=\varnothing$. Then each connected component of $\Omega_I$ in topological space $\mathbb{C}_I$, $I\in\mathbb{S}$, intersects $\mathbb{R}$.
\end{prop}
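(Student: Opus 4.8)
The plan is to argue by contradiction, exploiting the connectedness of $\Omega$ in the slice topology. Suppose some connected component $C$ of $\Omega_I$ (in the Euclidean topology of $\mathbb{C}_I$) satisfies $C\cap\mathbb{R}=\varnothing$. I will show that $C$ is simultaneously slice-open and slice-closed in $\Omega$, and that it is a proper nonempty subset; this contradicts the fact that an st-domain is $\tau_s$-connected, forcing every component of every slice to meet $\mathbb{R}$.

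The first and key step is to observe that $C$ itself is slice-open. For this I would compute the slices $C_J=C\cap\mathbb{C}_J$ for every $J\in\mathbb{S}$. When $J=\pm I$ we have $\mathbb{C}_J=\mathbb{C}_I$, so $C_J=C$, which is open in $\mathbb{C}_J$ because a connected component of the open set $\Omega_I$ in the locally connected space $\mathbb{C}_I$ is open. When $J\neq\pm I$ the two planes meet only along the real axis, $\mathbb{C}_I\cap\mathbb{C}_J=\mathbb{R}$, so $C_J\subseteq C\cap\mathbb{R}=\varnothing$; thus $C_J=\varnothing$ is open as well. Hence $C_J\in\tau(\mathbb{C}_J)$ for all $J$, i.e. $C\in\tau_s(\mathbb{H})$. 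This is precisely where the hypothesis $C\cap\mathbb{R}=\varnothing$ enters: it forces $C$ to be invisible in every plane other than $\mathbb{C}_I$.

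Next I would show that $\Omega\setminus C$ is also slice-open. Examining its slices, $(\Omega\setminus C)_J=\Omega_J\setminus C_J$: for $J\neq\pm I$ this equals $\Omega_J$, which is open since $\Omega\in\tau_s(\mathbb{H})$; for $J=\pm I$ it equals $\Omega_I\setminus C$, and since $C$ is a connected component of the open set $\Omega_I$ it is both open and closed in $\Omega_I$, so $\Omega_I\setminus C$ is open in $\Omega_I$ and hence in $\mathbb{C}_I$. Therefore $\Omega\setminus C\in\tau_s(\mathbb{H})$, meaning $C$ is slice-closed in $\Omega$.

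Finally I would close the argument: $C$ is nonempty by construction, and $C\neq\Omega$ because $\Omega\setminus C\supseteq\Omega_\mathbb{R}\neq\varnothing$ (indeed $C\cap\mathbb{R}=\varnothing$ while $\Omega_\mathbb{R}\neq\varnothing$ by hypothesis). Thus $\{C,\Omega\setminus C\}$ is a separation of $\Omega$ into two nonempty disjoint slice-open sets, contradicting the connectedness of the st-domain $\Omega$. I do not expect a serious obstacle here; the only points requiring care are the elementary facts $\mathbb{C}_I\cap\mathbb{C}_J=\mathbb{R}$ for $I\neq\pm J$ and the openness and closedness of connected components of open subsets of $\mathbb{C}$, both of which are routine.
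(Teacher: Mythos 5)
Your proof is correct and follows essentially the same route as the paper's: the paper also argues by contradiction, asserting that the offending component $C$ and $\Omega\setminus C$ form a separation of $\Omega$ by two disjoint nonempty slice-open sets. Your proposal simply fills in the slice-by-slice verification (using $\mathbb{C}_I\cap\mathbb{C}_J=\mathbb{R}$ for $J\neq\pm I$ and the clopenness of components of open planar sets) that the paper dismisses as ``easy to check.''
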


\begin{proof}
	Proof by contradiction. If $U$ is a connected component of $\Omega_I$ for some $I\in\mathbb{S}$ with $U_\mathbb{R}\neq\varnothing$, then it is easy to check that $U$ and $\Omega\backslash U$ are two disjoint nonempty slice-open sets. It implies that $\Omega$ is not slice-connected, a contradiction.
\end{proof}

\begin{rmk}\label{rm-2pssd}
	Let $\Omega\subset\mathbb{H}$ be an st-domain with $\Omega_\mathbb{R}=\varnothing$. According to \cite[Proposition 3.8]{Dou2020001}, $\Omega\subset\mathbb{C}_I$ for some $I\in\mathbb{S}$. It follows by connectedness that $\Omega$ is a domain in the topological space $\mathbb{C}_I$.
\end{rmk}

\begin{prop}\label{pr-osmba}
	Let $\Omega\subset \mathbb{H}$ be a non-empty $2$-path-symmetric st-domain. Then $\mathbb{S}_\Omega^{2ps}$ is countable.
\end{prop}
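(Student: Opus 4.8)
The plan is to first dispose of the degenerate case and then reduce the statement to a discreteness phenomenon in $\mathbb{C}$. If $\Omega_\mathbb{R}=\varnothing$, then by Remark \ref{rm-2pssd} we have $\Omega\subset\mathbb{C}_I$ for a single $I\in\mathbb{S}$, while $\Omega_{2ps}=\varnothing$; hence $\Omega_J=(\Omega_{2ps})^J=\varnothing$ for every $J\neq\pm I$, so $\mathbb{S}_\Omega^{2ps}\subseteq\{I,-I\}$ is finite. I would therefore assume $\Omega_\mathbb{R}\neq\varnothing$ from now on. The first key observation is that $2$-path-symmetry forces $(\Omega_{2ps})^I\subseteq\Omega_I$ for every $I$: if $z\in\Omega_{2ps}$, a witnessing path $\gamma$ satisfies $\gamma^{I'},\gamma^{J'}\subset\Omega$ for some distinct $I',J'$, so by Definition \ref{def-2ps} we get $\gamma^K\subset\Omega$ for all $K\in\mathbb{S}$, whence $z^K\in\Omega$. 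Thus $I\in\mathbb{S}_\Omega^{2ps}$ precisely when $\Omega_I\supsetneq(\Omega_{2ps})^I$, i.e.\ when there is $z\in\mathbb{C}$ with $z^I\in\Omega$ but $z\notin\Omega_{2ps}$. By \eqref{eq-omr} such a $z$ cannot be real, and from Proposition \ref{pr-losmt}\ref{it-lzio} together with \eqref{eq-o2ps} one sees that $z\notin\Omega_{2ps}$ is equivalent (given $z^I\in\Omega$, $z\notin\mathbb{R}$) to $[z^I]_{\approx_\Omega}=\{z^I\}$ being a singleton class.

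Next, for each bad $I\in\mathbb{S}_\Omega^{2ps}$ I would manufacture a distinguished boundary point of $\Omega_{2ps}$. Fix $z^I\in\Omega_I\setminus(\Omega_{2ps})^I$ as above. By Proposition \ref{pr-los} the connected component of $\Omega_I$ containing $z^I$ meets $\mathbb{R}$, and being an open connected subset of $\mathbb{C}_I$ it is path-connected; pulling such a path back through $\Psi_I^i$ yields $\gamma\in\mathscr{P}_0(\mathbb{C})$ with $\gamma(1)=z$ and $\gamma^I\subset\Omega$. Now $\gamma(0)\in\Omega_\mathbb{R}\subseteq\Omega_{2ps}$ while $\gamma(1)=z\notin\Omega_{2ps}$, so the exit time $s:=\inf\{t:\gamma(t)\notin\Omega_{2ps}\}$ is attained (the complement of the open set $\Omega_{2ps}$ is closed) and satisfies $\gamma(t)\in\Omega_{2ps}$ for $t<s$; hence $w_I:=\gamma(s)\in\partial\Omega_{2ps}$. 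Since $\gamma^I\subset\Omega$ we also get $w_I^I=\gamma^I(s)\in\Omega_I$.

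The third step converts \eqref{eq-2ps} into a discreteness statement. Because $w_I\in\partial\Omega_{2ps}\cap\Psi_I^i(\Omega_I)$, equation \eqref{eq-2ps} forbids $w_I\in\Psi_J^i(\Omega_J)$ for any $J\neq I$; that is, $w_I^J\notin\Omega$ whenever $J\neq I$. This immediately gives injectivity of $I\mapsto w_I$, since a common value would lie in the empty intersection of \eqref{eq-2ps}. Moreover, as $w_I^I\in\Omega_I$ and $\Omega_I$ is open in $\mathbb{C}_I$, there is $\rho_I>0$ with $(B_\mathbb{C}(w_I,\rho_I))^I\subset\Omega$; if some other $w_{I'}$ with $I'\neq I$ lay in $B_\mathbb{C}(w_I,\rho_I)$, then $w_{I'}^I\in\Omega$, contradicting $w_{I'}^J\notin\Omega$ for $J\neq I'$. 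Hence each $w_I$ is isolated in the set $W:=\{w_I:I\in\mathbb{S}_\Omega^{2ps}\}\subset\mathbb{C}$. A discrete subspace of the second-countable space $\mathbb{C}$ is countable, so $W$ is countable, and by injectivity $\mathbb{S}_\Omega^{2ps}$ is countable as well.

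The main obstacle, in my view, is identifying the right object to count: a direct attempt fails because $\partial\Omega_{2ps}$ may well be uncountable, and the non-symmetric slices live in different planes $\mathbb{C}_I$, so there is no obvious common space in which to compare them. The decisive idea is to read \eqref{eq-2ps} as asserting that each boundary point of $\Omega_{2ps}$ is reachable by \emph{at most one} slice; this simultaneously pins down a single boundary point $w_I$ per bad $I$ (yielding injectivity) and, through the openness of $\Omega_I$, supplies the separation $w_{I'}\notin B_\mathbb{C}(w_I,\rho_I)$ that forces discreteness. The other delicate point is verifying the singleton-class characterization of the extra slice points, which is what guarantees that the exit point $w_I$ genuinely lands on $\partial\Omega_{2ps}$ rather than inside $\Omega_{2ps}$.
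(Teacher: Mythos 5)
Your proof is correct and takes essentially the same approach as the paper's: both attach to each $I\in\mathbb{S}_\Omega^{2ps}$ a point of $\partial(\Omega_{2ps})$ lying in $\Psi_I^i(\Omega_I)$ — you via an exit-time argument along a path starting in $\Omega_\mathbb{R}$, the paper via connectedness of a component of $\Omega_I$ meeting $\mathbb{R}$ (Proposition \ref{pr-los}) — and both then read \eqref{eq-2ps} as saying that such a boundary point can lie in $\Psi_J^i(\Omega_J)$ for no other $J$, after which second countability of $\mathbb{C}$ gives countability. The only cosmetic difference is the final counting step: the paper injects $\mathbb{S}_\Omega^{2ps}$ directly into a countable basis of $\mathbb{C}$, whereas you note that the set $\{w_I\}$ is discrete in $\mathbb{C}$, hence countable, and invoke injectivity of $I\mapsto w_I$.
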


\begin{proof}
	(i) Suppose that $\Omega_\mathbb{R}=\varnothing$. By Remark \ref{rm-2pssd}, $\Omega$ is a domain $\mathbb{C}_I$ for some $I\in\mathbb{S}$. Therefore $\Omega_J=\varnothing$ for each $J\neq\pm I$. Recalling that, by \eqref{eq-s omega 2ps}, $\mathbb{S}_\Omega^{2ps}=\{I\in\mathbb{S}:\Omega_I\neq (\Omega_{2ps})^I\}$,
it follows from $\Omega_{2ps}=\varnothing$ that $\mathbb{S}_\Omega^{2ps}=\{\pm I\}$ is countable.
	
	(ii) Suppose that $\Omega_\mathbb{R}\neq\varnothing$. Let $\mathcal{B}$ be a countable topological bases of $\mathbb{C}$. Let $I\in \mathbb{S}_\Omega^{2ps}$. By definition $\Omega_I\neq (\Omega_{2ps})^I$. Let $U$ be a connected component of $\Omega_I$ in $\mathbb{C}_I$ with $U\nsubseteq (\Omega_{2ps})^I$. By Proposition \ref{pr-los}, $U_\mathbb{R}\neq\varnothing$. It follows by $U_\mathbb{R}\subset\Omega_\mathbb{R}\subset\Omega_{2ps}$ that $U\cap (\Omega_{2ps})^I\supset U_\mathbb{R}\neq\varnothing$.
	
	If $U\cap\partial_I\left[(\Omega_{2ps})^I\right]=\varnothing$, then $U$ is the union of two disjoint nonempty open set $U\backslash (\Omega_{2ps})^I$ and $U\cap (\Omega_{2ps})^I$, where $\partial_I\left[(\Omega_{2ps})^I\right]$ is the boundary of $(\Omega_{2ps})^I$ in $\mathbb{C}_I$. So $U$ is not connected, a contradiction.
	
	Otherwise $U\cap\partial_I\left[(\Omega_{2ps})^I\right]\neq\varnothing$. Let $z_I\in U\cap\partial_I\left[(\Omega_{2ps})^I\right]$ and $V_{(I)}\in\mathcal{B}$ with $z_I\in(V_{(I)})^I\subset\Omega_I$. Consider the map
	\begin{equation*}
		\begin{split}
			\varphi:\quad\mathbb{S}_\Omega^{2ps}\quad &\xlongrightarrow[\hskip1cm]{}\quad \mathcal{B},
			\\ J\qquad &\shortmid\!\xlongrightarrow[\hskip1cm]{}\quad V_{(J)}.
		\end{split}
	\end{equation*}
	We will prove that $\varphi$ is injective by contradiction: If $\varphi(K)=\varphi(L)$ for some $(K,L)\in\mathbb{S}_*^2$, then by $V_{(K)}=\varphi(K)=\varphi(L)=V_{(L)}$ we have \begin{equation*}
		\Psi_K^i(z_K)\in\partial(\Omega_{2ps})\cap V_{(K)}\cap V_{(L)}\subset\partial(\Omega_{2ps})\cap\Psi_K^i(\Omega_K)\cap\Psi_L^i(\Omega_L).
	\end{equation*}
	It implies that \eqref{eq-2ps} does not hold, a contradiction. Therefore $\varphi$ is injective and the cardinality $|\mathbb{S}_\Omega^{2ps}|<|\mathcal{B}|$. So $\mathbb{S}_\Omega^{2ps}$ is countable.
\end{proof}

\begin{prop}\label{pr-los2}
	Let $\Omega\subset\mathbb{H}$ be $2$-path-symmetric and $U\subset\mathbb{C}$ be path-connected with $U\cap\Omega_{2ps}\neq\varnothing$. If $U^I\subset\Omega_I$ and $U^J\subset\Omega_J$ for some $I\neq J$, then $U\subset\Omega_{2ps}$.
\end{prop}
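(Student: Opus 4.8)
The plan is to fix a base point in $U\cap\Omega_{2ps}$, extract from it a path joining the real axis to that point whose slices over two distinct units lie in $\Omega$, upgrade those slices to the \emph{prescribed} units $I$ and $J$ by invoking $2$-path-symmetry, and then propagate to an arbitrary point of $U$ by concatenating with a path lying entirely in $U$.

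More precisely, first I would choose $z_0\in U\cap\Omega_{2ps}$, which exists by hypothesis. By the definition \eqref{eq-o2ps1} of $\Omega_{2ps}$, there are $\gamma\in\mathscr{P}_0(\mathbb{C})$ and $(I',J')\in\mathbb{S}^2_*$ with $\gamma^{I'},\gamma^{J'}\subset\Omega$ and $\gamma(1)=z_0$. Since $\Omega$ is $2$-path-symmetric and $(I',J')\in\mathbb{S}^2_*$, Definition \ref{def-2ps} yields $\gamma^K\subset\Omega$ for \emph{every} $K\in\mathbb{S}$; in particular $\gamma^I\subset\Omega$ and $\gamma^J\subset\Omega$ for the two units $I\neq J$ provided in the statement. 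This is the step that bridges the a priori unrelated witness units $I',J'$ of $\Omega_{2ps}$ to the units $I,J$ appearing in the hypothesis, and it is exactly where $2$-path-symmetry is used.

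Next, for an arbitrary $w\in U$, I would use path-connectedness of $U$ to pick a continuous $\delta:[0,1]\to U$ with $\delta(0)=z_0$ and $\delta(1)=w$. Because $\delta([0,1])\subset U$ and $\Psi_i^I$ acts pointwise, $\delta^I([0,1])=\left(\delta([0,1])\right)^I\subset U^I\subset\Omega_I$, and similarly $\delta^J([0,1])\subset U^J\subset\Omega_J$; hence $\delta^I,\delta^J\subset\Omega$. I would then form the concatenation $\beta:=\gamma * \delta$ (traverse $\gamma$ first, then $\delta$), which is legitimate since $\gamma(1)=z_0=\delta(0)$. Then $\beta$ is continuous with $\beta(0)=\gamma(0)\in\mathbb{R}$, so $\beta\in\mathscr{P}_0(\mathbb{C})$ and $\beta(1)=w$. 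As the slice map commutes with concatenation, $\beta^I=\gamma^I * \delta^I\subset\Omega$ and $\beta^J=\gamma^J * \delta^J\subset\Omega$. Since $(I,J)\in\mathbb{S}^2_*$, the definition \eqref{eq-o2ps1} gives $w\in\Omega_{2ps}$, and as $w\in U$ was arbitrary, $U\subset\Omega_{2ps}$.

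I do not anticipate a genuine obstacle: the argument is a concatenation-of-paths manipulation. The only point requiring mild care is verifying that the slice operation commutes with concatenation and preserves containment in $\Omega$, so that $\beta^I,\beta^J\subset\Omega$; this holds because $\Psi_i^I$ is applied pointwise and $\Omega_I$, $\Omega_J$ already contain the relevant images of $\gamma$ and $\delta$. The conceptual heart is the single application of $2$-path-symmetry to replace $I',J'$ by the given $I,J$.
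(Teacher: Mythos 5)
Your proposal is correct and follows essentially the same route as the paper's own proof: fix $z_0\in U\cap\Omega_{2ps}$, use $2$-path-symmetry once to transfer the witness units $I',J'$ to the prescribed $I,J$, then concatenate $\gamma$ with a path in $U$ and invoke the definition of $\Omega_{2ps}$. The only cosmetic difference is that you spell out the commutation of the slice map $\Psi_i^I$ with concatenation, which the paper leaves implicit.
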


\begin{proof}
	Let $z\in U\cap \Omega_{2ps}$. Then there are $\gamma\in\mathscr{P}_0(\mathbb{C})$ and $(I',J')\in\mathbb{S}^2_*$, s.t. $\gamma^{I'},\gamma^{J'}\subset\Omega$ and $\gamma(1)=z$. Since $\Omega$ is $2$-path-symmetric, $\gamma^K\subset\Omega$ for each $K\in\mathbb{S}$. Take $w\in U$, then there is a path $\beta$ in $U$ from $z$ to $w$. Thus
	\begin{equation*}
		\beta^L\subset U^L\subset\Omega_L,\qquad\forall\ L\in\{I,J\}.
	\end{equation*}
	Hence $(\gamma\beta)^{I},(\gamma\beta)^{J}\subset\Omega$ and $(\gamma\beta)(1)=w$. It implies $w\in\Omega_{2ps}$ and so $U\subset\Omega_{2ps}$.
\end{proof}

Let $\Omega\subset\mathbb{H}$ and set
\begin{equation*}\index{$\widetilde{\Omega}$}
	\widetilde{\Omega}:=\bigcup_{x+yI\in\Omega}x+y\mathbb{S}.
\end{equation*}
$\Omega$ is called \textit{\textbf{axially symmetric}}\index{Axially symmetric} if $\Omega=\widetilde{\Omega}$. We point out that
$\widetilde{\Omega}$ is said to be the \textit{\textbf{axially symmetric completion}}\index{Axially symmetric completion} of $\Omega$ according to the terminology in \cite{Colombo2009001}.

In the sequel, it is useful to introduce the notation
$$\Omega^{2ps}:=\{x+yI:x+yi\in\Omega_{2ps},\ I\in\mathbb{S}\}$$\index{$\Omega^{2ps}$}
to denote the axially symmetric completion of $\Omega_{2ps}$, i.e. $\Omega^{2ps}=\widetilde{\Omega^{2ps}}$.

\begin{rmk}\label{rmk-2ps}
	Let $\Omega\subset\mathbb{H}$ is $2$-path-symmetric. Then the following statements follow from the definitions:
	\begin{enumerate}[label=(\roman*)]
		\item\label{it-Omega 2ps} $\Omega^{2ps}$ is a subset of $\Omega$.
		\item\label{it-xyI} Let $I\in\mathbb{S}$. Then $x+yI\in\Omega^{2ps}$ if and only if $x+yJ\in\Omega^{2ps}$ for each $J\in\mathbb{S}$.
		\item If $\Omega\in\tau_s(\mathbb{H})$, then $\Omega^{2ps}\in\tau_s(\mathbb{H})$.
		\item\label{it-o2p} If $\Omega^{2ps}\in\tau_s(\mathbb{H})$, then for each $f\in\mathcal{SR}(\Omega^{2ps})$,
			\begin{equation*}
				f(x+yI)=(1,I)\begin{pmatrix}
					1&J\\1&K
				\end{pmatrix}\begin{pmatrix}
				f(x+yJ)\\f(x+yK)
			\end{pmatrix},\qquad\forall\ x+yI\in\Omega^{2ps}.
			\end{equation*}
		For all $J,K\in\mathbb{S}$ and $x+yI\in\Omega^{2ps}$.
		It implies that $f:(\Omega^{2ps},\tau)\rightarrow(\mathbb{H},\tau)$ is continuous.
		\item\label{it-o2c} $\Omega_{2ps}=\Conj_\mathbb{C}(\Omega_{2ps}):=\{z\in\mathbb{C}:\overline{z}\in\Omega_{2ps}\}$.
	\end{enumerate}
\end{rmk}

\section{Slice-piecewise path}\label{sec-slice piecewise paths}

In this brief section, we introduce the concept of slice-piecewise paths. These paths can describe the path-connectedness of st-domains in a finer way. In particular, we show that any two points in an st-domain can be connected by a slice-piecewise path.

\begin{defn}
	Let $n\in\mathbb{N}\backslash\{0\}$, $I_\imath\in\mathbb{S}$,
	\begin{equation*}
		I_\imath\in\mathbb{S},\qquad\gamma_\imath\in\mathscr{P}(\mathbb{C}):=\{\delta:[0,1]\rightarrow\mathbb{C}:\delta\mbox{ is continuous}\},\qquad \imath=1,...,n.
	\end{equation*}
	We call $\left((\gamma_1)^{I_1},...,(\gamma_n)^{I_n}\right)$, an ($n$-)slice-piecewise path from $\left[(\gamma_1)^{I_1}\right](0)$ to $\left[(\gamma_n)^{I_n}\right](1)$ if
	\begin{equation*}
		\gamma_\imath(1)=\gamma_{\imath+1}(0),\qquad\imath=1,...,n-1.
	\end{equation*}

	Moreover, we call an ($n$-)slice-piecewise path $\beta=(\beta_1,...,\beta_n)$ is in $\Omega\subset\mathbb{H}$, if $\beta_\imath\subset\Omega$, $\imath=1,...,n$.
\end{defn}

\begin{prop}\label{pr-fpp}
	Let $\Omega\subset\mathbb{H}$ be an st-domain, and $p,q\in\Omega$. Then there is a slice-piecewise path $\beta$ in $\Omega$ from $p$ to $q$.
\end{prop}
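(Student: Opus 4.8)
The plan is to introduce the binary relation $p\sim q$ on $\Omega$ meaning that there exists a slice-piecewise path in $\Omega$ from $p$ to $q$, to prove that $\sim$ is an equivalence relation whose classes are slice-open, and then to invoke the slice-connectedness of $\Omega$ to conclude that there is a single class, so that every pair $p,q$ is joined by such a path.

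First I would verify that $\sim$ is an equivalence relation. Reflexivity is immediate from a constant path. For symmetry, given a slice-piecewise path $\left((\gamma_1)^{I_1},\dots,(\gamma_n)^{I_n}\right)$ from $p$ to $q$, the order-reversed and time-reversed family $\left((\delta_1)^{I_n},\dots,(\delta_n)^{I_1}\right)$ with $\delta_k(t):=\gamma_{n+1-k}(1-t)$ has the same images in $\Omega$ and satisfies the matching condition, hence is a slice-piecewise path from $q$ to $p$. Transitivity is the only delicate point, and I expect it to be the main obstacle: concatenating a path from $p$ to $q$ ending with $(\gamma_m)^{I_m}$ and a path from $q$ to $r$ starting with $(\eta_1)^{J_1}$ requires $\gamma_m(1)=\eta_1(0)$ \emph{in $\mathbb{C}$}, whereas a priori we only know $(\gamma_m(1))^{I_m}=(\eta_1(0))^{J_1}=q$ \emph{in $\mathbb{H}$}. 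The resolution is the elementary fact that $z^{I}=w^{J}$ for $z,w\in\mathbb{C}$, $I,J\in\mathbb{S}$ forces $w\in\{z,\overline{z}\}$, with $J=-I$ in the case $w=\overline{z}\neq z$ (which can occur only when $q\notin\mathbb{R}$). In the case $w=z$ the junction already matches. In the case $w=\overline{z}$ I would conjugate the \emph{entire} second path, replacing each $(\eta_\ell)^{J_\ell}$ by $(\Conj_\mathbb{C}\circ\eta_\ell)^{-J_\ell}$. Since $(\overline{u})^{K}=u^{-K}$, each piece is unchanged as a map into $\mathbb{H}$ and all internal matchings are preserved, while $\eta_1(0)=\overline{z}$ becomes $\overline{\eta_1(0)}=z=\gamma_m(1)$, so the concatenation is a genuine slice-piecewise path from $p$ to $r$.

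Next I would show that each class $[p]_\sim$ is slice-open, that is, that $[p]_\sim\cap\mathbb{C}_J$ is open in $\mathbb{C}_J$ for every $J\in\mathbb{S}$. Given $q\in[p]_\sim\cap\mathbb{C}_J$, since $\Omega_J$ is open in $\mathbb{C}_J$ there is a disc $B=B_{\mathbb{C}_J}(q,r)\subset\Omega_J$. For any $q'\in B$ the straight segment from $q$ to $q'$ stays in the convex set $B\subset\Omega_J$, and this segment is a $1$-slice-piecewise path in $\Omega$, namely $(\gamma)^J$ for the continuous path $\gamma=\Psi_J^i$ composed with the segment. Hence $q'\sim q\sim p$, so $B\subset[p]_\sim$ and thus $B\subset[p]_\sim\cap\mathbb{C}_J$. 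The key observation is that the witnessing disc need not itself be slice-open: openness in $\tau_s(\mathbb{H})$ is tested slice by slice, and within the single plane $\mathbb{C}_J$ ordinary discs suffice, so no separate treatment of real and non-real points of $q$ is needed.

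Finally, the classes of $\sim$ partition $\Omega$ into nonempty, pairwise disjoint slice-open sets; in particular, for any class $A=[p]_\sim$ both $A$ and its complement $\Omega\setminus A$ (a union of classes) are slice-open. Since $\Omega$ is an st-domain, hence connected in $\tau_s(\mathbb{H})$, there can be only one class, so $p\sim q$, which is exactly the existence of a slice-piecewise path in $\Omega$ from $p$ to $q$.
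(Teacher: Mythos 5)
Your proof is correct, and it takes a genuinely different route from the paper's. The paper proceeds local-to-global along an actual $\tau_s$-continuous path: it invokes three results from \cite{Dou2020001} — that $(\mathbb{H},\tau_s)$ is connected and locally path-connected (so $\Omega$ is $\tau_s$-path-connected and a continuous path $\alpha$ from $p$ to $q$ exists), that every point has a real-connected st-domain neighborhood, and that inside such a neighborhood any two points are joined by a two-piece slice path — then runs a compactness argument on $[0,1]$ with a minimality/reordering step (the finite cover meets each point at most twice) and concatenates the resulting $2m$ slice pieces. You instead run the classical ``open equivalence classes'' argument directly in the slice topology: you declare $p\sim q$ when a slice-piecewise path joins them, verify this is an equivalence relation, show each class is slice-open by a one-piece segment argument inside a single disc $B_{\mathbb{C}_J}(q,r)\subset\Omega_J$, and conclude from slice-connectedness that there is a single class. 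The one genuinely delicate point in your route is exactly the one you isolate: the matching condition $\gamma_\imath(1)=\gamma_{\imath+1}(0)$ in the definition lives in $\mathbb{C}$, not $\mathbb{H}$, and your resolution — that $z^{I}=w^{J}$ forces $w\in\{z,\overline{z}\}$, together with replacing each $(\eta_\ell)^{J_\ell}$ by $(\Conj_\mathbb{C}\circ\eta_\ell)^{-J_\ell}$, which leaves the images in $\mathbb{H}$ (hence containment in $\Omega$) unchanged since $(\overline{u})^{K}=u^{-K}$ — is exactly right, and is the same conjugation device the paper itself uses in Proposition \ref{pr-losmt}. What each approach buys: yours is self-contained and more elementary, needing only that $\Omega$ is slice-open and $\tau_s$-connected, with no appeal to path-connectedness or local path-connectedness of the slice topology and no covering argument; the paper's proof, while heavier, produces extra structure along the way — the slice-piecewise path refines a given continuous path $\alpha$ and its pieces sit inside real-connected st-domains — and is the natural argument given the toolkit the authors had already established in \cite{Dou2020001}.
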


\begin{proof}
	According to \cite[Proposition 3.10]{Dou2020001}, $(\mathbb{H},\tau_s)$ is connected and locally path-connected, so is $(\Omega,\tau_s)$. Then there is a path $\alpha$ in $\Omega$ from $p$ to $q$. Let $t\in[0,1]$. As a result of \cite[Proposition 3.5]{Dou2020001}, there is an st-domain $U_t\subset \Omega$ containing $\alpha(t)$, such that $U_t$ is real-connected (that is $(U_t)_\mathbb{R}$ being connected in $\mathbb{R}$). Then the connected component $W_t$ of $\alpha^{-1}(U_t)$ containing $t$ is open in $[0,1]$. Since $[0,1]$ is compact, it follows that there is a minimal $m\in\mathbb{N}^+$, and $t_\imath\in[0,1]$, $\imath=1,2,...,m$, such that
	\begin{equation*}
		[0,1]\subset\bigcup_{\imath=1}^m W_{t_\imath}\qquad\mbox{and}\qquad 0<t_0<t_1<...<t_m<1.
	\end{equation*}
	In fact, $\cup_{\imath=1}^m W_{t_\imath}$ covers each point in $[0,1]$ at most twice. Otherwise, there is a point $q\in W_a\cap W_b\cap W_c$ in $[0,1]$, and $a,b,c\in\{t_1,t_2,...,t_m\}$ are different from each other. Then
	\begin{equation*}
		W:=W_a\cup W_b\cup W_c
	\end{equation*}
	is an open interval in $[0,1]$. Without loss of generality, we can suppose that
	\begin{equation*}
		\inf(W_a)=\inf(W)\qquad\mbox{and}\qquad\sup(W_b)=\sup(W).
	\end{equation*}
	Then $W=W_a\cup W_b$. So we can remove $W_c$, and $m$ is not minimal.
	
	Therefore there is a bijection
	\begin{equation*}
		L:\{1,2,...,m\}\rightarrow\{1,2,...,m\}
	\end{equation*}
	(reordering $\{W_{t_\imath}\}_{\imath=1}^m$ by the order of $\sup(W_{t_\imath})$), and
	\begin{equation*}
		s_\imath\in[0,1],\qquad\imath=1,2,...,m+1
	\end{equation*}
	with
	\begin{equation*}
		0=s_1<s_2<...<s_{m+1}=1\qquad\mbox{and}\qquad[s_\jmath,s_{\jmath+1}]\subset W_{t_{L(\jmath)}},\quad\jmath=1,2,...,m.
	\end{equation*}
	Then
	\begin{equation*}
		\alpha([s_\imath,s_{\imath+1}])\subset U_{t_{L(\imath)}},\qquad\imath=1,2,...,m.
	\end{equation*}
	Thanks to \cite[Corollary 3.9]{Dou2020001}, there are two paths $\beta_{2\imath-1}$ and $\beta_{2\imath}$ in $U$ such that $\beta_{2\imath-1}\beta_{2\imath}$ be a path from $\alpha(s_\imath)$ to $\alpha(s_{\imath+1})$ for each $\imath\in\{1,2,...,m\}$, and $\beta_\jmath$, $\jmath=1,...,2m$ is a path on a slice $\mathbb{C}_{I_\jmath}$ for some $I_\jmath\in\mathbb{S}$. Then $\Pi_{\imath=1}^{2m}\beta_\imath:=\beta_1\cdots\beta_{2m}$ is a path from $p$ to $q$, i.e. $(\beta_1,...,\beta_{2m})$ is a slice-piecewise path from $p$ to $q$.
\end{proof}

\section{$2$-path-symmetric Riemann domain}\label{sec-2ps Riemann domain}

In this section, we introduce a suitable notion of $2$-path-symmetric Riemann domain. To this end, we first define an equivalence relation $\sim_{\scriptscriptstyle\Omega}$ on $\bigsqcup_{I\in\mathbb{S}}\Omega_I\subset\mathbb{H}\times\mathbb{S}$. Then, for any 2-path-symmetric st-domain $\Omega\subset\mathbb H$ we define a new topological space $\breve{\Omega}$, see \eqref{eq-bob}, and a map $\pi_{\scriptscriptstyle\breve\Omega}$  which turns out to be a local homomorphism, see Proposition \ref{pr-usoi}. In Theorem  \ref{thm-Riemann doamin} we prove that $(\breve\Omega,\pi_{\scriptscriptstyle\breve\Omega})$ is a Riemann domain over $\mathbb C$ when $\Omega$ is a 2-path-symmetric domain and this is the reason to refer to $\breve \Omega$ as to the $2$-path-symmetric Riemann domain associated with $\Omega$.

Let $\Omega\subset\mathbb{H}$ and denote by
\begin{equation*}
	\bigsqcup_{I\in\mathbb{S}}\Omega_I
	:=\bigcup_{I\in\mathbb{S}}\big(\Omega_I\times\{I\}\big)
	=\big\{(u,I)\in\mathbb{H}\times\mathbb{S}:u\in\Omega_I\big\},
\end{equation*}
the disjoint union of $\{\Omega_I\}_{I\in\mathbb{S}}$ with the disjoint union topology
\begin{equation*}
	\tau\left(\bigsqcup_{I\in\mathbb{S}}\Omega_I\right):=\left\{U\subset\bigsqcup_{I\in\mathbb{S}}\Omega_I:U\cap\big(\Omega_I\times\{I\}\big)\in\tau\big(\Omega_I\times\{I\}\big),\ \forall\ I\in\mathbb{S}\right\}.
\end{equation*}
Let $u\in\bigsqcup_{I\in\mathbb{S}}\Omega_I$. Denote by $(z_u,I_u)$, the unique pair in $\mathbb{C}\times\mathbb{S}$ such that
\begin{equation}\label{eq-uxu}
	u=(z_u^{I_u},I_u),
\end{equation}
where $z_u^{I_u}:=(z_u)^{I_u}$.

Define a binary relation $\sim_{\scriptscriptstyle \Omega}$: Let $u,v\in\bigsqcup_{I\in\mathbb{S}}{\Omega_I}$. Then $u\sim_{\scriptscriptstyle \Omega} v$ if and only if the following statements hold:
\begin{equation}\label{eq-sim}
	\begin{split}
		&(i)\ \ z_u=z_v.\\
		&(ii)\ I_u=I_v\ \mbox{when}\ z_u\notin\Omega_{2ps}.
	\end{split}
\end{equation}
It is easy to check that if $u\sim_{\scriptscriptstyle \Omega} v$ and $u\neq v$, then
\begin{equation}\label{eq-jujv}
	I_u\neq I_v,\qquad\mbox{and}\qquad z_u=z_v\in\Omega_{2ps}.
\end{equation}
According to \eqref{eq-sim},
\begin{equation}\label{eq-zaia}
	z_a^{I_a}\approx_{\scriptscriptstyle \Omega} z_b^{I_b},\qquad\forall\ a,b\in\bigsqcup_{I\in\mathbb{S}}{\Omega_I}\mbox{ with } a\sim_{\scriptscriptstyle \Omega} b.
\end{equation}

\begin{proposition}
	Let $\Omega\subset\mathbb{H}$. Then the binary relation $\sim_{\scriptscriptstyle \Omega}$ is an equivalence relation.
\end{proposition}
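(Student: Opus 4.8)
The plan is to verify directly the three defining properties of an equivalence relation, namely reflexivity, symmetry, and transitivity, working straight from the definition in \eqref{eq-sim}. The key observation that makes all three checks go through is that condition (i), $z_u=z_v$, forces the membership statement ``$z_u\notin\Omega_{2ps}$'' appearing in condition (ii) to be invariant among related elements: once two points share the same complex coordinate $z$, the hypothesis of (ii) is literally the same statement for both. So the whole argument reduces to bookkeeping on the pairs $(z_u,I_u)$ introduced in \eqref{eq-uxu}.

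First I would treat reflexivity. For any $u\in\bigsqcup_{I\in\mathbb{S}}\Omega_I$ we have $z_u=z_u$, so (i) holds, and $I_u=I_u$ holds unconditionally, so (ii) holds as well; hence $u\sim_{\scriptscriptstyle \Omega} u$. Next, symmetry: assuming $u\sim_{\scriptscriptstyle \Omega} v$, condition (i) gives $z_v=z_u$, which is (i) for the pair $(v,u)$. For (ii), if $z_v\notin\Omega_{2ps}$ then, since $z_v=z_u$, also $z_u\notin\Omega_{2ps}$, so $I_u=I_v$ by the hypothesis $u\sim_{\scriptscriptstyle \Omega} v$, giving $I_v=I_u$ as required.

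The only step with any content is transitivity, and even here the content is merely the invariance noted above. Suppose $u\sim_{\scriptscriptstyle \Omega} v$ and $v\sim_{\scriptscriptstyle \Omega} w$. From the two instances of (i) we get $z_u=z_v$ and $z_v=z_w$, hence $z_u=z_w$, which is (i) for $(u,w)$. To verify (ii) for $(u,w)$, assume $z_u\notin\Omega_{2ps}$. Because $z_u=z_v=z_w$, we also have $z_v\notin\Omega_{2ps}$, so condition (ii) applies to both relations: $I_u=I_v$ from $u\sim_{\scriptscriptstyle \Omega} v$ and $I_v=I_w$ from $v\sim_{\scriptscriptstyle \Omega} w$, whence $I_u=I_w$. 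This establishes $u\sim_{\scriptscriptstyle \Omega} w$ and completes the verification.

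I do not anticipate a genuine obstacle here; the statement is immediate once one recognizes that the conditional clause in (ii) depends only on the shared value $z$. The single point requiring a moment of care is precisely this transfer of the condition ``$z\notin\Omega_{2ps}$'' across equal complex coordinates in the transitivity step, which is why I would spell that substitution out explicitly rather than leave it implicit.
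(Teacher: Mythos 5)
Your proof is correct and takes essentially the same approach as the paper's: a direct verification of reflexivity, symmetry, and transitivity from \eqref{eq-sim}, hinging on the single observation that the hypothesis of clause (ii) depends only on the shared coordinate $z$. The only cosmetic difference is that the paper organizes symmetry and transitivity as a case split on whether the elements coincide (using that distinct related elements force $z_u=z_v\in\Omega_{2ps}$, so clause (ii) becomes vacuous), whereas you propagate the conditional clause directly across the equality $z_u=z_v=z_w$; the two arguments amount to the same bookkeeping.
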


\begin{proof}
	The reflexivity of $\sim_{\scriptscriptstyle \Omega}$ is directly by definition.
	
	Proof of transitivity: Let $u\sim_{\scriptscriptstyle \Omega} v$ and $v\sim_{\scriptscriptstyle \Omega} w$, we need to prove $u\sim_{\scriptscriptstyle \Omega} w$. If two of $u$, $v$ and $w$ coincide, then the transitivity holds. Otherwise, $u$, $v$ and $w$ are distinct to each other. $u\sim_{\scriptscriptstyle \Omega} v$ implies by \eqref{eq-jujv} that $z_u=z_v\in\Omega_{2ps}$. Similarly, $z_v=z_w\in\Omega_{2ps}$. In summary,
	\begin{equation*}
		z_u=z_v=z_w\in\Omega_{2ps}.
	\end{equation*}
	It is clear by \eqref{eq-sim} that $u\sim w$.
	
	Proof of symmetry: Let $u\sim_{\scriptscriptstyle \Omega} v$. If $u=v$, then $v\sim_{\scriptscriptstyle \Omega} u$ by reflexivity. Otherwise $u\neq v$. According to \eqref{eq-jujv},
	 $z_u=z=z_v\in\Omega_{2ps}$. Hence $v\sim_{\scriptscriptstyle \Omega} u$.
\end{proof}

Let $X$ be a set and let $P(X)$ be the power set of $X$, namely the collection of subsets of $X$:
\begin{equation*}\index{$P(X)$}
	P(X):=\{U\subset X\}.
\end{equation*}

Let $\Omega\in\tau_s(\mathbb{H})$ be a $2$-path-symmetric. We define the topological space
\begin{equation}\label{eq-bob}\index{$\breve{\Omega}$}
	\breve{\Omega}:=\bigsqcup_{I\in\mathbb{S}}{\Omega_I}\bigg/\sim_{\scriptscriptstyle \Omega}
\end{equation}
with the final topology
\begin{equation}\label{eq-two}\index{$\tau(\breve{\Omega})$}
	\tau(\breve{\Omega}):=\left\{U\subset\breve{\Omega}:\left( i_{\scriptscriptstyle \Omega}^I \right) ^{-1}(U)\in\tau\big(\Omega_I\times \{I\}\big),\ \forall\ I\in\mathbb{S}\right\}
\end{equation}
induced by inclusions $\left\{i_{\scriptscriptstyle \Omega}^I\right\}_{I\in\mathbb{S}}$ defined by
\begin{equation*}\index{$i_{\scriptscriptstyle \Omega}^I$}
	\begin{split}
		i_{\scriptscriptstyle \Omega}^I:\quad\Omega_I\times\{I\}\quad &\xlongrightarrow[\hskip1cm]{}\quad \breve{\Omega}\subset P\left(\bigsqcup_{I\in\mathbb{S}}\Omega_I\right),
			\\ u\qquad\ \ &\shortmid\!\xlongrightarrow[\hskip1cm]{}\quad
			\left[u\right]_{\sim_{\scriptscriptstyle \Omega}}
			:=\left\{v\in\bigsqcup_{I\in\mathbb{S}}\Omega_I:v\sim_{\scriptscriptstyle\Omega} u\right\}.
	\end{split}
\end{equation*}

It is easy to check that the definition implies that $i_{\scriptscriptstyle \Omega}^I$ is continuous and
\begin{equation}\label{eq-lir}
	\left( i_{\scriptscriptstyle \Omega} \right) ^{-1}\big([u]_{\sim_{\scriptscriptstyle \Omega}}\big)=\begin{cases}
		\left\{\left(z_u^J,J\right):J\in\mathbb{S}\right\},\qquad\ &z_u\in\Omega_{2ps},\\
		\left\{u=\left(z_u^{I_u},I_u\right)\right\},\qquad&\mbox{otherwise},
	\end{cases}
\end{equation}
where $i_{\scriptscriptstyle \Omega}: u\mapsto[u]_{\sim_{\scriptscriptstyle \Omega}}$ is the canonical projection, moreover
\begin{equation}\label{eq-wiiso}
	[(w^I,I)]_{\sim_\Omega}=[(w^J,J)]_{\sim_\Omega},\qquad\forall\ w\in\Omega_{2ps}\ \mbox{and}\ I,J\in\mathbb{S},
\end{equation}
Hence for each $U\subset\Omega_{2ps}$ and $I,J\in\mathbb{S}$,
\begin{equation}\label{eq-uii}
	[U^I\times \{I\}]_{\sim_\Omega}=\bigcup_{\xi\in U}[(\xi^I,I)]_{\sim_\Omega}=\bigcup_{\xi\in U}[(\xi^J,J)]_{\sim_\Omega}=[U^J\times \{J\}]_{\sim_\Omega},
\end{equation}
where
\begin{equation*}
	[U^I\times \{I\}]_{\sim_\Omega}:=\left\{[v]_{\sim_\Omega}:v\in U^I\times \{I\}\right\}.
\end{equation*}

\begin{prop}
	Let $\Omega\subset\mathbb{H}$ be a $2$-path-symmetric set, $I\in\mathbb{S}$ and $U\subset\Omega_I$. Then
	\begin{equation}\label{eq-lis}
		\left( i_{\scriptscriptstyle \Omega}^J \right) ^{-1}\bigg(\big[U\times\{I\}\big]_{\sim_{\scriptscriptstyle \Omega}}\bigg)=
		\begin{cases}
			\big(\Psi_I^J(U)\cap\Omega^{2ps}\big)\times\{J\},\qquad &J\neq I,
			\\U\times\{I\},\qquad &J=I,
		\end{cases}
	\end{equation}
	where $\Psi_I^J:=\Psi_i^J\circ\Psi_I^i$. \index{$\Psi_I^J$}
\end{prop}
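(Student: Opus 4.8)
The plan is to unwind the definition of $\sim_{\scriptscriptstyle\Omega}$ given in \eqref{eq-sim} and to compare the resulting membership condition with the two cases on the right-hand side. Fix $J\in\mathbb{S}$. By the definition of the inclusion $i_{\scriptscriptstyle\Omega}^J$ and of $[U\times\{I\}]_{\sim_{\scriptscriptstyle\Omega}}$, a point $(w,J)\in\Omega_J\times\{J\}$ lies in $\big(i_{\scriptscriptstyle\Omega}^J\big)^{-1}\big([U\times\{I\}]_{\sim_{\scriptscriptstyle\Omega}}\big)$ if and only if $(w,J)\sim_{\scriptscriptstyle\Omega}(z^I,I)$ for some $z\in\mathbb{C}$ with $z^I\in U$. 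Writing $\zeta:=\Psi_J^i(w)$, so that $z_{(w,J)}=\zeta$ and $I_{(w,J)}=J$ while $z_{(z^I,I)}=z$ and $I_{(z^I,I)}=I$, condition \eqref{eq-sim} reads: (i) $\zeta=z$, and (ii) $J=I$ whenever $z\notin\Omega_{2ps}$.

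I would then split into the two cases. If $J=I$, condition (ii) holds automatically since $I_{(w,I)}=I=I_{(z^I,I)}$, so the relation reduces to $\zeta=z$, i.e. $w=z^I$; combined with $z^I\in U$ this gives exactly $w\in U$, whence the preimage is $U\times\{I\}$. If $J\neq I$, condition (i) forces $z=\zeta$, so that $w=\zeta^J$ and the requirement $z^I\in U$ becomes $\zeta^I\in U$; by the definition $\Psi_I^J=\Psi_i^J\circ\Psi_I^i$ this is equivalent to $w=\zeta^J\in\Psi_I^J(U)$. At the same time, because $J\neq I$, the contrapositive of (ii) forces $z=\zeta\in\Omega_{2ps}$, and by the definition of $\Omega^{2ps}$ as the axially symmetric completion of $\Omega_{2ps}$ this is in turn equivalent to $w=\zeta^J\in\Omega^{2ps}$. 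Hence $(w,J)$ lies in the preimage if and only if $w\in\Psi_I^J(U)\cap\Omega^{2ps}$, which is the claimed description.

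The only point requiring a little care is that the left-hand side is a priori a subset of $\Omega_J\times\{J\}$, whereas $\Psi_I^J(U)\cap\Omega^{2ps}$ on the right is cut out using the axially symmetric set $\Omega^{2ps}$; I would close this gap by Remark \ref{rmk-2ps}\ref{it-Omega 2ps}, namely $\Omega^{2ps}\subset\Omega$, so that every $w\in\mathbb{C}_J$ with $w\in\Omega^{2ps}$ automatically satisfies $w\in\Omega\cap\mathbb{C}_J=\Omega_J$. This guarantees that both sides are the same subset of $\Omega_J\times\{J\}$ and is the only (minor) obstacle; everything else is a direct translation of the equivalence relation $\sim_{\scriptscriptstyle\Omega}$.
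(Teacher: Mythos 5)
Your proof is correct and takes essentially the same route as the paper's: both arguments directly unwind the definition \eqref{eq-sim} of $\sim_{\scriptscriptstyle\Omega}$ and split into the cases $J=I$ and $J\neq I$, the only difference being that you phrase it as a single chain of equivalences while the paper organizes the identical content as two inclusions. Your explicit appeal to Remark \ref{rmk-2ps} \ref{it-Omega 2ps} (that $\Omega^{2ps}\subset\Omega$, which is where $2$-path-symmetry is actually used) to guarantee that $\big(\Psi_I^J(U)\cap\Omega^{2ps}\big)\times\{J\}$ really lies in the domain $\Omega_J\times\{J\}$ of $i_{\scriptscriptstyle\Omega}^J$ is a point the paper leaves implicit, so no gap remains.
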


\begin{proof}
``Proof of $\subset$'' Let
	\begin{equation*}
		u\in\left( i_{\scriptscriptstyle \Omega}^J \right) ^{-1}\big([U\times\{I\}]_{\sim_{\scriptscriptstyle \Omega}}\big)\subset\left( i_{\scriptscriptstyle \Omega}^J \right) ^{-1}(\breve{\Omega})=\Omega_J\times\{J\}.
	\end{equation*}
	Then there is $v\in U\times\{I\}$ such that $u\sim_{\scriptscriptstyle \Omega} v$. By \eqref{eq-sim}, $z_u=z_v$.
	
	If $I=J$, then
	\begin{equation*}
		u=\left(z_u^J,J\right)=\left(z_v^I,I\right)=v\in U\times\{I\}.
	\end{equation*}
	
	Otherwise, $I\neq J$. By \eqref{eq-sim}, $z_u^J\in\Omega^{2ps}$. It follows from $z_v^I\in U$ that $z_u^J=z_v^J\in \Psi_I^J(U)$. In summary,
	\begin{equation*}
		u=\left(z_u^J,J\right)\in\big(\Psi_I^J(U)\cap\Omega^{2ps}\big)\times\{J\}.
	\end{equation*}

	``Proof of $\supseteq$'' The case of $J=I$: Let $u\in U\times\{I\}$. Since $i_{\scriptscriptstyle \Omega}^I(u)=[u]_{\sim_{\scriptscriptstyle \Omega}}\in \big[U\times\{I\}\big]_{\sim_{\scriptscriptstyle \Omega}}$, we have $u\in\left( i_{\scriptscriptstyle \Omega}^I \right) ^{-1}\big(\big[U\times\{I\}\big]_{\sim_{\scriptscriptstyle \Omega}}\big)$.
	
	The case of $J\neq I$: Let $u\in\big(\Psi_I^J(U)\cap\Omega^{2ps}\big)\times\{J\}$. Then $z_u^J\in\Omega^{2ps}$. It implies that $u\sim_{\scriptscriptstyle \Omega} \left(z_u^I,I\right)=:v$ and $v\in U\times\{I\}$. Hence
	\begin{equation*}
		i_{\scriptscriptstyle \Omega}^J(u)=[u]_{\sim_{\scriptscriptstyle \Omega}}=[v]_{\sim_{\scriptscriptstyle \Omega}}\in\big[U\times\{I\}\big]_{\sim_{\scriptscriptstyle \Omega}},
	\end{equation*}
	i.e.
	$u\in\left( i_{\scriptscriptstyle \Omega}^J \right) ^{-1}\big(\big[U\times\{I\}\big]_{\sim_{\scriptscriptstyle \Omega}}\big)$.
\end{proof}

Let $\Omega\subset\mathbb{H}$ be a $2$-path-symmetric set, $\mu\in\breve{\Omega}$ and $u,v\in\left( i_{\scriptscriptstyle \Omega} \right) ^{-1}(\mu)$. It follows from $u\sim_{\scriptscriptstyle \Omega} v$ that $z_u=z_v$. So
\begin{equation}\label{eq-xx}
	z_\mu:=z_u
\end{equation}
do not depend on the choice of $u\in\left( i_{\scriptscriptstyle \Omega} \right) ^{-1}(\mu)$. Denote
\begin{equation}\label{eq-smli} \index{$\mathbb{S}_\mu$}
	\mathbb{S}_\mu:=\left\{I\in\mathbb{S}:z_\mu^I\in\Omega\mbox{ with }\left[(z_\mu^I, I)\right]_{\sim_{\scriptscriptstyle \Omega}}=\mu\right\}.
\end{equation}
It follows from \eqref{eq-lir} that
\begin{equation}\label{eq-msn}
	\mathbb{S}_\mu=\begin{cases}
		\mathbb{S},\qquad &z_\mu\in\Omega_{2ps},\\
		\{I_\mu\},\qquad &\mbox{otherwise},
	\end{cases},\qquad\qquad\forall\ \mu\in\breve{\Omega},
\end{equation}
where we denote by $I_\mu$ the unique element in $\mathbb{S}_\mu$.

\begin{prop}\label{pr-zinviant}
	Let $\Omega\in\tau_s(\mathbb{H})$ be $2$-path-symmetric, and $\mu\in\breve{\Omega}$. If $\mu=[(w^J,J)]_{\sim_\Omega}$ for some $w\in\mathbb{C}$ and $J\in\mathbb{S}$, then $w=z_\mu$.
\end{prop}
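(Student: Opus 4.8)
The plan is to unwind the definitions, relying on two facts: that the complex coordinate $z_u$ attached to any point $u\in\bigsqcup_{I\in\mathbb{S}}\Omega_I$ is pinned down by $u$ via the injectivity of $\Psi_i^J$, and that the coordinate $z_\mu$ defined in \eqref{eq-xx} is independent of the representative chosen for the class $\mu$.

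First I would set $u:=(w^J,J)$. By hypothesis $\mu=[(w^J,J)]_{\sim_{\scriptscriptstyle\Omega}}=i_{\scriptscriptstyle\Omega}(u)$, and since $u\sim_{\scriptscriptstyle\Omega}u$ by reflexivity we have $u\in\left(i_{\scriptscriptstyle\Omega}\right)^{-1}(\mu)$. Thus $u$ is a legitimate representative of $\mu$, and the definition \eqref{eq-xx} of $z_\mu$ (which does not depend on the chosen $u\in\left(i_{\scriptscriptstyle\Omega}\right)^{-1}(\mu)$) gives $z_\mu=z_u$.

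Next I would identify $z_u$ explicitly. By \eqref{eq-uxu}, $(z_u,I_u)$ is the unique pair in $\mathbb{C}\times\mathbb{S}$ with $u=(z_u^{I_u},I_u)$. Comparing this with $u=(w^J,J)$ and matching the two entries of the ordered pair forces $I_u=J$ and $z_u^{J}=w^{J}$, that is, $\Psi_i^J(z_u)=\Psi_i^J(w)$. Since $\Psi_i^J:\mathbb{C}\to\mathbb{C}_J$ is a bijection (with inverse $\Psi_J^i$), it is in particular injective, whence $z_u=w$. Combining this with the previous step yields $z_\mu=z_u=w$, as claimed.

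There is no serious obstacle here: the statement is essentially a consistency check, confirming that the intrinsically defined coordinate $z_\mu$ coincides with the complex coordinate read off from any concrete representative $(w^J,J)$ of $\mu$. The only point requiring mild care is the invocation of the injectivity of $\Psi_i^J$ to pass from the equality $z_u^{J}=w^{J}$ of points of $\mathbb{C}_J$ back to the equality $z_u=w$ of points of $\mathbb{C}$; this is precisely where one uses that $1$ and $J$ are $\mathbb{R}$-linearly independent.
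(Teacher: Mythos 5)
Your proof is correct and takes essentially the same route as the paper, whose entire proof reads ``It follows directly by \eqref{eq-uxu} and \eqref{eq-xx}'' --- precisely the two facts you unwind (uniqueness of the pair $(z_u,I_u)$ and the representative-independence of $z_\mu$). Your write-up merely makes explicit the component-matching and the injectivity of $\Psi_i^J$ that the paper leaves implicit.
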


\begin{proof}
It follows directly by \eqref{eq-uxu} and \eqref{eq-xx}.
\end{proof}

\begin{prop}\label{pr-homeomorphism}
	Let $\Omega\in\tau_s(\mathbb{H})$ be $2$-path-symmetric, $I\in\mathbb{S}$ and $U\in\tau(\Omega_I)$. Then
	\begin{equation}\label{eq-lio}
		\left( i_{\scriptscriptstyle \Omega}^J \right) ^{-1}\bigg([U\times \{I\}]_{\sim_{\scriptscriptstyle \Omega}}\bigg)\in\tau\big(\Omega_J\times\{J\}\big),\qquad\forall\ J\in\mathbb{S},
	\end{equation}
	and
	\begin{equation}\label{eq-uib}
		\big[U\times\{I\}\big]_{\sim_{\scriptscriptstyle \Omega}}\in\tau(\breve{\Omega}).
	\end{equation}

	Moreover,
	\begin{equation*}
		\begin{split}
			\imath_{\scriptscriptstyle \Omega}^I:\Omega_I\times \{I\}&\xlongrightarrow[\hskip1cm]{}[\Omega_I\times \{I\}]_{\sim_{\scriptscriptstyle \Omega}},
			\\ u\quad\ \ &\shortmid\!\xlongrightarrow[\hskip1cm]{}\quad i_{\scriptscriptstyle \Omega}^I(u).
		\end{split}
	\end{equation*}
	is a homeomorphism.
\end{prop}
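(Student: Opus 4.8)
The plan is to reduce everything to the explicit computation of preimages recorded in \eqref{eq-lis} by the preceding proposition, after which each of the three assertions becomes a routine topological check.

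First I would establish \eqref{eq-lio}. For $J=I$, the right-hand side of \eqref{eq-lis} is $U\times\{I\}$, which is open in $\Omega_I\times\{I\}$ by the hypothesis $U\in\tau(\Omega_I)$. For $J\neq I$, \eqref{eq-lis} gives $\big(\Psi_I^J(U)\cap\Omega^{2ps}\big)\times\{J\}$. Since $\Psi_I^J=\Psi_i^J\circ\Psi_I^i$ is a homeomorphism from $\mathbb{C}_I$ onto $\mathbb{C}_J$ and $U$ is open in $\Omega_I\subset\mathbb{C}_I$, the image $\Psi_I^J(U)$ is open in $\mathbb{C}_J$. By Remark \ref{rmk-2ps}(iii) we have $\Omega^{2ps}\in\tau_s(\mathbb{H})$, so $(\Omega^{2ps})_J=\Omega^{2ps}\cap\mathbb{C}_J$ is open in $\mathbb{C}_J$; consequently $\Psi_I^J(U)\cap\Omega^{2ps}=\Psi_I^J(U)\cap(\Omega^{2ps})_J$ is open in $\mathbb{C}_J$. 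As $\Omega^{2ps}\subset\Omega$ by Remark \ref{rmk-2ps}(i), this set is contained in $\Omega_J$ and is therefore open in $\Omega_J$, which yields \eqref{eq-lio}.

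Then \eqref{eq-uib} is immediate: by the very definition of the final topology $\tau(\breve{\Omega})$ in \eqref{eq-two}, a subset of $\breve{\Omega}$ is open exactly when all its preimages under the maps $i_{\scriptscriptstyle \Omega}^J$, $J\in\mathbb{S}$, are open, and \eqref{eq-lio} verifies precisely this for $[U\times\{I\}]_{\sim_{\scriptscriptstyle \Omega}}$. Finally I would show that $\imath_{\scriptscriptstyle \Omega}^I$ is a homeomorphism by checking it is a continuous open bijection. Surjectivity holds by construction of the codomain. For injectivity, if $\imath_{\scriptscriptstyle \Omega}^I(u)=\imath_{\scriptscriptstyle \Omega}^I(v)$ with $u,v\in\Omega_I\times\{I\}$, then $u\sim_{\scriptscriptstyle \Omega} v$; since $I_u=I_v=I$, \eqref{eq-jujv} forces $u=v$. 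Continuity of $\imath_{\scriptscriptstyle \Omega}^I$ follows from the continuity of $i_{\scriptscriptstyle \Omega}^I$ by corestriction to the subspace $[\Omega_I\times\{I\}]_{\sim_{\scriptscriptstyle \Omega}}$. For openness, any open subset of $\Omega_I\times\{I\}$ has the form $U\times\{I\}$ with $U\in\tau(\Omega_I)$, and its image $[U\times\{I\}]_{\sim_{\scriptscriptstyle \Omega}}$ is open in $\breve{\Omega}$ by \eqref{eq-uib} and contained in $[\Omega_I\times\{I\}]_{\sim_{\scriptscriptstyle \Omega}}$, hence open in the subspace topology.

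The genuine content is already packaged in \eqref{eq-lis}, so the remaining work is largely bookkeeping. The only points requiring care are the openness in the case $J\neq I$ of \eqref{eq-lio}, where one must combine the slice-openness of $\Omega^{2ps}$ with the homeomorphism property of $\Psi_I^J$, and the injectivity of $\imath_{\scriptscriptstyle \Omega}^I$, which is where the fine structure of $\sim_{\scriptscriptstyle \Omega}$ (through \eqref{eq-jujv}) actually enters.
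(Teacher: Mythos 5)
Your proposal is correct and follows essentially the same route as the paper's proof: both derive \eqref{eq-lio} from the preimage formula \eqref{eq-lis} together with the openness of $\Omega_{2ps}$ (your citation of Remark \ref{rmk-2ps}(iii) and the paper's use of Proposition \ref{pr-2psopen} are interchangeable here), obtain \eqref{eq-uib} directly from the final-topology definition \eqref{eq-two}, and then verify that $\imath_{\scriptscriptstyle \Omega}^I$ is a continuous open bijection, with your injectivity argument via \eqref{eq-jujv} matching in substance the paper's argument via \eqref{eq-xx}.
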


\begin{proof}
	According to Proposition \ref{pr-2psopen}, $\Omega_{2ps}\in\tau(\mathbb{C})$. It implies by definition that $\Omega^{2ps}\cap\mathbb{C}_J=(\Omega_{2ps})^J$ is open in $\mathbb{C}_J$. Then
	\begin{equation*}
		\Psi_I^J(U)\cap\Omega^{2ps}=\Psi_I^J(U)\cap\mathbb{C}_J\cap\Omega^{2ps}=\Psi_I^J(U)\cap(\mathbb{C}_J\cap\Omega^{2ps})\in\tau(\mathbb{C}_J).
	\end{equation*}
	It follows from \eqref{eq-lis} that \eqref{eq-lio} holds. And \eqref{eq-uib} holds by \eqref{eq-lio} and \eqref{eq-two}.
	
	Let $v,u\in\Omega_I\times \{I\}$ with $\imath_{\scriptscriptstyle \Omega}^I(u)=\imath_{\scriptscriptstyle \Omega}^I(v)=:\mu$. Then $u,v\in\left( i_{\scriptscriptstyle \Omega}^I \right) ^{-1}(\mu)$. By \eqref{eq-xx}, $z_u=z_\mu=z_v$. Therefore
	\begin{equation*}
		u=\left(z_u^I,I\right)=\left(z_v^I,I\right)=v.
	\end{equation*}
	It implies that $\imath_{\scriptscriptstyle \Omega}^I$ is injective. And by definition $\imath_{\scriptscriptstyle \Omega}^I$ is surjective.
	
	Let $V\in\tau(\Omega_I\times \{I\})$. Then $V=W\times\{I\}$ for some $W\in\tau(\Omega_I)$. By \eqref{eq-uib},
	\begin{equation*}
		\left[\left(\imath_{\scriptscriptstyle \Omega}^I\right)^{-1}\right]^{-1}(V)=\imath_{\scriptscriptstyle \Omega}^I(W\times\{I\})=i_{\scriptscriptstyle \Omega}^I(W\times\{I\})=[W\times\{I\}]_{\sim_{\scriptscriptstyle \Omega}}\in\tau(\breve{\Omega})
	\end{equation*}
	Hence $\left(\imath_{\scriptscriptstyle \Omega}^I\right)^{-1}$ is continuous. Since $i_{\scriptscriptstyle \Omega}^I$ is continuous, so is $\imath_{\scriptscriptstyle \Omega}^I$. In summary, $\imath_{\scriptscriptstyle \Omega}^I$ is a homeomorphism.
\end{proof}

Let $z\in\mathbb{C}$, $I\in\mathbb{S}$ and recall that $B_I(z^I,r):=[B_{\mathbb{C}}(z,r)]^I$ denotes the ball with center $z^I$ and radius  $r\ge 0$ in $\mathbb{C}_I$, where $B_{\mathbb{C}}(z,r)$ denotes the open ball with center $z$ and  radius $r$ in $\mathbb{C}$.
Note that $B_\mathbb{C}(z,0)=\varnothing$.

Let $\mu\in\breve{\Omega}$.  Denote
\begin{equation}\label{eq-rmi}
	r_\mu^I:=
	\begin{cases}
		\delta_I(z_\mu^I,\mathbb{C}_I\backslash\Omega_I),\qquad &I\in\mathbb{S}_\mu,\\
		0,\qquad&\mbox{otherwise},
	\end{cases}
\end{equation}
where $\delta_I$ is the distance function on $\mathbb{C}_I$. Moreover, we set
\begin{equation}\label{eq-rms}\index{$r_\mu^+$, $r_\mu^-$, $r_\mu^I$}
	r_\mu^+:=\sup_{I\in\mathbb{S}} r_\mu^I,\qquad r_\mu^-:=\inf_{I\in\mathbb{S}} r_\mu^I,
\end{equation}
\begin{equation}\label{eq-msm}\index{$\mathbb{S}_\mu^+$, $\mathbb{S}_\mu^-$}
	\mathbb{S}_\mu^+:=\{I\in\mathbb{S}:r_\mu^I=r_\mu^+\},\qquad\mbox{and}\qquad \mathbb{S}_\mu^-:=\{I\in\mathbb{S}:r_\mu^I=r_\mu^-\}.
\end{equation}

\begin{prop}\label{pr-losm}
	Let $\Omega\in\tau_s(\mathbb{H})$ be $2$-path-symmetric, and $\mu\in\breve{\Omega}$. Then the following statements holds:
	\begin{enumerate}[label=(\roman*)]
		\item \label{it-msb} $|\mathbb{S}\backslash\mathbb{S}_\mu^-|\le 1$.
		\item
		\begin{equation}\label{eq-msmb}
			\mathbb{S}_\mu^+=\begin{cases}
				\mathbb{S}\backslash\mathbb{S}_\mu^-,\qquad &|\mathbb{S}_\mu^+|=1,
				\\\mathbb{S}=\mathbb{S}_\mu=\mathbb{S}_\mu^-,\qquad &\mbox{otherwise}.
			\end{cases}
		\end{equation}
		Moreover
		\begin{equation}\label{eq-rml}
				r_\mu^\lambda=r_\mu^I,\qquad\forall\ I\in\mathbb{S}_\mu^\lambda,\qquad \lambda\in\{+,-\}.
		\end{equation}
		\item\label{it-lri} Let $r\in(0,r_\mu^+]$. Then
		\begin{equation}\label{eq-balldef}\index{$B_{\breve{\Omega}}(\mu,r)$}
			B_{\breve{\Omega}}(\mu,r):=\left[(B_\mathbb{C}\left(z_\mu,r\right))^I\times\{I\}\right]_{\sim_{\scriptscriptstyle \Omega}},
		\end{equation}
		does not depend on the choice of $I\in\mathbb{S}_\mu$ with $B_I\left(z_\mu^I,r\right)\subset\Omega$.
		\item $\beta_{\breve{\Omega}}:=\{B_{\breve{\Omega}}(\nu,r): \nu\in\breve{\Omega},\ 0<r\le r_\nu^+\}$ is a basis for the topology $\tau(\breve{\Omega})$.
	\end{enumerate}
\end{prop}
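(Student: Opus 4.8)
The plan is to split on whether $z_\mu\in\Omega_{2ps}$ or not, using \eqref{eq-msn}, which says $\mathbb{S}_\mu=\mathbb{S}$ in the first case and $\mathbb{S}_\mu=\{I_\mu\}$ in the second. I would first dispose of the case $z_\mu\notin\Omega_{2ps}$, which is purely computational: then $r_\mu^I=0$ for every $I\neq I_\mu$, while $r_\mu^{I_\mu}=\delta_{I_\mu}(z_\mu^{I_\mu},\mathbb{C}_{I_\mu}\backslash\Omega_{I_\mu})>0$ because $z_\mu^{I_\mu}$ lies in the open set $\Omega_{I_\mu}$. Hence $r_\mu^-=0$, $\mathbb{S}_\mu^-=\mathbb{S}\backslash\{I_\mu\}$, so $\mathbb{S}\backslash\mathbb{S}_\mu^-=\{I_\mu\}$ has one element (giving (i)), and $r_\mu^+=r_\mu^{I_\mu}>r_\mu^-$ forces $\mathbb{S}_\mu^+=\{I_\mu\}=\mathbb{S}\backslash\mathbb{S}_\mu^-$, the first alternative of (ii).

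The core of the argument is the case $z_\mu\in\Omega_{2ps}$, and here I would isolate the key statement: \emph{if $r_\mu^I>s$ and $r_\mu^J>s$ for some $s\ge 0$ and some distinct $I,J\in\mathbb{S}$, then $r_\mu^K\ge s$ for every $K\in\mathbb{S}$.} To prove it I would use path surgery together with $2$-path-symmetry. Since $z_\mu\in\Omega_{2ps}$, \eqref{eq-o2ps1} provides $\gamma\in\mathscr{P}_0(\mathbb{C})$ with $\gamma(1)=z_\mu$ and $\gamma^{I'},\gamma^{J'}\subset\Omega$ for some $(I',J')\in\mathbb{S}^2_*$, whence $\gamma^L\subset\Omega$ for all $L$ by $2$-path-symmetry. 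Fixing $w\in B_\mathbb{C}(z_\mu,s)$ and concatenating $\gamma$ with the segment $\mathcal{L}_{z_\mu}^w$, the resulting $\delta:=\gamma\circ\mathcal{L}_{z_\mu}^w$ lies in $\mathscr{P}_0(\mathbb{C})$, ends at $w$, and satisfies $\delta^I,\delta^J\subset\Omega$: indeed the segment stays in the convex ball $B_\mathbb{C}(z_\mu,s)$, and $r_\mu^I,r_\mu^J>s$ give $[B_\mathbb{C}(z_\mu,s)]^I\subset\Omega_I$ and $[B_\mathbb{C}(z_\mu,s)]^J\subset\Omega_J$. Applying $2$-path-symmetry once more yields $\delta^K\subset\Omega$, so $w^K=\delta^K(1)\in\Omega_K$; as $w$ was arbitrary, $[B_\mathbb{C}(z_\mu,s)]^K\subset\Omega_K$, i.e. $r_\mu^K\ge s$. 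With this statement, (i) is immediate: if two distinct slices had radius exceeding $r_\mu^-$, picking $s$ with $r_\mu^-<s<\min(r_\mu^I,r_\mu^J)$ would force $r_\mu^K\ge s$ for all $K$, contradicting $r_\mu^-=\inf_K r_\mu^K$; hence $\mathbb{S}\backslash\mathbb{S}_\mu^-=\{I:r_\mu^I>r_\mu^-\}$ has at most one element. Part (ii) then follows by bookkeeping: if $r_\mu^+>r_\mu^-$, the unique exceptional slice $I_0$ has $r_\mu^+=r_\mu^{I_0}$, so $\mathbb{S}_\mu^+=\{I_0\}=\mathbb{S}\backslash\mathbb{S}_\mu^-$ with $|\mathbb{S}_\mu^+|=1$; if $r_\mu^+=r_\mu^-$, all radii coincide and are positive, so $\mathbb{S}_\mu=\mathbb{S}_\mu^+=\mathbb{S}_\mu^-=\mathbb{S}$. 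The identities $r_\mu^\lambda=r_\mu^I$ for $I\in\mathbb{S}_\mu^\lambda$ hold by the definition of $\mathbb{S}_\mu^\lambda$ once we observe that both the supremum and the infimum are attained.

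For (iii), if $|\mathbb{S}_\mu|=1$ there is a single admissible $I$ and nothing to prove, so I would assume $|\mathbb{S}_\mu|>1$, which by \eqref{eq-msn} gives $z_\mu\in\Omega_{2ps}$. Given $I,J\in\mathbb{S}_\mu$ with $B_I(z_\mu^I,r),B_J(z_\mu^J,r)\subset\Omega$ and $I\neq J$, I would apply Proposition \ref{pr-los2} to the convex, hence path-connected, set $U:=B_\mathbb{C}(z_\mu,r)$, which meets $\Omega_{2ps}$ at $z_\mu$ and has $U^I\subset\Omega_I$, $U^J\subset\Omega_J$; this yields $U\subset\Omega_{2ps}$, and then \eqref{eq-uii} gives $[U^I\times\{I\}]_{\sim_\Omega}=[U^J\times\{J\}]_{\sim_\Omega}$, which is precisely the claimed independence of the choice of $I$. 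For (iv) I would check the two characterizing properties of a basis. Openness of $B_{\breve\Omega}(\nu,r)$: choosing $I\in\mathbb{S}_\nu^+$ (nonempty by (ii)) we have $r_\nu^I=r_\nu^+\ge r>0$, so $B_I(z_\nu^I,r)\in\tau(\Omega_I)$ and \eqref{eq-uib} gives $B_{\breve\Omega}(\nu,r)=[B_I(z_\nu^I,r)\times\{I\}]_{\sim_\Omega}\in\tau(\breve\Omega)$. Conversely, given $W\in\tau(\breve\Omega)$ and $\mu\in W$, I would take $I\in\mathbb{S}_\mu^+$; since $\mu=[(z_\mu^I,I)]_{\sim_\Omega}$ the point $(z_\mu^I,I)$ lies in $(i_{\scriptscriptstyle\Omega}^I)^{-1}(W)$, which is open in $\Omega_I\times\{I\}$ by \eqref{eq-two}, so $B_I(z_\mu^I,r)\times\{I\}\subset(i_{\scriptscriptstyle\Omega}^I)^{-1}(W)$ for a suitable $r\in(0,r_\mu^+]$; applying $i_{\scriptscriptstyle\Omega}^I$ and using $i_{\scriptscriptstyle\Omega}^I\big((i_{\scriptscriptstyle\Omega}^I)^{-1}(W)\big)\subset W$ gives $\mu\in B_{\breve\Omega}(\mu,r)\subset W$, which completes the verification.

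The hard part is the path-surgery statement underlying (i); everything else is bookkeeping from (i) or a direct appeal to the already-proved Propositions \ref{pr-los2} and \ref{pr-homeomorphism} and the definitions. The delicate points in the surgery are, first, to manufacture for every $w$ in the ball a single path whose images under two distinct slice maps both remain in $\Omega$ — this is exactly the input the $2$-path-symmetry hypothesis needs to transport the whole ball to an arbitrary slice $K$ — and, second, to accommodate the possibility $r_\mu^I=+\infty$ (when $\Omega_I=\mathbb{C}_I$), which is why I would phrase the key statement in the ``$>s$ implies $\ge s$'' form with $s$ finite rather than comparing the radii directly.
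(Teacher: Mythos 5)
Your proof is correct and follows essentially the same route as the paper: the heart of parts (i)--(ii) is the same propagation of ball containment from two slices to all slices via $2$-path-symmetry (your ``key claim'' is precisely the special case of Proposition \ref{pr-los2} for the ball $B_\mathbb{C}(z_\mu,s)$, which the paper cites directly rather than re-deriving by path surgery), and your arguments for (iii) and (iv) match the paper's almost verbatim. The remaining differences --- your upfront case split on $z_\mu\in\Omega_{2ps}$ versus the paper's contradiction argument in which $z_\mu\in\Omega_{2ps}$ is forced, and your ``$>s$ implies $\ge s$'' phrasing to sidestep infinite radii --- are organizational rather than mathematical.
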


\begin{proof}
	(i) Proof by contradiction. If $|\mathbb{S}\backslash\mathbb{S}^-|\ge 2$, then there is $(I,J)\in\mathbb{S}^2_*$ such that
	\begin{equation*}
		r_\mu^I\ge r_\mu^J>r_\mu^-\ge 0.
	\end{equation*}
	By \eqref{eq-rmi}, $I,J\in\mathbb{S}_\mu$. It follows from \eqref{eq-msn} that $z_\mu\in\Omega_{2ps}$ and $\mathbb{S}_\mu=\mathbb{S}$. Let $U:=B_\mathbb{C}(z_\mu,r_\mu^J)$. It is easy to check by \eqref{eq-rmi} that $U^I\subset\Omega_I$ and $U^J\subset\Omega_J$. According to Proposition \ref{pr-los2}, $U\subset\Omega_{2ps}$. Hence
	\begin{equation*}
		U^K\subset\Omega_K,\qquad\forall\ K\in\mathbb{S}=\mathbb{S}_\mu.
	\end{equation*}
	Again by \eqref{eq-msn}, $r_\mu^K\ge r_\mu ^J$. Therefore $r_\mu^-=\inf_{L\in\mathbb{S}}r_{\mu}^L\ge r_\mu^J$, a contradiction.
	
	(ii) Suppose that $|\mathbb{S}\backslash\mathbb{S}_\mu^-|=0$. Then $\mathbb{S}=\mathbb{S}_\mu^-$. By \eqref{eq-msm},
	\begin{equation}\label{eq-rmir}
		r_\mu^I=r_\mu^-,\qquad\forall\ I\in\mathbb{S}.
	\end{equation}
	It implies from \eqref{eq-rms} that $r_\mu^+=r_\mu^-$ and $\mathbb{S}_\mu^+=\mathbb{S}_\mu^-$. According to \eqref{eq-rmi} and \eqref{eq-rmir}, $\mathbb{S}_\mu=\mathbb{S}$. In summary,
	\begin{equation*}
		\mathbb{S}_\mu^+=\mathbb{S}_\mu^-=\mathbb{S}_\mu=\mathbb{S}
	\end{equation*}
	and then \eqref{eq-msmb} holds.
	
	Suppose that $|\mathbb{S}\backslash\mathbb{S}_\mu^-|=1$. Then $\mathbb{S}\backslash\mathbb{S}_\mu^-=\{I\}$ for some $I\in\mathbb{S}$. By definition, $r_\mu^+\neq r_\mu^-$. It is easy to check that $\mathbb{S}_\mu^+=\{I\}=\mathbb{S}\backslash\mathbb{S}_\mu^-$. Therefore \eqref{eq-msmb} holds by \ref{it-msb}.
	
	(iii) We only need to prove (iii) holds when $|\mathbb{S}_\mu|>1$. Let $I,J\in\mathbb{S}_\mu$ with $I\neq J$, $B_I\left(z_\mu^J,r\right)\subset\Omega$ and $B_J\left(z_\mu^J,r\right)\subset\Omega$. By similar method in (i), $B_{\mathbb{C}}(z_\mu,r)\subset\Omega_{2ps}$. By \eqref{eq-uii},
	\begin{equation*}
			\left[(B_\mathbb{C}(z_\mu,r))^J\times\{J\}\right]_{\sim_{\scriptscriptstyle \Omega}}=\left[(B_\mathbb{C}(z_\mu,r))^I\times\{I\}\right]_{\sim_{\scriptscriptstyle \Omega}}.
	\end{equation*}
	It clear that (iii) holds.

	(iv) According to \eqref{eq-uib},
	\begin{equation}\label{eq-bbon}
		B_{\breve{\Omega}}(\nu,r)\in\tau(\breve{\Omega}),\qquad\forall\ \nu\in\breve{\Omega}\ \mbox{and}\ r\in(0,r_\nu^+].
	\end{equation}
	
	Let $U\in\tau(\breve{\Omega})$ and $\xi\in U$. We will find $V\in\beta_{\breve{\Omega}}$ such that $\xi\in V\subset U$: Fix $J\in\mathbb{S}_\xi$. Then by \eqref{eq-two},
	\begin{equation*}
		(z_\xi^J,J)\in\left(i_{\scriptscriptstyle \Omega}^J\right)^{-1}(U)\in\tau(\Omega_J\times\{J\}).
	\end{equation*}
	It implies that there is $r>0$ such that
	\begin{equation}\label{eq-bjzm}
		[B_\mathbb{C}(z_\xi,r)]^J\times\{J\}=B_J(z_\xi^J,r)\times\{J\}\subset\left(i_{\scriptscriptstyle \Omega}^J\right)^{-1}(U)\subset\Omega_J\times\{J\}.
	\end{equation}
	Hence
	\begin{equation}\label{eq-mibb}
		\xi\in\bigg[[B_\mathbb{C}(z_\xi,r)]^J\times\{J\}\bigg]_{\sim_{\scriptscriptstyle \Omega}}\subset U.
	\end{equation}
	By \eqref{eq-bjzm}, $[B_\mathbb{C}(z_\xi,r)]^J\subset\Omega$. It implies that $r\le r_\xi^J\le r_\xi^+$. Then
	\begin{equation*}
		\bigg[[B_\mathbb{C}(z_\xi,r)]^J\times\{J\}\bigg]_{\sim_{\scriptscriptstyle \Omega}}=B_{\breve{\scriptscriptstyle \Omega}}(\xi,r)\in\beta_{\breve{\Omega}}.
	\end{equation*}
	According \eqref{eq-bbon} and \eqref{eq-mibb}, $\beta_{\breve{\Omega}}$ is a basis for $\tau(\breve{\Omega})$.
\end{proof}

\begin{prop}\label{pr-ni}
	Let $\Omega\in\tau_s(\mathbb{H})$ be $2$-path-symmetric, $\mu,\nu\in\breve{\Omega}$ with $z_\mu=z_\nu\in\overline{\Omega_{2ps}}$. Then $\mu=\nu$.
\end{prop}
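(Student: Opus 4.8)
The plan is to split according to the decomposition $\overline{\Omega_{2ps}}=\Omega_{2ps}\sqcup\partial(\Omega_{2ps})$, which is legitimate because $\Omega_{2ps}$ is open in $\mathbb{C}$ by Proposition \ref{pr-2psopen}. Writing $z:=z_\mu=z_\nu$, the two cases to treat are $z\in\Omega_{2ps}$ and $z\in\partial(\Omega_{2ps})$, and the whole difficulty will concentrate in the second one.

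In the interior case $z\in\Omega_{2ps}$, formula \eqref{eq-msn} gives $\mathbb{S}_\mu=\mathbb{S}=\mathbb{S}_\nu$. Hence, by the definition \eqref{eq-smli} of $\mathbb{S}_\mu$, for every $I\in\mathbb{S}$ one has $\mu=[(z^I,I)]_{\sim_{\scriptscriptstyle \Omega}}$ and likewise $\nu=[(z^I,I)]_{\sim_{\scriptscriptstyle \Omega}}$. Fixing any single $I\in\mathbb{S}$ and using $z_\mu=z_\nu$ immediately yields $\mu=\nu$.

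In the boundary case $z\in\partial(\Omega_{2ps})$ we have $z\notin\Omega_{2ps}$, so \eqref{eq-msn} forces $\mathbb{S}_\mu=\{I_\mu\}$ and $\mathbb{S}_\nu=\{I_\nu\}$ to be singletons, with $\mu=[(z^{I_\mu},I_\mu)]_{\sim_{\scriptscriptstyle \Omega}}$ and $\nu=[(z^{I_\nu},I_\nu)]_{\sim_{\scriptscriptstyle \Omega}}$. Thus it suffices to show $I_\mu=I_\nu$. I argue by contradiction: if $I_\mu\neq I_\nu$, then $(I_\mu,I_\nu)\in\mathbb{S}^2_*$, while from $z^{I_\mu}\in\Omega_{I_\mu}$ and $z^{I_\nu}\in\Omega_{I_\nu}$ one gets $z=\Psi_{I_\mu}^i(z^{I_\mu})\in\Psi_{I_\mu}^i(\Omega_{I_\mu})$ and similarly $z\in\Psi_{I_\nu}^i(\Omega_{I_\nu})$. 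Combined with $z\in\partial(\Omega_{2ps})$, this places $z$ in
\begin{equation*}
	\partial(\Omega_{2ps})\cap\Psi_{I_\mu}^i(\Omega_{I_\mu})\cap\Psi_{I_\nu}^i(\Omega_{I_\nu}),
\end{equation*}
which is empty by \eqref{eq-2ps}, a contradiction. Therefore $I_\mu=I_\nu$, whence $\mu=\nu$.

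The main obstacle is precisely this boundary case: on $\partial(\Omega_{2ps})$ the set $\mathbb{S}_\mu$ degenerates to a single unit, so distinctness of $\mu$ and $\nu$ could only arise from two different imaginary units sitting over the same base point $z$; ruling this out is exactly what the separation property \eqref{eq-2ps} of $2$-path-symmetric st-domains provides. Everything else is a direct unravelling of \eqref{eq-msn} together with the definition of $\sim_{\scriptscriptstyle \Omega}$.
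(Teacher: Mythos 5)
Your proposal is correct and follows essentially the same route as the paper: split $\overline{\Omega_{2ps}}$ into $\Omega_{2ps}$ and $\partial(\Omega_{2ps})$, handle the interior case by noting the fiber over $z$ collapses to a single equivalence class (you via \eqref{eq-msn}, the paper via \eqref{eq-lir} — equivalent facts), and handle the boundary case by invoking the separation property \eqref{eq-2ps} to force the two imaginary units to coincide. The paper phrases the boundary step directly ($I=J$ follows from \eqref{eq-2ps}) rather than as an explicit contradiction, but the argument is identical.
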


\begin{proof}
	If $z_\mu\in\Omega_{2ps}$, then choose $I\in\mathbb{S}_\mu$. By definition, $\mu=[(z_\mu^I,I)]_{\sim_{\scriptscriptstyle \Omega}}$. According to \eqref{eq-lir},
	\begin{equation*}
		\begin{split}
			\left( i_{\scriptscriptstyle \Omega} \right) ^{-1}(\mu)=&\left( i_{\scriptscriptstyle \Omega} \right) ^{-1}\bigg([(z_\mu^I,I)]_{\sim_{\scriptscriptstyle \Omega}}\bigg)=\bigg\{(z_\mu^J ,J):J\in\mathbb{S}\bigg\}
			\\=&\bigg\{(z_\nu^J,J):J\in\mathbb{S}\bigg\}=\left( i_{\scriptscriptstyle \Omega} \right) ^{-1}(\nu).
		\end{split}
	\end{equation*}
	It implies that $\mu=\nu$.
	
	Otherwise $z_\mu\in  \overline{\Omega_{2ps}}\backslash\Omega_{2ps}=\partial(\Omega_{2ps})$. Let $I\in\mathbb{S}_\mu$ and $J\in\mathbb{S}_\nu$. Then $z_\mu^I\in\Omega_I$, $z_\nu^J\in\Omega_J$ and
	\begin{equation*}
		z_\mu=z_\nu\in\partial(\Omega_{2ps})\bigcap\Psi_I^i(\Omega_I)\bigcap\Psi_J^i(\Omega_J).
	\end{equation*}
	It follows by \eqref{eq-2ps} that $I=J$. Then
	\begin{equation*}
		\mu=[(z_\mu^I,I)]_{\sim_{\scriptscriptstyle \Omega}}=[(z_\nu^I,I)]_{\sim_{\scriptscriptstyle \Omega}}=\nu.
	\end{equation*}
\end{proof}

Let $\Omega\subset\mathbb{H}$ be a $2$-path-symmetric st-domain. According to \eqref{eq-xx}, we define the map
\begin{equation*}
	\begin{split}
		\pi_{\breve{\scriptscriptstyle \Omega}}:\ \breve{\Omega}\ &\xlongrightarrow[\hskip1cm]{}\ \mathbb{C},
		\\ \mu\ &\shortmid\!\xlongrightarrow[\hskip1cm]{}\ z_\mu.
	\end{split}
\end{equation*}

\begin{prop}\label{pr-usoi}
	Let $\Omega\in\tau_s(\mathbb{H})$ be $2$-path-symmetric, $I\in\mathbb{S}$ and $U\in\tau\left(\Omega_I\right)$. Then
	\begin{equation}\label{eq-pso}
		\begin{split}
			\pi_{\breve{\scriptscriptstyle \Omega}}|_{[U\times\{I\}]_{\sim_{\scriptscriptstyle \Omega}}}:\ [U\times\{I\}]_{\sim_{\scriptscriptstyle \Omega}}\  &\xlongrightarrow[\hskip1cm]{}\ \Psi_I^i(U),
			\\ \big[(z^I,I)\big]_{\sim_{\scriptscriptstyle \Omega}}\ &\shortmid\!\xlongrightarrow[\hskip1cm]{}\quad z,\qquad\forall\ z^I\in U,
		\end{split}
	\end{equation}
	is a homeomorphism. Therefore, $\pi_{\breve{\scriptscriptstyle \Omega}}$ is a local homeomorphism.
\end{prop}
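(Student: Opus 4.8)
The plan is to realize $\pi_{\breve{\scriptscriptstyle \Omega}}|_{[U\times\{I\}]_{\sim_{\scriptscriptstyle \Omega}}}$ as a composition of maps that are already known to be homeomorphisms, so that the statement reduces to pure bookkeeping. Writing $p_1:U\times\{I\}\to U$ for the projection onto the first factor (a homeomorphism, since the second factor is a singleton) and $\Psi_I^i|_U:U\to\Psi_I^i(U)$ for the restriction of the $\mathbb{R}$-affine homeomorphism $\Psi_I^i:\mathbb{C}_I\to\mathbb{C}$, I would prove the factorization
\begin{equation*}
	\pi_{\breve{\scriptscriptstyle \Omega}}|_{[U\times\{I\}]_{\sim_{\scriptscriptstyle \Omega}}}=\left(\Psi_I^i|_U\right)\circ p_1\circ\left(\imath_{\scriptscriptstyle \Omega}^I|_{U\times\{I\}}\right)^{-1}.
\end{equation*}

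First I would settle the formal data: the domain $[U\times\{I\}]_{\sim_{\scriptscriptstyle \Omega}}$ lies in $\tau(\breve{\Omega})$ by \eqref{eq-uib}, and the codomain $\Psi_I^i(U)$ lies in $\tau(\mathbb{C})$ because $\Psi_I^i:\mathbb{C}_I\to\mathbb{C}$ is a homeomorphism and $U\in\tau(\Omega_I)\subset\tau(\mathbb{C}_I)$. By Proposition \ref{pr-homeomorphism}, $\imath_{\scriptscriptstyle \Omega}^I$ restricts to a homeomorphism $U\times\{I\}\to[U\times\{I\}]_{\sim_{\scriptscriptstyle \Omega}}$; in particular each $\mu\in[U\times\{I\}]_{\sim_{\scriptscriptstyle \Omega}}$ has a \emph{unique} representative of the form $(z^I,I)$ with $z^I\in U$, so that $\left(\imath_{\scriptscriptstyle \Omega}^I\right)^{-1}(\mu)=(z^I,I)$ is single-valued.

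Next I would check the factorization pointwise. For $\mu\in[U\times\{I\}]_{\sim_{\scriptscriptstyle \Omega}}$ with unique representative $(z^I,I)$, $z\in\mathbb{C}$, we have $p_1\left(\left(\imath_{\scriptscriptstyle \Omega}^I\right)^{-1}(\mu)\right)=z^I$. Since $\mu=[(z^I,I)]_{\sim_{\scriptscriptstyle \Omega}}$, Proposition \ref{pr-zinviant} yields $z_\mu=z$, and therefore
\begin{equation*}
	\pi_{\breve{\scriptscriptstyle \Omega}}(\mu)=z_\mu=z=\Psi_I^i(z^I),
\end{equation*}
which is exactly the claimed composition. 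As a composition of three homeomorphisms, $\pi_{\breve{\scriptscriptstyle \Omega}}|_{[U\times\{I\}]_{\sim_{\scriptscriptstyle \Omega}}}$ is a homeomorphism onto $\Psi_I^i(U)$, giving \eqref{eq-pso}. (Alternatively one can argue directly: surjectivity is clear from $\mu=[(z^I,I)]_{\sim_{\scriptscriptstyle \Omega}}\mapsto z$, injectivity follows since $\pi_{\breve{\scriptscriptstyle \Omega}}(\mu)=z$ forces the representative to be $(z^I,I)$, and bicontinuity comes from \eqref{eq-uib} together with the openness of $\Psi_I^i$.)

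Finally, for the local homeomorphism assertion I would cover $\breve{\Omega}$ by charts of this type. Any $\mu\in\breve{\Omega}$ has $\mathbb{S}_\mu\neq\varnothing$ by \eqref{eq-msn}; choosing $I\in\mathbb{S}_\mu$ gives $\mu=[(z_\mu^I,I)]_{\sim_{\scriptscriptstyle \Omega}}\in[\Omega_I\times\{I\}]_{\sim_{\scriptscriptstyle \Omega}}$, an open neighbourhood of $\mu$ on which, taking $U=\Omega_I$, the map $\pi_{\breve{\scriptscriptstyle \Omega}}$ restricts to a homeomorphism onto the open set $\Psi_I^i(\Omega_I)$. Hence $\pi_{\breve{\scriptscriptstyle \Omega}}$ is a local homeomorphism. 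I do not expect a genuine obstacle; the only delicate point is the well-definedness and single-valuedness of $\left(\imath_{\scriptscriptstyle \Omega}^I\right)^{-1}$ on $[U\times\{I\}]_{\sim_{\scriptscriptstyle \Omega}}$ — i.e. that distinct points of $U\times\{I\}$ are never $\sim_{\scriptscriptstyle \Omega}$-identified — which is precisely the injectivity already secured in Proposition \ref{pr-homeomorphism}.
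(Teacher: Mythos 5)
Your proof is correct, and it takes a genuinely different route from the paper's. The paper proves \eqref{eq-pso} by direct verification: it first gets bijectivity from the fact (via \eqref{eq-lir}) that each fiber $\left(\pi_{\breve{\scriptscriptstyle \Omega}}|_{[U\times\{I\}]_{\sim_{\scriptscriptstyle \Omega}}}\right)^{-1}(z)$ is the singleton $\left\{[(z^I,I)]_{\sim_{\scriptscriptstyle \Omega}}\right\}$, and then checks continuity in both directions by chasing open sets — preimages of open $V\subset\Psi_I^i(U)$ are of the form $\left[\Psi_i^I(V)\times\{I\}\right]_{\sim_{\scriptscriptstyle \Omega}}$, open by \eqref{eq-uib}, while images of open $W=[D\times\{I\}]_{\sim_{\scriptscriptstyle \Omega}}$ are $\Psi_I^i(D)$, open by \eqref{eq-lis} and \eqref{eq-two}. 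You instead factor the restricted projection as $\left(\Psi_I^i|_U\right)\circ p_1\circ\left(\imath_{\scriptscriptstyle \Omega}^I|_{U\times\{I\}}\right)^{-1}$, invoking Proposition \ref{pr-homeomorphism} as a black box, and verify the factorization pointwise via Proposition \ref{pr-zinviant} ($\pi_{\breve{\scriptscriptstyle \Omega}}(\mu)=z_\mu=z$), so the restriction is a composition of three homeomorphisms. What your approach buys is modularity: the two-way open-set bookkeeping that the paper redoes in this proof is essentially the content already established in Proposition \ref{pr-homeomorphism}, so you avoid duplicating it; the only point you must (and do) attend to is that restricting $\imath_{\scriptscriptstyle \Omega}^I$ to $U\times\{I\}$ gives a homeomorphism onto $[U\times\{I\}]_{\sim_{\scriptscriptstyle \Omega}}$ with the correct subspace topology, which is legitimate since $[\Omega_I\times\{I\}]_{\sim_{\scriptscriptstyle \Omega}}$ is open in $\breve{\Omega}$ by \eqref{eq-uib}. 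What the paper's self-contained computation buys is independence from the precise formulation of Proposition \ref{pr-homeomorphism}, at the cost of repetition. Your final step — covering $\breve{\Omega}$ by the open sets $[\Omega_{I}\times\{I\}]_{\sim_{\scriptscriptstyle \Omega}}$ with $I\in\mathbb{S}_\mu$ and taking $U=\Omega_I$ — coincides with the paper's part (ii).
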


\begin{proof}
	(i) It is easy to check by \eqref{eq-lir} that the preimage
	\begin{equation}\label{eq-sco}
		\left(\pi_{\breve{\scriptscriptstyle \Omega}}|_{[U\times\{I\}]_{\sim_{\scriptscriptstyle \Omega}}}\right)^{-1}(z)=\bigg\{\big[(z^I,I)\big]_{\sim_{\scriptscriptstyle \Omega}}\bigg\},\qquad\forall\ z\in\Psi_I^i(U),
	\end{equation}
	is a singleton. It follows that $\pi_{\breve{\scriptscriptstyle \Omega}}|_{[U\times\{I\}]_{\sim_{\scriptscriptstyle \Omega}}}$ is bijective, moreover
	$\pi_{\scriptscriptstyle \Omega}|_{[U\times\{I\}]_{\sim_{\scriptscriptstyle \Omega}}}$ is continuous. In fact, let $V\in\tau\left(\Psi_I^i(U)\right)$. It implies that
	\begin{equation*}
		V^I\in\tau\left( U\right)\subset\tau\left(\Omega_I\right).
	\end{equation*}
	It follows from \eqref{eq-sco} and \eqref{eq-uib} that
	\begin{equation*}
		\left(\pi_{\breve{\scriptscriptstyle \Omega}}|_{[U\times\{I\}]_{\sim_{\scriptscriptstyle \Omega}}}\right)^{-1}(V)=\left[\Psi_i^I(V)\times\{I\}\right]_{\sim_{\scriptscriptstyle \Omega}}\in\tau(\breve{\Omega}).
	\end{equation*}
	Since $\left[\Psi_i^I(V)\times\{I\}\right]_{\sim_{\scriptscriptstyle \Omega}}\subset\left[U\times\{I\}\right]_{\sim_{\scriptscriptstyle \Omega}}$, we have
	\begin{equation*}
		\left(\pi_{\breve{\scriptscriptstyle \Omega}}|_{[U\times\{I\}]_{\sim_{\scriptscriptstyle \Omega}}}\right)^{-1}(V)\in\tau\big(\left[U\times\{I\}\right]_{\sim_{\scriptscriptstyle \Omega}}\big).
	\end{equation*}
	It implies that $\pi_{\breve{\scriptscriptstyle \Omega}}|_{[U\times\{I\}]_{\sim_{\scriptscriptstyle \Omega}}}$ is continuous.
	
Also	$\big(\pi_{\breve{\scriptscriptstyle \Omega}}|_{[U\times\{I\}]_{\sim_{\scriptscriptstyle \Omega}}}\big)^{-1}$ is continuous: let $W\in\tau\big(\left[U\times\{I\}\right]_{\sim_{\scriptscriptstyle \Omega}}\big)$. Then there is $D\subset U$ such that $W=[D\times\{I\}]_{\sim_{\scriptscriptstyle \Omega}}$. By \eqref{eq-lis} and \eqref{eq-two},
	\begin{equation*}
		D\times\{I\}=\left( i_{\scriptscriptstyle \Omega}^I \right) ^{-1}\bigg(\big[D\times\{I\}\big]_{\sim_{\scriptscriptstyle \Omega}}\bigg)\in\tau\big(\Omega_I\times\{I\}\big).
	\end{equation*}
	Since $D\subset U$, we have $D\in\tau(U)$ and $\Psi_I^i (D)\in\tau\big(\Psi_I^i(U)\big)$. Therefore by \eqref{eq-pso}
	\begin{equation*}
		\left(\big(\pi_{\breve{\scriptscriptstyle \Omega}}|_{[U\times\{I\}]_{\sim_{\scriptscriptstyle \Omega}}}\big)^{-1}\right)^{-1}(W)=\pi_{\breve{\scriptscriptstyle \Omega}}|_{[U\times\{I\}]_{\sim_{\scriptscriptstyle \Omega}}}\left([D\times\{I\}]_{\sim_\Omega}\right)=\Psi_I^i (D)\in\tau\big(\Psi_I^i(U)\big).
	\end{equation*}
	It implies that $\big(\pi_{\breve{\scriptscriptstyle \Omega}}|_{[U\times\{I\}]_{\sim_{\scriptscriptstyle \Omega}}}\big)^{-1}$ is continuous. In summary, $\pi_{\breve{\scriptscriptstyle \Omega}}|_{[U\times\{I\}]_{\sim_{\scriptscriptstyle \Omega}}}$ is a homeomorphism.
	
	(ii) According to \eqref{eq-uib}, for each $\mu\in\breve{\Omega}$, the set $[\Omega_{I_\mu}\times\{I_\mu\}]_{\sim_{\scriptscriptstyle \Omega}}$ is an open neighborhood of $\mu$. It follows from (i) that $\pi_{\scriptscriptstyle \Omega}$ is a local homeomorphism.
\end{proof}

\begin{prop}\label{pr-osmh}
	Let $\Omega\subset\mathbb{H}$ be a $2$-path-symmetric st-domain. Then $(\breve{\Omega},\tau)$ is Hausdorff.
\end{prop}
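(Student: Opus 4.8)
The plan is to separate two arbitrary distinct points $\mu,\nu\in\breve{\Omega}$ by disjoint open neighbourhoods, splitting the argument according to whether their images $z_\mu=\pi_{\breve{\scriptscriptstyle \Omega}}(\mu)$ and $z_\nu=\pi_{\breve{\scriptscriptstyle \Omega}}(\nu)$ in $\mathbb{C}$ coincide. The case $z_\mu\neq z_\nu$ is immediate: by Proposition \ref{pr-usoi} the map $\pi_{\breve{\scriptscriptstyle \Omega}}$ is a local homeomorphism, hence continuous, and $\mathbb{C}$ is Hausdorff, so one chooses disjoint open sets $U_\mu\ni z_\mu$ and $U_\nu\ni z_\nu$ in $\mathbb{C}$ and takes the open preimages $\pi_{\breve{\scriptscriptstyle \Omega}}^{-1}(U_\mu)$ and $\pi_{\breve{\scriptscriptstyle \Omega}}^{-1}(U_\nu)$, which are disjoint neighbourhoods of $\mu$ and $\nu$.

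The remaining, and main, case is $z_\mu=z_\nu$ with $\mu\neq\nu$. Here the decisive input is Proposition \ref{pr-ni}: since $\mu\neq\nu$, we cannot have $z_\mu=z_\nu\in\overline{\Omega_{2ps}}$, so $z_\mu\notin\overline{\Omega_{2ps}}$; in particular $z_\mu\notin\Omega_{2ps}$, whence \eqref{eq-msn} gives that $\mathbb{S}_\mu=\{I_\mu\}$ and $\mathbb{S}_\nu=\{I_\nu\}$ are singletons, and the inequality $\mu=[(z_\mu^{I_\mu},I_\mu)]_{\sim_{\scriptscriptstyle \Omega}}\neq[(z_\nu^{I_\nu},I_\nu)]_{\sim_{\scriptscriptstyle \Omega}}=\nu$ forces $I_\mu\neq I_\nu$. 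Because $z_\mu\notin\overline{\Omega_{2ps}}$, there is $\rho>0$ with $B_\mathbb{C}(z_\mu,\rho)\cap\Omega_{2ps}=\varnothing$. I would then set $r:=\min(r_\mu^+,r_\nu^+,\rho)>0$ and take the basic neighbourhoods $B_{\breve{\Omega}}(\mu,r)$ and $B_{\breve{\Omega}}(\nu,r)$ supplied by Proposition \ref{pr-losm}, which are open and contain $\mu$ and $\nu$ respectively.

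To see these are disjoint, suppose some $\xi$ lay in both. By \eqref{eq-balldef} we may write $\xi=[(w^{I_\mu},I_\mu)]_{\sim_{\scriptscriptstyle \Omega}}$ with $w\in B_\mathbb{C}(z_\mu,r)$ and $\xi=[(w'^{I_\nu},I_\nu)]_{\sim_{\scriptscriptstyle \Omega}}$ with $w'\in B_\mathbb{C}(z_\nu,r)=B_\mathbb{C}(z_\mu,r)$; Proposition \ref{pr-zinviant} then yields $w=z_\xi=w'$. Consequently $(w^{I_\mu},I_\mu)\sim_{\scriptscriptstyle \Omega}(w^{I_\nu},I_\nu)$ with $I_\mu\neq I_\nu$, so \eqref{eq-jujv} forces $w\in\Omega_{2ps}$, contradicting $w\in B_\mathbb{C}(z_\mu,r)\subset B_\mathbb{C}(z_\mu,\rho)$, which avoids $\Omega_{2ps}$. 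Hence $B_{\breve{\Omega}}(\mu,r)\cap B_{\breve{\Omega}}(\nu,r)=\varnothing$, completing the separation. The routine part is the unequal-image case; the main obstacle is this equal-image case, where Hausdorffness genuinely uses the behaviour of $\Omega$ near $\partial(\Omega_{2ps})$. The role of Proposition \ref{pr-ni} is precisely to guarantee that two distinct fibre points over the same $z$ can occur only strictly outside $\overline{\Omega_{2ps}}$, which is what gives the room to shrink the balls until they no longer meet $\Omega_{2ps}$ and therefore cannot be glued by $\sim_{\scriptscriptstyle \Omega}$.
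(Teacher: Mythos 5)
Your proof is correct and follows essentially the same route as the paper's: the same case split on whether $z_\mu=z_\nu$, with the easy case handled through the projection $\pi_{\breve{\scriptscriptstyle \Omega}}$ and the hard case resting on Proposition \ref{pr-ni} to get $z_\mu\notin\overline{\Omega_{2ps}}$ and then shrinking the balls until they avoid $\Omega_{2ps}$, so that $\sim_{\scriptscriptstyle \Omega}$ cannot identify points over them. The only differences are cosmetic: you pull back disjoint open sets of $\mathbb{C}$ instead of using half-distance balls, and in the equal-image case you argue at the level of representatives via \eqref{eq-jujv} and Proposition \ref{pr-zinviant} rather than through the fiber formula \eqref{eq-lis}.
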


\begin{proof}
	Let $\mu,\nu\in\breve{\Omega}$ with $\mu\neq\nu$. According to Proposition \ref{pr-losm} (iii), there is $r'>0$ such that $\mu\in B_{\breve{\Omega}}(\mu,r')\in\tau(\breve{\Omega})$, and $\nu\in B_{\breve{\Omega}}(\nu,r')\in\tau(\breve{\Omega})$. If there is $r\in(0,r')$ such that
	\begin{equation}\label{eq-lioi}
		B_{\breve{\Omega}}(\mu,r)\cap B_{\breve{\Omega}}(\mu,r)=\varnothing,
	\end{equation}
	then $\tau(\breve{\Omega})$ is Hausdorff.
	
	(i) If $z_\mu\neq z_\nu$, then set $r=\min\left\{\frac{1}{2}|z_\mu-z_\nu|,r'\right\}$. By \eqref{eq-lis},
	\begin{equation*}
		\left( i_{\scriptscriptstyle \Omega}^I \right) ^{-1}\bigg(B_{\breve{\Omega}}(\xi,r)\bigg)\subset B_I(z_\xi^I,r)\times\{I\},\qquad\xi=\mu,\nu,\ \forall\ I\in\mathbb{S}.
	\end{equation*}
	Hence
	\begin{equation*}
		\begin{split}
			&\bigg(\left( i_{\scriptscriptstyle \Omega}^I \right) ^{-1}\big(B_{\breve{\Omega}}(\mu,r)\big)\bigg)\bigcap\bigg(\left( i_{\scriptscriptstyle \Omega}^I \right) ^{-1}\big(B_{\breve{\Omega}}(\nu,r)\big)\bigg)
			\\\subset& \big(B_I(z_\mu^I,r)\times\{I\}\big)\cap\big( B_I(z_\nu^I,r)\times\{I\}\big)=\varnothing,
		\end{split}\qquad\forall\ I\in\mathbb{S}.
	\end{equation*}
	It implies that
	\begin{equation}\label{eq-blir}
		\bigg(\left( i_{\scriptscriptstyle \Omega} \right) ^{-1}\big(B_{\breve{\Omega}}(\mu,r)\big)\bigg)\bigcap\bigg(\left( i_{\scriptscriptstyle \Omega} \right) ^{-1}\big(B_{\breve{\Omega}}(\nu,r)\big)\bigg)=\varnothing
	\end{equation}
	Then \eqref{eq-lioi} holds.
	
	(ii) Otherwise, $z_\mu=z_\nu$. By Proposition \ref{pr-ni}, $z_\mu\notin \overline{\Omega_{2ps}}$. Then there is $r\in(0,r')$ such that $B_\mathbb{C}(z_\mu,r)\cap\Omega_{2ps}=\varnothing$. It follows from \eqref{eq-lis} that
	\begin{equation}\label{eq-lior1}
		\left( i_{\scriptscriptstyle \Omega} \right) ^{-1}\bigg(B_{\breve{\Omega}}(\xi,r)\bigg)=\bigcup_{I\in\mathbb{S}_\xi}\bigg(B_I(z_\xi^I,r)\bigg),\qquad\xi=\mu,\nu.
	\end{equation}
	Since $\mu\neq\nu$, we have $\mathbb{S}_\mu\cap\mathbb{S}_\nu=\varnothing$. It follows from \eqref{eq-lior1} that \eqref{eq-blir} holds, and then \eqref{eq-lioi} holds.
\end{proof}

\begin{prop}\label{pr-fppo}
	Let  $\Omega\subset\mathbb{H}$ be a $2$-path-symmetric st-domain, and $\mu,\nu\in\breve{\Omega}$. Then there is a slice-piecewise path $\beta=(\beta_1,...,\beta_m)$ from $p\in\Omega$ to $q\in\Omega$, and $I_1,...,I_m\in\mathbb{S}$ such that
	\begin{equation*}
		\mu=[(p,I_1)]_{\sim_{\scriptscriptstyle \Omega}},\qquad\nu=[(q,I_m)]_{\sim_{\scriptscriptstyle \Omega}},\qquad\mbox{and}\qquad \beta_\imath\subset\Omega_{I_\imath}.
	\end{equation*}
\end{prop}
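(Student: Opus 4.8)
The plan is to choose convenient representatives of $\mu$ and $\nu$ inside $\Omega$ and then to join them by passing through the real axis, where the imaginary unit carried by a slice-piecewise path can be switched at no cost. First I would fix $I_\mu\in\mathbb{S}_\mu$ and $I_\nu\in\mathbb{S}_\nu$ and set $p:=z_\mu^{I_\mu}$ and $q:=z_\nu^{I_\nu}$. By the definition \eqref{eq-smli} of $\mathbb{S}_\mu$ these points lie in $\Omega$ and satisfy $\mu=[(p,I_\mu)]_{\sim_{\scriptscriptstyle\Omega}}$ and $\nu=[(q,I_\nu)]_{\sim_{\scriptscriptstyle\Omega}}$, so it suffices to produce a slice-piecewise path in $\Omega$ from $p$ to $q$ whose first segment is labelled by $I_\mu$ and whose last segment is labelled by $I_\nu$.

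The delicate point is exactly this labelling, and it is the main obstacle. A single non-real point $w=x+yK\in\Omega$ (with $y\neq 0$) represents two different elements of $\breve{\Omega}$, namely $[(w,K)]_{\sim_{\scriptscriptstyle\Omega}}$, whose $\pi_{\breve{\scriptscriptstyle\Omega}}$-image is $x+yi$, and $[(w,-K)]_{\sim_{\scriptscriptstyle\Omega}}$, whose image is $x-yi$; since $\pi_{\breve{\scriptscriptstyle\Omega}}$ separates these two classes, attaching the opposite unit at an endpoint would land us on the conjugate point rather than on $\mu$ (or $\nu$). Thus one cannot simply quote Proposition \ref{pr-fpp} for a path from $p$ to $q$, because it gives no control over the signs of the units at the two ends; this is precisely what forces the detour through $\mathbb{R}$.

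Assume first $\Omega_\mathbb{R}\neq\varnothing$. Since $\Omega$ is an st-domain, Proposition \ref{pr-los} shows that the connected component of $\Omega_{I_\mu}$ in $\mathbb{C}_{I_\mu}$ containing $p$ meets $\mathbb{R}$; I would take a path inside this component from $p$ to a real point $r$, i.e. a one-segment slice-piecewise path $\eta=(\gamma_\eta)^{I_\mu}$ with $\gamma_\eta(0)=z_\mu$ and $\gamma_\eta(1)=r\in\mathbb{R}$, which keeps $[(p,I_\mu)]_{\sim_{\scriptscriptstyle\Omega}}=\mu$. Symmetrically I would build $\eta'=(\gamma_{\eta'})^{I_\nu}$ from a real point $r'$ to $q$, labelled by $I_\nu$, so that $[(q,I_\nu)]_{\sim_{\scriptscriptstyle\Omega}}=\nu$. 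Proposition \ref{pr-fpp} then supplies a slice-piecewise path $(\delta_1,\dots,\delta_l)$ in $\Omega$ from $r$ to $r'$, and the concatenation $(\eta,\delta_1,\dots,\delta_l,\eta')$ is the desired path: its first unit is $I_\mu$ and its last unit is $I_\nu$. The junction conditions hold inside the $\delta$-block by Proposition \ref{pr-fpp}, while at $r$ and $r'$ the common $\mathbb{C}$-coordinate is real, hence fixed by conjugation, so the change of imaginary unit there produces no mismatch in the defining condition $\gamma_\imath(1)=\gamma_{\imath+1}(0)$. Equivalently, by \eqref{eq-omr} these real junctions lie in $\Omega_\mathbb{R}\subset\Omega_{2ps}$, so the two classes attached at each junction coincide and the induced lift to $\breve{\Omega}$ is in fact continuous.

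Finally, if $\Omega_\mathbb{R}=\varnothing$ then Remark \ref{rm-2pssd} gives $\Omega\subset\mathbb{C}_I$ for a single $I$, and $\Omega_{2ps}=\varnothing$, so by \eqref{eq-msn} the sets $\mathbb{S}_\mu,\mathbb{S}_\nu$ are singletons and $\breve{\Omega}$ reduces to sheets indexed by the unit over the planar domain $\Omega_I$. In this degenerate case $\mu$ and $\nu$ lie over sheets indexed by $I_\mu$ and $I_\nu$, and when these units agree one simply joins $p$ to $q$ by a path inside the planar domain $\Omega_{I_\mu}$, obtaining a one-segment slice-piecewise path with the correct label; this is the classical planar picture already signalled in the introduction, for which no real points are needed.
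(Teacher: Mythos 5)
Your construction in the main case $\Omega_\mathbb{R}\neq\varnothing$ is correct and genuinely different from the paper's proof, but the case $\Omega_\mathbb{R}=\varnothing$ contains a real gap: you only treat the sub-case ``when these units agree'', and the other sub-case is neither vacuous nor reachable by your method. Concretely, take a domain $U\subset\mathbb{C}$ with $U\cap\mathbb{R}=\varnothing$ and $U\cap\Conj_\mathbb{C}(U)=\varnothing$ (for instance $U=B_\mathbb{C}(2i,1)$) and let $\Omega:=U^I$. Then $\Omega$ is an st-domain (on subsets of $\mathbb{C}_I\setminus\mathbb{R}$ the slice topology induces the Euclidean topology of $\mathbb{C}_I$), it is $2$-path-symmetric vacuously (no $\gamma\in\mathscr{P}_0(\mathbb{C})$ has $\gamma^J\subset\Omega$ for any $J$, since $\gamma(0)\in\mathbb{R}$), and $\Omega_{2ps}=\varnothing$, so $\sim_\Omega$ is trivial. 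For $w\in\Omega$ the elements $\mu=[(w,I)]_{\sim_\Omega}$ and $\nu=[(w,-I)]_{\sim_\Omega}$ of $\breve\Omega$ are distinct, with $\mathbb{S}_\mu=\{I\}$ and $\mathbb{S}_\nu=\{-I\}$ by \eqref{eq-msn}: the units disagree. Here there is no real point to detour through; worse, no slice-piecewise path in $\Omega$ can ever switch the sign of its realizing unit, since a junction between a piece realized with unit $I$ and one realized with unit $-I$ would require a complex point $\zeta$ with both $\zeta^I\in\Omega$ and $\zeta^{-I}\in\Omega$, i.e. $\zeta\in U\cap\Conj_\mathbb{C}(U)=\varnothing$. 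So your strategy cannot be extended to cover this case.

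What rescues the statement is precisely the device you dismissed in your second paragraph. Proposition \ref{pr-fppo} does not require $I_1,\dots,I_m$ to be the units realizing $\beta$ as a slice-piecewise path; it only requires $\mu=[(p,I_1)]_{\sim_\Omega}$, $\nu=[(q,I_m)]_{\sim_\Omega}$ and $\beta_\imath\subset\Omega_{I_\imath}$, and this last condition cannot see the sign of $I_\imath$, because $\mathbb{C}_{I_\imath}=\mathbb{C}_{-I_\imath}$ and hence $\Omega_{I_\imath}=\Omega_{-I_\imath}$. In the missing sub-case one therefore joins $p$ to $q$ by a path inside the planar domain, splits it into two pieces (both realized with unit $I_\mu$, so the junction condition holds), and assigns $I_1:=I_\mu$, $I_2:=I_\nu$; all three required conditions follow. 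This is exactly how the paper argues, uniformly in all cases: it applies Proposition \ref{pr-fpp} directly between arbitrary representatives $(p,J_1)$, $(q,J_m)$ and repairs only the first and last labels, using that either $K_1=\pm J_1$ (so the label $J_1$ still satisfies $\beta_1\subset\mathbb{C}_{K_1}=\mathbb{C}_{J_1}$), or else $p\in\mathbb{C}_{J_1}\cap\mathbb{C}_{K_1}=\mathbb{R}\subset\Omega_{2ps}$, in which case every label represents $\mu$. Your opening claim that Proposition \ref{pr-fpp} cannot be quoted directly thus misjudges what the statement demands. Your instinct does buy something real, though: when $\Omega_\mathbb{R}\neq\varnothing$, your real-axis detour makes all changes of unit occur at points of $\Omega_\mathbb{R}\subset\Omega_{2ps}$, so the lifted path in $\breve\Omega$ is continuous from $\mu$ to $\nu$ --- the property actually needed in Theorem \ref{thm-Riemann doamin} --- whereas the two-sheeted example above shows that this stronger conclusion is unattainable in general when $\Omega_\mathbb{R}=\varnothing$.
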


\begin{proof}
	By definition, there is $p,q\in\Omega$ and $J_1,J_m\in\mathbb{S}$ such that
	\begin{equation*}
		\mu=[(p,J_1)]_{\sim_{\scriptscriptstyle \Omega}}\qquad\mbox{and}\qquad
		\nu=[(q,J_m)]_{\sim_{\scriptscriptstyle \Omega}}.
	\end{equation*}
	According to Proposition \ref{pr-fpp}, there is a slice-piecewise path $\beta=(\beta_1,...,\beta_m)$ in $\Omega$ from $p$ to $q$. Then
	\begin{equation*}
		\beta_1\subset\mathbb{C}_{K_1},
		\qquad\beta_m\subset\mathbb{C}_{K_m}\qquad\mbox{and}\qquad
		\beta_\imath\subset\mathbb{C}_{I_\imath},\qquad
		\imath=2,...,m-1,
	\end{equation*}
	for some $K_1,K_m,I_\imath\in\mathbb{S}$.
	
	We will choose $I_1\in\{J_1,K_1\}$ such that $\mu=[(p,I_1)]_{\sim_{\scriptscriptstyle \Omega}}$ and $\beta_1\subset\Omega_{I_1}$:
	If $K_1=\pm J_1$, then let $I_1=J_1$. It implies that $\beta_1\subset\mathbb{C}_{K_1}=\mathbb{C}_{J_1}=\mathbb{C}_{I_1}$. Otherwise $K_1\neq \pm J_1$. Let $I_1=K_1$. It follows that $p\in\mathbb{C}_{J_1}\cap\mathbb{C}_{K_1}=\mathbb{R}$. By definition, $p\in\Omega^{2ps}$. Then
	\begin{equation*}
		\mu=[(p,J_1)]_{\sim_{\scriptscriptstyle \Omega}}=[(p,K_1)]_{\sim_{\scriptscriptstyle \Omega}}=[(p,I_1)]_{\sim_{\scriptscriptstyle \Omega}}.
	\end{equation*}
	
	Similarly, there is $I_m\in\{J_m,K_m\}$ such that $\nu=[(q,I_m)]_{\scriptscriptstyle \Omega}$ and $\beta_m\subset\Omega_{I_m}$.
\end{proof}

Recall that (see e.g. \cite[Page 88]{Fritzsche2002001B}) a Riemann domain over $\mathbb{C}$ is a pair $(G,\pi)$ with the following properties:
\begin{enumerate}[label=(\roman*)]
	\item $G$ is a connected Hausdorff space.
	\item $\pi:G\rightarrow\mathbb{C}$ is a local homeomorphism.
\end{enumerate}

\begin{thm}\label{thm-Riemann doamin}
	Let $\Omega\subset\mathbb{H}$ be a $2$-path-symmetric st-domain. Then $(\breve{\Omega},\pi_{\breve{\scriptscriptstyle \Omega}})$ is a Riemann domain over $\mathbb{C}$.
\end{thm}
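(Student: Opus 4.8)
The plan is to check the two defining conditions of a Riemann domain over $\mathbb{C}$, namely that $\breve{\Omega}$ is a connected Hausdorff space and that $\pi_{\breve{\scriptscriptstyle \Omega}}$ is a local homeomorphism. Two of these three requirements are already in hand: Proposition \ref{pr-osmh} shows that $(\breve{\Omega},\tau)$ is Hausdorff, and Proposition \ref{pr-usoi} shows that $\pi_{\breve{\scriptscriptstyle \Omega}}$ is a local homeomorphism. Hence the entire content of the theorem lies in proving that $\breve{\Omega}$ is connected, and I would establish this by showing it is path-connected.

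To that end, fix $\mu,\nu\in\breve{\Omega}$. By Proposition \ref{pr-fppo} there is a slice-piecewise path $(\beta_1,\dots,\beta_m)$ in $\Omega$ and imaginary units $I_1,\dots,I_m\in\mathbb{S}$ with $\mu=[(p,I_1)]_{\sim_{\scriptscriptstyle \Omega}}$, $\nu=[(q,I_m)]_{\sim_{\scriptscriptstyle \Omega}}$, $\beta_\imath\subset\Omega_{I_\imath}$, and $\beta_1(0)=p$, $\beta_m(1)=q$. I would lift each piece to a path in $\breve{\Omega}$ by setting $\hat{\beta}_\imath(t):=i_{\scriptscriptstyle \Omega}^{I_\imath}\big(\beta_\imath(t),I_\imath\big)$; since $t\mapsto(\beta_\imath(t),I_\imath)$ is continuous into $\Omega_{I_\imath}\times\{I_\imath\}$ and $i_{\scriptscriptstyle \Omega}^{I_\imath}$ is continuous, each $\hat{\beta}_\imath$ is a continuous path in $\breve{\Omega}$. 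It then remains to concatenate the $\hat{\beta}_\imath$ into a single path from $\hat{\beta}_1(0)=[(p,I_1)]_{\sim_{\scriptscriptstyle \Omega}}=\mu$ to $\hat{\beta}_m(1)=[(q,I_m)]_{\sim_{\scriptscriptstyle \Omega}}=\nu$.

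The key step, and the main obstacle, is to verify that consecutive lifts actually meet, i.e. that $\hat{\beta}_\imath(1)=\hat{\beta}_{\imath+1}(0)$ in $\breve{\Omega}$ for $\imath=1,\dots,m-1$. Writing $\beta_\imath=(\gamma_\imath)^{I_\imath}$, the defining condition of a slice-piecewise path gives $\gamma_\imath(1)=\gamma_{\imath+1}(0)=:z$, so the two representatives $(z^{I_\imath},I_\imath)$ and $(z^{I_{\imath+1}},I_{\imath+1})$ share the same coordinate $z$ by \eqref{eq-uxu}; thus condition (i) of \eqref{eq-sim} holds automatically. Moreover, the slice-piecewise path furnished by Proposition \ref{pr-fppo} is, by its construction in Proposition \ref{pr-fpp}, the decomposition of a genuine continuous path in $\Omega$, so at the junction $z^{I_\imath}=\beta_\imath(1)=\beta_{\imath+1}(0)=z^{I_{\imath+1}}$ as quaternions; comparing imaginary parts forces either $I_\imath=I_{\imath+1}$ or $z\in\mathbb{R}\subset\Omega_\mathbb{R}\subset\Omega_{2ps}$, and in either case condition (ii) of \eqref{eq-sim} is satisfied. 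Hence $(z^{I_\imath},I_\imath)\sim_{\scriptscriptstyle \Omega}(z^{I_{\imath+1}},I_{\imath+1})$, the lifts meet, and the concatenation is a well-defined continuous path from $\mu$ to $\nu$. As $\mu,\nu$ were arbitrary, $\breve{\Omega}$ is path-connected, hence connected, and together with Propositions \ref{pr-osmh} and \ref{pr-usoi} this shows that $(\breve{\Omega},\pi_{\breve{\scriptscriptstyle \Omega}})$ is a Riemann domain over $\mathbb{C}$. The delicate point throughout is precisely this reconciliation of the complex-parameter gluing encoded in the slice-piecewise path with the identification $\sim_{\scriptscriptstyle \Omega}$: whenever the imaginary units at a junction differ, the junction point is pinned to the real axis, where all slices have already been identified.
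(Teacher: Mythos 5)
Your proposal is correct and follows essentially the same route as the paper: both invoke Propositions \ref{pr-osmh} and \ref{pr-usoi} for the Hausdorff and local-homeomorphism requirements, and both prove connectedness by taking the slice-piecewise path of Proposition \ref{pr-fppo}, lifting each piece via $i_{\scriptscriptstyle\Omega}^{I_\imath}$, and concatenating the lifts into a path from $\mu$ to $\nu$. The junction condition you isolate as the key obstacle (consecutive lifts meeting in $\breve{\Omega}$, which needs either $I_\imath=I_{\imath+1}$ or a real junction point lying in $\Omega_{\mathbb{R}}\subset\Omega_{2ps}$) is simply asserted without comment in the paper's proof, so your argument is, if anything, a more complete version of the same proof.
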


\begin{proof}
	Let $\mu,\nu\in\breve{\Omega}$. By Proposition \ref{pr-fppo}, there is a slice-piecewise path $\beta=(\beta_1,...,\beta_m)$ from $p\in\Omega$ to $q\in\Omega$, and $I_1,...,I_m\in\mathbb{S}$ such that
	\begin{equation*}
		\mu=[(p,I_1)]_{\sim_{\scriptscriptstyle \Omega}},\qquad
		\nu=[(q,I_m)]_{\sim_{\scriptscriptstyle \Omega}} \qquad\mbox{and}\qquad
		\beta_\imath\subset\Omega_{I_\imath}.
	\end{equation*}
	Note that $i_{\scriptscriptstyle \Omega}^{I_\imath}\circ \beta'_\imath$ is a path in $\breve{\Omega}$, where
	\begin{equation*}
		\begin{split}
			\beta'_\imath:\ [0,1]\ &\xlongrightarrow[\hskip1cm]{}\ \Omega_{I_\imath}\times\{I_\imath\},
			\\ t\quad &\shortmid\!\xlongrightarrow[\hskip1cm]{}\ (\beta(t),I_\imath).
		\end{split}
	\end{equation*}
	It implies that
	\begin{equation*}
		\prod_{\imath=1}^m \left(i_{\scriptscriptstyle \Omega}^{I_\imath}\circ \beta'_\imath\right)=\left(i_{\scriptscriptstyle \Omega}^{I_1}\circ \beta_1'\right)\circ\cdots\circ \left(i_{\scriptscriptstyle \Omega}^{I_m}\circ \beta_m'\right)
	\end{equation*}
	is a path in $\breve{\Omega}$ from $\mu=[(p,I_1)]_{\sim_{\scriptscriptstyle \Omega}}$ to $\nu=[(q,I_m)]_{\sim_{\scriptscriptstyle \Omega}}$. Hence $\breve{\Omega}$ is path-connected and then connected. According to Propositions \ref{pr-usoi} and \ref{pr-osmh}, $\breve{\Omega}$ is Hausdorff and $\pi_{\breve{\scriptscriptstyle \Omega}}$ is a local homeomorphism. In summary, $(\breve{\Omega},\pi_{\breve{\scriptscriptstyle \Omega}})$ is a Riemann domain over $\mathbb{C}$.
\end{proof}

In the sequel we shall refer to $\breve\Omega$ as the 2-path-symmetric Riemann domain associated with $\Omega$.
\begin{example}
	Let $E_1, U_{a,b},I,J$ be defined as in Example \ref{exa-e1}. Then
	\begin{equation*}
		\breve{E_1}\cong \left.\left[\left(U'_{-\frac{\pi}{2},\frac{3\pi}{4}}\times\{1\}\right)\bigcup \left(U'_{\frac{\pi}{4},\frac{3\pi}{2}}\times\{2\}\right)\right]\right/\sim,
	\end{equation*}
	with the structure-preserving map
	\begin{equation*}
		\phi\left(\left[(z^K,K)\right]_{\sim_\Omega}\right)=\begin{cases}
			[(z,1)]_\sim,\qquad K=\pm I,
			\\ [(z,2)]_\sim,\qquad \mbox{otherwise},
		\end{cases}\qquad\forall\ z^K\in E_1,
	\end{equation*}
	where
	\begin{equation*}
		U'_{a,b}:=U_{a,b}\cap{\Conj}_{\mathbb{C}}(U_{a,b})
	\end{equation*}
	and $(z,n)\sim(w,m)$ with $(z,n)\neq(w,m)$ if and only if $n\neq m$ and $z=w\in U_{0,3\pi}\cap{\Conj}_{\mathbb{C}}\left(U_{0,3\pi}\right)$.
\end{example}

%{\centering\includegraphics[width=12.5cm]{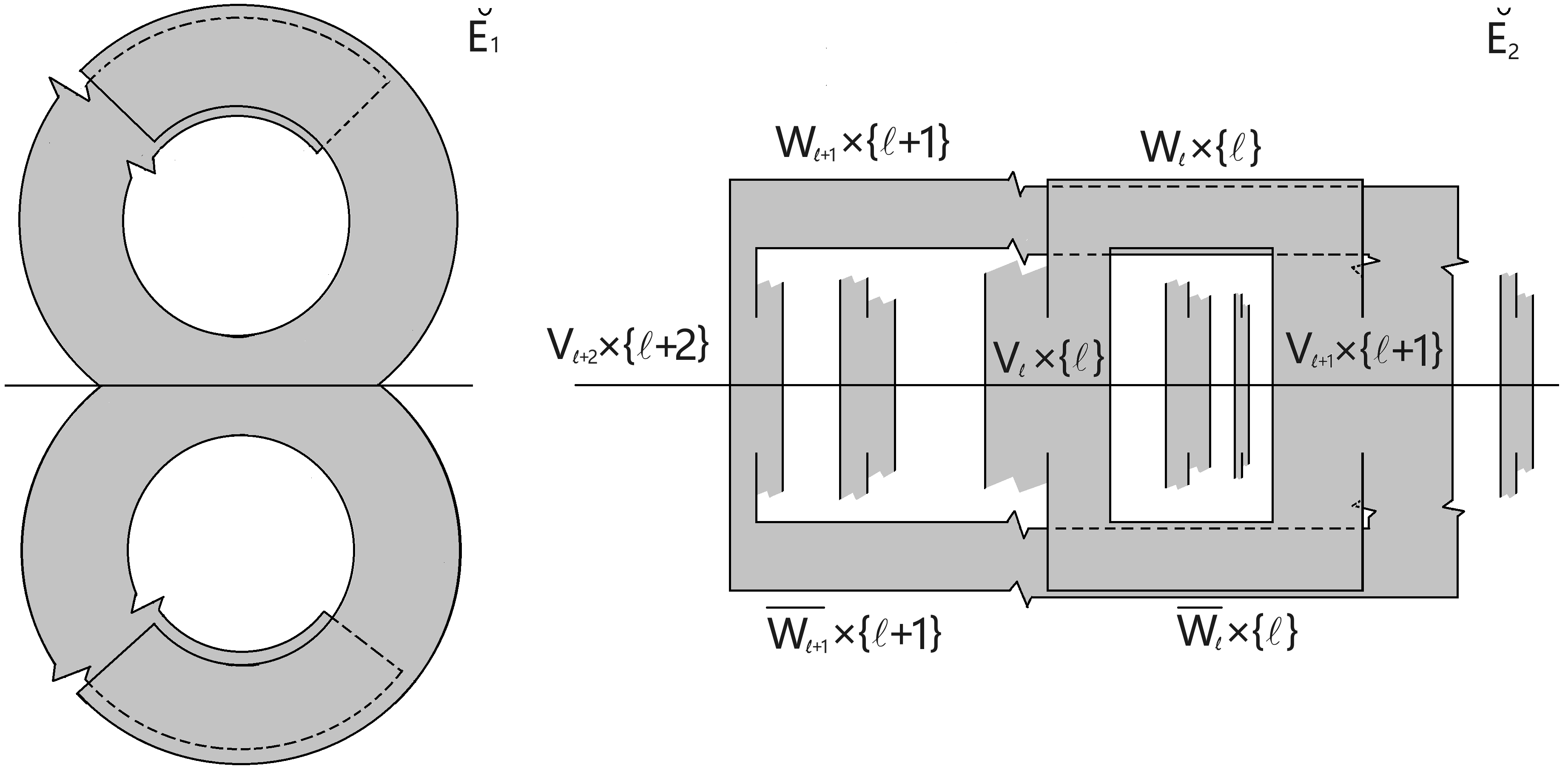}}

\begin{figure}\includegraphics[width=12.5cm]{RE}\caption{}\end{figure}

\begin{example}
	Let $E_2$, $V_\ell$, $W_\ell$, $I_\ell$ be defined as in Example \ref{exa-e2}. Then
	\begin{equation*}
		\breve{E_2}\cong\left.\bigcup_{\ell=1}^{+\infty}\left[V_\ell\cup W_\ell\cup{\Conj}_{\mathbb{C}}(W_\ell)\right]\times\{\ell\}\right/\sim
	\end{equation*}
	with the structure-preserving map
	\begin{equation*}
		\phi\left([(z^K,K)]_{\sim_\Omega}\right)=[(z,\ell)]_\sim,\qquad\forall\ z^K\in (V_\ell)^\mathbb{S}\cup (W_\ell)^{I_\ell}\cup (V_{\ell+1})^\mathbb{S},
	\end{equation*}
	where $(z,n)\sim(w,m)$ with $(z,n)\neq(w,m)$ if and only if $|n-m|=1$ and $z=w\in V_{\max(n,m)}$. See Figure 3 where the broken lines denote different branches.
\end{example}

\begin{remark}
	Let $\Omega\in\tau_s(\mathbb{H})$ be $2$-path-symmetric. Then $\breve{\Omega}$ is the disjoint union of some Riemann domains over $\mathbb{C}$, i.e.
	\begin{equation*}
		\breve{\Omega}=\bigcup_{U\in\mathcal{SCC}(\Omega)}\breve{U},
	\end{equation*}
	where $\mathcal{SCC}(\Omega)$ is the family of slice-connected components of $\Omega$.
\end{remark}

Let $\Omega\in\tau_s(\mathbb{H})$ be $2$-path-symmetric, $\mu\in\breve{\Omega}$ and $r\in(0,r^+_\mu]$. By Proposition \ref{pr-losm} \ref{it-lri},
\begin{equation*}
	\pi_{\breve{\scriptscriptstyle \Omega}} \left[B_{\breve{\Omega}}(\mu,r)\right]
	=\pi_{\breve{\scriptscriptstyle \Omega}}\left[\left[(B_\mathbb{C}\left(z_\mu,r\right))^I\times\{I\}\right]_{\sim_{\scriptscriptstyle \Omega}}\right]
	=B_\mathbb{C}\left(z_\mu,r\right).
\end{equation*}
Let $r_1,r_2\in(0,r)$ with $r_1+r_2\le r$, and let $\nu\in B_{\breve{\scriptscriptstyle \Omega}}(\mu,r_1)$. Then
\begin{equation*}
	z_\nu
	=\pi_{\breve{\scriptscriptstyle \Omega}}(\nu)
	\in \pi_{\breve{\scriptscriptstyle \Omega}} \left[B_{\breve{\Omega}}(\mu,r_1)\right]
	= B_{\mathbb{C}}\left(z_\mu,r_1\right),
\end{equation*}
and
\begin{equation}\label{eq-bbonr}
	\begin{split}
		B_{\breve{\Omega}}(\nu,r_2)
		&=\left[(B_\mathbb{C}\left(z_\nu,r_2\right))^I\times\{I\}\right]_{\sim_{\scriptscriptstyle \Omega}}
		\\&\subset\left[(B_\mathbb{C}\left(z_\mu,r\right))^I\times\{I\}\right]_{\sim_{\scriptscriptstyle \Omega}}
		=B_{\breve{\Omega}}(\mu,r).
	\end{split}
\end{equation}

\begin{prop}\label{pr-r+}
	Let $\Omega\in\tau_s(\mathbb{H})$ be $2$-path-symmetric. The map
	\begin{equation*}\index{$r^+$}
		\begin{split}
			r^+:\ \ \breve{\Omega}\ \ &\xlongrightarrow[\hskip1cm]{}(0,+\infty)=:\mathbb{R}^+,
			\\ \mu\ \ &\shortmid\!\xlongrightarrow[\hskip1cm]{}\quad r_\mu^+,
		\end{split}
	\end{equation*}
	is continuous.
\end{prop}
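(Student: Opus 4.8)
The plan is to prove the stronger, quantitative statement that $r^+$ is locally $1$-Lipschitz with respect to the projection $\pi_{\breve{\scriptscriptstyle \Omega}}$: for every $\mu\in\breve\Omega$ there is a neighborhood of $\mu$ on which $|r_\nu^+-r_\mu^+|\le|z_\mu-z_\nu|$. Since $z_\nu=\pi_{\breve{\scriptscriptstyle \Omega}}(\nu)$ depends continuously on $\nu$ (Proposition \ref{pr-usoi}) and can be made arbitrarily close to $z_\mu$ on the small balls $B_{\breve{\Omega}}(\mu,r_1)$, continuity of $r^+$ at $\mu$ follows at once. The only analytic input is that, for each fixed $I\in\mathbb{S}$, the Euclidean distance function $z\mapsto\delta_I(z^I,\mathbb{C}_I\backslash\Omega_I)$ is $1$-Lipschitz on $\Omega_I$; all the remaining work is to guarantee that the imaginary units realizing $r_\mu^+$ and $r_\nu^+$ are \emph{simultaneously admissible} for both $\mu$ and $\nu$, which is precisely where the ball structure of $\breve\Omega$ enters.

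Fix $\mu\in\breve\Omega$ and choose $r_1\in(0,r_\mu^+/2)$; note that $r_\mu^+>0$, since for any $I\in\mathbb{S}_\mu$ we have $z_\mu^I\in\Omega_I$ with $\Omega_I$ open, whence $r_\mu^I>0$. For the lower bound, pick $I\in\mathbb{S}_\mu^+$, so that $r_\mu^I=r_\mu^+$ by \eqref{eq-rml}, and express $B_{\breve{\Omega}}(\mu,r_1)$ through this $I$ as in \eqref{eq-balldef}. Any $\nu$ in this ball has the form $\nu=[(z_\nu^I,I)]_{\sim_{\scriptscriptstyle \Omega}}$ with $z_\nu\in B_\mathbb{C}(z_\mu,r_1)$, by Proposition \ref{pr-zinviant}; in particular $z_\nu^I\in\Omega_I$, so $I\in\mathbb{S}_\nu$ and $r_\nu^I$ is given by the first branch of \eqref{eq-rmi}. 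The $1$-Lipschitz estimate then yields $r_\nu^I\ge r_\mu^I-|z_\mu-z_\nu|$, whence $r_\nu^+\ge r_\nu^I\ge r_\mu^+-|z_\mu-z_\nu|$.

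For the reverse bound, which I expect to be the main obstacle, I first upgrade the relation $\nu\in B_{\breve{\Omega}}(\mu,r_1)$ to the symmetric statement $\mu\in B_{\breve{\Omega}}(\nu,r_1')$ for a suitable $r_1'$. Because $r_\nu^I\ge r_\mu^+-|z_\mu-z_\nu|>r_1>|z_\mu-z_\nu|$, owing to the choice $r_1<r_\mu^+/2$, one may pick $r_1'\in(|z_\mu-z_\nu|,r_\nu^I]$; then $z_\mu\in B_\mathbb{C}(z_\nu,r_1')$ and $B_I(z_\nu^I,r_1')\subset\Omega_I$, so $\mu=[(z_\mu^I,I)]_{\sim_{\scriptscriptstyle \Omega}}\in B_{\breve{\Omega}}(\nu,r_1')$. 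Now select $J'\in\mathbb{S}_\nu^+$, which is nonempty by Proposition \ref{pr-losm}, so that $r_\nu^{J'}=r_\nu^+\ge r_\nu^I\ge r_1'$. By Proposition \ref{pr-losm} \ref{it-lri} the ball $B_{\breve{\Omega}}(\nu,r_1')$ does not depend on the admissible unit used to define it, so it also equals $[(B_\mathbb{C}(z_\nu,r_1'))^{J'}\times\{J'\}]_{\sim_{\scriptscriptstyle \Omega}}$; since $\mu$ lies in it, Proposition \ref{pr-zinviant} forces $\mu=[(z_\mu^{J'},J')]_{\sim_{\scriptscriptstyle \Omega}}$ with $z_\mu^{J'}\in\Omega_{J'}$, that is, $J'\in\mathbb{S}_\mu$. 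Applying the Lipschitz estimate to $J'$ gives $r_\mu^{J'}\ge r_\nu^{J'}-|z_\mu-z_\nu|$, and therefore $r_\mu^+\ge r_\mu^{J'}\ge r_\nu^+-|z_\mu-z_\nu|$.

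Combining the two inequalities yields $|r_\mu^+-r_\nu^+|\le|z_\mu-z_\nu|<r_1$ for every $\nu\in B_{\breve{\Omega}}(\mu,r_1)$, which establishes continuity of $r^+$ at the arbitrary point $\mu$. The delicate step is exactly the transfer of the maximizing unit $J'$ from $\nu$ to $\mu$ in the preceding paragraph: a priori the supremum defining $r_\nu^+$ might be attained only along slices that are inadmissible for $\mu$, and it is precisely the independence of $B_{\breve{\Omega}}(\nu,\cdot)$ on the chosen unit, together with the invariance of $z_\mu$ under $\sim_{\scriptscriptstyle \Omega}$, that rules this out.
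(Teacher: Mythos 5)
Your proof is correct, and it in fact proves more than the paper's own argument does. Both proofs work inside the same framework, the balls $B_{\breve{\Omega}}(\mu,r)$ of Proposition \ref{pr-losm}, but they diverge in substance. The paper takes $\nu\in B_{\breve{\Omega}}(\mu,t_1/2)$ and invokes the inclusion \eqref{eq-bbonr}, $B_{\breve{\Omega}}(\nu,r_\mu^+-t_1/2)\subset B_{\breve{\Omega}}(\mu,r_\mu^+)$, which yields only the lower bound $r_\nu^+\ge r_\mu^+-t_1/2$; it then concludes $\nu\in(r^+)^{-1}(U)$, a step that also requires an upper bound on $r_\nu^+$ (since $U$ is an arbitrary open set only known to contain the interval $(r_\mu^+-t_1,r_\mu^++t_1)$), and that upper bound is never established there. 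So the published proof, as written, establishes lower semicontinuity only. Your argument supplies exactly the missing half: the transfer of the maximizing unit $J'\in\mathbb{S}_\nu^+$ back to $\mu$, justified by the independence of $B_{\breve{\Omega}}(\nu,r_1')$ of the admissible unit (Proposition \ref{pr-losm} \ref{it-lri}) together with Proposition \ref{pr-zinviant}, which forces $J'\in\mathbb{S}_\mu$ and hence $r_\mu^+\ge r_\mu^{J'}\ge r_\nu^+-|z_\mu-z_\nu|$. The payoff of your route is a genuine local $1$-Lipschitz estimate $|r_\mu^+-r_\nu^+|\le|z_\mu-z_\nu|$, from which continuity is immediate; the payoff of the paper's route is brevity, at the price of a gap that your unit-transfer step closes. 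The individual steps of your argument all check out against the paper's machinery: $\mathbb{S}_\mu^+\ne\varnothing$ and $r_\mu^+=r_\mu^I$ for $I\in\mathbb{S}_\mu^+$ by \eqref{eq-rml}; the identification of the base point via Proposition \ref{pr-zinviant}; the membership $I\in\mathbb{S}_\nu$ needed to use the first branch of \eqref{eq-rmi}; and the choice $r_1<r_\mu^+/2$, which guarantees that the interval $(|z_\mu-z_\nu|,r_\nu^I]$ from which you draw $r_1'$ is nonempty.
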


\begin{proof}
	Let $U\in\tau (\mathbb{R}^+)$ and $\mu\in(r^+)^{-1}(U)$. Then there is $t_1\in(0,r^+_\mu)$ such that $B_\mathbb{R}(r^+_\mu,t_1)\subset U$. Let $\nu\in B_{\breve{\Omega}}(\mu,t_1/2)$. By \eqref{eq-bbonr},
	\begin{equation*}
		B_{\breve{\Omega}}(\nu,r_{\mu}^+-t_1/2)\subset B_{\breve{\Omega}}(\mu,r_\mu^+),
	\end{equation*}
	and $\nu\in(r^+)^{-1}(U)$. Hence $\mu$ is a interior point in $(r^+)^{-1}(U)$, and $(r^+)^{-1}(U)$ is open. It is clear that $r^+$ is continuous.
\end{proof}

Let $\Omega\in\tau_s(\mathbb{H})$ be $2$-path-symmetric, and $U\subset\breve{\Omega}$. Denote
\begin{equation*}\index{$r^+(U),\ r^+_U$}
	r^+(U):=r^+_U:=\inf_{\mu\in U}r_\mu^+.
\end{equation*}
If $U$ is compact, then by \eqref{eq-rml},
\begin{equation*}
	r^+_U=r^+_\nu>0,\qquad\mbox{ for some }\qquad\nu\in U.
\end{equation*}

\section{Some properties of $2$-path-symmetric Riemann domains}\label{sec-properties of 2ps Riemann domains}

In this section, we carefully study a relation between the points in $\Omega$ and the points in the associated $2$-path-symmetric Riemann domain $\breve{\Omega}$. This relation is useful to construct  slice regular functions that  cannot be extended slice regularly to a larger domain, in the sense of Riemann domains. Moreover we prove a number of technical results which are crucial in the proof of the interpolation result in Section \ref{sec-interpolation} and in the proof of the Cartan-Thullen theorem.

Let $(\Omega,\pi)$ be a Riemann domain over $\mathbb{C}$ and $U\subset\Omega$. Denote by
\begin{equation*}\index{$\delta_{\scriptscriptstyle\Omega}$}
	\delta_{\scriptscriptstyle\Omega}(U,\partial \Omega):=\sup\left\{r\ge 0:\exists\ B'_\Omega (z,r),\ \forall\ z\in U\right\},
\end{equation*}
the boundary distance of $U$ on $\Omega$, where $B'_\Omega (z,r)$ is the unique subset (if it exists) of $\Omega$ such that $z\in B'_\Omega (z,r)$ and $\pi:B'_\Omega (z,r)\rightarrow B_\mathbb{C} (\pi(z),r)$ is a homeomorphism.
Let
\begin{equation*}\index{$U^{\land}_\Omega$, $\widehat{U}_\Omega$}
	U^{\land}_\Omega:=\widehat{U}_\Omega:=\left\{z\in\Omega:|f(z)|\le \sup_{w\in U}|f(w)|,\ f\in\mathscr{O}(\Omega)\right\}
\end{equation*}
be the holomorphically convex hull, where $\mathscr{O}(\Omega)$\index{$\mathscr{O}(\Omega)$} is the class of holomorphic functions on $\Omega$. Note that, by definition
\begin{equation}\label{eq-usw}
	U\subset \widehat{U}_\Omega.
\end{equation}
Recall each ($1$-dimensional) Riemann domain over $\mathbb{C}$ is a domain of holomorphy. By \cite[Lemma 7.5.5]{Noguchi2016001} we have
\begin{equation*}
	\delta_\Omega(U,\partial \Omega)=	\delta_\Omega(\widehat{U}_\Omega,\partial \Omega).
\end{equation*}

By Theorem \ref{thm-Riemann doamin} we know that if $\Omega\subset\mathbb{H}$ is a $2$-path-symmetric st-domain then $(\breve{\Omega},\pi_{\scriptscriptstyle{\breve{\Omega}}})$ is a Riemann domain over $\mathbb{C}$. It follows from
\begin{equation*}
	\pi_{\scriptscriptstyle{\breve{\Omega}}}\left[B_{\scriptscriptstyle{\breve{\Omega}}}(\mu,r)\right]=B_\mathbb{C}(z_\mu,r)=\pi_{\scriptscriptstyle{\breve{\Omega}}}\left[B'_{\scriptscriptstyle{\breve{\Omega}}}(\mu,r)\right],\qquad\forall\ \mu\in\breve{\Omega}\quad\mbox{and}\quad r\le r_\mu^+,
\end{equation*}
that
\begin{equation*}
	B_{\scriptscriptstyle{\breve{\Omega}}}(\mu,r)=B'_{\scriptscriptstyle{\breve{\Omega}}}(\mu,r),\qquad\forall\ \mu\in\breve{\Omega}\quad\mbox{and}\quad r\le r_\mu^+,
\end{equation*}
where $B_{\scriptscriptstyle{\breve{\Omega}}}(\mu,r)$ is defined by \eqref{eq-balldef}.

Note that sometimes, for graphical reasons, it is convenient to write $U^{\land}_\Omega$ instead of $\widehat{U}_\Omega$, like in the result below:
\begin{prop}\label{pr-K Ur holo convex eq K Ur}
	Let $\Omega$ be a Riemann domain over $\mathbb{C}$, $K\subset\Omega$ be compact with $\hat{K}_\Omega=K$, $U\in\tau(\Omega)$ and $r>0$. Then $K\cap U^r$ is compact in $U$ with
	\begin{equation}\label{eq-kcu}
		(K\cap U^r)^{\land}_U=K\cap U^r,
	\end{equation}
	where
	\begin{equation*}\index{$U^r$}
		U^r:=\{z\in U:\delta_U(z,\partial U)\ge r\}.
	\end{equation*}
\end{prop}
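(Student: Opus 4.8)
The plan is to prove the two assertions separately, writing $L := K\cap U^r$ throughout and recalling the always-valid inclusion $L\subseteq \widehat{L}_U$ from \eqref{eq-usw}, so that for the hull identity only the reverse inclusion $\widehat{L}_U\subseteq L$ will need proof. For the compactness assertion I would first show that $U^r$ is closed in $\Omega$. The key input is that the boundary distance $z\mapsto\delta_U(z,\partial U)$ is $1$-Lipschitz on $U$ for the local metric pulled back by $\pi$: if $B'_U(z,s)\subseteq U$ then the shrunk ball $B'_U(z',s-d(z,z'))$ lies in $U$ for $z'$ near $z$, giving $|\delta_U(z,\partial U)-\delta_U(z',\partial U)|\le d(z,z')$. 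Hence if $z_n\in U^r$ and $z_n\to z$ in $\Omega$, the presence of full $r$-balls $B'_U(z_n,r)$ together with $r>0$ and the local-homeomorphism property of $\pi$ prevents $z$ from lying on $\partial U$, so $z\in U$; the Lipschitz estimate then yields $\delta_U(z,\partial U)\ge r$, i.e. $z\in U^r$. Since $K$ is compact, hence closed in the Hausdorff space $\Omega$, the set $L=K\cap U^r$ is a closed subset of the compact set $K$, therefore compact, and as $L\subseteq U$ it is compact in $U$.

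For the identity \eqref{eq-kcu} I would prove the two inclusions $\widehat{L}_U\subseteq K$ and $\widehat{L}_U\subseteq U^r$ and then intersect. The first is a restriction argument: every $f\in\mathscr{O}(\Omega)$ restricts to $f|_U\in\mathscr{O}(U)$, so for $z\in\widehat{L}_U$ we get $|f(z)|\le\sup_{w\in L}|f(w)|\le\sup_{w\in K}|f(w)|$ using $L\subseteq K$; as this holds for all $f\in\mathscr{O}(\Omega)$ and $z\in U\subseteq\Omega$, we conclude $z\in\widehat{K}_\Omega=K$. The second inclusion is where the theory of domains of holomorphy enters: each connected component of $U$ is a $1$-dimensional Riemann domain over $\mathbb{C}$, hence a domain of holomorphy, so applying the cited boundary-distance/convexity relation \cite[Lemma 7.5.5]{Noguchi2016001} on $U$ componentwise (both $\delta_U$ and $\widehat{\cdot}_U$ decompose over the components of $U$) gives $\delta_U(\widehat{L}_U,\partial U)=\delta_U(L,\partial U)$. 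Since $L\subseteq U^r$ forces $\delta_U(L,\partial U)=\inf_{z\in L}\delta_U(z,\partial U)\ge r$, every point of $\widehat{L}_U$ has boundary distance at least $r$, that is $\widehat{L}_U\subseteq U^r$. Combining the two inclusions gives $\widehat{L}_U\subseteq K\cap U^r=L$, and with $L\subseteq\widehat{L}_U$ this yields \eqref{eq-kcu}.

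I expect the compactness step, specifically verifying that $U^r$ is closed in $\Omega$ and not merely in $U$, to be the main technical obstacle, since it requires controlling the behavior of the embedded $r$-balls under limits in the Riemann domain and ruling out escape of limit points to $\partial U$; once this and the componentwise reduction are in place, the hull identity is a bookkeeping combination of the elementary restriction argument with the deep but already-cited Oka-type lemma.
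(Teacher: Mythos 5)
Your proposal is correct and follows essentially the same route as the paper: the inclusion $(K\cap U^r)^{\land}_U\subseteq U^r$ via the boundary-distance identity of \cite[Lemma 7.5.5]{Noguchi2016001} applied to $U$ as a (union of) Riemann domain(s), and $(K\cap U^r)^{\land}_U\subseteq K$ by restricting functions of $\mathscr{O}(\Omega)$ to $U$ and invoking $\widehat{K}_\Omega=K$ (the paper phrases this step contrapositively, you phrase it directly, which is the same argument). The only difference is that you also verify the compactness of $K\cap U^r$ explicitly, a point the paper's proof leaves implicit.
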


\begin{proof}
	Since
	\begin{equation*}
		\delta_U\left((K\cap U^r)^{\land}_U,\partial U\right)=\delta_U(K\cap U^r,\partial U)\ge \delta_U(U^r,\partial U)= r,
	\end{equation*}
	we have
	\begin{equation}\label{eq-kcur}
		(K\cap U^r)^{\land}_U\subset U^r.
	\end{equation}

	Let $z\in U\backslash K$. Then there is $f\in\mathscr{O}(\Omega)$ such that
	\begin{equation*}
		|f(z)|\ge \sup_{w\in K}|f(w)|\ge \sup_{w\in K\cap U^r}|f(w)|.
	\end{equation*}
	It implies that $z\notin (K\cap U^r)^{\land}_U$. Therefore $(K\cap U^r)^{\land}_U\subset K$, and then \eqref{eq-kcu} holds by \eqref{eq-kcur} and \eqref{eq-usw}.
\end{proof}

The $\sigma$-ball with center $q\in\mathbb{H}$ and radius $s>0$ is introduced in \cite{Gentili2012001} as follows:
\begin{equation*}
	\Sigma(q,s):=\{q\in\mathbb{H}:\sigma(p,q)<s\}
\end{equation*}
and is induced by the so-called $\sigma$-distance
\begin{equation*}
	\sigma(p,q):=\begin{cases}
		|q-p|,\qquad &\exists\ I\in\mathbb{S},\ s.t.\ p,q\in\mathbb{C}_I,\\
		\sqrt{(Re(q-p))^2+|Im(q)|^2+|Im(p)|^2},\qquad &\mbox{otherwise}.
	\end{cases}
\end{equation*}
Let $p\in\mathbb{H}$ and $U,V\subset\mathbb{H}$. Set
\begin{equation*}
	\sigma^{p}_{V}:=\sigma(p,V):=\inf_{q\in V}\left(\sigma(p,q)\right),
	\qquad\mbox{and}\qquad
	\sigma^{U}_{V}:=\sigma(U,V):=\inf_{p\in U, q\in V}\left(\sigma(p,q)\right).
\end{equation*}

\begin{prop}\label{pr-lbes}
	Let $\Omega\in\tau_s(\mathbb{H})$ be $2$-path-symmetric, $I\in\mathbb{S}$, $p\in\Omega_I$ and $U\subset\Omega_I$. Then
	\begin{equation}\label{eq-sqmbo}
		\sigma(p,\mathbb{H}\backslash \Omega)=\delta_{I}\left(p,\mathbb{C}_I\backslash\Omega_I\right),
	\end{equation}
	and
	\begin{equation}\label{eq-suh}
		\sigma(U,\mathbb{H}\backslash \Omega)=\delta_{I}(U,\mathbb{C}_I\backslash\Omega_I).
	\end{equation}
\end{prop}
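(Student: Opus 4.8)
The plan is to prove the pointwise identity \eqref{eq-sqmbo} first and then deduce \eqref{eq-suh} from it by taking infima. For the easy inequality $\sigma(p,\mathbb{H}\backslash\Omega)\le\delta_I(p,\mathbb{C}_I\backslash\Omega_I)$, I would observe that $\mathbb{C}_I\backslash\Omega_I=\mathbb{C}_I\cap(\mathbb{H}\backslash\Omega)\subset\mathbb{H}\backslash\Omega$ and that on the single plane $\mathbb{C}_I$ the function $\sigma$ restricts to the Euclidean distance $\delta_I$ (the first branch of the definition applies to any two points of $\mathbb{C}_I$). Hence the infimum defining $\sigma(p,\mathbb{H}\backslash\Omega)$, taken over a set containing $\mathbb{C}_I\backslash\Omega_I$, is at most $\inf_{q\in\mathbb{C}_I\backslash\Omega_I}\sigma(p,q)=\delta_I(p,\mathbb{C}_I\backslash\Omega_I)$.

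The substance is the reverse inequality, which I would restate as follows: writing $d:=\delta_I(p,\mathbb{C}_I\backslash\Omega_I)$, so that $B_I(p,d)\subset\Omega_I$, every $q\in\mathbb{H}$ with $\sigma(p,q)<d$ lies in $\Omega$. If $q\in\mathbb{C}_I$ this is immediate, since then $\sigma(p,q)=|p-q|<d$ forces $q\in B_I(p,d)\subset\Omega$. So the real work is the off-plane case $q\notin\mathbb{C}_I$.

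In that case write $q=z^K$ with $z\in\mathbb{C}$ and $K\in\mathbb{S}\backslash\{\pm I\}$, and put $z':=\Psi_I^i(p)$. The key computation, read off directly from the definition of $\sigma$, is the estimate $\sigma(p,q)\ge\max\{\delta_I(p,z^I),\,\delta_I(p,\overline{z}^{\,I})\}$, where $\overline{z}:=\Conj_\mathbb{C}(z)$: the value $\sigma(p,q)$ dominates \emph{both} Euclidean distances in $\mathbb{C}_I$ from $p$ to the two lifts $z^I$ and $\overline{z}^{\,I}$ of the complex coordinate $z$. Consequently $\sigma(p,q)<d$ gives $z^I,\overline{z}^{\,I}\in B_I(p,d)\subset\Omega_I$. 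Now consider the lens $U:=B_\mathbb{C}(z',d)\cap\Conj_\mathbb{C}\big(B_\mathbb{C}(z',d)\big)$, which is convex (hence path-connected) and conjugation-invariant; since $\sigma(p,q)<d$ forces $|\mathrm{Im}(z')|<d$, the two discs overlap on $\mathbb{R}$, so $U\cap\Omega_{2ps}\supset U\cap\Omega_\mathbb{R}\neq\varnothing$ by \eqref{eq-omr}. Moreover $U^I\subset B_I(p,d)\subset\Omega_I$, and by conjugation-invariance of $U$ also $U^{-I}=U^I\subset\Omega_I=\Omega_{-I}$. Applying Proposition \ref{pr-los2} to $U$ with the pair $(I,-I)$ yields $U\subset\Omega_{2ps}$. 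Finally $z^I,\overline{z}^{\,I}\in B_I(p,d)$ means exactly $z\in U$, whence $z\in\Omega_{2ps}$ and therefore $q=z^K\in\Omega^{2ps}\subset\Omega$ by Remark \ref{rmk-2ps}\ref{it-Omega 2ps}. This establishes \eqref{eq-sqmbo}.

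The second identity \eqref{eq-suh} then follows formally by taking infima in the first: $\sigma(U,\mathbb{H}\backslash\Omega)=\inf_{p\in U}\sigma(p,\mathbb{H}\backslash\Omega)=\inf_{p\in U}\delta_I(p,\mathbb{C}_I\backslash\Omega_I)=\delta_I(U,\mathbb{C}_I\backslash\Omega_I)$, using \eqref{eq-sqmbo} at each $p\in U$. I expect the main obstacle to be precisely the off-plane case above: the crux is to recognize that $\sigma(p,q)<d$ forces the complex coordinate of $q$ into the conjugation-symmetric lens $B_\mathbb{C}(z',d)\cap\Conj_\mathbb{C}(B_\mathbb{C}(z',d))$ and not merely into the disc $B_\mathbb{C}(z',d)$, so that the conjugate pair $(I,-I)$ becomes available and $2$-path-symmetry (through Proposition \ref{pr-los2}) can transport $q$ back into $\Omega$. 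The definition of $\sigma$ is calibrated exactly so that this lens, rather than a larger disc, is the correct cross-section of the $\sigma$-ball.
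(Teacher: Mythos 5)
Your argument is correct, and at its core it runs on the same engine as the paper's own proof: the only nontrivial point is the off-plane case, and both arguments handle it by exploiting the conjugate pair $(I,-I)$ --- the one pair of distinct imaginary units whose slices are both constrained by the single hypothesis $B_I(p,d)\subset\Omega_I$ --- and then letting $2$-path-symmetry transport membership to every other slice. The difference is only in the packaging. The paper works with an explicit segment: for $x+yJ\in\Sigma(p,r)$ it checks that $(\mathcal{L}_x^{x+yi})^{I}$ and $(\mathcal{L}_x^{x+yi})^{-I}$ lie in $B_I(p,r)\subset\Omega$ and applies Definition \ref{def-2ps} directly to get $(\mathcal{L}_x^{x+yi})^{J}\subset\Omega$, hence $x+yJ\in\Omega$. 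You instead identify the cross-section of the $\sigma$-ball as the conjugation-symmetric lens $B_\mathbb{C}(z',d)\cap\Conj_\mathbb{C}\big(B_\mathbb{C}(z',d)\big)$, feed that lens to Proposition \ref{pr-los2} with the pair $(I,-I)$, and conclude via $z\in\Omega_{2ps}$ and $\Omega^{2ps}\subset\Omega$ (Remark \ref{rmk-2ps}). Your route is slightly less self-contained (it invokes Proposition \ref{pr-los2} and the $\Omega_{2ps}$ machinery where the paper needs only the definition of $2$-path-symmetry), but it makes explicit a fact the paper leaves implicit: the lens, not a disc, is exactly the trace of the $\sigma$-ball, which is the right geometric picture.

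One caveat, which affects your proof and the paper's proof in exactly the same way. Your ``key computation'' $\sigma(p,q)\ge\max\{\delta_I(p,z^I),\delta_I(p,\overline{z}^{\,I})\}$ is \emph{false} for the formula for $\sigma$ as printed in the paper, whose off-slice branch reads $\sqrt{(\mathrm{Re}(q-p))^2+|\mathrm{Im}(q)|^2+|\mathrm{Im}(p)|^2}$: taking $p=I$ and $q=J\neq\pm I$ gives $\sigma(p,q)=\sqrt{2}$, while $\delta_I\big(p,\overline{z}^{\,I}\big)=|I-(-I)|=2$. Indeed, with that printed formula the proposition itself fails: the set $\Omega:=B_I(I,3/2)\cup L^{\mathbb{S}}$, with $L:=B_\mathbb{C}(i,3/2)\cap\Conj_\mathbb{C}\big(B_\mathbb{C}(i,3/2)\big)$, is slice-open and $2$-path-symmetric, yet $J\notin\Omega$ although $\sqrt{(\mathrm{Re}(J-I))^2+|\mathrm{Im}(J)|^2+|\mathrm{Im}(I)|^2}=\sqrt2<3/2=\delta_I(I,\mathbb{C}_I\backslash\Omega_I)$; correspondingly, the paper's own claim that \emph{both} segments $(\mathcal{L}_x^{x+yi})^{\pm I}$ lie in $B_I(p,r)$ breaks at the same point (here $(\mathcal{L}_0^{i})^{-I}(1)=-I\notin B_I(I,3/2)$). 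Both proofs are correct for the $\sigma$-distance of the cited reference \cite{Gentili2012001}, in which the imaginary parts are added \emph{before} squaring, $\sqrt{(\mathrm{Re}(q-p))^2+(|\mathrm{Im}(q)|+|\mathrm{Im}(p)|)^2}$; with that definition your inequality is immediate, since $(|y|+|b|)^2\ge(y\mp b)^2$, and the lens is then precisely the cross-section of the $\sigma$-ball, as your closing remark says. So you have implicitly used the intended definition --- the printed one is a typo --- and your argument stands.
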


\begin{proof}
	$\sigma^p_{\mathbb{H}\backslash \Omega}\le\delta_{I}\left(p,\mathbb{C}_I\backslash\Omega_I\right)$ holds directly by $B_I(p,r)\subset\Sigma(p,r)$ for each $r>0$.
	
	Conversely, suppose $B_I(p,r)\subset\Omega$ for some fixed $r>0$. If $B_I(p,r)\cap\mathbb{R}=\varnothing$, then
	\begin{equation*}
		\Sigma(p,r)=B_I(p,r)\subset\Omega.
	\end{equation*}
	Since the choice of $r>0$ with $B_I(p,r)\subset\Omega$ is arbitrary,
	\begin{equation}\label{eq-sbpr}
		\sigma(p,\mathbb{H}\backslash \Omega)\ge\delta_{I}\left(p,\mathbb{C}_I\backslash\Omega_I\right)
	\end{equation}
	and \eqref{eq-sqmbo} holds. Otherwise $B_I(p,r)\cap\mathbb{R}\neq\varnothing$. Then for each $x+yJ\in\Sigma(p,r)$, it is each to check that
	\begin{equation*}
		\mathcal{L}_x^{x+yi}\in\mathscr{P}_0(\mathbb{C})\qquad\mbox{and}\qquad (\mathcal{L}_x^{x+yi})^I,(\mathcal{L}_x^{x+yi})^{-I}\subset\Omega.
	\end{equation*}
	Since $\Omega$ is $2$-path-symmetric, $(\mathcal{L}_x^{x+yi})^J\subset \Omega$. Therefore
	\begin{equation*}
		x+yJ=(\mathcal{L}_x^{x+yi})^J(1)\in\Omega.
	\end{equation*}
	Hence $\Sigma(p,r)\subset\Omega$. Since the choice of $r>0$ with $B_I(p,r)\subset\Omega$ is arbitrary, \eqref{eq-sbpr} holds. Therefore \eqref{eq-sqmbo} holds. Then
	\begin{equation*}
		\sigma(U,\mathbb{H}\backslash \Omega)
		=\inf_{q\in U}\sigma(q,\mathbb{H}\backslash \Omega)
		=\inf_{q\in U}\delta_{I}(q,\mathbb{C}_I\backslash\Omega_I)
		=\delta_{I}(U,\mathbb{C}_I\backslash\Omega_I),
	\end{equation*}
	i.e. \eqref{eq-suh} holds.
\end{proof}

\begin{prop}\label{pr-lmi}
	Let $\Omega\in\tau_s(\mathbb{H})$ be $2$-path-symmetric, $\mu\in\breve{\Omega}$ and $I,J\in\mathbb{S}_\mu$. Then
	\begin{equation}\label{eq-lxy}
		\left[\left(\overline{z_\mu}^{-I},-I\right)\right]_{\sim_\Omega}=\left[\left(\overline{z_\mu}^{-J},-J\right)\right]_{\sim_\Omega}.
	\end{equation}
\end{prop}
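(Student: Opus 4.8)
The plan is to reduce \eqref{eq-lxy} to showing that the two chosen representatives
\[
u:=\left(\overline{z_\mu}^{-I},-I\right)\qquad\text{and}\qquad v:=\left(\overline{z_\mu}^{-J},-J\right)
\]
are $\sim_{\scriptscriptstyle\Omega}$-equivalent, since by construction of $\breve\Omega$ as a quotient this immediately forces their classes to coincide. First I would record the bookkeeping identity $\overline{z_\mu}^{-I}=z_\mu^I$: writing $z_\mu=x+yi$ one has $\overline{z_\mu}^{-I}=\Psi_i^{-I}(x-yi)=x+yI=\Psi_i^I(x+yi)=z_\mu^I$, and likewise with $J$. Because $I\in\mathbb{S}_\mu$ gives $z_\mu^I\in\Omega$ together with $z_\mu^I\in\mathbb{C}_I=\mathbb{C}_{-I}$, we get $\overline{z_\mu}^{-I}=z_\mu^I\in\Omega_{-I}$, so $u$ (and similarly $v$) genuinely lies in $\bigsqcup_{K\in\mathbb{S}}\Omega_K$ and the classes in \eqref{eq-lxy} are well defined. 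Reading off the canonical coordinates via \eqref{eq-uxu} then yields $z_u=z_v=\overline{z_\mu}$, with $I_u=-I$ and $I_v=-J$.

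Next I would verify the two defining conditions of $\sim_{\scriptscriptstyle\Omega}$ in \eqref{eq-sim}. Condition (i), $z_u=z_v$, is immediate since both equal $\overline{z_\mu}$. For condition (ii) I would split into cases. If $I=J$, then $u=v$ and there is nothing to prove. If $I\neq J$, then since $I,J\in\mathbb{S}_\mu$ the dichotomy \eqref{eq-msn} forces $z_\mu\in\Omega_{2ps}$ (indeed $\mathbb{S}_\mu=\mathbb{S}$); invoking the conjugation symmetry $\Omega_{2ps}=\Conj_\mathbb{C}(\Omega_{2ps})$ from Remark \ref{rmk-2ps}\ref{it-o2c} gives $z_u=\overline{z_\mu}\in\Omega_{2ps}$. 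Hence the hypothesis $z_u\notin\Omega_{2ps}$ of condition (ii) is never satisfied, so (ii) holds vacuously. Therefore $u\sim_{\scriptscriptstyle\Omega}v$, and $[u]_{\sim_{\scriptscriptstyle\Omega}}=[v]_{\sim_{\scriptscriptstyle\Omega}}$, which is exactly \eqref{eq-lxy}.

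There is no real obstacle here, as the statement is in essence a definition chase. The only two points requiring a little care are the interplay between the sign flip $I\mapsto -I$ and complex conjugation, which is precisely what lands the first coordinate $\overline{z_\mu}^{-I}$ back inside $\Omega_{-I}=\Omega_I$, and the use of the $\Conj_\mathbb{C}$-invariance of $\Omega_{2ps}$ (Remark \ref{rmk-2ps}\ref{it-o2c}), which is what allows condition (ii) to be discharged in the case $I\neq J$.
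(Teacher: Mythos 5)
Your proof is correct and follows essentially the same route as the paper's: both arguments rest on the identity $\overline{z_\mu}^{-I}=z_\mu^I$, the dichotomy \eqref{eq-msn} (which forces $I=J$ unless $z_\mu\in\Omega_{2ps}$), and the conjugation invariance $\Omega_{2ps}=\Conj_\mathbb{C}(\Omega_{2ps})$ from Remark \ref{rmk-2ps}~\ref{it-o2c}. The only cosmetic difference is that you organize the cases as $I=J$ versus $I\neq J$ and verify \eqref{eq-sim} directly, whereas the paper splits on $z_\mu\in\Omega_{2ps}$ versus $z_\mu\notin\Omega_{2ps}$ and cites \eqref{eq-lir}.
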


\begin{proof}
	If $z_\mu\in\Omega_{2ps}$, then by Remark \ref{rmk-2ps} \ref{it-o2c} we have $\overline{z_\mu}\in\Omega_{2ps}$. It implies from \eqref{eq-lir} that $(\overline{z_\mu}^{-I},-I)\sim_{\scriptscriptstyle \Omega}(\overline{z_\mu}^{-J},-J)$. Hence \eqref{eq-lxy} holds. Otherwise, $z_\mu\notin\Omega_{2ps}$. By \eqref{eq-msn}, we have $\mathbb{S}_\mu=\{I_\mu\}$ and then $I=J=I_\mu$. It follows from $\overline{z_\mu}^{-I}=z_\mu^I\in\Omega$ that \eqref{eq-lxy} holds.
\end{proof}

For each $\mu\in\breve{\Omega}$, we choose a fixed $J_\mu\in\mathbb{S}_\mu$. Define
\begin{equation}\label{eq-conjugation}\index{${\Conj}_{\breve{\scriptscriptstyle\Omega}}$, $\overline{\mu}$}
	\begin{split}
		{\Conj}_{\breve{\scriptscriptstyle\Omega}}:\ \breve{\Omega}\ &\xlongrightarrow[\hskip1cm]{}\ \breve{\Omega},
		\\ \mu\ &\shortmid\!\xlongrightarrow[\hskip1cm]{}\ \overline{\mu}:=\left[\left(\overline{z_\mu}^{-J_\mu}, -J_\mu\right)\right]_{\sim_\Omega}=\left[\left(z_\mu^{J_\mu}, -J_\mu\right)\right]_{\sim_\Omega}.
	\end{split}
\end{equation}
According to Proposition \ref{pr-lmi}, ${\Conj}_{\breve{\scriptscriptstyle\Omega}}$ does not depend on the choice of $J_\mu\in\mathbb{S}_\mu$. It is easy to check that
\begin{equation}\label{eq-zomoz}
	z_{\overline{\mu}}=\overline{z_\mu},\qquad\mbox{and}\qquad \mathbb{S}_{\overline{\mu}}=-\mathbb{S}_{\mu}.
\end{equation}
If $z_\mu=\Omega_{2ps}$, then by \eqref{eq-sim},
\begin{equation*}
	\overline{\mu}=[(\overline{z_\mu}^I,I)]_{\sim_\Omega},\qquad\forall\ I\in\mathbb{S}_\mu=\mathbb{S}.
\end{equation*}

\begin{prop}\label{pr-rrcb}
	Let $\Omega\in\tau_s(\mathbb{H})$ be $2$-path-symmetric, $\mu\in\Omega$, and $U\subset\breve{\Omega}$. Then
	\begin{equation*}
		r_\mu^+=r_{\overline{\mu}}^+,
		\qquad\mbox{and}\qquad
		r_U^+=r_{\Conj_{\breve{\scriptscriptstyle\Omega}}(U)}^+.
	\end{equation*}
\end{prop}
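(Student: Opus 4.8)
The plan is to reduce both equalities to a single pointwise identity relating the radii at $\mu$ and at its conjugate $\overline{\mu}$, and then to pass to the supremum and to the infimum.

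First I would prove that $r_{\overline{\mu}}^{-I}=r_\mu^I$ for every $I\in\mathbb{S}$. The underlying arithmetic facts are that $\mathbb{C}_{-I}=\mathbb{R}+\mathbb{R}(-I)=\mathbb{C}_I$, whence $\Omega_{-I}=\Omega_I$, together with the cancellation $\overline{w}^{-I}=w^I$ for $w\in\mathbb{C}$ (writing $w=x+yi$ gives $\overline{w}^{-I}=\Psi_i^{-I}(x-yi)=x+yI=w^I$). By \eqref{eq-zomoz} we have $z_{\overline{\mu}}=\overline{z_\mu}$ and $\mathbb{S}_{\overline{\mu}}=-\mathbb{S}_\mu$, so $-I\in\mathbb{S}_{\overline{\mu}}$ precisely when $I\in\mathbb{S}_\mu$. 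In that case \eqref{eq-rmi} yields
\[
r_{\overline{\mu}}^{-I}=\delta_{-I}\bigl(z_{\overline{\mu}}^{-I},\mathbb{C}_{-I}\setminus\Omega_{-I}\bigr)=\delta_{I}\bigl(z_\mu^{I},\mathbb{C}_I\setminus\Omega_I\bigr)=r_\mu^I,
\]
using $z_{\overline{\mu}}^{-I}=\overline{z_\mu}^{-I}=z_\mu^I$; and if $I\notin\mathbb{S}_\mu$ then both sides vanish by \eqref{eq-rmi}. Thus the identity holds for all $I\in\mathbb{S}$.

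Next, taking the supremum over $I\in\mathbb{S}$ and reindexing $J=-I$ (so that $J$ also ranges over all of $\mathbb{S}$) in \eqref{eq-rms}, I obtain
\[
r_{\overline{\mu}}^{+}=\sup_{J\in\mathbb{S}}r_{\overline{\mu}}^{J}=\sup_{I\in\mathbb{S}}r_{\overline{\mu}}^{-I}=\sup_{I\in\mathbb{S}}r_\mu^{I}=r_\mu^{+},
\]
which is the first assertion. For the second, I would first note that $\Conj_{\breve{\scriptscriptstyle\Omega}}$ is an involution: choosing, as permitted by Proposition \ref{pr-lmi}, the representative $J_{\overline{\mu}}=-J_\mu\in\mathbb{S}_{\overline{\mu}}$ in \eqref{eq-conjugation}, one computes
\[
\overline{\overline{\mu}}=\bigl[\bigl(z_{\overline{\mu}}^{J_{\overline{\mu}}},-J_{\overline{\mu}}\bigr)\bigr]_{\sim_\Omega}=\bigl[\bigl(z_\mu^{J_\mu},J_\mu\bigr)\bigr]_{\sim_\Omega}=\mu.
\]
Hence $\Conj_{\breve{\scriptscriptstyle\Omega}}$ restricts to a bijection of $U$ onto $\Conj_{\breve{\scriptscriptstyle\Omega}}(U)$, and applying the first part inside the infimum defining $r^+_U$ gives
\[
r^+_{\Conj_{\breve{\scriptscriptstyle\Omega}}(U)}=\inf_{\nu\in\Conj_{\breve{\scriptscriptstyle\Omega}}(U)}r_\nu^{+}=\inf_{\mu\in U}r_{\overline{\mu}}^{+}=\inf_{\mu\in U}r_\mu^{+}=r_U^{+}.
\]

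The whole argument is essentially bookkeeping through the conjugation, and the only point that needs genuine care—and which I expect to be the main (minor) obstacle—is the correct matching of indices: verifying $\overline{z_\mu}^{-I}=z_\mu^I$ and $\mathbb{C}_{-I}=\mathbb{C}_I$, and separating the degenerate indices $I\notin\mathbb{S}_\mu$, for which the corresponding radius is zero, from the active ones.
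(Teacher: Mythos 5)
Your proof is correct and follows essentially the same route as the paper's: both arguments rest on the facts $\overline{z_\mu}^{-I}=z_\mu^{I}$, $\mathbb{C}_{-I}=\mathbb{C}_{I}$ and $\mathbb{S}_{\overline{\mu}}=-\mathbb{S}_{\mu}$, and then pass to the supremum and infimum. The only difference is organizational: you prove the exact pointwise identity $r_{\overline{\mu}}^{-I}=r_\mu^{I}$ for every $I\in\mathbb{S}$ and reindex the supremum, whereas the paper establishes the one-sided inequality $r_{\overline{\mu}}^{+}\ge r_\mu^{+}$ at a maximizing index $I\in\mathbb{S}_\mu^{+}$ (using \eqref{eq-rml}) and gets the reverse inequality from the involution $\overline{\overline{\mu}}=\mu$ --- a difference of bookkeeping, not of substance.
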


\begin{proof}
	Let $I\in\mathbb{S}_\mu^+$. By \eqref{eq-rml}, $r_\mu^+=r_\mu^I>0$. Then $-I\in -\mathbb{S}_\mu=\mathbb{S}_{\overline{\mu}}$, and
	\begin{equation*}
		B_{-I}\left(z_{\overline{\mu}}^{-I},r_\mu^+\right)=B_{-I}\left(\overline{z_\mu}^{-I},r_\mu^+\right)=B_I(z_\mu^I,r_\mu^I)\subset\Omega.
	\end{equation*}
	It implies that
		$r_{\overline{\mu}}^+
		\ge r_{\overline{\mu}}^{-I}
		\ge r_\mu^+$.
	Similarly,
		$r_\mu^+
		=r_{\overline{\overline{\mu}}}^+
		\ge r_{\overline{\mu}}^+$.
	Therefore $r_\mu^+=r_{\overline{\mu}}^+$. Then
	\begin{equation*}
		r_U^+=\inf_{\nu\in U} r_\nu^+
		=\inf_{\nu\in U}r_{\overline{\nu}}^+
		=\inf_{\overline{\nu}\in \Conj_{\breve{\scriptscriptstyle\Omega}}(U)}r_{\overline{\nu}}^+=r_{\Conj_{\breve{\scriptscriptstyle\Omega}}(U)}^+.
	\end{equation*}
\end{proof}

\begin{prop}
	Let $\Omega\in\tau_s(\mathbb{H})$ be $2$-path-symmetric. Then $\Conj_{\breve{\scriptscriptstyle\Omega}}:\, \breve{\Omega}\to\breve{\Omega}$ is a homeomorphism.
\end{prop}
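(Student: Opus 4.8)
The plan is to realize $\Conj_{\breve{\scriptscriptstyle\Omega}}$ as the descent to the quotient $\breve{\Omega}=\bigsqcup_{I\in\mathbb{S}}\Omega_I/\sim_{\scriptscriptstyle \Omega}$ of an explicit conjugation defined directly on the disjoint union, and then to invoke the universal property of the final topology \eqref{eq-two}. First I would introduce
$$c\colon\ \bigsqcup_{I\in\mathbb{S}}\Omega_I\ \longrightarrow\ \bigsqcup_{I\in\mathbb{S}}\Omega_I,\qquad c(u):=\bigl(\overline{z_u}^{-I_u},-I_u\bigr)=\bigl(z_u^{I_u},-I_u\bigr).$$
This is well defined because $\mathbb{C}_{I_u}=\mathbb{C}_{-I_u}$ forces $\Omega_{I_u}=\Omega_{-I_u}$, so that $z_u^{I_u}\in\Omega_{-I_u}$. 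On each summand the map is simply $(q,I)\mapsto(q,-I)$, a homeomorphism of $\Omega_I\times\{I\}$ onto $\Omega_{-I}\times\{-I\}$; by the universal property of the disjoint union topology, $c$ is therefore a homeomorphism of $\bigsqcup_{I\in\mathbb{S}}\Omega_I$. A direct computation gives $c\circ c=\id$, so $c$ is an involution.

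Next I would show that $c$ is compatible with $\sim_{\scriptscriptstyle \Omega}$ and so descends to the quotient. Suppose $u\sim_{\scriptscriptstyle \Omega}v$. Then $z_{c(u)}=\overline{z_u}=\overline{z_v}=z_{c(v)}$ by \eqref{eq-sim}, which is condition (i) for $c(u),c(v)$. Moreover, by Remark \ref{rmk-2ps} \ref{it-o2c} one has $\overline{z_u}\notin\Omega_{2ps}$ if and only if $z_u\notin\Omega_{2ps}$, and in that case $I_u=I_v$, hence $I_{c(u)}=-I_u=-I_v=I_{c(v)}$, which is condition (ii). Thus $c(u)\sim_{\scriptscriptstyle \Omega}c(v)$, so $c$ induces a map $\overline{c}\colon\breve{\Omega}\to\breve{\Omega}$ satisfying $\overline{c}\circ i_{\scriptscriptstyle \Omega}=i_{\scriptscriptstyle \Omega}\circ c$, that is $\overline{c}\bigl([u]_{\sim_{\scriptscriptstyle \Omega}}\bigr)=\bigl[(\overline{z_u}^{-I_u},-I_u)\bigr]_{\sim_{\scriptscriptstyle \Omega}}$. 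Since for $\mu=[u]_{\sim_{\scriptscriptstyle \Omega}}$ one has $z_\mu=z_u$ and $I_u\in\mathbb{S}_\mu$ by \eqref{eq-smli}, comparison with the definition \eqref{eq-conjugation} together with Proposition \ref{pr-lmi} (which guarantees that $[(\overline{z_\mu}^{-J},-J)]_{\sim_{\scriptscriptstyle \Omega}}$ is independent of $J\in\mathbb{S}_\mu$) yields $\overline{c}=\Conj_{\breve{\scriptscriptstyle\Omega}}$.

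Finally, since $\tau(\breve{\Omega})$ is precisely the final topology induced by the canonical projection $i_{\scriptscriptstyle \Omega}$, a map out of $\breve{\Omega}$ is continuous exactly when its composite with $i_{\scriptscriptstyle \Omega}$ is. Here $\Conj_{\breve{\scriptscriptstyle\Omega}}\circ i_{\scriptscriptstyle \Omega}=i_{\scriptscriptstyle \Omega}\circ c$ is continuous as a composite of the continuous maps $c$ and $i_{\scriptscriptstyle \Omega}$, so $\Conj_{\breve{\scriptscriptstyle\Omega}}$ is continuous. Because $c$ is an involution, so is $\overline{c}=\Conj_{\breve{\scriptscriptstyle\Omega}}$; hence $\Conj_{\breve{\scriptscriptstyle\Omega}}$ is a bijection whose inverse is itself and therefore also continuous, making it a homeomorphism. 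I expect the only delicate point to be the descent step: checking that $c$ respects $\sim_{\scriptscriptstyle \Omega}$ relies on the conjugation-invariance $\Omega_{2ps}=\Conj_\mathbb{C}(\Omega_{2ps})$ of Remark \ref{rmk-2ps} \ref{it-o2c}, while identifying the descended map with the chart-dependent definition \eqref{eq-conjugation} requires Proposition \ref{pr-lmi}; everything else is formal.
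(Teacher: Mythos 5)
Your proof is correct, and it reaches the result by a genuinely different route than the paper. The paper works entirely downstairs in $\breve{\Omega}$: bijectivity comes from $\Conj_{\breve{\scriptscriptstyle\Omega}}^2=\id_{\breve{\scriptscriptstyle\Omega}}$, as in your argument, but continuity is proved pointwise with the ball basis of Proposition \ref{pr-losm}: given $U\in\tau(\breve{\Omega})$ and $\mu\in\Conj_{\breve{\scriptscriptstyle\Omega}}^{-1}(U)$, one chooses $r\in(0,r_\mu^+)$ with $B_{\breve{\scriptscriptstyle\Omega}}(\overline{\mu},r)\subset U$ (meaningful since $r_\mu^+=r_{\overline{\mu}}^+$ by Proposition \ref{pr-rrcb}) and then computes, using $\mathbb{S}_{\overline{\mu}}=-\mathbb{S}_{\mu}$ from \eqref{eq-zomoz}, that $\Conj_{\breve{\scriptscriptstyle\Omega}}\big(B_{\breve{\scriptscriptstyle\Omega}}(\overline{\mu},r)\big)=B_{\breve{\scriptscriptstyle\Omega}}(\mu,r)\subset\Conj_{\breve{\scriptscriptstyle\Omega}}^{-1}(U)$, so every point of $\Conj_{\breve{\scriptscriptstyle\Omega}}^{-1}(U)$ is interior. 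You work upstairs instead: the relabeling $c:(q,I)\mapsto(q,-I)$ is an involutive homeomorphism of $\bigsqcup_{I\in\mathbb{S}}\Omega_I$, it descends through $\sim_{\scriptscriptstyle\Omega}$ --- the only substantive check, which you correctly base on the two conditions in \eqref{eq-sim} together with $\Omega_{2ps}=\Conj_{\mathbb{C}}(\Omega_{2ps})$ from Remark \ref{rmk-2ps} \ref{it-o2c} --- the descended map equals $\Conj_{\breve{\scriptscriptstyle\Omega}}$ by Proposition \ref{pr-lmi} (or simply by evaluating at the representative $(z_\mu^{J_\mu},J_\mu)$), and continuity then comes for free from the universal property of the final topology \eqref{eq-two}; your implicit identification of the final topology for the family $\{i^I_{\scriptscriptstyle\Omega}\}$ with the quotient topology for the single projection $i_{\scriptscriptstyle\Omega}$ is a one-line check, since $i^I_{\scriptscriptstyle\Omega}=i_{\scriptscriptstyle\Omega}|_{\Omega_I\times\{I\}}$ and the disjoint-union topology is itself final with respect to the summand inclusions. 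Your approach buys a cleaner, more structural proof: it needs no metric input at all (no $r^+$, no Proposition \ref{pr-rrcb}, no ball basis) and it re-derives the well-definedness of \eqref{eq-conjugation} as a by-product of the descent step. What the paper's hands-on computation buys is the explicit ball-swapping identity $\Conj_{\breve{\scriptscriptstyle\Omega}}\big(B_{\breve{\scriptscriptstyle\Omega}}(\overline{\mu},r)\big)=B_{\breve{\scriptscriptstyle\Omega}}(\mu,r)$, which is of the same kind as \eqref{eq-schso} and is the form in which conjugation is actually used in later arguments such as Proposition \ref{pr-loitsm}.
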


\begin{proof}
	$\Conj_{\breve{\scriptscriptstyle\Omega}}$ is bijective directly by $\Conj_{\breve{\scriptscriptstyle\Omega}}^2=\id_{\breve{\scriptscriptstyle\Omega}}$, where $\id_{\breve{\scriptscriptstyle\Omega}}$ is the identity map on $\breve\Omega$. Let $U\in\tau(\breve{\Omega})$ and $\mu\in \Conj_{\breve{\scriptscriptstyle\Omega}}^{-1}(U)$. It implies that
	\begin{equation*}
		\overline{\mu}={\Conj}_{\breve{\scriptscriptstyle\Omega}}(\mu)={\Conj}_{\breve{\scriptscriptstyle\Omega}}^{-1}(\mu)\in
		{\Conj}_{\breve{\scriptscriptstyle\Omega}}^{-1}\left[{\Conj}_{\breve{\scriptscriptstyle\Omega}}^{-1}(U)\right]= U,
	\end{equation*}
	and there is $r\in(0,r_\mu^+)$ such that $B_{\breve{\scriptscriptstyle \Omega}}(\overline{\mu},r)\subset U$. Let $J_\mu\in\mathbb{S}_\mu$. Then
	\begin{equation*}
		-J_\mu\in-\mathbb{S}_\mu=\mathbb{S}_{\overline{\mu}}
	\end{equation*}
	and
	\begin{equation*}
		\begin{split}
			B_{\breve{\scriptscriptstyle \Omega}}(\mu,r)
			&=\left[B_{J_\mu}\left(z_\mu^{J_\mu},r\right)\times \{J_\mu\}\right]_{\sim_\Omega}
			={\Conj}_{\breve{\scriptscriptstyle\Omega}}^2\left(\left[B_{J_\mu}\left(z_\mu^{J_\mu},r\right)\times\{J_\mu\}\right]_{\sim_\Omega}\right)
			\\&={\Conj}_{\breve{\scriptscriptstyle\Omega}}\left(\left[B_{J_\mu}\left(\overline{z_\mu}^{-J_\mu},r\right)\times \{-J_\mu\}\right]_{\sim_\Omega}\right)
			\\&={\Conj}_{\breve{\scriptscriptstyle\Omega}}\left(\left[B_{-J_\mu}\left(z_{\overline{\mu}}^{-J_\mu},r\right)\times \{-J_\mu\}\right]_{\sim_\Omega}\right)
			={\Conj}_{\breve{\scriptscriptstyle\Omega}}\left(B_{\breve{\scriptscriptstyle \Omega}}(\overline{\mu},r)\right)
			\\&\subset {\Conj}_{\breve{\scriptscriptstyle\Omega}}(U)
			={\Conj}_{\breve{\scriptscriptstyle\Omega}}^{-1}(U).
		\end{split}
	\end{equation*}
	Therefore, $\mu$ is an interior point in ${\Conj}_{\breve{\scriptscriptstyle\Omega}}^{-1}(U)$, and ${\Conj}_{\breve{\scriptscriptstyle\Omega}}^{-1}(U)$ is open. It is clear that ${\Conj}_{\breve{\scriptscriptstyle\Omega}}$ is continuous, so is ${\Conj}_{\breve{\scriptscriptstyle\Omega}}^{-1}={\Conj}_{\breve{\scriptscriptstyle\Omega}}$. In summary, ${\Conj}_{\breve{\scriptscriptstyle\Omega}}$ is a homeomorphism.
\end{proof}

Let $\Omega\in\tau_s(\mathbb{H})$ be $2$-path-symmetric and $t\ge0$ and set
\begin{equation}\label{eq-distance}\index{$\Omega^t$, $\Omega^{t^\circ}$, $\breve{\Omega}_{+}^{t}$, $\breve{\Omega}_{+}^{t^\circ}$}
	\Omega^t:=\{q\in\Omega:t\le \sigma(q,\mathbb{H}\backslash\Omega)\},
	\qquad
	\Omega^{t^\circ}:=\{q\in\Omega:t < \sigma(q,\mathbb{H}\backslash\Omega)\}.
\end{equation}
\begin{equation*}
	\breve{\Omega}_{+}^{t}:=\{\mu\in\breve{\Omega}:t\le r^+_\mu\},
	\qquad\mbox{and}\qquad
	\breve{\Omega}_{+}^{t^\circ}:=\{\mu\in\breve{\Omega}:t < r^+_\mu\}.
\end{equation*}

	\begin{prop}\label{pr-omegar}
		Let $\Omega\in\tau_s(\mathbb{H})$ be $2$-path-symmetric, $I\in\mathbb{}S$, $w\in\Omega_I$ with $\mathbb{S}_{[(w,I)]_{\sim_\Omega}}^+=\mathbb{S}$. Then
		\begin{equation}\label{eq-omegar}
			\Omega^r\cap[w]_{\approx_\Omega}=\varnothing,\qquad\forall\ r>\sigma(w,\mathbb{H}\backslash\Omega).
		\end{equation}
	\end{prop}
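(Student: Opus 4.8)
The plan is to show that every point of the class $[w]_{\approx_\Omega}$ shares the same $\sigma$-distance to $\mathbb{H}\backslash\Omega$, namely the common value $r^+_\mu$ where $\mu:=[(w,I)]_{\sim_\Omega}$, and that this value equals $\sigma(w,\mathbb{H}\backslash\Omega)$. Once this is established, no point of the class can belong to $\Omega^r$ as soon as $r>r^+_\mu$, which is exactly the assertion.

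First I would unwind the hypothesis $\mathbb{S}_\mu^+=\mathbb{S}$. By the definition \eqref{eq-msm} this says $r^K_\mu=r^+_\mu$ for every $K\in\mathbb{S}$, and Proposition \ref{pr-r+} guarantees $r^+_\mu>0$. Hence $r^K_\mu>0$ for all $K$, so \eqref{eq-rmi} forces $K\in\mathbb{S}_\mu$ for every $K$, i.e. $\mathbb{S}_\mu=\mathbb{S}$; by \eqref{eq-msn} this is equivalent to $z_\mu\in\Omega_{2ps}$. In particular, reading \eqref{eq-rmi} again, for each $K\in\mathbb{S}$ we have $r^K_\mu=\delta_K(z_\mu^K,\mathbb{C}_K\backslash\Omega_K)=r^+_\mu$; that is, the planar boundary distance of $z_\mu^K$ is the same on every slice.

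Next I would translate these planar distances into $\sigma$-distances through Proposition \ref{pr-lbes}. Since $w=z_\mu^I\in\Omega_I$, equation \eqref{eq-sqmbo} gives $\sigma(w,\mathbb{H}\backslash\Omega)=\delta_I(w,\mathbb{C}_I\backslash\Omega_I)=r^I_\mu=r^+_\mu$. For the class itself, take any $q\in[w]_{\approx_\Omega}$. Because $q\approx_{\scriptscriptstyle\Omega} w=z_\mu^I$ and $q\in\Omega$, Proposition \ref{pr-losmt} \ref{it-lp} yields $q=z_\mu^K$ for some $K\in\mathbb{S}$, whence $q\in\Omega_K$. Applying \eqref{eq-sqmbo} once more and invoking the constancy from the previous step, $\sigma(q,\mathbb{H}\backslash\Omega)=\delta_K(z_\mu^K,\mathbb{C}_K\backslash\Omega_K)=r^K_\mu=r^+_\mu$ (here $K\in\mathbb{S}_\mu=\mathbb{S}$ legitimizes writing $r^K_\mu$ as a genuine boundary distance).

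Finally, fix $r>\sigma(w,\mathbb{H}\backslash\Omega)=r^+_\mu$. For every $q\in[w]_{\approx_\Omega}$ we have just shown $\sigma(q,\mathbb{H}\backslash\Omega)=r^+_\mu<r$, so by the definition \eqref{eq-distance} of $\Omega^r=\{p\in\Omega:r\le\sigma(p,\mathbb{H}\backslash\Omega)\}$ we conclude $q\notin\Omega^r$. This gives $\Omega^r\cap[w]_{\approx_\Omega}=\varnothing$, as required. The only genuinely substantive point is the first step: the hypothesis $\mathbb{S}_\mu^+=\mathbb{S}$ is precisely what forces the boundary distance to be constant across all slices through $z_\mu$, and once this is observed the rest is routine bookkeeping with the identifications in Proposition \ref{pr-lbes} and Proposition \ref{pr-losmt} \ref{it-lp}.
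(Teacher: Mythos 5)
Your proposal is correct and follows essentially the same route as the paper: identify every point of $[w]_{\approx_\Omega}$ as $z_\mu^K$ via Proposition \ref{pr-losmt} \ref{it-lp}, use $\mathbb{S}_\mu^+=\mathbb{S}$ together with \eqref{eq-rmi} and \eqref{eq-sqmbo} to see that $\sigma(q,\mathbb{H}\backslash\Omega)=r_\mu^+=\sigma(w,\mathbb{H}\backslash\Omega)$ for all such points, and conclude they are excluded from $\Omega^r$ when $r>\sigma(w,\mathbb{H}\backslash\Omega)$. The only cosmetic difference is that you deduce $\mathbb{S}_\mu=\mathbb{S}$ from the positivity of $r_\mu^+$ rather than from \eqref{eq-msmb} as the paper does, which is an equally valid (and slightly more self-contained) bookkeeping step.
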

	
	\begin{proof}
		Let $u\in[w]_{\sim\Omega}$. Then $u\approx_{\scriptscriptstyle\Omega} w=z^I$, where $z:=\Psi_I^i(w)$. According to Proposition \ref{pr-losmt} \ref{it-lp}, $u=z^K$ for some $K\in\mathbb{S}$. By \eqref{eq-msmb}, $\mathbb{S}_\mu=\mathbb{S}_\mu^+=\mathbb{S}$, where $\mu:=[(w,I)]_{\sim_\Omega}=[(z^I,I)]_{\sim_\Omega}$. It follows from $z_\mu=z$, \eqref{eq-rmi}, \eqref{eq-rml} and \eqref{eq-sqmbo} that
		\begin{equation*}
			\begin{split}
				\sigma(u,\mathbb{H}\backslash\Omega)&=\delta_K\left(z^K,\mathbb{C}_K\backslash\Omega_K\right)=r_\mu^K
				\\&=r_\mu^+=r_\mu^I=\delta_I\left(z^I,\mathbb{C}_I\backslash\Omega_I\right)=\sigma(w,\mathbb{H}\backslash\Omega)<r.
			\end{split}
		\end{equation*}
		According to \eqref{eq-distance}, $u\notin\Omega^r$. Then \eqref{eq-omegar} holds.
	\end{proof}

Let $\Omega\in\tau_s(\mathbb{H})$ be $2$-path-symmetric, $\mathbb{S}'\subset\mathbb{S}$, $t\ge 0$ and $\rho\in\{t,t^\circ\}$. Denote
\begin{equation}\label{eq-definitions}\index{$\psi_{\scriptscriptstyle\Omega}^{\mathbb{S}'}$, $\varphi_{\scriptscriptstyle\breve{\Omega}}^\rho$, $\phi_{\scriptscriptstyle\Omega}^\rho$, $\phi_{\breve{\scriptscriptstyle\Omega}}^\rho$}
	\begin{split}
		\psi_{\scriptscriptstyle\Omega}^{\mathbb{S}'}:\ P(\Omega)\ &\xlongrightarrow[\hskip1cm]{}\ P(\breve{\Omega}),
		\\
		U\quad &\shortmid\!\xlongrightarrow[\hskip1cm]{}\
		\left[\bigcup_{I\in\mathbb{S}'} U_I\times\{I\}\right]_{\sim_\Omega},
		\\
		\varphi_{\scriptscriptstyle\breve{\Omega}}^\rho:\ P(\breve{\Omega})\ &\xlongrightarrow[\hskip1cm]{}\ P(\Omega),
		\\ V\quad &\shortmid\!\xlongrightarrow[\hskip1cm]{}\ \left\{q\in\Omega^\rho:\psi_{\scriptscriptstyle\Omega}^{\mathbb{S}}(\{q\})\cap V\neq\varnothing\right\},
		\\
		\phi_{\scriptscriptstyle\Omega}^\rho:\ P(\Omega)\ &\xlongrightarrow[\hskip1cm]{}\ P(\Omega),
		\\ U\quad &\shortmid\!\xlongrightarrow[\hskip1cm]{}\
			 \left[U\cup\left(U\cap\Omega^{2ps}\right)^\sim\right]\cap\Omega^\rho
			=[U]_{\approx_{\Omega}}^\cup\cap\Omega^\rho,
		\\
		\phi_{\breve{\scriptscriptstyle\Omega}}^\rho:\ P(\breve{\Omega})\ &\xlongrightarrow[\hskip1cm]{}\ P(\breve{\Omega}),
		\\ V\quad &\shortmid\!\xlongrightarrow[\hskip1cm]{}\ \left[V\cup{\Conj}_{\scriptscriptstyle\breve{\Omega}}(V)\right]\cap\breve{\Omega}^\rho_{+},
	\end{split}
\end{equation}
where
\begin{equation*}\index{$[U]_{\approx_\Omega}^\cup$}
	[U]_{\approx_\Omega}^\cup:=\bigcup_{V\in [U]_{\approx_\Omega}}V=\{q\in\Omega: q\in [p]_{\approx_\Omega},\mbox{ for some }p\in U\},
\end{equation*}
	and $\left(U\cap\Omega^{2ps}\right)^\sim$ \index{$U^\sim$} denotes the axially symmetric completion of $U\cap\Omega^{2ps}$.
By definition, $\phi_{\scriptscriptstyle\Omega}^\rho$ and $\phi_{\breve{\scriptscriptstyle\Omega}}^\rho$ are projection, i.e. $(\phi_{\scriptscriptstyle\Omega}^\rho)^2=\phi_{\scriptscriptstyle\Omega}^\rho$.

Let $x\in\Omega_{\mathbb{R}}$. By \eqref{eq-omr}, $x\in\Omega_\mathbb{R}\subset\Omega_{2ps}$. According to \eqref{eq-sim},
\begin{equation*}
	\left(x^I,I\right)\sim_{\Omega}\left(x^J,J\right),\qquad\forall\ I,J\in\mathbb{S}.
\end{equation*}
Thence for each $J\in\mathbb{S}$,
\begin{equation*}
	\begin{split}
		\psi_{\scriptscriptstyle \Omega}^\mathbb{S}(\{x\})
		=&\left[\bigcup_{I\in\mathbb{S}}\{x\}_I\times\{I\}\right]_{\sim_\Omega}
		=\left[\bigcup_{I\in\mathbb{S}}\{(x,I)\}\right]_{\sim_\Omega}
		\\=&\bigcup_{I\in\mathbb{S}}\left\{\left[(x^I,I)\right]_{\sim_\Omega}\right\}
		=\left[(x^J,J)\right]_{\sim_\Omega}.
	\end{split}
\end{equation*}
It implies that
\begin{equation}\label{eq-psoz}
	\psi_{\scriptscriptstyle \Omega}^\mathbb{S}(\{z^I\})=\left\{\left[(z^I,I)\right]_{\sim_\Omega}\right\}\bigcup\left\{\left[(\overline{z}^{-I},-I)\right]_{\sim_\Omega}\right\},\qquad\forall\ z^I\in\Omega.
\end{equation}
Let $z\in\Omega_{2ps}$ and $I,J\in\mathbb{S}$. Then it follows from
\begin{equation*}
	[(z^I,I)]_{\sim_\Omega}=[(z^J,J)]_{\sim_\Omega},\qquad\mbox{and}\qquad [(\overline{z}^{-I},-I)]_{\sim_\Omega}=[(z^{-J},-J)]_{\sim_\Omega}
\end{equation*}
that
\begin{equation}\label{eq-psoms}
	\psi_{\scriptscriptstyle \Omega}^\mathbb{S}(\{z^I\})=\psi_{\scriptscriptstyle \Omega}^\mathbb{S}(\{z^J\}).
\end{equation}

\begin{prop}\label{pr-distanse}
	Let $\Omega\in\tau_s(\mathbb{H})$ be $2$-path-symmetric, $t>0$, and $\rho\in\{t,t^\circ\}$. Then
	\begin{equation*}
		\mu:=\left[\left(w^I,I\right)\right]_{\sim_\Omega}\in\breve{\Omega}_+^\rho,\qquad\forall\ w^I\in\Omega^\rho.
	\end{equation*}
	Moreover,
	\begin{equation*}
		\psi_{\scriptscriptstyle \Omega}^\mathbb{S}(\Omega^\rho)\subset\breve{\Omega}_+^\rho.
	\end{equation*}
\end{prop}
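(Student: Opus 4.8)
The plan is to reduce both assertions to a single computation of $r_\mu^+$ at one distinguished slice. First I would record the bookkeeping: writing $\mu=[(w^I,I)]_{\sim_\Omega}$ for $w^I\in\Omega^\rho$, the definition of $z_\mu$ in \eqref{eq-xx} gives $z_\mu=w$, and since $z_\mu^I=w^I\in\Omega$ with $[(z_\mu^I,I)]_{\sim_\Omega}=\mu$, the definition \eqref{eq-smli} of $\mathbb{S}_\mu$ shows $I\in\mathbb{S}_\mu$. Hence $r_\mu^I$ is computed by the first branch of \eqref{eq-rmi}, namely $r_\mu^I=\delta_I(z_\mu^I,\mathbb{C}_I\backslash\Omega_I)=\delta_I(w^I,\mathbb{C}_I\backslash\Omega_I)$.

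The central step is to identify this slicewise distance with the $\sigma$-distance. Since $w^I\in\Omega_I$, Proposition \ref{pr-lbes}, specifically \eqref{eq-sqmbo}, yields $\delta_I(w^I,\mathbb{C}_I\backslash\Omega_I)=\sigma(w^I,\mathbb{H}\backslash\Omega)$, so $r_\mu^I=\sigma(w^I,\mathbb{H}\backslash\Omega)$. Because $r_\mu^+=\sup_{J\in\mathbb{S}}r_\mu^J$ by \eqref{eq-rms}, I get at once $r_\mu^+\ge r_\mu^I=\sigma(w^I,\mathbb{H}\backslash\Omega)$. Now I would split on $\rho$: if $\rho=t$ then $w^I\in\Omega^t$ means, by \eqref{eq-distance}, that $t\le\sigma(w^I,\mathbb{H}\backslash\Omega)\le r_\mu^+$, giving $\mu\in\breve{\Omega}_+^t$; if $\rho=t^\circ$ the same chain with a strict first inequality gives $\mu\in\breve{\Omega}_+^{t^\circ}$. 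In both cases $\mu\in\breve{\Omega}_+^\rho$, which is the first claim.

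For the \emph{Moreover} part I would simply unwind the definition \eqref{eq-definitions} of $\psi_{\scriptscriptstyle\Omega}^\mathbb{S}$: by definition $\psi_{\scriptscriptstyle\Omega}^\mathbb{S}(\Omega^\rho)=\big[\bigcup_{I\in\mathbb{S}}(\Omega^\rho)_I\times\{I\}\big]_{\sim_\Omega}$ consists precisely of the classes $[(u,I)]_{\sim_\Omega}$ with $I\in\mathbb{S}$ and $u\in(\Omega^\rho)_I=\Omega^\rho\cap\mathbb{C}_I$. Each such $u$ is of the form $w^I$ with $w^I\in\Omega^\rho$, so the first part applies verbatim to each generator and yields $[(u,I)]_{\sim_\Omega}\in\breve{\Omega}_+^\rho$; hence the whole image lies in $\breve{\Omega}_+^\rho$.

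There is no serious obstacle here: the argument is essentially the observation that $r_\mu^+$ dominates the single-slice radius $r_\mu^I$, combined with Proposition \ref{pr-lbes}. The only point requiring a little care is that one must take the slice $I$ appearing in the chosen representative $(w^I,I)$ — for which $I\in\mathbb{S}_\mu$, so that both \eqref{eq-rmi} and the $\sigma$-distance identity \eqref{eq-sqmbo} are available — rather than an arbitrary slice; and one should handle the strict and non-strict thresholds together so that the cases $\rho=t$ and $\rho=t^\circ$ are both covered by the same inequality chain.
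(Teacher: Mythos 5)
Your proof is correct and follows essentially the same route as the paper: both arguments reduce everything to the bound $r_\mu^+\ge r_\mu^I$ at the slice $I$ of the given representative, identifying $r_\mu^I$ with the $\sigma$-distance of $w^I$ to $\mathbb{H}\backslash\Omega$ (you via the equality \eqref{eq-sqmbo} of Proposition \ref{pr-lbes}, the paper via the inline inclusion $B_I(w^I,t+\varepsilon)\subset\Omega_I$, which is the easy half of that equality), and then deduce the \emph{Moreover} part by applying the first claim to each generator $(u,I)$ of $\psi_{\scriptscriptstyle\Omega}^{\mathbb{S}}(\Omega^\rho)$, exactly as the paper does through \eqref{eq-psoz}.
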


\begin{proof}
	(i) Let $w^I\in\Omega^\rho$. By \eqref{eq-uxu} and \eqref{eq-xx}, $z_\mu=z_{(w^I,I)}=w$. There is
	\begin{equation}\label{eq-bibc}
		\varepsilon\in
		\begin{cases}
			\{0\},\qquad &\rho=t,
			\\(0,+\infty),\qquad &\rho=t^\circ,
		\end{cases}
	\end{equation}
	such that
	\begin{equation*}
		B_I\left(z_\mu^I,t+\varepsilon\right)
		=B_I\left(w^I,t+\varepsilon\right)
		\subset\Omega_I.
	\end{equation*}
	According to \eqref{eq-rms} and \eqref{eq-rml}, $r_\mu^+\ge r_\mu^I\ge t+\varepsilon$. It implies that $\mu\in\breve{\Omega}_+^\rho$.
	
	(ii) Let $\xi^J\in \Omega^\rho$. By \eqref{eq-psoz}
	\begin{equation*}
		\psi_{\scriptscriptstyle \Omega}^\mathbb{S}(\{\xi^J\})=\bigg\{\big[\big(\xi^J,J\big)\big]_{\sim_\Omega}\bigg\}\bigcup\bigg\{\big[\big(\overline{\xi}^{-J},-J\big)\big]_{\sim_\Omega}\bigg\},\qquad\forall\ z^J\in\Omega.
	\end{equation*}
	It implies by $\overline{\xi}^{-J}=\xi^J\in\Omega^\rho$ and (i) that
	\begin{equation*}
		\big[\big(\xi^J,J\big)\big]_{\sim_\Omega},\big[\big(\overline{\xi}^{-J},J\big)\big]_{\sim_\Omega}\in\breve{\Omega}^\rho_+.
	\end{equation*}
	Therefore
	\begin{equation*}
		\psi_{\scriptscriptstyle \Omega}^\mathbb{S}(\Omega^\rho)=\bigcup_{\xi^J\in \Omega^\rho}\psi_{\scriptscriptstyle \Omega}^\mathbb{S}\left(\{\xi^J\}\right)\subset\breve{\Omega}^\rho_+.
	\end{equation*}
\end{proof}
The technical result in the next lemma will be useful in the next section.
\begin{lem}
	Let $\Omega\subset\tau_s(\mathbb{H})$ be $2$-path-symmetric, $t>0$ and $\rho\in\{t,t^\circ\}$. Then
	\begin{equation}\label{eq-cvsrc}
		\begin{cases}
			\varphi_{\scriptscriptstyle\breve{\Omega}}^\rho\circ \psi_{\scriptscriptstyle\Omega}^{\mathbb{S}}=\phi_{\scriptscriptstyle\Omega}^\rho,\\
			\psi_{\scriptscriptstyle\Omega}^{\mathbb{S}}\circ\varphi_{\scriptscriptstyle\breve{\Omega}}^\rho=\phi_{\breve{\scriptscriptstyle\Omega}}^\rho.
		\end{cases}
	\end{equation}
\end{lem}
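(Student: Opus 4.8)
The plan is to reduce both set–identities to a single pointwise description of $\psi_{\scriptscriptstyle\Omega}^{\mathbb{S}}$ on singletons. Writing $q=z^K\in\Omega$ and setting $\mu_q:=[(z^K,K)]_{\sim_{\scriptscriptstyle\Omega}}$, formula \eqref{eq-psoz} together with the definition \eqref{eq-conjugation} of $\Conj_{\breve{\scriptscriptstyle\Omega}}$ identifies the second lift as the conjugate, giving
\begin{equation*}
	\psi_{\scriptscriptstyle\Omega}^{\mathbb{S}}(\{q\})=\{\mu_q,\overline{\mu_q}\},\qquad \overline{\mu_q}={\Conj}_{\breve{\scriptscriptstyle\Omega}}(\mu_q).
\end{equation*}
Moreover, since $\mathbb{C}_{-I}=\mathbb{C}_I$, each point $p\in U$ contributes exactly the two lifts $\mu_p,\overline{\mu_p}$ to $\bigcup_{I}U_I\times\{I\}$, so $\psi_{\scriptscriptstyle\Omega}^{\mathbb{S}}(U)=\bigcup_{p\in U}\psi_{\scriptscriptstyle\Omega}^{\mathbb{S}}(\{p\})$. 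With these two observations both compositions reduce to unions of the two–element sets $\{\mu_q,\overline{\mu_q}\}$, which I match against the right-hand sides.

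Next I would establish the pointwise equivalence
\begin{equation*}
	q \approx_{\scriptscriptstyle \Omega} p
	\quad\Longleftrightarrow\quad
	\psi_{\scriptscriptstyle\Omega}^{\mathbb{S}}(\{q\})\cap\psi_{\scriptscriptstyle\Omega}^{\mathbb{S}}(\{p\})\neq\varnothing.
	\tag{$\ast$}
\end{equation*}
For $\Leftarrow$, an equality among $\mu_q,\overline{\mu_q},\mu_p,\overline{\mu_p}$ is an instance of $\sim_{\scriptscriptstyle\Omega}$, and \eqref{eq-zaia} yields $q\approx_{\scriptscriptstyle\Omega}p$, since each of the four lifts has quaternionic representative $q$ or $p$. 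For $\Rightarrow$, I take a path realizing $q\approx_{\scriptscriptstyle\Omega}p$: there are $\gamma\in\mathscr{P}_0(\mathbb{C})$ and $I,J\in\mathbb{S}$ with $\gamma^I,\gamma^J\subset\Omega$, $\gamma^I(1)=q$, $\gamma^J(1)=p$. Putting $\zeta=\gamma(1)$, the classes $[(\zeta^I,I)]_{\sim_{\scriptscriptstyle\Omega}}$ and $[(\zeta^J,J)]_{\sim_{\scriptscriptstyle\Omega}}$ lie in $\psi_{\scriptscriptstyle\Omega}^{\mathbb{S}}(\{q\})$ and $\psi_{\scriptscriptstyle\Omega}^{\mathbb{S}}(\{p\})$ respectively and share the common coordinate $\zeta$; if $I=J$ they coincide, while if $I\neq J$ then $\zeta\in\Omega_{2ps}$ by \eqref{eq-o2ps1}, whence they coincide by \eqref{eq-wiiso}. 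This produces a common element, proving $(\ast)$. The first identity then follows: for $q\in\Omega^\rho$, unfolding $\varphi_{\scriptscriptstyle\breve{\Omega}}^\rho$ in \eqref{eq-definitions} and using $\psi_{\scriptscriptstyle\Omega}^{\mathbb{S}}(U)=\bigcup_{p\in U}\psi_{\scriptscriptstyle\Omega}^{\mathbb{S}}(\{p\})$ and $(\ast)$ shows $q\in\varphi_{\scriptscriptstyle\breve{\Omega}}^\rho(\psi_{\scriptscriptstyle\Omega}^{\mathbb{S}}(U))$ iff $q\approx_{\scriptscriptstyle\Omega}p$ for some $p\in U$, i.e. iff $q\in[U]_{\approx_\Omega}^\cup\cap\Omega^\rho=\phi_{\scriptscriptstyle\Omega}^\rho(U)$.

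For the second identity I argue by double inclusion. For $q=z^K\in\Omega^\rho$ one has $q\in\varphi_{\scriptscriptstyle\breve{\Omega}}^\rho(V)$ iff $\mu_q\in V$ or $\overline{\mu_q}\in V$, so $\psi_{\scriptscriptstyle\Omega}^{\mathbb{S}}(\varphi_{\scriptscriptstyle\breve{\Omega}}^\rho(V))$ is the union of the pairs $\{\mu_q,\overline{\mu_q}\}$ over such $q$. For ``$\subseteq$'': if $\nu\in\{\mu_q,\overline{\mu_q}\}$ with $q\in\Omega^\rho$, then $\mu_q\in\breve{\Omega}_+^\rho$ by Proposition \ref{pr-distanse} and $\overline{\mu_q}\in\breve{\Omega}_+^\rho$ by Proposition \ref{pr-rrcb}, so $\nu\in\breve{\Omega}_+^\rho$; and from $\mu_q\in V$ or $\overline{\mu_q}\in V$ together with $\Conj_{\breve{\scriptscriptstyle\Omega}}^2=\id$ one gets $\nu\in V\cup\Conj_{\breve{\scriptscriptstyle\Omega}}(V)$. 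For ``$\supseteq$'': given $\nu\in[V\cup\Conj_{\breve{\scriptscriptstyle\Omega}}(V)]\cap\breve{\Omega}_+^\rho$, I choose $I\in\mathbb{S}_\nu^+$ (nonempty by Proposition \ref{pr-losm}, and contained in $\mathbb{S}_\nu$ since $r_\nu^+\ge t>0$ forces $r_\nu^I>0$ in \eqref{eq-rmi}) and set $q:=z_\nu^I$; then $\mu_q=\nu$, and
\begin{equation*}
	\sigma(q,\mathbb{H}\backslash\Omega)=\delta_I(z_\nu^I,\mathbb{C}_I\backslash\Omega_I)=r_\nu^I=r_\nu^+
\end{equation*}
by Proposition \ref{pr-lbes}, \eqref{eq-rmi} and \eqref{eq-rml}, so $q\in\Omega^\rho$. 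Since $\nu\in V$ or $\overline{\nu}\in V$ and $\psi_{\scriptscriptstyle\Omega}^{\mathbb{S}}(\{q\})=\{\nu,\overline{\nu}\}$, we obtain $q\in\varphi_{\scriptscriptstyle\breve{\Omega}}^\rho(V)$, hence $\nu\in\psi_{\scriptscriptstyle\Omega}^{\mathbb{S}}(\varphi_{\scriptscriptstyle\breve{\Omega}}^\rho(V))$.

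The main obstacle is the careful bookkeeping of the conjugation $\Conj_{\breve{\scriptscriptstyle\Omega}}$, which simultaneously swaps the complex coordinate $z\leftrightarrow\overline{z}$ and the slice label $K\leftrightarrow-K$; tracking which of the two lifts $\mu_q,\overline{\mu_q}$ one lands on is what makes the four-way matching in $(\ast)$ and the two inclusions of the second identity delicate. A secondary but essential point is the correct choice of the label $I\in\mathbb{S}_\nu^+$ in the ``$\supseteq$'' part: it is precisely this choice that, through Proposition \ref{pr-lbes} and \eqref{eq-rml}, guarantees that the selected representative $z_\nu^I$ actually lies in $\Omega^\rho$ rather than merely in $\Omega$.
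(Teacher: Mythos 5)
Your proof is correct and takes essentially the same route as the paper's: both arguments unwind the definitions in \eqref{eq-definitions} and rest on the same ingredients, namely the description \eqref{eq-psoz} of $\psi_{\scriptscriptstyle\Omega}^{\mathbb{S}}$ on singletons, the link \eqref{eq-zaia} between $\sim_{\scriptscriptstyle\Omega}$ and $\approx_{\scriptscriptstyle\Omega}$ together with \eqref{eq-wiiso}, Propositions \ref{pr-distanse}, \ref{pr-rrcb} and \ref{pr-lbes}, and the radius identities \eqref{eq-rmi}, \eqref{eq-rml} via the choice of $I\in\mathbb{S}_\nu^+$. The only difference is organizational: you isolate the equivalence $(\ast)$ so the first identity follows in one line, while the paper proves the two inclusions directly; this is a cleaner packaging of the same argument, not a different one.
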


\begin{proof}
	Let $U\in P(\Omega)$ and $V\in P(\breve{\Omega})$.
	
	(i) Suppose $p\in\varphi_{\scriptscriptstyle\breve{\Omega}}^\rho\circ \psi_{\scriptscriptstyle\Omega}^{\mathbb{S}}(U)$. Then
	\begin{equation}\label{eq-vps}
		\begin{split}
			\varphi_{\scriptscriptstyle\breve{\Omega}}^\rho\circ \psi_{\scriptscriptstyle\Omega}^{\mathbb{S}}(U)
			=&\varphi_{\scriptscriptstyle\breve{\Omega}}^\rho\left(\left[\bigcup_{I\in\mathbb{S}} U_I\times\{I\}\right]_{\sim_\Omega}\right)
			\\=&\left\{q\in\Omega^\rho:\psi_{\scriptscriptstyle\Omega}^{\mathbb{S}}(\{q\})\cap\left[\bigcup_{I\in\mathbb{S}} U_I\times\{I\}\right]_{\sim_\Omega}\neq\varnothing\right\}.
		\end{split}
	\end{equation}
	implies that $p\in\Omega^\rho$ and
	\begin{equation*}
		\left[\bigcup_{I\in\mathbb{S}} \{p\}_I\times\{I\}\right]_{\sim_\Omega}\bigcap\left[\bigcup_{I\in\mathbb{S}} U_I\times\{I\}\right]_{\sim_\Omega}=\psi_{\scriptscriptstyle\Omega}^{\mathbb{S}}(\{p\})\cap\left[\bigcup_{I\in\mathbb{S}} U_I\times\{I\}\right]_{\sim_\Omega}\neq\varnothing,
	\end{equation*}
	i.e. there is $w\in U$ and $J,K\in\mathbb{S}_\mu$ such that
	\begin{equation*}
		\mu:=\left[(p,J)\right]_{\sim_\Omega}=\left[(w,K)\right]_{\sim_\Omega}\in \left[\bigcup_{I\in\mathbb{S}} {\{p\}}_I\times\{I\}\right]_{\sim_\Omega}\bigcap\left[\bigcup_{I\in\mathbb{S}} U_I\times\{I\}\right]_{\sim_\Omega}.
	\end{equation*}
	It follows from \eqref{eq-zaia} and $(p,J)\sim_{\scriptscriptstyle\Omega}(w,K)$ that
	\begin{equation*}
		p=z_{\scriptscriptstyle(p,J)}^{I_{(p,J)}}\approx_{\scriptscriptstyle\Omega} z_{\scriptscriptstyle(w,k)}^{I_{(w,k)}}=w.
	\end{equation*}
	It implies that $p\in[w]_{\approx_\Omega}\subset[U]_{\approx_\Omega}$. Hence $p\in [U]_{\approx_\Omega}\cap\Omega^\rho=\varphi_{\scriptscriptstyle\Omega}^\rho(U)$. Then
	\begin{equation}\label{eq-vbso}
		\varphi_{\breve{\scriptscriptstyle \Omega}}^\rho\circ\psi_{\scriptscriptstyle \Omega}^\mathbb{S}(U)\subset\varphi_{\scriptscriptstyle\Omega}^\rho(U).
	\end{equation}
	
	(ii) Suppose $p\in\phi_{\scriptscriptstyle\Omega}(U)=[U]_{\approx_\Omega}\cap\Omega^\rho$. Then there is $z^K\in U$ with $p\approx_{\scriptscriptstyle\Omega} z^K$. By Proposition \ref{pr-losmt} \ref{it-lp}, there is $J\in\mathbb{S}$ such that $p=z^J$. According to \eqref{eq-o2ps}, $J=K$ or $z\in\Omega_{2ps}$. Then by \eqref{eq-sim}, $(p,J)=(z^J,J) \sim_{\scriptscriptstyle\Omega} (z^K,K)$. Therefore
	\begin{equation*}
		\left[(p,J)\right]_{\sim_\Omega}=\left[(z^K,K)\right]_{\sim_\Omega}\in\left[U_K\times\{K\}\right]_{\sim_\Omega}\subset\left[\bigcup_{I\in\mathbb{S}} U_I\times\{I\}\right]_{\sim_\Omega}.
	\end{equation*}
	Hence
	\begin{equation*}
		\left[(p,J)\right]_{\sim_\Omega}\in \left[\bigcup_{I\in\mathbb{S}} {\{p\}}_I\times\{I\}\right]_{\sim_\Omega}\bigcap\left[\bigcup_{I\in\mathbb{S}} U_I\times\{I\}\right]_{\sim_\Omega}\neq\varnothing.
	\end{equation*}
	By \eqref{eq-vps}, $p\in\varphi_{\scriptscriptstyle\breve{\Omega}}^\rho\circ \psi_{\scriptscriptstyle\Omega}^{\mathbb{S}}(U)$. Then $\phi_{\scriptscriptstyle\Omega}(U)\subset\varphi_{\scriptscriptstyle\breve{\Omega}}^\rho\circ \psi_{\scriptscriptstyle\Omega}^{\mathbb{S}}(U)$. According to \eqref{eq-vbso},
	\begin{equation}\label{eq-vsb}
		\phi_{\scriptscriptstyle\Omega}=\varphi_{\scriptscriptstyle\breve{\Omega}}^\rho\circ \psi_{\scriptscriptstyle\Omega}^{\mathbb{S}}.
	\end{equation}
	
	(iii) Suppose that
	\begin{equation*}
		\mu\in\psi_{\scriptscriptstyle\Omega}^\mathbb{S}\circ\varphi_{\breve{\scriptscriptstyle\Omega}}^\rho(V)
		=\psi_{\scriptscriptstyle\Omega}^\mathbb{S}\left\{q\in\Omega^\rho:\psi_{\scriptscriptstyle\Omega}^\mathbb{S}(\{q\})\cap V\neq\varnothing\right\}.
	\end{equation*}
	By \eqref{eq-psoz} and Proposition \ref{pr-distanse}, there is $z^I\in\Omega^\rho$ with $\psi_{\scriptscriptstyle\Omega}^\mathbb{S}(\{z^I\})\cap V\neq\varnothing$ such that
	\begin{equation*}
		\begin{split}
			\mu
			\in\psi_{\scriptscriptstyle\Omega}^\mathbb{S}(\{z^I\})
			&=\left\{\left[(z^I,I)\right]_{\sim_\Omega}\right\}\bigcup\left\{\left[(\overline{z}^{-I},-I)\right]_{\sim_\Omega}\right\}
			\\&\subset(V\cup{\Conj}_{\breve{\scriptscriptstyle\Omega}}(V))\cap\breve{\Omega}_+^\rho
			=\phi_{\breve{\scriptscriptstyle\Omega}}^\rho(U).
		\end{split}
	\end{equation*}
	It implies that
	\begin{equation}\label{eq-psopr}
		\psi_{\scriptscriptstyle\Omega}^\mathbb{S}\circ\varphi_{\breve{\scriptscriptstyle\Omega}}^\rho(V)
		\subset \phi_{\breve{\scriptscriptstyle\Omega}}^\rho(U).
	\end{equation}
	
	(iv) Suppose that $\mu\in\varphi_{\scriptscriptstyle\Omega}^\rho(V)	=(V\cap{\Conj}_{\breve{\scriptscriptstyle\Omega}}(V))\cap\Omega^\rho$. Then
	\begin{equation*}
		\nu:={\Conj}_{\breve{\scriptscriptstyle\Omega}}^\lambda (\mu)\in V,\qquad\mbox{for some}\qquad\lambda\in\{0,1\}.
	\end{equation*}
	It follows from $\mu\in\breve{\Omega}^\rho_+$ that $r_\mu^+\ge t+\varepsilon$ for some $\varepsilon$ satisfying \eqref{eq-bibc}. Let $I\in\mathbb{S}_\mu^+$. Then $r_\mu^+=r_\mu^I$ and $B_I\left(z_\mu^I,r_\mu^I\right)\subset\Omega$. It implies that
	\begin{equation}\label{eq-zznm}
		z_\nu^J
		=z_{{\Conj}_{\breve{\scriptscriptstyle\Omega}}^\lambda (\mu)}^{J}
		={\Conj}_\mathbb{C}^\lambda(z_\mu)^{(-1)^\lambda I}
		=z_\mu^I
		\in\Omega^\rho,
		\qquad\mbox{where}\qquad J=(-1)^\lambda I.
	\end{equation}
	Hence
	\begin{equation*}
		\nu
		=\left[\left({\Conj}_{\breve{\Omega}}^\lambda(z_\mu)^{(-1)^\lambda I},(-1)^\lambda I\right)\right]_{\sim_\Omega}
		=\left[\left(z_\nu^J,J\right)\right]_{\sim_\Omega}
		\in \left[\bigcup_{I\in\mathbb{S}}\left\{z_\nu^J\right\}_I\times\{I\}\right]_{\sim_\Omega}.
	\end{equation*}
	Therefore
	\begin{equation*}
		\nu\in\left[\bigcup_{I\in\mathbb{S}}\left\{z_\nu^J\right\}\times\{I\}\right]_{\sim_\Omega}\cap V\neq\varnothing.
	\end{equation*}
	According to \eqref{eq-zznm} and Proposition \ref{pr-distanse},
	\begin{equation*}
		\mu=\left[\left(z_\mu^J,J\right)\right]_{\sim_\Omega}\in\breve{\Omega}_+^\rho.
	\end{equation*}
	It implies that $z_\nu^J\in\varphi_{\breve{\scriptscriptstyle \Omega}}^\rho(V)$. By \eqref{eq-zznm}, $z_\mu^I=z_\nu^J$. Then
	\begin{equation*}
		\begin{split}
			\mu
			&=\left[(z_\mu^I,I)\right]_{\sim_\Omega}
			=\left[\left(z_\nu^J,I\right)\right]_{\sim_\Omega}
			\in\bigcup_{L\in\mathbb{S},\ z_\mu^J\in\mathbb{C}_L}\left[\left(z_\nu^J,L\right)\right]_{\sim_\Omega}
			\\&=\left[\bigcup_{L\in\mathbb{S}}\left\{z_\nu^J\right\}_L\times\{L\}\right]_{\sim_\Omega}
			=\psi_{\scriptscriptstyle\Omega}^\mathbb{S}\left(\left\{z_\nu^J\right\}\right)
			\subset\psi_{\scriptscriptstyle\Omega}^\mathbb{S}\left(\varphi_{\scriptscriptstyle\breve{\Omega}}^\rho(V)\right).
		\end{split}
	\end{equation*}
	It implies that $\phi_{\breve{\scriptscriptstyle\Omega}}^\rho(U)
	\subset\psi_{\scriptscriptstyle\Omega}^\mathbb{S}\circ\varphi_{\breve{\scriptscriptstyle\Omega}}^\rho(V)$. By \eqref{eq-psopr}, $\psi_{\scriptscriptstyle\Omega}^\mathbb{S}\circ\varphi_{\breve{\scriptscriptstyle\Omega}}^\rho=\phi_{\breve{\scriptscriptstyle\Omega}}^\rho$. It follows from \eqref{eq-vsb} that \eqref{eq-cvsrc} holds.
\end{proof}

\section{Stem discrete sets}\label{Stem discrete sets}
In this section, we give a concept of stem discrete sets which applies to 2-path-symmetric open sets. The stem discrete sets play same roles as discrete sets in interpolation theorem in complex analysis.
When these sets have the property of invariance in Definition \ref{def-invariant}, we can deduce further consequences that we describe in various results in this section and that will be used in Section \ref{sec-interpolation}. In particular, in Proposition \ref{prop-construct slice regular functions}, we show how to construct a slice regular function defined on such a 2-path-symmetric open set using a formula that mimics a representation formula.
\begin{defn}\label{def-stem discrete}\index{($\breve{\Omega}$-)stem discrete, ($\breve{\Omega}$-)stem-finite}
	Let $\Omega\in\tau_s(\mathbb{H})$ be $2$-path-symmetric. $A\subset\Omega$ is called \textit{\textbf{($\breve{\Omega}$-)stem discrete}}, if for each $r>0$, $\psi^\mathbb{S}_\Omega(A\cap\Omega^r)$ has no accumulation point in $\breve{\Omega}$. Moreover, we call $A$ is \textit{\textbf{($\breve{\Omega}$-)stem-finite}} if $|\psi_{\scriptscriptstyle\Omega}^\mathbb{S}(A)|<+\infty$.
\end{defn}

\begin{defn}\label{def-invariant}\index{($\breve{\Omega}$-)invariant, ($\Omega$-)invariant}
	Let $\Omega\in\tau_s(\mathbb{H})$ be $2$-path-symmetric. $K\subset\breve\Omega$ is called \textit{\textbf{($\breve{\Omega}$-)invariant}}, if $K$ is compact and
	\begin{equation*}
		K=K_{\breve{\scriptscriptstyle\Omega}}^{\land}={\Conj}_{\breve{\scriptscriptstyle\Omega}}(K).
	\end{equation*}
	Moreover, $A\subset\Omega$ is called \textit{\textbf{($\Omega$-)invariant}}, if $\psi_{\scriptscriptstyle\Omega}^\mathbb{S}(A)$ is invariant and $\phi_{\scriptscriptstyle\Omega}^r(A)=A$ for some $r>0$.
\end{defn}

\begin{prop}\label{prop-invariant K}
	Let $\Omega\in\tau_s(\mathbb{H})$ be $2$-path-symmetric, and $K\subset\Omega$ be invariant. Then
	\begin{equation}\label{eq-psos}
		\phi_{\scriptscriptstyle\Omega}^{\sigma(K,\mathbb{H}\backslash\Omega)}(K)=K.
	\end{equation}
\end{prop}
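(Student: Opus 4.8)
The plan is to prove \eqref{eq-psos} by a double inclusion. Write $s:=\sigma(K,\mathbb{H}\backslash\Omega)$ and recall from \eqref{eq-definitions} that $\phi_{\scriptscriptstyle\Omega}^\rho(U)=[U]_{\approx_\Omega}^\cup\cap\Omega^\rho$. The key structural observation is that the saturation $[U]_{\approx_\Omega}^\cup$ is independent of the exponent $\rho$, so that $\phi_{\scriptscriptstyle\Omega}^s(K)$ and $\phi_{\scriptscriptstyle\Omega}^r(K)$ differ only through the factors $\Omega^s$ and $\Omega^r$. Since $K$ is invariant, Definition \ref{def-invariant} supplies some $r>0$ with $\phi_{\scriptscriptstyle\Omega}^r(K)=K$; this is the only consequence of invariance that I will need here (the compactness and $\Conj_{\breve{\scriptscriptstyle\Omega}}$-symmetry of $\psi_{\scriptscriptstyle\Omega}^\mathbb{S}(K)$ play no role in this particular identity).

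First I would observe that the identity $K=\phi_{\scriptscriptstyle\Omega}^r(K)=[K]_{\approx_\Omega}^\cup\cap\Omega^r$ exhibits $K$ as a subset of $\Omega^r$, so $r\le\sigma(q,\mathbb{H}\backslash\Omega)$ for every $q\in K$; taking the infimum over $q\in K$ gives $r\le s$, and in particular $s>0$ so that $\phi_{\scriptscriptstyle\Omega}^s$ is meaningful. From $s\ge r$ we get $\Omega^s\subset\Omega^r$, and hence
\[
\phi_{\scriptscriptstyle\Omega}^s(K)=[K]_{\approx_\Omega}^\cup\cap\Omega^s\subset[K]_{\approx_\Omega}^\cup\cap\Omega^r=\phi_{\scriptscriptstyle\Omega}^r(K)=K,
\]
which is the first of the two inclusions.

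For the reverse inclusion, I would use that $\approx_{\scriptscriptstyle\Omega}$ is an equivalence relation on $\Omega$ (Proposition \ref{pr-losmt}, as $\Omega$ is $2$-path-symmetric), in particular reflexive, so that $K\subset[K]_{\approx_\Omega}^\cup$. Moreover $K\subset\Omega^s$: by definition $s=\inf_{q\in K}\sigma(q,\mathbb{H}\backslash\Omega)$, so every $q\in K$ satisfies $s\le\sigma(q,\mathbb{H}\backslash\Omega)$, which is precisely the defining condition of the closed set $\Omega^s=\{q\in\Omega:s\le\sigma(q,\mathbb{H}\backslash\Omega)\}$. Combining the two, $K\subset[K]_{\approx_\Omega}^\cup\cap\Omega^s=\phi_{\scriptscriptstyle\Omega}^s(K)$, which together with the first inclusion yields \eqref{eq-psos}.

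The argument is short; the point that requires care—hence the ``main obstacle''—is the appearance of the \emph{closed} distance set $\Omega^s$ rather than $\Omega^{s^\circ}$. The reverse inclusion rests on the non-strict inequality $s\le\sigma(q,\mathbb{H}\backslash\Omega)$, which can be an equality exactly at the points of $K$ realizing the infimum $s$, so for the strict exponent $s^\circ$ the analogous statement would fail. One should also dispose of the degenerate case $\mathbb{H}\backslash\Omega=\varnothing$, where $s=+\infty$ and $\Omega^s=\Omega$ by convention; there $\phi_{\scriptscriptstyle\Omega}^s(K)=[K]_{\approx_\Omega}^\cup=K$ follows directly from $\phi_{\scriptscriptstyle\Omega}^r(K)=K$.
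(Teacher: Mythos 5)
Your proof is correct and follows essentially the same route as the paper's: both establish $K\subset\phi_{\scriptscriptstyle\Omega}^{s}(K)$ from $K\subset[K]_{\approx_\Omega}^\cup\cap\Omega^{s}$, and both obtain the reverse inclusion by using invariance ($K=\phi_{\scriptscriptstyle\Omega}^{r}(K)$ for some $r>0$) to deduce $s\ge r$, hence $\Omega^{s}\subset\Omega^{r}$ and $\phi_{\scriptscriptstyle\Omega}^{s}(K)\subset\phi_{\scriptscriptstyle\Omega}^{r}(K)=K$. The only cosmetic difference is that you derive $s\ge r$ pointwise from $K\subset\Omega^{r}$, while the paper phrases it as $\sigma(K,\mathbb{H}\backslash\Omega)\ge\sigma(\Omega^{r},\mathbb{H}\backslash\Omega)\ge r$; your added remarks on $\Omega^{s}$ versus $\Omega^{s^\circ}$ and the case $\Omega=\mathbb{H}$ are sensible but not needed.
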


\begin{proof}
	It follows from $K\subset\Omega^{\sigma(K,\mathbb{H}\backslash\Omega)}$ that
	\begin{equation}\label{eq-psosk}
		\phi_{\scriptscriptstyle\Omega}^{\sigma(K,\mathbb{H}\backslash\Omega)}(K)=[K]_{\approx_\Omega}^\cup\bigcap\Omega^{\sigma(K,\mathbb{H}\backslash\Omega)}\supset K.
	\end{equation}
	Since $K$ is invariant,
	\begin{equation*}
		K=\phi_{\scriptscriptstyle\Omega}^r(K)=[K]_{\approx_\Omega}^\cup\bigcap\Omega^r,\qquad\mbox{for some}\qquad r>0.
	\end{equation*}
	Hence
	\begin{equation}\label{eq-rge0}
		\sigma(K,\mathbb{H}\backslash\Omega)
		=\sigma\left([K]_{\approx_{\Omega}}^\cup\bigcap \Omega^r,\mathbb{H}\backslash\Omega\right)
		\ge \sigma\left(\Omega^r,\mathbb{H}\backslash\Omega\right)
		=r.
	\end{equation}
	Therefore, $\Omega^{\sigma(K,\mathbb{H}\backslash\Omega)}\subset\Omega^r$. It implies that
	\begin{equation}\label{eq-kksob}
		K=[K]_{\approx_\Omega}^\cup\bigcap \Omega^r\supset[K]_{\approx_\Omega}^\cup\bigcap \Omega^{\sigma(K,\mathbb{H}\backslash\Omega)}=\phi_{\scriptscriptstyle\Omega}^{\sigma(K,\mathbb{H}\backslash\Omega)}(K).
	\end{equation}
	Then \eqref{eq-psos} holds by \eqref{eq-psosk} and \eqref{eq-kksob}.
\end{proof}

\begin{prop}\label{pr-lotb2ps}
	Let $\Omega\in\tau_s(\mathbb{H})$ be $2$-path-symmetric, $K\subset\Omega$ be invariant, $p\in\Omega$ with $|[p]_{\approx_\Omega}\cap K|\ge 2$. Then $[p]_{\approx_\Omega}\subset K$.
\end{prop}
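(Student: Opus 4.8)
The plan is to reduce the claim to a $\sigma$-distance computation on the associated Riemann domain $\breve\Omega$ and then exploit the cardinality bound of Proposition \ref{pr-losm}\ref{it-msb}. First I would write $p=z^I$ with $z:=\Psi_I^i(p)\in\mathbb{C}$ and apply Proposition \ref{pr-losmt}\ref{it-lzio}, whose hypothesis $|[z^I]_{\approx_\Omega}|>1$ is guaranteed by $|[p]_{\approx_\Omega}\cap K|\ge 2$, to obtain $[p]_{\approx_\Omega}=z^{\mathbb{S}}$; since this set has at least two elements, necessarily $z\notin\mathbb{R}$. By \eqref{eq-o2ps}, the presence of two distinct $\approx_\Omega$-equivalent slices over $z$ gives $z\in\Omega_{2ps}$, so for $\mu:=[(z^I,I)]_{\sim_\Omega}$ we have $z_\mu=z$ and $\mathbb{S}_\mu=\mathbb{S}$ by \eqref{eq-msn}. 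Consequently $z^J\in\Omega$ for every $J\in\mathbb{S}$, and $r_\mu^J=\delta_J(z^J,\mathbb{C}_J\backslash\Omega_J)=\sigma(z^J,\mathbb{H}\backslash\Omega)$ by \eqref{eq-rmi} and Proposition \ref{pr-lbes}.

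Next I would bring in the invariance of $K$. By Definition \ref{def-invariant} there is $r>0$ with $\phi_{\scriptscriptstyle\Omega}^r(K)=K$; since $\phi_{\scriptscriptstyle\Omega}^r(K)=[K]_{\approx_\Omega}^\cup\cap\Omega^r$ by \eqref{eq-definitions}, this yields in particular $K\subset\Omega^r$. Now choose distinct $a=z^{I_a}$ and $b=z^{I_b}$ in $[p]_{\approx_\Omega}\cap K$; because $z\notin\mathbb{R}$, the inequality $a\neq b$ forces $I_a\neq I_b$. From $a,b\in K\subset\Omega^r$ and the definition \eqref{eq-distance} of $\Omega^r$ we read off $\sigma(a,\mathbb{H}\backslash\Omega)\ge r$ and $\sigma(b,\mathbb{H}\backslash\Omega)\ge r$, which by the first paragraph is exactly $r_\mu^{I_a}\ge r$ and $r_\mu^{I_b}\ge r$.

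Finally I would propagate this lower bound to all imaginary units. By Proposition \ref{pr-losm}\ref{it-msb} we have $|\mathbb{S}\backslash\mathbb{S}_\mu^-|\le 1$, so of the two distinct units $I_a,I_b$ at least one, say $I_a$, lies in $\mathbb{S}_\mu^-$; hence $r_\mu^-=r_\mu^{I_a}\ge r$, and therefore $r_\mu^J\ge r_\mu^-\ge r$ for every $J\in\mathbb{S}$. This gives $\sigma(z^J,\mathbb{H}\backslash\Omega)=r_\mu^J\ge r$, i.e. $z^J\in\Omega^r$; moreover $z^J\in z^{\mathbb{S}}=[p]_{\approx_\Omega}$ and $a\in[p]_{\approx_\Omega}\cap K$ give $z^J\approx_{\scriptscriptstyle\Omega} a$ with $a\in K$, so $z^J\in[K]_{\approx_\Omega}^\cup$. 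Thus $z^J\in[K]_{\approx_\Omega}^\cup\cap\Omega^r=\phi_{\scriptscriptstyle\Omega}^r(K)=K$ for every $J$, that is $[p]_{\approx_\Omega}=z^{\mathbb{S}}\subset K$. The only genuine work is the passage from the two-slice hypothesis to the uniform bound $r_\mu^J\ge r$; this is precisely where $|\mathbb{S}\backslash\mathbb{S}_\mu^-|\le 1$ is indispensable, playing the role that a single interior estimate controlling the whole fiber plays in the complex case.
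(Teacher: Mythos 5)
Your proof is correct and follows essentially the same route as the paper's: both arguments reduce the claim to the identity $\sigma(z^J,\mathbb{H}\backslash\Omega)=r_\mu^J$ (via $z\in\Omega_{2ps}$, $\mathbb{S}_\mu=\mathbb{S}$, \eqref{eq-rmi} and \eqref{eq-sqmbo}), propagate the lower bound from the two distinct units meeting $K$ to all of $\mathbb{S}$ by means of $|\mathbb{S}\backslash\mathbb{S}_\mu^-|\le 1$, and conclude through $z^J\in[K]_{\approx_\Omega}^\cup\cap\Omega^\rho=\phi_{\scriptscriptstyle\Omega}^\rho(K)=K$. The only (harmless) deviation is that you take $\rho$ to be the radius $r$ supplied directly by Definition \ref{def-invariant}, whereas the paper takes $\rho=\sigma(K,\mathbb{H}\backslash\Omega)$ and invokes Proposition \ref{prop-invariant K}.
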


\begin{proof}
	Let $q\in [p]_{\approx_\Omega}$ and $z^I,\xi\in[p]_{\approx_\Omega}\cap K$ with $z^I\neq \xi$. By Proposition \ref{pr-losmt} \ref{it-lp},
	\begin{equation*}
		q=z^J,\qquad\mbox{and}\qquad \xi=z^L,\qquad\mbox{for some}\qquad J,L\in\mathbb{S}\mbox{ with }L\neq I.
	\end{equation*}
	According to \eqref{eq-o2ps}, $z\in\Omega_{2ps}$. Let $\mu:=[(z^I,I)]_{\sim_\Omega}$. It follows from \eqref{eq-msn} that $\mathbb{S}_\mu=\mathbb{S}$. By \eqref{eq-rmi} and \eqref{eq-sqmbo},
	\begin{equation*}
		\sigma(z^\mathcal{I},\mathbb{H}\backslash{\Omega})=\delta_\mathcal{I}(z^{\mathcal{I}},\mathbb{C}_\mathcal{I}\backslash\Omega_\mathcal{I})=r_\mu^\mathcal{I},\qquad\mathcal{I}=I,J,L.
	\end{equation*}
	It implies by $z^\mathcal{J}\in K$, $\mathcal{J}=I,L$, that
	\begin{equation*}
		r_\mu^\mathcal{J}=\sigma(z^\mathcal{J},\mathbb{H}\backslash{\Omega})\ge\sigma(K,\mathbb{H}\backslash\Omega).
	\end{equation*}
	By \eqref{eq-msmb},
	\begin{equation*}
		\sigma(q,\mathbb{H}\backslash\Omega)=\sigma(z^J,\mathbb{H}\backslash\Omega)=r_\mu^J\ge  r_\mu^-=\min\{r_\mu^I,r_\mu^L\}\ge\sigma(K,\mathbb{H}\backslash\Omega).
	\end{equation*}
	Then $q\in\Omega^{\sigma(K,\mathbb{H}\backslash\Omega)}$. By \eqref{eq-psos},
	\begin{equation*}
		q\in[p]_{\approx_\Omega}\cap\Omega^{\sigma(K,\mathbb{H}\backslash\Omega)}\subset[K]_{\approx_\Omega}^{\cup}\cap\Omega^{\sigma(K,\mathbb{H}\backslash\Omega)}=\phi_{\scriptscriptstyle\Omega}^{\sigma(K,\mathbb{H}\backslash\Omega)}(K).
	\end{equation*}
	It implies that $[p]_{\approx_\Omega}\subset K$.
\end{proof}

Let $\Omega\in\tau_s(\mathbb{H})$ be $2$-path-symmetric, $U\in\tau(\breve{\Omega})$, $g:U\rightarrow\mathbb{C}$ and $\mu\in\breve\Omega$. Define
\begin{equation*}\index{$B_\mathbb{C}^\mu$, $B_{\breve{\scriptscriptstyle\Omega}}^\mu$}
	B_\mathbb{C}^\mu:=B_{\mathbb{C}}(z_\mu,r^+_\mu),\qquad B_{\breve{\scriptscriptstyle\Omega}}^\mu:=B_{\breve{\scriptscriptstyle\Omega}}(\mu,r^+_\mu),
\end{equation*}
and
\begin{equation}\label{eq-gmbm}
	\begin{split}
		g_\mu:\ \pi_{\breve{\scriptscriptstyle\Omega}}(U\cap B_{\breve{\scriptscriptstyle\Omega}}^\mu)\ &\xlongrightarrow[\hskip1cm]{}\ \qquad \mathbb{C},
		\\ z\qquad\ &\shortmid\!\xlongrightarrow[\hskip1cm]{}\ g\circ\left(\pi_{\breve{\scriptscriptstyle\Omega}}|_{B_{\breve{\scriptscriptstyle\Omega}}^\mu}\right)^{-1}(z).
	\end{split}
\end{equation}
If $g$ is holomorphic, then by definition $g_\mu=g\circ\left(\pi_{\breve{\scriptscriptstyle\Omega}}|_{B_{\breve{\scriptscriptstyle\Omega}}^\mu}\right)^{-1}$ is also holomorphic. It follows from chain rule, $\frac{\partial}{\partial{\overline{z}}}g=0$ and $\frac{\partial}{\partial{z}}\Conj_\mathbb{C} =0$ that
\begin{equation}\label{eq-fppz}
	\begin{split}
		\frac{\partial}{\partial z} \left(g_{\mu}\circ{\Conj}_{\mathbb{C}}|_{B_\mathbb{C}^\mu}\right)
		&=\left[\left(\frac{\partial}{\partial z} g_\mu\right)\circ {\Conj}_\mathbb{C}|_{B_\mathbb{C}^\mu}\right]\cdot\left(\frac{\partial}{\partial z}{\Conj}_\mathbb{C}|_{B_\mathbb{C}^\mu}\right)
		\\&+\left[\left(\frac{\partial}{\partial \overline{z}} g_\mu\right)\circ {\Conj}_\mathbb{C}|_{B_\mathbb{C}^\mu}\right]\cdot\left(\frac{\partial}{\partial z}\overline{{\Conj}_\mathbb{C}|_{B_\mathbb{C}^\mu}}\right)=0.
	\end{split}
\end{equation}
Let $\nu\in B_{\breve{\scriptscriptstyle \Omega}}^\mu\cap U$. Then for each $\xi\in B_{\breve{\scriptscriptstyle \Omega}}^\mu\cap B_{\breve{\scriptscriptstyle \Omega}}^\nu\cap U$,
\begin{equation*}
	\left(\pi_{\breve{\scriptscriptstyle\Omega}}|_{B_{\breve\Omega}^\mu}\right)^{-1}(z_\xi)=\xi=\left(\pi_{\breve{\scriptscriptstyle\Omega}}|_{B_{\breve\Omega}^\nu}\right)^{-1} (z_\xi).
\end{equation*}
It follows that
\begin{equation*}
	g_\nu(z_\xi)=g\circ\left(\pi_{\breve{\scriptscriptstyle\Omega}}|_{B_{\breve{\scriptscriptstyle \Omega}}^\nu}\right)^{-1}(z_\xi)
	=g(\xi)
	=g\circ\left(\pi_{\breve{\scriptscriptstyle\Omega}}|_{B_{\breve{\scriptscriptstyle \Omega}}^\mu}\right)^{-1}(z_\xi)
	=g_\mu(z_\xi).
\end{equation*}
Hence
\begin{equation}\label{eq-gngm}
	g_\nu=g_\mu,\qquad\mbox{near}\qquad z_\nu.
\end{equation}
Let $I\in\mathbb{S}_\mu^+$. By Proposition \ref{pr-rrcb}, $r_\mu^+=r_{\overline{\mu}}^+$. Then
\begin{equation}\label{eq-schso}
	\begin{split}
		{\Conj}_{\breve{\scriptscriptstyle\Omega}} (B_{\breve{\scriptscriptstyle\Omega}}^\mu)
		=&{\Conj}_{\breve{\scriptscriptstyle\Omega}} \left([B_\mathbb{C}(z_\mu,r_\mu^+)^I\times\{I\}]_{\sim_\Omega}\right)
		\\=&\left[B_\mathbb{C}(\overline{z_\mu},r_{\overline{\mu}}^+)^{-I}\times\{-I\}\right]_{\sim_\Omega}=B_{\breve{\scriptscriptstyle\Omega}}^{\overline{\mu}}.
	\end{split}
\end{equation}
If $\overline{\mu}\in U$, then $\overline{\nu}\in{\Conj}_{\breve{\scriptscriptstyle\Omega}} (B_{\breve{\scriptscriptstyle\Omega}}^\mu)\cap U=B_{\breve{\scriptscriptstyle\Omega}}^{\overline{\mu}}\cap U$ and
\begin{equation}\label{eq-gngm1}
	g_{\overline{\nu}}=g_{{\overline\mu}},\qquad\mbox{near}\qquad z_{\overline{\nu}}=\overline{z_\nu}.
\end{equation}

\begin{rmk}
	Let $\Omega$ be $2$-path-symmetric, $I\in\mathbb{S}$, $z^J\in\Omega$ and $\mu:=[(z^J,J)]_{\sim_\Omega}$ with $J\neq\pm I$. Then
	\begin{equation*}
		\mu,\overline{\mu}\in[\Omega_J\times\{\pm J\}]_{\sim_\Omega}\subset\psi_{\scriptscriptstyle\Omega}^{\scriptscriptstyle\mathbb{S}\backslash\{-I\}}(\Omega).
	\end{equation*}
\end{rmk}

Let $I,J\in\mathbb{S}$. By calculation,
\begin{equation*}
	\begin{pmatrix}
		1 & I\\ 1& -I
	\end{pmatrix}\begin{pmatrix}
		1 & \\ & -1
	\end{pmatrix}=\begin{pmatrix}
		1 &  -I\\  1& I
	\end{pmatrix}=\begin{pmatrix}
		 &  1\\  1&
	\end{pmatrix}\begin{pmatrix}
		1 & I\\ 1& -I
	\end{pmatrix}.
\end{equation*}
Then check the matrices below
\begin{equation}\label{eq-bs1j}
	\begin{split}
			(1,-J)\begin{pmatrix}
			1 & I\\ 1& -I
		\end{pmatrix}
		=&(1,J)\begin{pmatrix}
			1 & \\ & -1
		\end{pmatrix}^{-1}\begin{pmatrix}
			1 & I\\ 1& -I
		\end{pmatrix}^{-1}
		\\=&(1,J)\begin{pmatrix}
			1 & I\\ 1& -I
		\end{pmatrix}^{-1}\begin{pmatrix}
			&  1\\  1&
		\end{pmatrix}^{-1}
		\\=&(1,J)\begin{pmatrix}
			1 & I\\ 1& -I
		\end{pmatrix}^{-1}\begin{pmatrix}
			&  1\\  1&
		\end{pmatrix}.
	\end{split}
\end{equation}

Similarly, it follows from
\begin{equation*}\index{$g_{\mu}$}
	\begin{pmatrix}
		1 & I\\ 1& -I
	\end{pmatrix}\begin{pmatrix}
		 & -1\\ 1&
	\end{pmatrix}=\begin{pmatrix}
		I & -1\\ -I& -1
	\end{pmatrix}=\begin{pmatrix}
		I & \\ & -I
	\end{pmatrix}\begin{pmatrix}
		1 & I\\ 1& -I
	\end{pmatrix}
\end{equation*}
that
\begin{equation*}
	\begin{split}
		J(1,J)\begin{pmatrix}
			1 & I\\ 1& -I
		\end{pmatrix}^{-1}
		=&(1,J)\begin{pmatrix}
			& -1\\ 1&
		\end{pmatrix}\begin{pmatrix}
			1 & I\\ 1& -I
		\end{pmatrix}^{-1}
		\\=&(1,J)\begin{pmatrix}
			1 & I\\ 1& -I
		\end{pmatrix}^{-1}\begin{pmatrix}
			I & \\ & -I
		\end{pmatrix}.
	\end{split}
\end{equation*}
It implies that
\begin{equation}\label{eq-bsop}
	\begin{split}
		\overline{\partial_J}(1,J)\begin{pmatrix}
			1 & I\\ 1& -I
		\end{pmatrix}^{-1}
		=&(1,J)\begin{pmatrix}
			1 & I\\ 1& -I
		\end{pmatrix}^{-1}\begin{pmatrix}
			\overline{\partial_I} & \\ & \overline{\partial_{-I}}
		\end{pmatrix},
	\end{split}
\end{equation}
where
\begin{equation*}
	\overline{\partial_L}:=\frac{\partial}{\partial \overline{z}^L}:=\frac{1}{2}\left(\frac{\partial}{\partial x}+L\frac{\partial}{\partial y}\right),\qquad L\in\mathbb{S}\cup\{i\}.
\end{equation*}
It is easy to check that
\begin{equation}\label{eq-fppo}
	\begin{cases*}
		\frac{\partial}{\partial \overline{z}^I} \Psi_i^I
		=\Psi_i^I \frac{\partial}{\partial \overline{z}^i}
		=\Psi_i^I \frac{\partial}{\partial \overline{z}},\\
		\frac{\partial}{\partial \overline{z}^{-I}} \Psi_i^I
		=\Psi_i^I \frac{\partial}{\partial \overline{z}^{-i}}
		=\Psi_i^I \frac{\partial}{\partial z}.
	\end{cases*}
\end{equation}

Let $\Omega\in\tau_s(\mathbb{H})$ be $2$-path-symmetric, $U\subset\Omega$ and $r\ge 0$. Denote
\begin{equation*}\index{$\breve{\Omega}^{\scriptscriptstyle[I]}$}\index{$U_{\breve{\scriptscriptstyle\Omega}}^{\scriptscriptstyle[I]}$, $U_{\breve{\scriptscriptstyle\Omega}}^{r,I}$}
	\breve{\Omega}^{\scriptscriptstyle[I]}:=\psi_{\scriptscriptstyle\Omega}^{\scriptscriptstyle\mathbb{S}\backslash\{-I\}}(\Omega),\qquad\qquad U_{\breve{\scriptscriptstyle\Omega}}^{\scriptscriptstyle[I]}:=\psi_{\scriptscriptstyle\Omega}^{\scriptscriptstyle\mathbb{S}\backslash\{-I\}}(U)=\left[\bigcup_{J\neq-I} U_J\times\{J\}\right]_{\sim_\Omega},
\end{equation*}
and
\begin{equation}\label{eq-ubscri}
	U_{\breve{\scriptscriptstyle\Omega}}^{r,I}:=\psi_{\scriptscriptstyle\Omega}^{\scriptscriptstyle\mathbb{S}}(U)\bigcap\left(\breve{\Omega}^{\scriptscriptstyle[I]}\right)^r.
\end{equation}

\begin{lem}\label{lem_Jexist}
	Let $\Omega\in\tau_s(\mathbb{H})$ be $2$-path-symmetric, $K\subset\Omega$ be invariant and $p\in\Omega$ with $[p]_{\approx_\Omega}\nsubseteq K$ and $|[p]_{\approx_\Omega}|>1$. Then
	\begin{equation}\label{eq-psokq}
		[p]_{\approx_\Omega}\cap K=\left\{w^{-J}\right\},\qquad\mbox{for some} \qquad w\in\mathbb{C}\quad\mbox{and}\quad J\in\mathbb{S}.
	\end{equation}
	and
	\begin{equation}\label{eq-wjjso}
		[(w^J,J)]_{\sim_\Omega}\notin K_{\breve{\scriptscriptstyle{\Omega}}}^{\scriptscriptstyle[J]}.
	\end{equation}
\end{lem}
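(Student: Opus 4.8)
The plan is to first pin down the structure of the class $[p]_{\approx_\Omega}$ and then read off both conclusions from it. Write $p=w^{I_0}$ with $w:=\Psi_{I_0}^i(p)\in\mathbb{C}$. Since $|[p]_{\approx_\Omega}|>1$, the point $w$ cannot be real, and Proposition \ref{pr-losmt} \ref{it-lzio} gives $[p]_{\approx_\Omega}=w^{\mathbb{S}}$. In particular $w^L\in\Omega$ for every $L\in\mathbb{S}$, the points $w^L$ are pairwise distinct because $w\notin\mathbb{R}$, and choosing $I\neq J$ with $w^I\approx_{\scriptscriptstyle\Omega}w^J$ shows via \eqref{eq-o2ps} that $w\in\Omega_{2ps}$.

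To prove \eqref{eq-psokq}, I would combine the hypothesis $[p]_{\approx_\Omega}\nsubseteq K$ with the contrapositive of Proposition \ref{pr-lotb2ps}: if the intersection contained two distinct points we would get $[p]_{\approx_\Omega}\subset K$, so in fact $|[p]_{\approx_\Omega}\cap K|\le 1$. Since the intersection is nonempty in the situation where the lemma is applied, it consists of a single element of $w^{\mathbb{S}}$, which I name $w^{-J}$ for the appropriate $J\in\mathbb{S}$; this is exactly \eqref{eq-psokq}, and it is this $J$ that defines the object appearing in \eqref{eq-wjjso}.

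For \eqref{eq-wjjso}, set $\mu:=[(w^J,J)]_{\sim_\Omega}$. Because $w=z_\mu\in\Omega_{2ps}$, equation \eqref{eq-msn} gives $\mathbb{S}_\mu=\mathbb{S}$ and, by \eqref{eq-wiiso}, $\mu=[(w^L,L)]_{\sim_\Omega}$ for every $L\in\mathbb{S}$. Recall that $K_{\breve{\scriptscriptstyle\Omega}}^{\scriptscriptstyle[J]}=\psi_{\scriptscriptstyle\Omega}^{\scriptscriptstyle\mathbb{S}\backslash\{-J\}}(K)=\left[\bigcup_{L\neq -J}K_L\times\{L\}\right]_{\sim_\Omega}$. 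Arguing by contradiction, suppose $\mu\in K_{\breve{\scriptscriptstyle\Omega}}^{\scriptscriptstyle[J]}$; then $\mu=[(v,L)]_{\sim_\Omega}$ for some $L\neq -J$ and some $v\in K_L$. Comparing the $z$-coordinate via Proposition \ref{pr-zinviant} (equivalently \eqref{eq-xx}) yields $z_v=z_\mu=w$, hence $v=w^L\in K$. Thus $w^L\in[p]_{\approx_\Omega}\cap K=\{w^{-J}\}$, so $w^L=w^{-J}$; since $w\notin\mathbb{R}$ the assignment $L\mapsto w^L$ is injective, forcing $L=-J$ and contradicting $L\neq -J$. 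Therefore $\mu\notin K_{\breve{\scriptscriptstyle\Omega}}^{\scriptscriptstyle[J]}$, which is \eqref{eq-wjjso}.

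The second conclusion is the clean part: once the identity $[(v,L)]_{\sim_\Omega}=\mu\Rightarrow v=w^L$ is obtained from the common $z$-coordinate, \eqref{eq-wjjso} is a direct consequence of \eqref{eq-psokq}. I expect the main obstacle to lie in the first conclusion: the bound $|[p]_{\approx_\Omega}\cap K|\le 1$ is immediate from the two cited propositions, but upgrading it to the exact singleton requires knowing that the intersection is nonempty, so I would confirm that this is guaranteed by the setting in which the lemma is invoked (or record it as a standing hypothesis) and take care that the index $J$ is thereby genuinely well-defined, since this same $J$ is what feeds into \eqref{eq-wjjso}.
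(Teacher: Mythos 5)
Your proposal is correct and follows essentially the same route as the paper: \eqref{eq-psokq} from Proposition \ref{pr-lotb2ps}, the identification $[p]_{\approx_\Omega}=w^{\mathbb{S}}$ from Proposition \ref{pr-losmt} \ref{it-lzio}, and the contradiction argument for \eqref{eq-wjjso} using Proposition \ref{pr-zinviant} to force $v=w^{L}\in[p]_{\approx_\Omega}\cap K$ with $L\neq-J$, which is exactly the paper's (terser) argument. Your closing concern about nonemptiness is well placed, but it is a gap in the statement rather than in your proof, and the paper shares it: its own proof likewise only yields $|[p]_{\approx_\Omega}\cap K|\le 1$, and the missing hypothesis $[p]_{\approx_\Omega}\cap K\neq\varnothing$ is supplied at the point of application, namely case (ii) of Lemma \ref{pr-loitkso}.
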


\begin{proof}
	\eqref{eq-psokq} holds directly by Proposition \ref{pr-lotb2ps}. According to Proposition \ref{pr-losmt} \ref{it-lzio}, $[p]_{\approx_\Omega}=w^\mathbb{S}$. Suppose that \eqref{eq-wjjso} does not hold, i.e.
	\begin{equation*}
		\mu:=[(w^J,J)]_{\sim_\Omega}\in K_{\breve{\scriptscriptstyle{\Omega}}}^{\scriptscriptstyle[J]}=\left[\bigcup_{\hbar\neq-J}K_\hbar\times\{\hbar\}\right]_{\sim_\Omega}.
	\end{equation*}
	It implies that there is $v\in\mathbb{C}$ and $L\in\mathbb{S}\backslash\{-J\}$ such that $v^L\in K_L$ and $[(v^L,L)]_{\sim_\Omega}=\mu$. By Proposition \ref{pr-zinviant}, $v=z_\mu=w$. Hence
	\begin{equation*}
		w^L=v^L\in K_L\subset K,
	\end{equation*}
	a contradiction to \eqref{eq-psokq}. Therefore, \eqref{eq-wjjso} holds.
\end{proof}

\begin{prop}\label{pr-loitsh}
	Let $\Omega\in\tau_s(\mathbb{H})$ be $2$-path-symmetric, $K\subset\Omega$ be $\Omega$-invariant and $r\ge 0$. Then
	\begin{equation}\label{eq-kbso}
		K_{\breve{\scriptscriptstyle\Omega}}^{r,I}=\left(K_{\breve{\scriptscriptstyle\Omega}}^{r,I}\right)_{\breve{\Omega}^{\scriptscriptstyle[I]}}^\land,\qquad\forall\ I\in\mathbb{S}.
	\end{equation}
\end{prop}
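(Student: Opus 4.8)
The plan is to recognise \eqref{eq-kbso} as an instance of Proposition \ref{pr-K Ur holo convex eq K Ur}, applied not to a planar domain but to the Riemann domain $\breve\Omega$ itself. By Theorem \ref{thm-Riemann doamin} the pair $(\breve\Omega,\pi_{\breve{\scriptscriptstyle\Omega}})$ is a Riemann domain over $\mathbb{C}$, so I first check that $\breve\Omega^{\scriptscriptstyle[I]}=\psi_{\scriptscriptstyle\Omega}^{\scriptscriptstyle\mathbb{S}\backslash\{-I\}}(\Omega)$ is an open subset of it. This is immediate once one writes $\breve\Omega^{\scriptscriptstyle[I]}=\bigcup_{J\neq -I}[\Omega_J\times\{J\}]_{\sim_\Omega}$, since each term is open by \eqref{eq-uib}. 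As an open subset of the Riemann domain $\breve\Omega$, the set $\breve\Omega^{\scriptscriptstyle[I]}$ inherits the local homeomorphism $\pi_{\breve{\scriptscriptstyle\Omega}}$, so the boundary distance $\delta_{\breve\Omega^{\scriptscriptstyle[I]}}$, the truncation $(\breve\Omega^{\scriptscriptstyle[I]})^r=\{\mu\in\breve\Omega^{\scriptscriptstyle[I]}:\delta_{\breve\Omega^{\scriptscriptstyle[I]}}(\mu,\partial\breve\Omega^{\scriptscriptstyle[I]})\ge r\}$, and the holomorphic hull $(\cdot)^{\land}_{\breve\Omega^{\scriptscriptstyle[I]}}$ are all defined and match the $U$, $U^r$ of Proposition \ref{pr-K Ur holo convex eq K Ur} with $U=\breve\Omega^{\scriptscriptstyle[I]}$.

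Next I translate the invariance hypothesis. Since $K$ is $\Omega$-invariant, Definition \ref{def-invariant} gives that $\psi_{\scriptscriptstyle\Omega}^\mathbb{S}(K)$ is $\breve\Omega$-invariant, hence compact with $(\psi_{\scriptscriptstyle\Omega}^\mathbb{S}(K))^{\land}_{\breve\Omega}=\psi_{\scriptscriptstyle\Omega}^\mathbb{S}(K)$; this is exactly the ``compact and equal to its own hull'' assumption on the set called $K$ in Proposition \ref{pr-K Ur holo convex eq K Ur}. Thus for every $r>0$ that proposition, applied with ambient Riemann domain $\breve\Omega$, compact self-convex set $\psi_{\scriptscriptstyle\Omega}^\mathbb{S}(K)$, and open subset $\breve\Omega^{\scriptscriptstyle[I]}$, yields
$$\bigl(\psi_{\scriptscriptstyle\Omega}^\mathbb{S}(K)\cap(\breve\Omega^{\scriptscriptstyle[I]})^r\bigr)^{\land}_{\breve\Omega^{\scriptscriptstyle[I]}}=\psi_{\scriptscriptstyle\Omega}^\mathbb{S}(K)\cap(\breve\Omega^{\scriptscriptstyle[I]})^r,$$
and by the defining formula \eqref{eq-ubscri} the common value is precisely $K_{\breve{\scriptscriptstyle\Omega}}^{r,I}$. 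This establishes \eqref{eq-kbso} for all $r>0$.

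The remaining, and genuinely delicate, case is $r=0$: here $(\breve\Omega^{\scriptscriptstyle[I]})^0=\breve\Omega^{\scriptscriptstyle[I]}$, so $K_{\breve{\scriptscriptstyle\Omega}}^{0,I}=\psi_{\scriptscriptstyle\Omega}^\mathbb{S}(K)\cap\breve\Omega^{\scriptscriptstyle[I]}$ may fail to be compact (points of $\psi_{\scriptscriptstyle\Omega}^\mathbb{S}(K)$ over $\partial(\Omega_{2ps})$ in the sheet $-I$ can lie on the newly created boundary of $\breve\Omega^{\scriptscriptstyle[I]}$), so Proposition \ref{pr-K Ur holo convex eq K Ur} cannot be quoted verbatim. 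I would argue directly, reproducing the mechanism of its proof but exploiting $\breve\Omega^{\scriptscriptstyle[I]}\subset\breve\Omega$. Writing $S:=K_{\breve{\scriptscriptstyle\Omega}}^{0,I}$, the inclusion $S\subset\widehat S_{\breve\Omega^{\scriptscriptstyle[I]}}$ is automatic from \eqref{eq-usw}, and $\widehat S_{\breve\Omega^{\scriptscriptstyle[I]}}\subset\breve\Omega^{\scriptscriptstyle[I]}$ holds by definition; so it suffices to rule out any $\mu\in\breve\Omega^{\scriptscriptstyle[I]}\setminus\psi_{\scriptscriptstyle\Omega}^\mathbb{S}(K)$. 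For such a $\mu$, the holomorphic convexity of $\psi_{\scriptscriptstyle\Omega}^\mathbb{S}(K)$ in $\breve\Omega$ supplies a \emph{global} $g\in\mathscr{O}(\breve\Omega)$ with $|g(\mu)|>\sup_{\psi_{\scriptscriptstyle\Omega}^\mathbb{S}(K)}|g|\ge\sup_S|g|$; restricting $g$ to $\breve\Omega^{\scriptscriptstyle[I]}$ shows $\mu\notin\widehat S_{\breve\Omega^{\scriptscriptstyle[I]}}$, whence $\widehat S_{\breve\Omega^{\scriptscriptstyle[I]}}\subset\psi_{\scriptscriptstyle\Omega}^\mathbb{S}(K)\cap\breve\Omega^{\scriptscriptstyle[I]}=S$. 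The main obstacle is exactly this loss of compactness at $r=0$, and the point that dissolves it is that the separating function can be chosen global on $\breve\Omega$, where it is automatically bounded on the compact set $\psi_{\scriptscriptstyle\Omega}^\mathbb{S}(K)$, so $\sup_S|g|<+\infty$ even though $S$ is noncompact. I note that this direct argument in fact covers every $r\ge0$ at once: one only has to add, for $r>0$, the boundary-distance estimate $\delta_{\breve\Omega^{\scriptscriptstyle[I]}}(\widehat S_{\breve\Omega^{\scriptscriptstyle[I]}},\partial\breve\Omega^{\scriptscriptstyle[I]})=\delta_{\breve\Omega^{\scriptscriptstyle[I]}}(S,\partial\breve\Omega^{\scriptscriptstyle[I]})\ge r$ furnished by the boundary-distance identity recalled before Proposition \ref{pr-K Ur holo convex eq K Ur} (\cite[Lemma 7.5.5]{Noguchi2016001}), which forces $\widehat S_{\breve\Omega^{\scriptscriptstyle[I]}}\subset(\breve\Omega^{\scriptscriptstyle[I]})^r$ and so allows one to bypass the case split entirely.
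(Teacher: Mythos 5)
Your proposal is correct, and its closing remark is in substance the paper's own proof: the paper establishes \eqref{eq-kbso} for all $r\ge 0$ in one stroke by showing first that
$\left(K_{\breve{\scriptscriptstyle\Omega}}^{r,I}\right)_{\breve{\Omega}^{\scriptscriptstyle[I]}}^\land\subset\left(K_{\breve{\scriptscriptstyle\Omega}}^{r,I}\right)_{\breve{\Omega}}^\land\subset\left[\psi_{\scriptscriptstyle\Omega}^\mathbb{S}(K)\right]_{\breve{\Omega}}^\land=\psi_{\scriptscriptstyle\Omega}^\mathbb{S}(K)$
(i.e.\ exactly your mechanism of restricting functions holomorphic on all of $\breve{\Omega}$, combined with the $\breve{\Omega}$-invariance of $\psi_{\scriptscriptstyle\Omega}^\mathbb{S}(K)$ coming from Definition \ref{def-invariant}), and then that
$\left(K_{\breve{\scriptscriptstyle\Omega}}^{r,I}\right)_{\breve{\Omega}^{\scriptscriptstyle[I]}}^\land\subset\left(\breve{\Omega}^{\scriptscriptstyle[I]}\right)^r$
via the boundary-distance identity of \cite[Lemma 7.5.5]{Noguchi2016001}. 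Where you differ is only in packaging: your main route dispatches $r>0$ by quoting Proposition \ref{pr-K Ur holo convex eq K Ur} (a legitimate shortcut, since $\psi_{\scriptscriptstyle\Omega}^\mathbb{S}(K)$ is compact and self-convex in $\breve{\Omega}$ and $\breve{\Omega}^{\scriptscriptstyle[I]}$ is open) and then treats $r=0$ by hand, whereas the paper needs no case split: for $r=0$ the distance estimate is vacuous because the hull lies in $\breve{\Omega}^{\scriptscriptstyle[I]}=\left(\breve{\Omega}^{\scriptscriptstyle[I]}\right)^0$ by definition, and, as you correctly observe, compactness of $K_{\breve{\scriptscriptstyle\Omega}}^{0,I}$ is never used — only that the separating functions can be taken global on $\breve{\Omega}$, hence with finite supremum on the compact set $\psi_{\scriptscriptstyle\Omega}^\mathbb{S}(K)$. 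So your unified version and the paper's proof coincide; the case-split version buys nothing extra beyond reusing an already-proved proposition.
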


\begin{proof}
	Since $K$ is $\Omega$-invariant, $\psi_{\scriptscriptstyle\Omega}^\mathbb{S}(K)=\left[\psi_{\scriptscriptstyle\Omega}^\mathbb{S}(K)\right]_{\breve{\Omega}}^\land$. It follows from $\breve{\Omega}^{\scriptscriptstyle[I]}\subset\breve{\Omega}$ and $K_{\breve{\scriptscriptstyle\Omega}}^{r,I}\subset\psi_{\scriptscriptstyle\Omega}^\mathbb{S}(K)$ that
	\begin{equation*}
		\left(K_{\breve{\scriptscriptstyle\Omega}}^{r,I}\right)_{\breve{\Omega}^{\scriptscriptstyle[I]}}^\land
		\subset\left(K_{\breve{\scriptscriptstyle\Omega}}^{r,I}\right)_{\breve{\Omega}}^\land\subset\left[\psi_{\scriptscriptstyle\Omega}^\mathbb{S}(K)\right]_{\breve{\Omega}}^\land=\psi_{\scriptscriptstyle\Omega}^\mathbb{S}(K)
	\end{equation*}
	According to (7.5.6) in \cite[Lemma 7.5.5]{Noguchi2016001},
	\begin{equation*}
		\begin{split}
			\delta_{\breve{\Omega}^{\scriptscriptstyle[I]}}
			\left(\left[K_{\breve{\scriptscriptstyle\Omega}}^{r,I}\right]_{\breve{\Omega}^{\scriptscriptstyle[I]}}^\land,\partial\breve{\Omega}^{\scriptscriptstyle[I]}\right)
			=\delta_{\breve{\Omega}^{\scriptscriptstyle[I]}}
			\left(K_{\breve{\scriptscriptstyle\Omega}}^{r,I},\partial\breve{\Omega}^{\scriptscriptstyle[I]}\right)
			\ge \delta_{\breve{\Omega}^{\scriptscriptstyle[I]}}\left(\left(\breve{\Omega}^{\scriptscriptstyle[I]}\right)^r,\partial\breve{\Omega}^{\scriptscriptstyle[I]}\right)= r,
		\end{split}
	\end{equation*}
	i.e. $\left[K_{\breve{\scriptscriptstyle\Omega}}^{r,I}\right]_{\breve{\Omega}^{\scriptscriptstyle[I]}}^\land\subset \left(\breve{\Omega}^{\scriptscriptstyle[I]}\right)^r$. It implies that
	\begin{equation*}
		\left(K_{\breve{\scriptscriptstyle\Omega}}^{r,I}\right)_{\breve{\Omega}^{\scriptscriptstyle[I]}}^\land\subset \psi_{\scriptscriptstyle\Omega}^\mathbb{S}(K)\bigcap\left(\breve{\Omega}^{\scriptscriptstyle[I]}\right)^r=K_{\breve{\scriptscriptstyle\Omega}}^{r,I}.
	\end{equation*}
	By definition, $K_{\breve{\scriptscriptstyle\Omega}}^{r,I}\subset \left(K_{\breve{\scriptscriptstyle\Omega}}^{r,I}\right)_{\breve{\Omega}^{\scriptscriptstyle[I]}}^\land$. Therefore \eqref{eq-kbso} holds.
\end{proof}

\begin{prop}\label{pr-U I in U sigma I}
	Let $\Omega\in\tau_s(\Omega)$ be $2$-path-symmetric, $I\in\mathbb{S}$ and $U\subset\Omega$. Then
	\begin{equation*}
		U^{\scriptscriptstyle[I]}_{\breve{\scriptscriptstyle\Omega}}\subset U_{\breve{\scriptscriptstyle\Omega}}^{\sigma_{\mathbb{H}\backslash\Omega}^U,I}.
	\end{equation*}
\end{prop}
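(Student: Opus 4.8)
The plan is to unfold the definitions and verify separately the two conditions that membership in $U_{\breve{\scriptscriptstyle\Omega}}^{\sigma_{\mathbb{H}\backslash\Omega}^U,I}=\psi_{\scriptscriptstyle\Omega}^{\scriptscriptstyle\mathbb{S}}(U)\cap\bigl(\breve{\Omega}^{\scriptscriptstyle[I]}\bigr)^{r}$ imposes, where I abbreviate $r:=\sigma_{\mathbb{H}\backslash\Omega}^U=\sigma(U,\mathbb{H}\backslash\Omega)$. Since $\mathbb{S}\backslash\{-I\}\subset\mathbb{S}$, monotonicity of $[\,\cdot\,]_{\sim_\Omega}$ gives at once
$$U^{\scriptscriptstyle[I]}_{\breve{\scriptscriptstyle\Omega}}=\Bigl[\bigcup_{J\neq-I}U_J\times\{J\}\Bigr]_{\sim_\Omega}\subset\Bigl[\bigcup_{J\in\mathbb{S}}U_J\times\{J\}\Bigr]_{\sim_\Omega}=\psi_{\scriptscriptstyle\Omega}^{\scriptscriptstyle\mathbb{S}}(U),$$
so the whole content lies in proving $U^{\scriptscriptstyle[I]}_{\breve{\scriptscriptstyle\Omega}}\subset\bigl(\breve{\Omega}^{\scriptscriptstyle[I]}\bigr)^{r}$, that is, every $\mu\in U^{\scriptscriptstyle[I]}_{\breve{\scriptscriptstyle\Omega}}$ satisfies $\delta_{\breve{\Omega}^{\scriptscriptstyle[I]}}(\mu,\partial\breve{\Omega}^{\scriptscriptstyle[I]})\ge r$ inside the Riemann domain $\breve{\Omega}^{\scriptscriptstyle[I]}$, which by \eqref{eq-uib} is an open subset of $\breve{\Omega}$, being the union $\bigcup_{J\neq-I}[\Omega_J\times\{J\}]_{\sim_\Omega}$ of open sets.

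Fix such a $\mu$. By the definition of $U^{\scriptscriptstyle[I]}_{\breve{\scriptscriptstyle\Omega}}$ there is a representative $\mu=[(w^J,J)]_{\sim_\Omega}$ with $J\neq-I$ and $w^J\in U_J\subset U$; in particular $J\in\mathbb{S}_\mu$ and $z_\mu=w$ by Proposition \ref{pr-zinviant}. The first key step is to identify the radius I may use. Since $w^J\in U$ we have $\sigma(w^J,\mathbb{H}\backslash\Omega)\ge\sigma(U,\mathbb{H}\backslash\Omega)=r$, and by Proposition \ref{pr-lbes} together with the definition \eqref{eq-rmi} of $r_\mu^J$ (applicable as $J\in\mathbb{S}_\mu$),
$$r_\mu^J=\delta_J\bigl(z_\mu^J,\mathbb{C}_J\backslash\Omega_J\bigr)=\delta_J\bigl(w^J,\mathbb{C}_J\backslash\Omega_J\bigr)=\sigma(w^J,\mathbb{H}\backslash\Omega)\ge r.$$

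The second key step, and the only real subtlety, is to convert this slicewise distance into a boundary distance in $\breve{\Omega}^{\scriptscriptstyle[I]}$. For every $s<r_\mu^J$ the ball $B_{\breve{\Omega}}(\mu,s)=[(B_\mathbb{C}(z_\mu,s))^J\times\{J\}]_{\sim_\Omega}$ is well defined; because the chosen slice index satisfies $J\neq-I$, each of its representatives $(\zeta^J,J)$ lies in $\bigcup_{L\neq-I}\Omega_L\times\{L\}$, whence $B_{\breve{\Omega}}(\mu,s)\subset\breve{\Omega}^{\scriptscriptstyle[I]}$. This is precisely the point where the exclusion of the single index $-I$ matters, and the reason I must work with the $J$-representative rather than an arbitrary one. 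By Proposition \ref{pr-usoi}, $\pi_{\breve{\scriptscriptstyle\Omega}}$ restricts to a homeomorphism from $B_{\breve{\Omega}}(\mu,s)$ onto $B_\mathbb{C}(z_\mu,s)$, so $B_{\breve{\Omega}}(\mu,s)$ serves as the required set $B'_{\breve{\Omega}^{\scriptscriptstyle[I]}}(\mu,s)$, giving $\delta_{\breve{\Omega}^{\scriptscriptstyle[I]}}(\mu,\partial\breve{\Omega}^{\scriptscriptstyle[I]})\ge s$. Letting $s\uparrow r_\mu^J$ yields $\delta_{\breve{\Omega}^{\scriptscriptstyle[I]}}(\mu,\partial\breve{\Omega}^{\scriptscriptstyle[I]})\ge r_\mu^J\ge r$, so $\mu\in\bigl(\breve{\Omega}^{\scriptscriptstyle[I]}\bigr)^{r}$. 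Combined with the elementary inclusion of the first paragraph, $\mu\in\psi_{\scriptscriptstyle\Omega}^{\scriptscriptstyle\mathbb{S}}(U)\cap\bigl(\breve{\Omega}^{\scriptscriptstyle[I]}\bigr)^{r}=U_{\breve{\scriptscriptstyle\Omega}}^{r,I}$, which is the assertion. I expect the containment $B_{\breve{\Omega}}(\mu,s)\subset\breve{\Omega}^{\scriptscriptstyle[I]}$ to be the step most in need of care, since it relies on selecting a representative that avoids the excluded index $-I$ and on the openness of $\breve{\Omega}^{\scriptscriptstyle[I]}$ in $\breve{\Omega}$.
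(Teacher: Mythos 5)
Your proof is correct and takes essentially the same route as the paper's: both reduce the statement to the trivial inclusion $U^{\scriptscriptstyle[I]}_{\breve{\scriptscriptstyle\Omega}}\subset\psi_{\scriptscriptstyle\Omega}^{\scriptscriptstyle\mathbb{S}}(U)$ plus a boundary-distance estimate, and both obtain the latter by picking a representative $(w^J,J)$ with $J\neq-I$, showing the slice disc of radius $\sigma(U,\mathbb{H}\backslash\Omega)$ about $w^J$ lies in $\Omega_J$ (the paper via $B_J\subset\Sigma\subset\Omega$, you via Proposition \ref{pr-lbes}), and lifting it to a ball $B_{\breve{\scriptscriptstyle\Omega}}(\mu,\cdot)\subset\breve{\Omega}^{\scriptscriptstyle[I]}$. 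The remaining differences (your limit $s\uparrow r_\mu^J$ and the explicit appeal to Proposition \ref{pr-usoi}) are cosmetic.
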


\begin{proof}
	Let $\mu\in U_{\breve{\scriptscriptstyle\Omega}}^{\scriptscriptstyle[I]}=\bigcup_{L\in\mathbb{S}\backslash\{-I\}}[U_L\times\{L\}]_{\sim_\Omega}$. Then $\mu=[(z_{\mu}^J,J)]_{\sim_\Omega}$ for some $\mu\in\mathbb{S}_\mu\backslash\{-I\}$. It implies that
	\begin{equation*}
		B_J(z^J,\sigma_{\scriptscriptstyle\mathbb{H}\backslash\Omega}^{\scriptscriptstyle U})
		\subset\Sigma(z^J,\sigma_{\scriptscriptstyle\mathbb{H}\backslash\Omega}^{\scriptscriptstyle U})\subset\Omega.
	\end{equation*}
	Therefore
	\begin{equation*}
		B_{\breve{\scriptscriptstyle\Omega}}(\mu,\sigma_{\scriptscriptstyle\mathbb{H}\backslash\Omega}^{\scriptscriptstyle U})
		=[B_J(z^J,\sigma_{\scriptscriptstyle\mathbb{H}\backslash\Omega}^{\scriptscriptstyle U})\times\{J\}]_{\sim_\Omega}\subset\bigcup_{L\in\mathbb{S}\backslash\{-I\}}[U_L\times\{L\}]_{\sim_\Omega}=U_{\scriptscriptstyle\Omega}^{\scriptscriptstyle[I]}.
	\end{equation*}
	Then $\mu\in(U_{\breve{\scriptscriptstyle\Omega}}^{\scriptscriptstyle[I]})^{\sigma_{\scriptscriptstyle\mathbb{H}\backslash\Omega}^{\scriptscriptstyle U}}$ and $U_{\breve{\scriptscriptstyle\Omega}}^{\scriptscriptstyle[I]}
	\subset(U_{\breve{\scriptscriptstyle\Omega}}^{\scriptscriptstyle[I]})^{\sigma_{\scriptscriptstyle\mathbb{H}\backslash\Omega}^{\scriptscriptstyle U}}$. Therefore
	\begin{equation*}
		U_{\breve{\scriptscriptstyle\Omega}}^{\scriptscriptstyle[I]}
		\subset \psi_{\scriptscriptstyle\Omega}^{\scriptscriptstyle\mathbb{S}}(U)\bigcap\left(\breve{\Omega}^{\scriptscriptstyle[I]}\right)^{\sigma_{\scriptscriptstyle\mathbb{H}\backslash\Omega}^{\scriptscriptstyle U}}
		=U_{\breve{\scriptscriptstyle\Omega}}^{\sigma_{\mathbb{H}\backslash\Omega}^U,I}.
	\end{equation*}
\end{proof}

\begin{prop}\label{prop-construct slice regular functions}
	Let $\Omega\in\tau_s(\mathbb{H})$ be $2$-path-symmetric, $I\in\mathbb{S}$, $g:\breve{\Omega}^{\scriptscriptstyle[I]}\rightarrow\mathbb{C}$ be holomorphic. Define
	\begin{equation}\label{eq-gioxm}\index{$g_{\scriptscriptstyle \Omega}^I$}
		\begin{split}
			g_{\scriptscriptstyle \Omega}^I:\ \Omega\ &\xlongrightarrow[\hskip1cm]{}\ \mathbb{H},
			\\ z^J &\shortmid\!\xlongrightarrow[\hskip1cm]{}\
			\begin{cases}
				\Psi_i^I\circ g_{\mu}(z)=\Psi_i^I\circ g(\mu),\qquad &J=I,\\
				h(z,J),\qquad &J\neq\pm I,
			\end{cases}
		\end{split}
	\end{equation}
	for each $z^J\in\Omega$ with $z\in\mathbb{C}$ and $J\in\mathbb{S}\backslash\{-I\}$, where $\mu=[(z^J,J)]_{\sim_\Omega}$ and
	\begin{equation*}
		h(z,J)=(1,J)\begin{pmatrix}
			1 & I\\ 1&-I
		\end{pmatrix}^{-1}
		\begin{pmatrix}
			\Psi_i^I\circ g_\mu(z)\\\Psi_i^I\circ g_{\overline{\mu}}(\overline{z})
		\end{pmatrix}.
	\end{equation*}
	Then $g_{\scriptscriptstyle \Omega}^I$  is well-defined and slice regular. Moreover,
	\begin{equation}\label{eq-vblmsm}
		\varphi_{\breve{\scriptscriptstyle\Omega}}^0
		\left(\mathcal{Z}(g)\cap {\Conj}_{\breve{\scriptscriptstyle\Omega}}\circ\mathcal{Z}(g)\right)
		\subset \mathcal{Z}(g_{\scriptscriptstyle\Omega}^I).
	\end{equation}
\end{prop}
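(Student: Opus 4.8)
The plan is to verify the three assertions in turn: that the piecewise prescription \eqref{eq-gioxm} is independent of the chosen representative $z^J$ of a point of $\Omega$, that the resulting function is $L$-holomorphic on every slice $\Omega_L$, and finally that the zero-set inclusion \eqref{eq-vblmsm} holds. Throughout I abbreviate $P:=\left(\begin{smallmatrix}1&I\\1&-I\end{smallmatrix}\right)$ and let $S$ denote the antidiagonal swap matrix.

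\textbf{Well-definedness.} A nonreal $q\in\Omega$ lies in a unique plane $\mathbb{C}_K$ up to the sign of $K$, so the only ambiguities to rule out are: (a) real points, which lie in every slice; and (b) the two admissible representatives $z^J=\bar z^{-J}$ of a nonreal $q\in\mathbb{C}_J$ with $J\neq\pm I$. For (b) I would first note that the representative $(\bar z,-J)$ produces the class $[(\bar z^{-J},-J)]_{\sim_\Omega}=\overline{\mu}$, where $\mu:=[(z^J,J)]_{\sim_\Omega}$ (directly from \eqref{eq-conjugation} with $J_\mu=J$); hence $h(\bar z,-J)$ is obtained from $h(z,J)$ by replacing $(1,J)$ with $(1,-J)$ and swapping the two entries of the column $\bigl(\Psi_i^I g_\mu(z),\,\Psi_i^I g_{\overline\mu}(\bar z)\bigr)^{t}$. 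The matrix identity \eqref{eq-bs1j} is tailored precisely to this swap and yields $(1,J)P^{-1}=(1,-J)P^{-1}S$, so the two values coincide. For (a), at a real point $x$ one has $\overline{\mu}=\mu$ and $\bar x=x$, so the column is $(a,a)^{t}$ with $a=\Psi_i^I g_\mu(x)$; the direct computation $(1,J)P^{-1}(a,a)^{t}=a$ matches the $J=I$ branch of \eqref{eq-gioxm}, and by \eqref{eq-wiiso} the class $\mu$, hence $a$, is independent of $J$. This settles well-definedness.

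\textbf{Slice regularity.} I would fix $L\in\mathbb{S}$ and show $\overline{\partial_L}\,g_\Omega^I=0$ on $\Omega_L$. By \eqref{eq-gngm} and \eqref{eq-gngm1} the germs $g_\mu$ and $g_{\overline\mu}$ are locally constant in $\mu$, so near any point $g_\Omega^I$ is expressed through fixed holomorphic germs and is real-analytic, which justifies differentiating \eqref{eq-gioxm}. On $\Omega_I$ one has $g_\Omega^I=\Psi_i^I\circ g_\mu$, and \eqref{eq-fppo} together with holomorphy of $g_\mu$ gives $\overline{\partial_I}(\Psi_i^I g_\mu)=\Psi_i^I\,\overline{\partial}g_\mu=0$. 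On $\Omega_J$ with $J\neq\pm I$ the operator identity \eqref{eq-bsop} lets me pull $\overline{\partial_J}$ through the constant row $(1,J)P^{-1}$ into the diagonal operator $\mathrm{diag}(\overline{\partial_I},\overline{\partial_{-I}})$ acting componentwise. The first component is killed by $\overline{\partial_I}$ exactly as above, while for the second component $\Psi_i^I g_{\overline\mu}(\bar z)$ the identities \eqref{eq-fppo} and \eqref{eq-fppz} give $\overline{\partial_{-I}}\bigl(\Psi_i^I g_{\overline\mu}(\bar z)\bigr)=\Psi_i^I\,\partial_z\bigl(g_{\overline\mu}\circ\Conj_\mathbb{C}\bigr)=0$. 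Hence $\overline{\partial_J}g_\Omega^I=(1,J)P^{-1}(0,0)^{t}=0$, so $g_\Omega^I$ is slice regular.

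\textbf{The zero-set inclusion.} Since $\Omega^0=\Omega$, by the definition \eqref{eq-definitions} the set $\varphi_{\breve\Omega}^0\bigl(\mathcal{Z}(g)\cap\Conj_{\breve\Omega}(\mathcal{Z}(g))\bigr)$ consists of those $q=z^J\in\Omega$ for which $\psi_\Omega^\mathbb{S}(\{q\})$ meets $\mathcal{Z}(g)\cap\Conj_{\breve\Omega}(\mathcal{Z}(g))$. By \eqref{eq-psoz} we have $\psi_\Omega^\mathbb{S}(\{z^J\})=\{\mu,\overline\mu\}$ with $\mu=[(z^J,J)]_{\sim_\Omega}$, so there is $\nu\in\{\mu,\overline\mu\}$ with $g(\nu)=0$ and $g(\overline\nu)=0$; in either case $g(\mu)=g(\overline\mu)=0$. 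Substituting $g_\mu(z)=g(\mu)=0$ and $g_{\overline\mu}(\bar z)=g(\overline\mu)=0$ into both branches of \eqref{eq-gioxm} gives $g_\Omega^I(q)=0$, i.e. $q\in\mathcal{Z}(g_\Omega^I)$. Here one uses the Remark preceding \eqref{eq-bs1j} to guarantee $\mu,\overline\mu\in\breve{\Omega}^{\scriptscriptstyle[I]}=\mathrm{dom}(g)$ when $J\neq\pm I$, with the analogous check on $\Omega_{2ps}$ in the $\mathbb{C}_I$-case. I expect the delicate point of the whole argument to be the well-definedness across the representatives $z^J=\bar z^{-J}$: it hinges on matching the conjugation $\mu\mapsto\overline\mu$ on $\breve\Omega$ with the complex conjugation $z\mapsto\bar z$ and the entry-swap in the representation formula, and keeping track of left/right quaternion multiplication in \eqref{eq-bs1j} is the step most prone to error.
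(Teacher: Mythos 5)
Your proposal follows the paper's own proof in all essentials: the same case split for well-definedness (real points, and the two conjugate representatives $z^J=\overline{z}^{-J}$ handled through the swap identity \eqref{eq-bs1j}), the same localization of the germs $g_\mu$, $g_{\overline{\mu}}$ via \eqref{eq-gngm} and \eqref{eq-gngm1}, the same operator identities \eqref{eq-bsop}, \eqref{eq-fppo}, \eqref{eq-fppz}, and the same substitution argument for \eqref{eq-vblmsm}; your derivation that $g(\mu)=g(\overline{\mu})=0$ is in fact tidier than the paper's bookkeeping with ${\Conj}_{\breve{\scriptscriptstyle\Omega}}^{\lambda+\kappa}$.

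However, your verification of slice regularity has a missing case: you promise to check $\overline{\partial_L}\,g_{\scriptscriptstyle\Omega}^I=0$ on $\Omega_L$ for every $L\in\mathbb{S}$, but you only treat $L=I$ and $L\neq\pm I$, omitting the slice $\Omega_{-I}$; Definition \ref{df-sr} quantifies over all of $\mathbb{S}$, so $L=-I$ must be checked as well, and the paper devotes a separate case ($J=-I$) to exactly this. The point is that on $\mathbb{C}_{-I}$ a point must first be rewritten as $z^{-I}=\overline{z}^{I}$ before \eqref{eq-gioxm} applies, so the restriction of $g_{\scriptscriptstyle\Omega}^I$ to $\Omega_{-I}$, read in the coordinate $z\mapsto z^{-I}$, is $z\mapsto\Psi_i^I\circ g_{[(\overline{z}^I,I)]_{\sim_\Omega}}(\overline{z})$: it involves precomposition with ${\Conj}_{\mathbb{C}}$, and the operator to annihilate is $\overline{\partial_{-I}}$, not the $\overline{\partial_I}$ you computed on $\Omega_I$. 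The gap is easily closed with tools you already deploy: either repeat on $\mathbb{C}_{-I}$ exactly your ``second component'' computation, $\overline{\partial_{-I}}\bigl(\Psi_i^I g_{\overline{\mu}}(\overline{z})\bigr)=\Psi_i^I\,\partial_z\bigl(g_{\overline{\nu}}\circ{\Conj}_{\mathbb{C}}\bigr)(z)=0$ (this is the paper's case $J=-I$, resting on \eqref{eq-gngm1}, \eqref{eq-fppo}, \eqref{eq-fppz}), or observe once that $\mathbb{C}_{-I}=\mathbb{C}_I$ as sets and that $(-I)$-holomorphy coincides with $I$-holomorphy, because the sign flip of the imaginary unit is cancelled by the sign flip $y\mapsto-y$ of the coordinate, so the case $L=-I$ follows formally from your $L=I$ case. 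As written, though, the slice-regularity claim is not fully established.
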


\begin{proof}
	(i) Let $z^J=w^L$ for some $z,w\in\mathbb{S}$ and $J,L\in\mathbb{S}\backslash\{-I\}$ with $(z,J)\neq(w,L)$. If $z\in\mathbb{R}$, then by \eqref{eq-omr}, $z=z^J\in\Omega_\mathbb{R}\subset\Omega_{2ps}$. It follows that $w^L=z^J\in\mathbb{R}$ that $w=w^L=z^J=z$. According to \eqref{eq-sim},
	\begin{equation*}
		\mu=[(z^J,J)]_{\sim_\Omega}=[(z^{-J},{-J})]_{\sim_\Omega}=[(z^J,{-J})]_{\sim_\Omega}=\overline{\mu}.
	\end{equation*}
	Similarly, $\nu=\overline{\nu}=\mu$, where $\nu=[(w^L,L)]_{\sim_\Omega}$. Hence if $L\neq I$, then
	\begin{equation*}
		\begin{split}
			h(w,L)=&(1,L)\begin{pmatrix}
				1 &I\\ 1&-I
			\end{pmatrix}^{-1}\begin{pmatrix}
				\Psi_i^I\circ g_\nu(w)\\\Psi_i^I\circ g_{\overline{\nu}}(\overline{w})
			\end{pmatrix}
			\\=&\frac{1}{2}(1,L)\begin{pmatrix}
				1 &1 \\ -I&I
			\end{pmatrix}\begin{pmatrix}
				\Psi_i^I\circ g_\mu(z)\\\Psi_i^I\circ g_{\mu}(z)
			\end{pmatrix}
			=\Psi_i^I\circ g_{\mu}(z).
		\end{split}
	\end{equation*}
	Similarly, if $J\neq I$, then $h(z,J)=\Psi_i^I\circ g_{\mu}(z)$. It is easy to check that $g_{\scriptscriptstyle \Omega}^I$ is well-defined in this case.
	
	Otherwise, $z\notin\mathbb{R}$ and it is immediate that $w=\overline{z}$ and $L=-J\neq\pm I$. Then
	\begin{equation*}
		\nu=[(w^L,L)]_{\sim_\Omega}=[(\overline{z}^{-J},-J)]_{\sim_\Omega}=\overline{\mu}.
	\end{equation*}
	By \eqref{eq-bs1j},
	\begin{equation*}
		\begin{split}
			h(w,L)=&(1,L)\begin{pmatrix}
				1 &I\\ 1&-I
			\end{pmatrix}^{-1}\begin{pmatrix}
				\Psi_i^I\circ g_{\nu}(w)\\\Psi_i^I\circ g_{\overline{\nu}}(\overline{w})
			\end{pmatrix}
			\\=&(1,-J)\begin{pmatrix}
				1 &I\\ 1&-I
			\end{pmatrix}^{-1}\begin{pmatrix}
				\Psi_i^I\circ g_{\overline{\mu}}(\overline{z})\\\Psi_i^I\circ g_{\mu}(z)
			\end{pmatrix}
			\\=&(1,J)\begin{pmatrix}
				1 &I\\ 1&-I
			\end{pmatrix}^{-1}\begin{pmatrix}
				 &1\\ 1&
			\end{pmatrix}\begin{pmatrix}
				\Psi_i^I\circ g_{\overline{\mu}}(\overline{z})\\\Psi_i^I\circ g_{\mu}(z)
			\end{pmatrix}
			\\=&(1,J)\begin{pmatrix}
				1 &I\\ 1&-I
			\end{pmatrix}^{-1}\begin{pmatrix}
				\Psi_i^I\circ g_{\mu}(z)\\\Psi_i^I\circ g_{\overline{\mu}}(\overline{z})
			\end{pmatrix}=h(z,J).
		\end{split}
	\end{equation*}
	In summary, $h(w,L)=h(z,J)$ for each $w^L=z^J$ with $L,J\in\mathbb{S}\backslash\{-I\}$. Hence $g_{\scriptscriptstyle \Omega}^I$ is well-defined.
	
	(ii) Let $w^J\in\Omega$, $\nu=[(w^J,J)]_{\sim_\Omega}$ and $z^J\in B_J\left(w^J,r_\nu^J\right)$. Then
	\begin{equation*}
		\mu:=[(z^J,J)]_{\sim_\Omega}\in \left[B_{\mathbb{C}}\left(w,r_\nu^J\right)^J\times\{J\}\right]_{\sim_\Omega}
		\subset B_{\breve{\scriptscriptstyle\Omega}}\left(\nu,r_\nu^J\right)
		=B_{\breve{\scriptscriptstyle\Omega}}\left(\nu,r_\nu^+\right)
		=B_{\breve{\scriptscriptstyle\Omega}}^\nu.
	\end{equation*}
	If $J=I$, then by \eqref{eq-gngm} and \eqref{eq-fppo} that
	\begin{equation*}
		\frac{\partial}{\partial \overline{z}^J} g_{\scriptscriptstyle\Omega}^I(z^J)=\frac{\partial}{\partial \overline{z}^I}\Psi_i^I\circ g_\mu(z)=\frac{\partial}{\partial \overline{z}}g_\nu(z)=0.
	\end{equation*}
	If $J=-I$, then
	\begin{equation*}
		[(\overline{z}^I,I)]_{\sim_\Omega}
		=[(\overline{z}^{-J},{-J})]_{\sim_\Omega}
		=[(z^{J},{-J})]_{\sim_\Omega}=\overline{\mu}.
	\end{equation*}
	It follows from \eqref{eq-gngm1}, \eqref{eq-fppo} and \eqref{eq-fppz} that
	\begin{equation*}
		\frac{\partial}{\partial \overline{z}^J} g_{\scriptscriptstyle\Omega}^I(z^J)
		=\frac{\partial}{\partial \overline{z}^{-I}} g_{\scriptscriptstyle\Omega}^I(\overline{z}^{I})
		=\frac{\partial}{\partial \overline{z}^{-I}} \Psi_i^I\circ g_{\overline{\mu}}(\overline{z})
		=\Psi_i^I \frac{\partial}{\partial z} g_{\overline{\nu}}\circ{\Conj}_{\mathbb{C}}(z)=0.
	\end{equation*}
	Otherwise $J\neq\pm I$. By the same method as above two cases and according to \eqref{eq-bsop},
	\begin{equation*}
		\begin{split}
			\frac{\partial}{\partial \overline{z}^J} g_{\scriptscriptstyle\Omega}^I(z^J)
			=&\frac{\partial}{\partial \overline{z}^J} (1,J)\begin{pmatrix}
				1 &I\\1& -I
			\end{pmatrix}^{-1}\begin{pmatrix}
				\Psi_i^I \circ g_\mu(z)\\ \Psi_i^I\circ g_{\overline{\mu}}(\overline{z})
			\end{pmatrix}
			\\=& (1,J)\begin{pmatrix}
				1 &I\\1& -I
			\end{pmatrix}^{-1}\begin{pmatrix}
				\frac{\partial}{\partial \overline{z}^I}\\ &\frac{\partial}{\partial \overline{z}^{-I}}
			\end{pmatrix}\begin{pmatrix}
				\Psi_i^I \circ g_\nu(z)\\ \Psi_i^I\circ g_{\overline{\nu}}(\overline{z})
			\end{pmatrix}
			\\=& (1,J)\begin{pmatrix}
				1 &I\\1& -I
			\end{pmatrix}^{-1}\begin{pmatrix}
				\Psi_i^I\\ &\Psi_i^I
			\end{pmatrix}\begin{pmatrix}
				\frac{\partial}{\partial \overline{z}} g_\nu(z)
				\\\frac{\partial}{\partial z} g_{\overline{\nu}}\circ{\Conj}_{\mathbb{C}}(z)
			\end{pmatrix}
			=0.
		\end{split}
	\end{equation*}
	Since the choice of $z^J\in B_J\left(w^J,r_\nu^J\right)$ is arbitrary, $g_{\scriptscriptstyle\Omega}^I|_{B_J\left(w^J,r_\nu^J\right)}$ is holomorphic. It follows that for each $J\in\mathbb{S}$, $g_{\scriptscriptstyle\Omega}^I|_{\Omega_J}$ is holomorphic at each point $w^J\in\Omega_J$, and then holomorphic on $\Omega_J$. Therefore, $g_{\scriptscriptstyle\Omega}^I$ is slice regular.
	
	(iii) Let $z^J\in\varphi_{\breve{\scriptscriptstyle\Omega}}^0
	\left(\mathcal{Z}(g)\cap {\Conj}_{\breve{\scriptscriptstyle\Omega}}\circ\mathcal{Z}(g)\right)$ with $J\neq -I$, and $\mu:=[(z^J,J)]_{\sim_\Omega}$. It follows from \eqref{eq-psoz} that there is $\lambda\in\{0, 1\}$ such that
	\begin{equation*}
		{\Conj}_{\breve{\scriptscriptstyle\Omega}}^\lambda(\mu)=[(z^J,(-1)^\lambda J)]_{\sim_\Omega}\in\psi_{\scriptscriptstyle \Omega}^\mathbb{S}(\{z^J\})\cap\mathcal{Z}(g)\cap {\Conj}_{\breve{\scriptscriptstyle\Omega}}\circ\mathcal{Z}(g)\neq\varnothing.
	\end{equation*}
	Then for each $\kappa\in\{0,1\}$
	\begin{equation*}
		\begin{split}
			{\Conj}_{\breve{\scriptscriptstyle\Omega}}^\kappa(\mu)=&{\Conj}_{\breve{\scriptscriptstyle\Omega}}^{\lambda+\kappa}\left[{\Conj}_{\breve{\scriptscriptstyle\Omega}}^\lambda(\mu)\right]
			\\\in&{\Conj}_{\breve{\scriptscriptstyle\Omega}}^{\lambda+\kappa}\left[\mathcal{Z}(g)\cap {\Conj}_{\breve{\scriptscriptstyle\Omega}}\circ\mathcal{Z}(g)\right]
			=\mathcal{Z}(g)\cap {\Conj}_{\breve{\scriptscriptstyle\Omega}}\circ\mathcal{Z}(g).
		\end{split}
	\end{equation*}
	Therefore $\mu,\overline{\mu}\in\mathcal{Z}(g)$. According to \eqref{eq-gmbm},
	\begin{equation*}
		g_\mu(z)=g\circ\left(\pi|_{B_{\breve{\scriptscriptstyle\Omega}}^\mu}\right)^{-1}(z)=g(\mu)=0.
	\end{equation*}
	Similarly $g_{\overline{\mu}}(\overline{z})=0$. It is easy to check by \eqref{eq-gioxm} that $g_{\scriptscriptstyle\Omega}^I(z^J)=0$. Hence $z^J\in\mathcal{Z}(g_{\scriptscriptstyle\Omega})$. It implies that \eqref{eq-vblmsm} holds.
\end{proof}

We recall some facts in slice analysis. Let $u,v\in\mathbb{H}$ and suppose that $u\overline{v}\in\mathbb{C}_I$ for some $I\in\mathbb{C}$. Then for any $J\in\mathbb{S}$,
\begin{equation*}
	\min_{L=\pm I} \left|u+Lv\right|\le \left|u+Jv\right|\le \max_{L=\pm I} \left|u+Lv\right|.
\end{equation*}
It implies that for each $w\in(\mathbb{C}_I)^{2\times 1}$ and $J\in\mathbb{S}$,
\begin{equation}\label{eq-l1jw}
	\left|(1,J)w\right|\le \max_{L=\pm I} \left|(1,L)w\right|.
\end{equation}

\begin{prop}\label{pr-loitps}
	Let $\Omega\in\tau_s(\mathbb{H})$ be $2$-path-symmetric, $U\subset\Omega$ with $r:=\sigma(U,\mathbb{H}\backslash\Omega)>0$, $I\in\mathbb{S}$ and $g:{\breve{\Omega}}^{\scriptscriptstyle [I]}\rightarrow\mathbb{C}$. Then
	\begin{equation}\label{eq-sulg}
		\sup_U \left|g_{\scriptscriptstyle{\Omega}}^{\scriptscriptstyle{I}}\right|\le\sup_{{U}_{\scriptscriptstyle{\breve{\Omega}}}^{\scriptscriptstyle [I]}}|g|.
	\end{equation}
\end{prop}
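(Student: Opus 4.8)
The plan is to prove \eqref{eq-sulg} pointwise, i.e. to show $\left|g_{\scriptscriptstyle\Omega}^I(q)\right|\le \sup_{U_{\breve{\scriptscriptstyle\Omega}}^{\scriptscriptstyle[I]}}|g|$ for every $q\in U$, and then pass to the supremum over $q\in U$. The main simplification is that, although the defining formula \eqref{eq-gioxm} for $g_{\scriptscriptstyle\Omega}^I$ is written in terms of the local germs $g_\mu$, at the relevant base point these germs reduce to the values of $g$ itself: by the construction \eqref{eq-gmbm}, since $\mu$ is the unique preimage of $z_\mu$ under $\pi_{\breve{\scriptscriptstyle\Omega}}|_{B_{\breve{\scriptscriptstyle\Omega}}^\mu}$, one has $g_\mu(z_\mu)=g(\mu)$, and likewise $g_{\overline\mu}(z_{\overline\mu})=g(\overline\mu)$ with $z_{\overline\mu}=\overline{z_\mu}$ by \eqref{eq-zomoz}. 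Consequently only the values $g(\mu)$ and $g(\overline\mu)$ enter the estimate, and the argument uses nothing about holomorphicity of $g$ beyond what is needed to make $g_{\scriptscriptstyle\Omega}^I$ well-defined.

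First I would fix $q\in U$ and choose a representative $q=z^J$ with $J\neq -I$; this is always possible, since if $q=x+y(-I)$ we may rewrite $q=(x-yi)^I$, so that $J=I$. Setting $\mu:=[(z^J,J)]_{\sim_\Omega}$, I note that $(z^J,J)\in U_J\times\{J\}$ with $J\neq -I$, hence $\mu\in\psi_{\scriptscriptstyle\Omega}^{\scriptscriptstyle\mathbb{S}\backslash\{-I\}}(U)=U_{\breve{\scriptscriptstyle\Omega}}^{\scriptscriptstyle[I]}$. For the conjugate, \eqref{eq-conjugation} gives $\overline\mu=[(\overline z^{-J},-J)]_{\sim_\Omega}$, and the identity $\overline z^{-J}=z^J=q$ shows that $\overline\mu$ has the representative $(q,-J)\in U_{-J}\times\{-J\}$; since $J\neq I$ forces $-J\neq -I$, also $\overline\mu\in U_{\breve{\scriptscriptstyle\Omega}}^{\scriptscriptstyle[I]}$. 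Thus both $|g(\mu)|$ and $|g(\overline\mu)|$ are bounded by $\sup_{U_{\breve{\scriptscriptstyle\Omega}}^{\scriptscriptstyle[I]}}|g|$.

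Next I split into the two branches of \eqref{eq-gioxm}. If $J=I$, then $g_{\scriptscriptstyle\Omega}^I(q)=\Psi_i^I\circ g(\mu)$ and, since $\Psi_i^I$ is an isometry, $\left|g_{\scriptscriptstyle\Omega}^I(q)\right|=|g(\mu)|$, which is already bounded as required. If $J\neq\pm I$, I set $M:=\begin{pmatrix}1&I\\1&-I\end{pmatrix}$ and $w:=M^{-1}\begin{pmatrix}\Psi_i^I\circ g(\mu)\\ \Psi_i^I\circ g(\overline\mu)\end{pmatrix}$. A direct computation gives $M^{-1}=\tfrac12\begin{pmatrix}1&1\\-I&I\end{pmatrix}$, so both entries of $w$ are $\mathbb{C}_I$-linear combinations of elements of $\mathbb{C}_I$ and hence $w\in(\mathbb{C}_I)^{2\times 1}$; the pointwise bound \eqref{eq-l1jw} then yields $\left|g_{\scriptscriptstyle\Omega}^I(q)\right|=|(1,J)w|\le \max_{L=\pm I}|(1,L)w|$. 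Because $(1,I)$ and $(1,-I)$ are exactly the two rows of $M$, one has $(1,I)M^{-1}=(1,0)$ and $(1,-I)M^{-1}=(0,1)$, whence $(1,I)w=\Psi_i^I\circ g(\mu)$ and $(1,-I)w=\Psi_i^I\circ g(\overline\mu)$, giving $\left|g_{\scriptscriptstyle\Omega}^I(q)\right|\le\max\{|g(\mu)|,|g(\overline\mu)|\}\le \sup_{U_{\breve{\scriptscriptstyle\Omega}}^{\scriptscriptstyle[I]}}|g|$.

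Taking the supremum over $q\in U$ then proves \eqref{eq-sulg}. The work here is bookkeeping rather than analysis: the one genuinely delicate point is verifying that \emph{both} $\mu$ and its conjugate $\overline\mu$ really lie in $U_{\breve{\scriptscriptstyle\Omega}}^{\scriptscriptstyle[I]}=\bigl[\bigcup_{L\neq -I}U_L\times\{L\}\bigr]_{\sim_\Omega}$, which is precisely where the exclusion of the slice $-I$ and the identity $\overline z^{-J}=z^J$ are used; the remaining ingredient is the short linear-algebra check that the multiplier vector $w$ has $\mathbb{C}_I$-entries, so that the slice estimate \eqref{eq-l1jw} applies.
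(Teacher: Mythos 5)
Your proof is correct and follows essentially the same route as the paper's own argument: the same case split ($J=I$ versus $J\neq\pm I$), the same reduction of the germs $g_\mu(z_\mu)$, $g_{\overline{\mu}}(\overline{z_\mu})$ to the values $g(\mu)$, $g(\overline{\mu})$, the same verification that $\mu$ and $\overline{\mu}$ lie in $U_{\breve{\Omega}}^{[I]}$, and the same application of \eqref{eq-l1jw} to the vector $w\in(\mathbb{C}_I)^{2\times 1}$. The extra details you supply --- the explicit inverse $\tfrac12\begin{pmatrix}1&1\\-I&I\end{pmatrix}$, the row identities $(1,\pm I)w$, and the observation that every $q\in U$ admits a representative $z^J$ with $J\neq -I$ --- are points the paper leaves implicit, not departures from its method.
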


\begin{proof}
	Let $z^J\in U$ with $J\in\mathbb{S}\backslash\{-I\}$, and $\mu:=\{z^J\}$. If $J=I$ then
	\begin{equation*}
		|g_{\scriptscriptstyle{\Omega}}^{\scriptscriptstyle{I}}(z^I)|=|\Psi_i^I\circ g_\mu(z)|=|g_\mu(z)|=|g(\mu)|\le \sup_{{U}_{\scriptscriptstyle{\breve{\Omega}}}^{\scriptscriptstyle [I]}}|g|.
	\end{equation*}
	Otherwise, $J\in\mathbb{S}\backslash\{\pm I\}$. Then
	\begin{equation*}
		w:=\begin{pmatrix}
			1 & I\\ 1&-I
		\end{pmatrix}^{-1}
		\begin{pmatrix}
			\Psi_i^I\circ g_\mu(z)\\\Psi_i^I\circ g_{\overline{\mu}}(\overline{z})
		\end{pmatrix}\in(\mathbb{C}_I)^{2\times 1},
	\end{equation*}
	and
	\begin{equation*}
		\begin{pmatrix}
			(1,I)w\\(1,-I)w
		\end{pmatrix}=\begin{pmatrix}
		1 & I\\ 1&-I
	\end{pmatrix}w=
	\begin{pmatrix}
		\Psi_i^I\circ g_\mu(z)\\\Psi_i^I\circ g_{\overline{\mu}}(\overline{z})
	\end{pmatrix}
	=\begin{pmatrix}
		\Psi_i^I\circ g(\mu)\\\Psi_i^I\circ g(\overline\mu)
	\end{pmatrix}.
	\end{equation*}
	By $z^J\in U$, we have
	\begin{equation*}
		\mu=[(z^J,J)]_{\sim_\Omega}=[U_J\times\{J\}]_{\sim_\Omega}\subset \psi_{\scriptscriptstyle\Omega}^{\scriptscriptstyle\mathbb{S}\backslash\{I\}}(U)=U_{\breve{\scriptscriptstyle\Omega}}^{[I]},
	\end{equation*}
	and
	\begin{equation*}
		\overline{\mu}=[(z^J,-J)]_{\sim_\Omega}=[U_{-J}\times\{-J\}]_{\sim_\Omega}\subset \psi_{\scriptscriptstyle\Omega}^{\scriptscriptstyle\mathbb{S}\backslash\{I\}}(U)=U_{\breve{\scriptscriptstyle\Omega}}^{[I]}.
	\end{equation*}
	According to \eqref{eq-l1jw},
	\begin{equation*}
		\begin{split}
			|g_{\scriptscriptstyle{\Omega}}^{\scriptscriptstyle{I}}(z^I)|
			=&|(1,J)w|
			\le\max_{L=\pm I}|(1,L)w|
			=\max_{\nu=\mu,\overline{\mu}} |\Psi_i^I\circ g(\nu)|
			\\=&\max_{\nu=\mu,\overline{\mu}} |g(\nu)|\le \sup_{{U}_{\scriptscriptstyle{\breve{\Omega}}}^{\scriptscriptstyle [I]}}|g|.
		\end{split}
	\end{equation*}
	Therefore
	\begin{equation*}
		|g_{\scriptscriptstyle{\Omega}}^{\scriptscriptstyle{I}}(q)|
		\le\sup_{{U}_{\scriptscriptstyle{\breve{\Omega}}}^{\scriptscriptstyle [I]}}|g|,\qquad\forall\ q\in U.
	\end{equation*}
	and \eqref{eq-sulg} holds.
\end{proof}

\begin{prop}
	Let $\Omega\in\tau_s(\mathbb{H})$ be $2$-path-symmetric, $p\in\Omega$, and $U\subset\Omega$. Then
	\begin{equation}\label{eq-rlpso}
		r^+\left(\psi_{\scriptscriptstyle\Omega}^\mathbb{S}(\{p\})\right)\ge \sigma(p,\mathbb{H}\backslash\Omega),
	\end{equation}
	and
	\begin{equation*}
		r^+\left(\psi_{\scriptscriptstyle\Omega}^\mathbb{S}(U)\right)\ge\sigma(U,\mathbb{H}\backslash\Omega).
	\end{equation*}
\end{prop}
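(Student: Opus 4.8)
The plan is to reduce the inequality \eqref{eq-rlpso} to the identity \eqref{eq-sqmbo} of Proposition \ref{pr-lbes} together with the conjugation symmetry $r_\mu^+=r_{\overline{\mu}}^+$ from Proposition \ref{pr-rrcb}. First I would treat a single point $p=z^I\in\Omega$. By \eqref{eq-psoz} the set $\psi_{\scriptscriptstyle\Omega}^{\mathbb{S}}(\{p\})$ consists of exactly the two classes $\mu:=[(z^I,I)]_{\sim_\Omega}$ and $\overline{\mu}=[(\overline{z}^{-I},-I)]_{\sim_\Omega}$ (which may coincide). Since $r^+$ on a subset of $\breve{\Omega}$ is by definition the infimum of the pointwise values $r_\nu^+$, and since $r_\mu^+=r_{\overline{\mu}}^+$ by Proposition \ref{pr-rrcb}, this infimum over the two-element fibre collapses and gives $r^+\big(\psi_{\scriptscriptstyle\Omega}^{\mathbb{S}}(\{p\})\big)=r_\mu^+$.

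Next I would bound $r_\mu^+$ from below by a single slice value. Because $z^I\in\Omega$, we have $I\in\mathbb{S}_\mu$, so $r_\mu^I=\delta_I\big(z_\mu^I,\mathbb{C}_I\backslash\Omega_I\big)$ by \eqref{eq-rmi}, and the supremum defining $r_\mu^+$ in \eqref{eq-rms} gives $r_\mu^+=\sup_{K\in\mathbb{S}}r_\mu^K\ge r_\mu^I$. Finally, the quantity $\delta_I\big(z^I,\mathbb{C}_I\backslash\Omega_I\big)=\sigma(p,\mathbb{H}\backslash\Omega)$ is precisely \eqref{eq-sqmbo} of Proposition \ref{pr-lbes}. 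Chaining these three observations yields $r^+\big(\psi_{\scriptscriptstyle\Omega}^{\mathbb{S}}(\{p\})\big)=r_\mu^+\ge r_\mu^I=\sigma(p,\mathbb{H}\backslash\Omega)$, which is \eqref{eq-rlpso}.

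For the statement about $U$, I would note that $\psi_{\scriptscriptstyle\Omega}^{\mathbb{S}}$ commutes with unions, so that $\psi_{\scriptscriptstyle\Omega}^{\mathbb{S}}(U)=\bigcup_{p\in U}\psi_{\scriptscriptstyle\Omega}^{\mathbb{S}}(\{p\})$; here one uses that $\mathbb{C}_{-I}=\mathbb{C}_I$, so both classes attached to $p=z^I$ already arise from the single element $p\in U_I=U_{-I}$, and no extra points of $U$ are needed. Since $r^+$ is an infimum, the union decomposition converts into an infimum over $p\in U$, and applying the single-point bound gives $r^+\big(\psi_{\scriptscriptstyle\Omega}^{\mathbb{S}}(U)\big)=\inf_{p\in U}r^+\big(\psi_{\scriptscriptstyle\Omega}^{\mathbb{S}}(\{p\})\big)\ge\inf_{p\in U}\sigma(p,\mathbb{H}\backslash\Omega)=\sigma(U,\mathbb{H}\backslash\Omega)$.

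There is no serious obstacle here: the result is essentially a dictionary entry translating the $\sigma$-distance in $\Omega$ into the boundary radius $r^+$ on $\breve{\Omega}$. The only points requiring care are the correct identification of the fibre $\psi_{\scriptscriptstyle\Omega}^{\mathbb{S}}(\{p\})$ via \eqref{eq-psoz} and the routine but essential observation that, $r^+$ being an infimum, passing from $\{p\}$ to $U$ is immediate once the union decomposition is in place.
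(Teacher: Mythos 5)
Your proposal is correct and follows essentially the same route as the paper's proof: both identify the fibre $\psi_{\scriptscriptstyle\Omega}^{\mathbb{S}}(\{p\})$ via \eqref{eq-psoz}, use Proposition \ref{pr-rrcb} to handle the conjugate class, reduce the bound to $r_\mu^+\ge r_\mu^I=\sigma(p,\mathbb{H}\backslash\Omega)$ through \eqref{eq-rmi} and \eqref{eq-sqmbo}, and pass from $\{p\}$ to $U$ by the union/infimum decomposition. The only cosmetic difference is that the paper phrases the key inequality via the ball containment $B_I(p,\sigma^p_{\mathbb{H}\backslash\Omega})\subset\Omega$ rather than quoting \eqref{eq-rmi} directly, which is the same argument.
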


\begin{proof}
	Let $w^I=p$ and $\mu=\left[(w^I,I)\right]_{\sim_\Omega}$. By \eqref{eq-sqmbo},
	\begin{equation*}
		[B_{\mathbb{C}}(z_\mu,\sigma^p_{\scriptscriptstyle\mathbb{H}\backslash{\Omega}})]^I=B_I(p,\sigma^p_{\scriptscriptstyle\mathbb{H}\backslash{\Omega}})=B_I\big(p,\delta_I(p,\mathbb{C}_I\backslash\Omega_I)\big)\subset\Omega.
	\end{equation*}
	It follows from Proposition \ref{pr-rrcb} that $\sigma^p_{\scriptscriptstyle\mathbb{H}\backslash{\Omega}}\le r_\mu^I\le r_\mu^+=r_{\overline{\mu}}^+$. Then \eqref{eq-rlpso} holds by \eqref{eq-psoz}. It is clear that
	\begin{equation*}
		\begin{split}
			r^+\left(\psi_{\scriptscriptstyle\Omega}^\mathbb{S}(U)\right)
			=&r^+\left(\bigcup_{p\in U}\psi_{\scriptscriptstyle\Omega}^\mathbb{S}(\{p\})\right)
			=\inf_{p\in U}r^+\left(\psi_{\scriptscriptstyle\Omega}^\mathbb{S}(\{p\})\right)
			\\\ge &\inf_{p\in U}\sigma(p,\mathbb{H}\backslash\Omega)
			=\sigma(U,\mathbb{H}\backslash\Omega).
		\end{split}
	\end{equation*}
\end{proof}

\begin{prop}\label{pr-ltmnp}
	Let $\Omega\in\tau_s(\mathbb{H})$ be $2$-path-symmetric, and $\mu\in\breve{\Omega}$. Then $\mu\notin\psi_{\scriptscriptstyle\Omega}^{\mathbb{S}\backslash\mathbb{S}_{\mu}}(\Omega)$.
\end{prop}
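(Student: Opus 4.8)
The plan is to argue by contradiction, unwinding the definitions of $\psi_{\scriptscriptstyle\Omega}^{\mathbb{S}\backslash\mathbb{S}_\mu}$ and of $\mathbb{S}_\mu$. Suppose $\mu\in\psi_{\scriptscriptstyle\Omega}^{\mathbb{S}\backslash\mathbb{S}_\mu}(\Omega)$. By the definition of $\psi_{\scriptscriptstyle\Omega}^{\mathbb{S}'}$ in \eqref{eq-definitions}, this means that $\mu=[(u,I)]_{\sim_\Omega}$ for some $I\in\mathbb{S}\backslash\mathbb{S}_\mu$ and some $u\in\Omega_I$. Since $u\in\Omega_I\subset\mathbb{C}_I$, we may write $u=w^I$ with $w\in\mathbb{C}$, so that $\mu=[(w^I,I)]_{\sim_\Omega}$.

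The key step is to identify the complex coordinate of this representative. Applying Proposition \ref{pr-zinviant} to $\mu=[(w^I,I)]_{\sim_\Omega}$ gives $w=z_\mu$, hence $u=z_\mu^I$. Now $z_\mu^I=u\in\Omega_I\subset\Omega$ and $[(z_\mu^I,I)]_{\sim_\Omega}=\mu$, which is precisely the membership condition in the definition \eqref{eq-smli} of $\mathbb{S}_\mu$. Therefore $I\in\mathbb{S}_\mu$, contradicting $I\in\mathbb{S}\backslash\mathbb{S}_\mu$. This contradiction yields $\mu\notin\psi_{\scriptscriptstyle\Omega}^{\mathbb{S}\backslash\mathbb{S}_\mu}(\Omega)$.

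I do not expect any serious obstacle here: the entire argument is definitional bookkeeping, the only substantial input being Proposition \ref{pr-zinviant}, which guarantees that any representative $(w^I,I)$ of $\mu$ necessarily has $w=z_\mu$. It is worth noting that the two cases recorded in \eqref{eq-msn} are handled uniformly by this argument. When $z_\mu\in\Omega_{2ps}$ we have $\mathbb{S}_\mu=\mathbb{S}$, so $\mathbb{S}\backslash\mathbb{S}_\mu=\varnothing$ and $\psi_{\scriptscriptstyle\Omega}^{\varnothing}(\Omega)=\varnothing$, making the conclusion vacuously true; when $z_\mu\notin\Omega_{2ps}$ we have $\mathbb{S}_\mu=\{I_\mu\}$, and the contradiction above rules out every $I\neq I_\mu$. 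In either case the extraction of the representative pair already forces $I\in\mathbb{S}_\mu$, closing the argument.
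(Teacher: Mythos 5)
Your proof is correct. It reaches the same contradiction as the paper, but by a cleaner route. The paper first disposes of the case $\mathbb{S}_\mu=\mathbb{S}$ (where the claim is vacuous), then in the case $\mathbb{S}_\mu\neq\mathbb{S}$ invokes \eqref{eq-msn} to conclude $z_\mu\notin\Omega_{2ps}$, and finally derives the contradiction from clause (ii) of the definition \eqref{eq-sim} of $\sim_{\scriptscriptstyle\Omega}$: a representative $(w^I,I)$ of $\mu$ must then satisfy $I=J$ for $J\in\mathbb{S}_\mu$. You bypass \eqref{eq-msn} and clause (ii) of \eqref{eq-sim} altogether: Proposition \ref{pr-zinviant} (which rests only on clause (i), i.e.\ on the well-definedness of $z_\mu$) pins the complex coordinate of any representative to $z_\mu$, and then the membership condition in \eqref{eq-smli} is verified verbatim, giving $I\in\mathbb{S}_\mu$ with no case analysis. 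What your route makes transparent is that $\mathbb{S}_\mu$ is exactly the set of $I$ for which $\mu$ admits a representative in $\Omega_I\times\{I\}$, so the proposition is essentially a restatement of the definition; what the paper's route makes visible instead is where the structure of $\sim_{\scriptscriptstyle\Omega}$, in particular the role of $\Omega_{2ps}$, enters the argument. Both are complete; yours is the more economical.
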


\begin{proof}
	If $\mathbb{S}=\mathbb{S}_\mu$, then $\mu\notin\varnothing=\psi_{\scriptscriptstyle\Omega}^{\mathbb{S}\backslash\mathbb{S}_{\mu}}(\Omega)$ and Proposition \ref{pr-ltmnp} holds. Otherwise, $\mathbb{S}\neq\mathbb{S}_\mu$. Suppose that $\mu\in[\Omega_I\times\{I\}]_{\sim_\Omega}$ for some $I\in\mathbb{S}\backslash\mathbb{S}_\mu$. Let $J\in\mathbb{S}_\mu$. According to \eqref{eq-msn}, $z_\mu\notin\Omega_{2ps}$. Hence there is $w^I\in\Omega_I$ such that
	\begin{equation*}
		[(z_\mu^J,J)]_{\sim_\Omega}=\mu=[(w^I,I)]_{\sim_\Omega},\qquad\mbox{i.e.}\qquad(z_\mu^J,J)\sim_{\scriptscriptstyle\Omega} (w^I,I).
	\end{equation*}
	It follows from \eqref{eq-sim} that $J=I$, a contradiction. Thence $\mu\notin[\Omega_I\times\{I\}]_{\sim_\Omega}$ and
	\begin{equation*}
		\mu\notin\bigcup_{L\in\mathbb{S}\backslash\mathbb{S}_\mu}[\Omega_L\times\{L\}]_{\sim_\Omega}=\psi_{\scriptscriptstyle\Omega}^{\mathbb{S}\backslash\mathbb{S}_{\mu}}(\Omega).
	\end{equation*}
\end{proof}

\begin{prop}\label{pr-loitsm}
	Let $\Omega\in\tau_s(\mathbb{H})$ be $2$-path-symmetric, $K\subset\Omega$ be invariant, $z^I\in\Omega\backslash K$, and $\mu=[(z^I,I)]_{\sim_\Omega}$. Then
	\begin{equation*}
		\begin{cases}
			\overline{\mu}\in\breve{\Omega}^{\scriptscriptstyle[J]}\backslash \left(\breve{\Omega}^{\scriptscriptstyle[J]}\right)^{\sigma^K_{\mathbb{H}\backslash\Omega}},\quad&[z^I]_{\approx_\Omega}\cap K=\{z^J\}\mbox{ for some } J\in\mathbb{S},
			\\ \mu,\overline{\mu}\notin \psi^{\mathbb{S}}_{\scriptscriptstyle\Omega}(K),\ [z^I]_{\approx_\Omega}\cap K=\varnothing,\quad& \mbox{otherwise}.
		\end{cases}
	\end{equation*}
\end{prop}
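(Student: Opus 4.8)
The plan is to reduce to the two cases of the statement, dispatch the empty‑intersection case by a direct computation, and handle the singleton case by a boundary‑distance argument on $\breve{\Omega}^{\scriptscriptstyle[J]}$. Since $z^I\in[z^I]_{\approx_\Omega}\setminus K$, we have $[z^I]_{\approx_\Omega}\nsubseteq K$, so Proposition \ref{pr-lotb2ps} gives $|[z^I]_{\approx_\Omega}\cap K|\le 1$; by Proposition \ref{pr-losmt}\ref{it-lp} every element of $[z^I]_{\approx_\Omega}$ is of the form $z^K$, so the intersection is either empty or a singleton $\{z^J\}$. In the singleton case $z^J\neq z^I$ forces $|[z^I]_{\approx_\Omega}|>1$, so Proposition \ref{pr-losmt}\ref{it-lzio} yields $[z^I]_{\approx_\Omega}=z^{\mathbb{S}}$ and \eqref{eq-o2ps} gives $z\in\Omega_{2ps}$, whence $\overline{z}\in\Omega_{2ps}$ by Remark \ref{rmk-2ps}\ref{it-o2c}; in particular $\mathbb{S}_\mu=\mathbb{S}_{\overline{\mu}}=\mathbb{S}$, and $\overline{\mu}=[(z^I,-I)]_{\sim_\Omega}$ since $\overline{z_\mu}^{-I}=z^I$.

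For the empty case I argue directly. If $\mu\in\psi_{\scriptscriptstyle\Omega}^\mathbb{S}(K)$ then $\mu=[(w^M,M)]_{\sim_\Omega}$ with $w^M\in K$; Proposition \ref{pr-zinviant} forces $w=z_\mu=z$, and then \eqref{eq-zaia} gives $z^M=w^M\approx_{\scriptscriptstyle\Omega} z^I$, so $z^M\in[z^I]_{\approx_\Omega}\cap K=\varnothing$, a contradiction. Applying the same reasoning to $\overline{\mu}=[(\overline{z}^M,M)]_{\sim_\Omega}$, together with $\overline{z}^M=z^{-M}$ and \eqref{eq-zaia}, produces a point $z^{-M}\in[z^I]_{\approx_\Omega}\cap K$, again impossible. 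Hence $\mu,\overline{\mu}\notin\psi_{\scriptscriptstyle\Omega}^\mathbb{S}(K)$.

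The substance is the singleton case, where I must show $\overline{\mu}\in\breve{\Omega}^{\scriptscriptstyle[J]}\setminus(\breve{\Omega}^{\scriptscriptstyle[J]})^{s}$ with $s:=\sigma(K,\mathbb{H}\backslash\Omega)$. Membership is immediate: as $\overline{z}\in\Omega_{2ps}$, the representative $(\overline{z}^J,J)$ has $J\neq -J$, so $\overline{\mu}\in\psi_{\scriptscriptstyle\Omega}^{\scriptscriptstyle\mathbb{S}\backslash\{-J\}}(\Omega)=\breve{\Omega}^{\scriptscriptstyle[J]}$. For the non‑membership I would compute the slice radii: by Proposition \ref{pr-lbes} and $\overline{z}^L=z^{-L}$, one has $r_{\overline{\mu}}^L=\sigma(z^{-L},\mathbb{H}\backslash\Omega)$ for all $L$. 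Since $\overline{z}^{-J}=z^J\in K\subset\Omega^{s}$, we get $r_{\overline{\mu}}^{-J}=\sigma(z^J,\mathbb{H}\backslash\Omega)\ge s$; whereas for $L\neq -J$ the point $z^{-L}\in[z^I]_{\approx_\Omega}\setminus\{z^J\}\subset[K]_{\approx_\Omega}^{\cup}\setminus K$, so if $z^{-L}\in\Omega^{s}$ then $z^{-L}\in[K]_{\approx_\Omega}^{\cup}\cap\Omega^{s}=\phi_{\scriptscriptstyle\Omega}^{s}(K)=K$ (Proposition \ref{prop-invariant K}), a contradiction; hence $r_{\overline{\mu}}^L<s$. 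Thus $-J$ is the strict maximizer of $L\mapsto r_{\overline{\mu}}^L$, and by \eqref{eq-msmb}--\eqref{eq-rml} all the other radii share a common value $\rho:=r_{\overline{\mu}}^-<s$.

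Finally I exploit the uniqueness of schlicht balls. Fix $r$ with $\rho<r<s$. Then $B'_{\breve{\Omega}}(\overline{\mu},r)=B_{\breve{\Omega}}(\overline{\mu},r)$ exists (as $r<s\le r_{\overline{\mu}}^+$), and since $-J$ is the only slice with $r_{\overline{\mu}}^{-J}\ge r$, Proposition \ref{pr-losm}\ref{it-lri} gives $B_{\breve{\Omega}}(\overline{\mu},r)=[(B_\mathbb{C}(\overline{z},r))^{-J}\times\{-J\}]_{\sim_\Omega}$. Because $r>\rho$, the disc $B_\mathbb{C}(\overline{z},r)$ cannot be contained in $\Omega_{2ps}$ (otherwise every radius would be $\ge r$), so it contains some $w\notin\Omega_{2ps}$ with $w^{-J}\in\Omega$; the point $[(w^{-J},-J)]_{\sim_\Omega}$ then lies in $B'_{\breve{\Omega}}(\overline{\mu},r)$ but has $\mathbb{S}=\{-J\}$ and so is not in $\breve{\Omega}^{\scriptscriptstyle[J]}$. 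As a schlicht ball is unique, $B'_{\breve{\Omega}^{\scriptscriptstyle[J]}}(\overline{\mu},r)$ cannot exist for such $r$, giving $\delta_{\breve{\Omega}^{\scriptscriptstyle[J]}}(\overline{\mu},\partial\breve{\Omega}^{\scriptscriptstyle[J]})\le\rho<s$, i.e. $\overline{\mu}\notin(\breve{\Omega}^{\scriptscriptstyle[J]})^{s}$. The main obstacle is precisely this step: one must recognize that deleting the slice $-J$ —which is exactly the unique maximal slice of $\overline{\mu}$— collapses the boundary distance from $r_{\overline{\mu}}^+\ge s$ down to $r_{\overline{\mu}}^-<s$, and that this collapse must be extracted through the uniqueness of the schlicht ball rather than through a naive supremum over single‑slice lifts (which would wrongly suggest a boundary distance of $s$).
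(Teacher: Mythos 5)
Your proof is correct, and it is structurally parallel to the paper's: the same case reduction via Propositions \ref{pr-lotb2ps} and \ref{pr-losmt}, essentially the same disposal of the empty case, and the same engine for non-membership in the singleton case, namely producing a point at base distance less than $\sigma^K_{\mathbb{H}\backslash\Omega}$ from $\overline{\mu}$ whose class has slice set $\{-J\}$ by \eqref{eq-msn}, hence lies outside $\breve{\Omega}^{\scriptscriptstyle[J]}$ by Proposition \ref{pr-ltmnp}, forcing the schlicht ball to leave $\breve{\Omega}^{\scriptscriptstyle[J]}$. Where you genuinely deviate is in how that point is manufactured. The paper stays on the $\mu$ side: it proves the single inequality $\sigma(z^I,\mathbb{H}\backslash\Omega)<\sigma^K_{\mathbb{H}\backslash\Omega}$ (via \eqref{eq-cvsrc} and \eqref{eq-psos}), picks $w^I$ in the slice-$I$ disc lying outside $\Omega$, forms $\nu=[(w^J,J)]_{\sim_\Omega}$, and only at the end conjugates the ball, $\Conj_{\breve{\scriptscriptstyle\Omega}}\left(B_{\breve{\scriptscriptstyle\Omega}}(\mu,\sigma^K_{\mathbb{H}\backslash\Omega})\right)=B_{\breve{\scriptscriptstyle\Omega}}(\overline{\mu},\sigma^K_{\mathbb{H}\backslash\Omega})$, to transport $\overline{\nu}$ next to $\overline{\mu}$. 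You work directly with $\overline{\mu}$: you run the invariance argument $\phi_{\scriptscriptstyle\Omega}^{\sigma^K_{\mathbb{H}\backslash\Omega}}(K)=K$ of Proposition \ref{prop-invariant K} at every slice $L\neq-J$, invoke the dichotomy of Proposition \ref{pr-losm} to conclude that all radii of $\overline{\mu}$ other than $r^{-J}_{\overline{\mu}}$ share a common value $\rho<\sigma^K_{\mathbb{H}\backslash\Omega}$, and extract the obstruction point from the failure of $B_{\mathbb{C}}(\overline{z},r)$ to lie in $\Omega_{2ps}$ --- a weaker property than the paper's $w^I\notin\Omega$, but all that \eqref{eq-msn} needs. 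The cost is the extra radius bookkeeping; what it buys is a sharper conclusion ($\delta_{\breve{\Omega}^{\scriptscriptstyle[J]}}\left(\overline{\mu},\partial\breve{\Omega}^{\scriptscriptstyle[J]}\right)\le\rho$, not merely $<\sigma^K_{\mathbb{H}\backslash\Omega}$), no need for the ball-conjugation identity, and an explicit justification --- uniqueness of schlicht balls --- of the step the paper leaves implicit when passing from $B_{\breve{\scriptscriptstyle\Omega}}(\overline{\mu},\sigma^K_{\mathbb{H}\backslash\Omega})\nsubseteq\breve{\Omega}^{\scriptscriptstyle[J]}$ to $\overline{\mu}\notin\left(\breve{\Omega}^{\scriptscriptstyle[J]}\right)^{\sigma^K_{\mathbb{H}\backslash\Omega}}$.
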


\begin{proof}
	(i) Suppose $[z^I]_{\approx_\Omega}\cap K=\{z^J\}$ for some $J\in\mathbb{S}$. Then
	\begin{equation*}
		r_{\mu}^+\ge r_\mu^J=\sigma(z^J,\mathbb{H}\backslash\Omega)\ge \sigma(K,\mathbb{H}\backslash\Omega).
	\end{equation*}
	If $\sigma(z^I,\mathbb{H}\backslash\Omega)\ge \sigma(K,\mathbb{H}\backslash\Omega)$, then $z^I\in\Omega^{\sigma(z^I,\mathbb{H}\backslash\Omega)}\subset\Omega^{\sigma(K,\mathbb{H}\backslash\Omega)}$. By \eqref{eq-psoms},
	\begin{equation*}
		\psi_{\scriptscriptstyle\Omega}^{\mathbb{S}}(z^I)\cap\psi_{\scriptscriptstyle\Omega}^{\mathbb{S}}(K)=\psi_{\scriptscriptstyle\Omega}^{\mathbb{S}}(z^J)\cap\psi_{\scriptscriptstyle\Omega}^{\mathbb{S}}(K)=\psi_{\scriptscriptstyle\Omega}^{\mathbb{S}}(z^J)\neq\varnothing,
	\end{equation*}
	\eqref{eq-cvsrc} and \eqref{eq-psos} that
	\begin{equation*}
		z^I\in\varphi_{\breve{\scriptscriptstyle\Omega}}^{\sigma(K,\mathbb{H}\backslash\Omega)}\left(\psi_{\scriptscriptstyle\Omega}^{\mathbb{S}}(K)\right)=\phi_{\scriptscriptstyle\Omega}^{\sigma(K,\mathbb{H}\backslash\Omega)}(K)=K,
	\end{equation*}
	a contradiction. Therefore $\sigma(z^I,\mathbb{H}\backslash\Omega)< \sigma(K,\mathbb{H}\backslash\Omega)$. Let
	\begin{equation*}
		w^I\in \left[B_{\mathbb{C}}(z,\sigma^K_{\scriptscriptstyle\mathbb{H}\backslash\Omega})\right]^I \backslash\Omega.
	\end{equation*}
	Then
	\begin{equation*}
		\nu:=\left[(w^J,J)\right]_{\sim_\Omega}
		\in \left[\left[B_{\mathbb{C}}(z,\sigma^K_{\scriptscriptstyle\mathbb{H}\backslash\Omega})\right]^J\times\{J\}\right]_{\sim_\Omega}
		=B_{\breve{\scriptscriptstyle\Omega}}(\mu,\sigma^K_{\scriptscriptstyle\mathbb{H}\backslash\Omega}).
	\end{equation*}
	It implies by $I\notin\mathbb{S}_\nu$, \eqref{eq-msn} and \eqref{eq-zomoz} that $\mathbb{S}_{\overline{\nu}}=-\mathbb{S}_\nu=-\{J\}=\{-J\}$. According to Proposition \ref{pr-ltmnp}, $\overline{\nu}\notin\psi_{\scriptscriptstyle\Omega}^{\scriptscriptstyle\mathbb{S}\backslash\{-J\}}(\Omega)$. It follows from
	\begin{equation*}
		\overline{\nu}={\Conj}_{\breve{\scriptscriptstyle\Omega}}(\nu)
		\in{\Conj}_{\breve{\scriptscriptstyle\Omega}}\left[B_{\breve{\scriptscriptstyle\Omega}}(\mu,\sigma^K_{\scriptscriptstyle\mathbb{H}\backslash\Omega})\right]
		=B_{\breve{\scriptscriptstyle\Omega}}(\overline{\mu},\sigma^K_{\scriptscriptstyle\mathbb{H}\backslash\Omega})
	\end{equation*}
	that
	\begin{equation*}
		B_{\breve{\scriptscriptstyle\Omega}}(\overline{\mu},\sigma^K_{\scriptscriptstyle\mathbb{H}\backslash\Omega})\not\subset\psi_{\scriptscriptstyle\Omega}^{\scriptscriptstyle\mathbb{S}\backslash\{-J\}}(\Omega)=\breve{\Omega}^{[J]}.
	\end{equation*}
	Thence $\overline{\mu}\notin  (\breve{\Omega}^{[J]})^{\sigma^K_{\mathbb{H}\backslash\Omega}}$. According \eqref{eq-o2ps} and $z^I\approx_{\scriptscriptstyle\Omega} z^J$, we have $z\in\Omega_{2ps}$ and $\overline{z}^J=z^{-J}\in\Omega$. According to \eqref{eq-wiiso},
	\begin{equation*}
		\overline{\mu}=[(\overline{z}^{-J},-J)]_{\sim_\Omega}=[(\overline{z}^{J},J)]_{\sim_\Omega}
		\in[\Omega_J\times\{J\}]_{\sim_\Omega}
		\subset\psi_{\scriptscriptstyle\Omega}^{\mathbb{S}\backslash\{-J\}}(\Omega)=\breve{\Omega}^{[J]}.
	\end{equation*}
	In summary, $\overline{\mu}\in\breve{\Omega}^{[J]}\backslash (\breve{\Omega}^{[J]})^{\sigma^K_{\mathbb{H}\backslash\Omega}}$.
	
	(ii) Otherwise, $\left|[z^I]_{\approx_\Omega}\cap K\right|\neq 1$. If $\left|[z^I]_{\approx_\Omega}\cap K\right|\ge 2$, then by Proposition \ref{pr-lotb2ps},
	\begin{equation*}
		z^I\in[z^I]_{\approx_\Omega}\subset K,
	\end{equation*}
	a contradiction. Thence $[z^I]_{\approx_\Omega}\cap K=\varnothing$. If $\{\mu,\overline{\mu}\}\cap\psi_{\scriptscriptstyle\Omega}^\mathbb{S}(K)\neq\varnothing$. Then there is $w^J\in K$ and $\lambda,\kappa\in\{0,1\}$ such that ${\Conj}_{\breve{\scriptscriptstyle\Omega}}^\lambda(\mu)=[(w^J,(-1)^\kappa J)]_{\sim_\Omega}$. Then
	\begin{equation*}
		[(z^I,I)]_{\sim_\Omega}=\mu={\Conj}_{\breve{\scriptscriptstyle\Omega}}^\lambda[(w^J,(-1)^\kappa J)]_{\sim_\Omega}=[(w^J,(-1)^{\kappa+\lambda} J)]_{\sim_\Omega}=[({w'}^{J'},J')]_{\sim_\Omega},
	\end{equation*}
	where $w'={\Conj}_{\mathbb{C}}^{\kappa+\lambda}(w)$ and $J'=(-1)^{\kappa+\lambda} J$. By \eqref{eq-zaia}, $z^I\approx_{\scriptscriptstyle\Omega} {w'}^{J'}$. It implies that
	\begin{equation*}
		w^J={w'}^{J'}\in[z^I]_{\approx_\Omega} \cap K,
	\end{equation*}
	a contradiction. Thence $\{\mu,\overline{\mu}\}\cap\psi_{\scriptscriptstyle\Omega}^\mathbb{S}(K)=\varnothing$, i.e. $\mu,\overline{\mu}\notin \psi_{\scriptscriptstyle\Omega}^\mathbb{S}(K)$.
\end{proof}

\section{Interpolation Theorem}\label{sec-interpolation}

In this section, we will prove an interpolation theorem for slice regular functions, which is crucial for proving the Cartan-Thullen Theorem \ref{thm-ct} but also of independent interest. We begin by defining when an open set $\Omega$ is 2-path-symmetric and when a function is 2-path-slice, so that we can prove how to obtain a unique extension of 2-path-slice function to such a set $\Omega$, see Proposition \ref{uniqext}. Another extension result, this time to obtain a slice regular function, is proved in Proposition \ref{pr-l2psia}; this result will be needed to prove the Interpolation Theorem \ref{th-it} for slice regular function defined on $2$-path-symmetric slice-open sets.

\begin{defn}
	Let $\Omega\in\tau_s(\mathbb{H})$ be $2$-path-symmetric, $U\subset\Omega$ and let
	\begin{equation*}\index{$U_{\scriptscriptstyle \Omega,2}$, $U_{\scriptscriptstyle \Omega,\tilde{2}}$}
		U_{\scriptscriptstyle \Omega,2}:=\{p\in U: |[p]_{\approx_\Omega}\cap U|\ge 2\}.
	\end{equation*}
	We call
	\begin{equation}\label{eq-uso}
		U_{\scriptscriptstyle \Omega,\tilde{2}}:=\left(\bigcup_{p\in U_{\Omega,2}}[p]_{\approx_\Omega}\right)\bigcup U
	\end{equation}
	the \textit{\textbf{$\Omega$-$2$-path-symmetric completion}} of $U$. We say that $U$ is \textit{\textbf{$\Omega$-$2$-path-symmetric}}\index{$\Omega$-$2$-path-symmetric}, if
	$U=U_{\scriptscriptstyle\Omega,\widetilde{2}}$.
\end{defn}

\begin{defn}
	Let $U\subset\Omega\subset\mathbb{H}$. $f:U\rightarrow\mathbb{H}$ is called \textit{\textbf{$\Omega$-path-slice}}\index{$\Omega$-path-slice}, if for each $\gamma\in\mathscr{P}_0(\mathbb{C})$, $I,J,K\in\mathbb{S}$ with $\gamma^I,\gamma^J,\gamma^K\subset\Omega$, $J\neq K$ and $\gamma^I(1),\gamma^J(1),\gamma^K(1)\in U$, we have
	\begin{equation}\label{eq-fri}
		f\circ\gamma^I(1)=(1,I)\begin{pmatrix}
			1 & J\\1 & K
		\end{pmatrix}^{-1}\begin{pmatrix}
			f\circ\gamma^J(1)\\f\circ\gamma^K(1)
		\end{pmatrix}.
	\end{equation}
\end{defn}

\begin{prop}\label{uniqext}
	Let $\Omega\in\tau_s(\mathbb{H})$ be $2$-path-symmetric, $U\subset\Omega$, and $f:U\rightarrow\mathbb{H}$ be $\Omega$-path-slice. Then there is a unique $\Omega$-path-slice function $g:=f_{\scriptscriptstyle\Omega,\widetilde{2}}\, :\, U_{\scriptscriptstyle\Omega,\widetilde{2}}\rightarrow \mathbb{H}$ such that $g|_U=f$.
	\index{$f_{\scriptscriptstyle\Omega,\widetilde{2}}$}
\end{prop}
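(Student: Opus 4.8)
The plan is to treat each $\approx_{\scriptscriptstyle\Omega}$-class separately and to read \eqref{eq-fri} as the formula \emph{defining} the extension. Fix $q\in U_{\scriptscriptstyle\Omega,\widetilde{2}}$ and write $q=z^{I_0}$ with $z\in\mathbb{C}$. If $q\in U$ and $|[q]_{\approx_\Omega}\cap U|\le 1$, then by \eqref{eq-uso} no point of $[q]_{\approx_\Omega}$ other than $q$ lies in $U_{\scriptscriptstyle\Omega,\widetilde{2}}$, and I set $g(q):=f(q)$. Otherwise $|[q]_{\approx_\Omega}\cap U|\ge 2$, so $|[q]_{\approx_\Omega}|>1$ and Proposition \ref{pr-losmt} \ref{it-lzio} gives $[q]_{\approx_\Omega}=z^{\mathbb{S}}$; thus every point of this class has the form $z^L$, and I call such a class \emph{completed}.

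The heart of the argument is the one-class lemma: for each completed class $z^{\mathbb{S}}$ there is a \emph{unique} pair $(a,b)\in\mathbb{H}^2$ with $f(z^L)=a+Lb$ for every $z^L\in U\cap z^{\mathbb{S}}$. For existence, pick distinct $z^J,z^K\in U\cap z^{\mathbb{S}}$ (possible since $|U\cap z^{\mathbb{S}}|\ge 2$, which also forces $z\notin\mathbb{R}$ and hence $J\ne K$) and set $\binom{a}{b}:=\begin{pmatrix}1&J\\1&K\end{pmatrix}^{-1}\binom{f(z^J)}{f(z^K)}$, which makes sense because $J-K\ne 0$ is an invertible quaternion; then $f(z^J)=a+Jb$ and $f(z^K)=a+Kb$. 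For any other $z^L\in U\cap z^{\mathbb{S}}$, Proposition \ref{pr-losmt} \ref{it-lzii} produces $\gamma\in\mathscr{P}_0(\mathbb{C})$ with $\gamma(1)=z$ and $\gamma^L,\gamma^J,\gamma^K\subset\Omega$, so the $\Omega$-path-slice relation \eqref{eq-fri} gives $f(z^L)=(1,L)\binom{a}{b}=a+Lb$. Uniqueness is immediate: if $(a',b')$ also works, subtracting $a+Jb=a'+Jb'$ from $a+Kb=a'+Kb'$ yields $(J-K)(b-b')=0$, so $b=b'$ and then $a=a'$.

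Now define $g(z^L):=a+Lb$ for all $z^L\in U_{\scriptscriptstyle\Omega,\widetilde{2}}\cap z^{\mathbb{S}}$ whenever $z^{\mathbb{S}}$ is completed, and $g:=f$ at the isolated points. The lemma guarantees that $g$ is well defined (the pair $(a,b)$ depends only on $z$, not on the representative or the chosen units) and that $g|_U=f$ (on a completed class the expression $a+Lb$ reproduces $f$ at the points of $U$). To see that $g$ is $\Omega$-path-slice, take $\gamma,I,J,K$ as in \eqref{eq-fri} with $\gamma^I,\gamma^J,\gamma^K\subset\Omega$, $J\ne K$ and all three endpoints in $U_{\scriptscriptstyle\Omega,\widetilde{2}}$, and set $z:=\gamma(1)$. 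If $z\in\mathbb{R}$ the class is the single point $\{z\}$ and \eqref{eq-fri} reduces to $g(z)=g(z)$; otherwise $z\notin\mathbb{R}$, the points $z^J,z^K$ are distinct members of $U_{\scriptscriptstyle\Omega,\widetilde{2}}\cap z^{\mathbb{S}}$, and by \eqref{eq-uso} the class $z^{\mathbb{S}}$ must be completed, since an isolated point of $U$ has no distinct class-mate in $U_{\scriptscriptstyle\Omega,\widetilde{2}}$. Hence $g(z^L)=a+Lb$ for $L\in\{I,J,K\}$, and $\begin{pmatrix}1&J\\1&K\end{pmatrix}\binom{a}{b}=\binom{a+Jb}{a+Kb}$ turns \eqref{eq-fri} into the identity $a+Ib=a+Ib$.

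For uniqueness of the extension, let $h$ be any $\Omega$-path-slice function on $U_{\scriptscriptstyle\Omega,\widetilde{2}}$ with $h|_U=f$. On an isolated point $h=f=g$ trivially; on a completed class, for each $z^L\in U_{\scriptscriptstyle\Omega,\widetilde{2}}\cap z^{\mathbb{S}}$ I choose distinct $z^J,z^K\in U\cap z^{\mathbb{S}}$ and a path $\gamma$ as in Proposition \ref{pr-losmt} \ref{it-lzii}, and apply \eqref{eq-fri} to $h$; since $h=f$ at $z^J,z^K$, this pins $h(z^L)$ to the very expression defining $g(z^L)$, so $h=g$. The main obstacle is the well-definedness of $g$, equivalently the two halves of the one-class lemma, which rest entirely on being able to realize the path hypotheses of \eqref{eq-fri} (Proposition \ref{pr-losmt} \ref{it-lzii} and \ref{it-lzio}) and on the invertibility of $J-K$ for distinct units.
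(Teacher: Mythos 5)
Your proposal is correct and takes essentially the same approach as the paper: both define the extension class-by-class via the two-point representation formula (your stem pair $(a,b)$ is exactly the paper's expression $(1,L_q)\begin{pmatrix}1&I_q\\1&J_q\end{pmatrix}^{-1}\begin{pmatrix}f(z_q^{I_q})\\ f(z_q^{J_q})\end{pmatrix}$ written in solved form), both verify the path-slice identity by the same split into real and non-real endpoint cases, and both obtain uniqueness by applying \eqref{eq-fri} to any competing extension at two points of $U$ in the class. The only cosmetic difference is that the paper tracks the $z$-versus-$\overline{z}$ labelling of a class explicitly through ${\Conj}_{\mathbb{C}}^{\lambda}$ in its computation \eqref{eq-cont}, whereas you absorb it into the claim that $(a,b)$ is independent of the representative, which is indeed a one-line consequence of the uniqueness half of your one-class lemma.
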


\begin{proof}
	(i) Denote
	\begin{equation*}
		\mathcal{U}:=\{[p]_{\approx_\Omega}\cap U\in P(\Omega):p\in U\}.
	\end{equation*}
	By Proposition \ref{pr-losmt} \ref{it-lp}, for each $\mu\in\mathcal{U}\subset P(\Omega)$, there is $z_{\mu}\in\mathbb{C}$ such that
	\begin{equation*}
		\mu= \left\{\left(z_{\mu}\right)^I\in U:I\in\mathbb{S}\right\}.
	\end{equation*}
	If $|\mu|\ge 2$, then $z_{\mu}\notin\mathbb{R}$ and we choose $I_{\mu},J_{\mu}\in\mathbb{S}$ with $z_{\mu}^{I_{\mu}},z_{\mu}^{J_{\mu}}\in \mu$ and $I_{\mu}\neq J_{\mu}$. For each $q\in[z_\mu^{I_\mu}]_{\approx_\Omega}$ with $|\mu|\ge 2$, denote
	\begin{equation*}
		z_q:=z_\mu,\qquad I_q:=I_{\mu},\qquad J_{q}:=J_{\mu}
	\end{equation*}
	and $L_q$ be the unique imaginary unit such that $q=z_q^{I_q}$. Define
	\begin{equation*}
		\begin{split}
			f_{\scriptscriptstyle\Omega,\widetilde{2}}:\ U_{\scriptscriptstyle\Omega,\widetilde{2}}\ &\xlongrightarrow[\hskip1cm]{}\ \mathbb{H},
			\\ q &\shortmid\!\xlongrightarrow[\hskip1cm]{}\
			\begin{cases}
				f(q),\qquad &q\in U,\\
				h(q),\qquad &\mbox{otherwise},
			\end{cases}
		\end{split}
	\end{equation*}
	where
	\begin{equation}\label{eq-hzl}
		h(q):=(1,L_q)\begin{pmatrix}
			1 & I_{q}\\1 & J_{q}
		\end{pmatrix}^{-1}\begin{pmatrix}
			f(z_{q}^{I_q}) \\ f(z_q^{J_q})
		\end{pmatrix}.
	\end{equation}
	
	Now we show that
	\begin{equation}\label{eq-f1iq}
		f_{\scriptscriptstyle\Omega,\widetilde{2}}(q)=(1,L_q)\begin{pmatrix}
			1 & I_{q}\\1 & J_{q}
		\end{pmatrix}^{-1}\begin{pmatrix}
			f(z_q^{I_{q}}) \\ f(z_q^{J_{q}})
		\end{pmatrix},\quad\forall\ q\in U_{\scriptscriptstyle\Omega,\widetilde{2}}\mbox{ with } |[q]_{\approx_\Omega}\cap U|\ge 2.
	\end{equation}
	Let $q\in U_{\scriptscriptstyle\Omega,\widetilde{2}}$ with $|[q]_{\approx_\Omega}\cap U|\ge 2$. If $q\notin U$, then \eqref{eq-f1iq} holds by \eqref{eq-hzl}. Otherwise, $q\in U$. It follows from Proposition \ref{pr-losmt} \ref{it-lzii} and $z_q^{I_q}, z_q^{J_q}\in[q]_{\approx_\Omega}$ that there is $\beta\in\mathscr{P}_0(\mathbb{C})$ such that
	\begin{equation}\label{eq-lijq}
		\beta^{L_q},\beta^{I_q},\beta^{J_q}\subset\Omega,\qquad \mbox{ and }\qquad \beta(1)=z_q.
	\end{equation}
	Since $f$ is $\Omega$-path-slice,
	\begin{equation*}
		\begin{split}
			f_{\scriptscriptstyle\Omega,\widetilde{2}}(q)=f(q)=f(\beta^{L_q}(1))
			=&(1,L_q)\begin{pmatrix}
				1 & I_{q}\\1 & J_{q}
			\end{pmatrix}^{-1}\begin{pmatrix}
				f(\beta^{I_q}(1)) \\ f(\beta^{J_q}(1))
			\end{pmatrix}
			\\=&(1,L_q)\begin{pmatrix}
				1 & I_{q}\\1 & J_{q}
			\end{pmatrix}^{-1}\begin{pmatrix}
				f(z_{q}^{I_q}) \\ f(z_q^{J_q})
			\end{pmatrix}.
		\end{split}
	\end{equation*}
	Thence \eqref{eq-f1iq} holds.
	
	Let $\gamma\in\mathscr{P}_0(\mathbb{C})$, $I,J,K\in\mathbb{S}$ with $\gamma^I,\gamma^J,\gamma^K\subset\Omega$, $J\neq K$ and $\gamma^I(1),\gamma^J(1),\gamma^K(1)\in U_{\scriptscriptstyle\Omega,\widetilde{2}}$. If $w:=\gamma(1)\notin\mathbb{R}$, then $|\mu|\ge|\{w^J,w^K\}|\ge 2$, where $\mu:=[w^I]_{\approx_\Omega}\cap U$. Then there is $\lambda\in\{0,1\}$ such that
	\begin{equation*}
		z_\mu={\Conj}_{\mathbb{C}}^\lambda(w),\qquad\mbox{and}\qquad z_\mu^{(-1)^\lambda \Gamma}=w^\Gamma,\qquad \Gamma=I,J,K.
	\end{equation*}
	It follows from \eqref{eq-f1iq} that
	\begin{equation}\label{eq-cont}
		\begin{split}
			&(1,I)\begin{pmatrix}
				1&J\\1&K
			\end{pmatrix}^{-1}\begin{pmatrix}
				f_{\scriptscriptstyle\Omega,\widetilde{2}}(\gamma^J(1))\\f_{\scriptscriptstyle\Omega,\widetilde{2}}(\gamma^K(1))
			\end{pmatrix}
			=(1,I)\begin{pmatrix}
				1&J\\1&K
			\end{pmatrix}^{-1}\begin{pmatrix}
				f_{\scriptscriptstyle\Omega,\widetilde{2}}(w^J)\\f_{\scriptscriptstyle\Omega,\widetilde{2}}(w^K)
			\end{pmatrix}
			\\=&(1,I)\begin{pmatrix}
				1&J\\1&K
			\end{pmatrix}^{-1}\begin{pmatrix}
				1&J\\1&K
			\end{pmatrix}\begin{pmatrix}
				1\\&(-1)^\lambda
			\end{pmatrix}
			\begin{pmatrix}
				1 & I_{\mu}\\1 & J_{\mu}
			\end{pmatrix}^{-1}\begin{pmatrix}
				f(z_{\mu}^{I_{\mu}}) \\ f(z_{\mu}^{J_{\mu}})
			\end{pmatrix}
			\\=&(1,I)\begin{pmatrix}
				1\\&(-1)^\lambda
			\end{pmatrix}\begin{pmatrix}
				1 & I_{\mu}\\1 & J_{\mu}
			\end{pmatrix}^{-1}\begin{pmatrix}
				f(z_\mu^{I_{\mu}}) \\ f(z_\mu^{J_{\mu}})
			\end{pmatrix}
			\\=&\left(1,(-1)^\lambda I\right)\begin{pmatrix}
				1 & I_{\mu}\\1 & J_{\mu}
			\end{pmatrix}^{-1}\begin{pmatrix}
				f(z_\mu^{I_{\mu}}) \\ f(z_\mu^{J_{\mu}})
			\end{pmatrix}
			=f_{\scriptscriptstyle\Omega,\widetilde{2}}(w^I)
			=f_{\scriptscriptstyle\Omega,\widetilde{2}}(\gamma^I(1)).
		\end{split}
	\end{equation}
	Otherwise, $w\in\mathbb{R}$. Then $\gamma^\Gamma(1)=w$, $\Gamma=I,J,K$. It implies that
	\begin{equation}\label{eq-cont1}
		\begin{split}
			(1,I)\begin{pmatrix}
				1&J\\1&K
			\end{pmatrix}^{-1}\begin{pmatrix}
				f_{\scriptscriptstyle\Omega,\widetilde{2}}(\gamma^J(1))\\f_{\scriptscriptstyle\Omega,\widetilde{2}}(\gamma^K(1))
			\end{pmatrix}
			=&(1,I)\begin{pmatrix}
				1&J\\1&K
			\end{pmatrix}^{-1}\begin{pmatrix}
				1&J\\1&K
			\end{pmatrix}\begin{pmatrix}
				f_{\scriptscriptstyle\Omega,\widetilde{2}}(w)\\0
			\end{pmatrix}
			\\=&(1,I)\begin{pmatrix}
				f_{\scriptscriptstyle\Omega,\widetilde{2}}(w)\\0
			\end{pmatrix}
			=f_{\scriptscriptstyle\Omega,\widetilde{2}}(\gamma^I(1)).
		\end{split}
	\end{equation}
	According \eqref{eq-cont} and \eqref{eq-cont1}, $f_{\scriptscriptstyle\Omega,\widetilde{2}}$ is $\Omega$-path-slice.
	
	(ii) Suppose that $h:U_{\scriptscriptstyle\Omega,\widetilde{2}}\rightarrow\mathbb{H}$ is $\Omega$-path-slice with $h|_U=f$. Let $q\in U_{\scriptscriptstyle\Omega,\widetilde{2}}\backslash U$. It is easy to check that $|[q]_{\approx_\Omega}\cap U|\ge 2$. According to Proposition \ref{pr-losmt} \ref{it-lzii}, there is $\beta\in\mathscr{P}_0(\mathbb{C})$ such that \eqref{eq-lijq} holds. By \eqref{eq-f1iq},
	\begin{equation*}
		\begin{split}
			h(q)=&h(\beta^{L_q}(1))
			=(1,L_q)\begin{pmatrix}
				1 & I_q\\ 1& J_q
			\end{pmatrix}^{-1}
			\begin{pmatrix}
				h(\beta^{I_q}(1))\\h(\beta^{J_q}(1))
			\end{pmatrix}
			\\=&(1,L_q)\begin{pmatrix}
				1 & I_q\\ 1& J_q
			\end{pmatrix}^{-1}
			\begin{pmatrix}
				f(z_q^{I_q})\\f(z_q^{J_q})
			\end{pmatrix}
			=f_{\scriptscriptstyle\Omega,\widetilde{2}}(q).
		\end{split}
	\end{equation*}
	It implies from $h|_U=f=f_{\scriptscriptstyle\Omega,\widetilde{2}}|_U$ that $h=f_{\scriptscriptstyle\Omega,\widetilde{2}}$.
\end{proof}

Recalling the notion of analytic set, see e.g. \cite[Definition 2.3.1]{Noguchi2016001}, we prove the next result:

\begin{lem}\label{pr-lopbar}
	Let $(\Omega,\pi)$ be a Riemann domain over $\mathbb{C}$, $K\subset\Omega$ be compact with $K_\Omega^\land= K$, $A$ be an analytic set with $A\neq\Omega$, $z\in A\backslash K$ and $\varepsilon>0$. Then there is $f\in\mathscr{O}(\Omega)$ such that
	\begin{equation}\label{eq-f1fab}
		f(z)=1,\qquad f|_{A\backslash\{z\}}\equiv 0,\qquad\mbox{and}\qquad\sup_{K}|f|<\varepsilon.
	\end{equation}
\end{lem}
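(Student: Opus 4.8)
The plan is to meet the three requirements in \eqref{eq-f1fab} separately, using two auxiliary holomorphic functions whose product, raised to a high power, does the job. The point is that vanishing on $A\setminus\{z\}$ is an interpolation condition, the normalization $f(z)=1$ is trivial to preserve, and the smallness on $K$ is bought cheaply from holomorphic convexity.

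First I would use the hypothesis $\widehat{K}_\Omega=K$ together with $z\notin K$. Since $z\notin\widehat{K}_\Omega$, the definition of the holomorphically convex hull gives some $h\in\mathscr{O}(\Omega)$ with $|h(z)|>\sup_K|h|$. In particular $h(z)\neq 0$, so $u:=h/h(z)\in\mathscr{O}(\Omega)$ satisfies $u(z)=1$ and
\begin{equation*}
	c:=\sup_K|u|=\frac{\sup_K|h|}{|h(z)|}<1 .
\end{equation*}
This $u$ is the ``contracting'' factor on $K$.

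Next I would produce a holomorphic function vanishing on $A\setminus\{z\}$ but equal to $1$ at $z$. Here I use that $(\Omega,\pi)$ is a one-dimensional Riemann domain over $\mathbb{C}$, hence a connected open Riemann surface and a domain of holomorphy, as recalled before Proposition \ref{pr-K Ur holo convex eq K Ur}. Because $A$ is analytic with $A\neq\Omega$ and $\Omega$ is connected, $A$ is a closed discrete subset of $\Omega$; since $z$ is isolated in $A$, the set $A\setminus\{z\}$ is again closed and discrete. On a Stein space the restriction map $\mathscr{O}(\Omega)\to\mathscr{O}(A)$ is surjective (equivalently, the second Cousin problem is solvable on an open Riemann surface, so one may prescribe a holomorphic function with a given zero divisor), and the reduced structure sheaf on a discrete set imposes no compatibility among the values. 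Hence I can choose $g\in\mathscr{O}(\Omega)$ with $g(z)=1$ and $g\equiv 0$ on $A\setminus\{z\}$.

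Finally I would set $f:=g\,u^{n}$ for a suitably large $n$. Then $f(z)=g(z)\,u(z)^{n}=1$, and on $A\setminus\{z\}$ we have $f=g\,u^{n}=0$ because $g$ already vanishes there. For the estimate, let $M:=\sup_K|g|<+\infty$, finite since $K$ is compact and $g$ is continuous; pointwise on $K$ one has $|g\,u^{n}|\le M c^{n}$, so $\sup_K|f|\le M c^{n}$, and as $c<1$ we may fix $n$ with $M c^{n}<\varepsilon$. This $f$ satisfies \eqref{eq-f1fab}. The only genuinely nontrivial ingredient—hence the step I expect to be the main obstacle—is the interpolation in the previous paragraph: constructing a single global holomorphic function on the Riemann domain that vanishes on the discrete analytic set $A\setminus\{z\}$ while being nonzero at $z$. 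The convexity separation and the power trick $f=g\,u^{n}$ are then elementary.
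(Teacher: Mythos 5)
Your proposal is correct and follows essentially the same route as the paper's proof: both exploit $z\notin K=\widehat{K}_\Omega$ to obtain $h$ with $|h(z)|>\sup_K|h|$ and then crush the sup on $K$ with a high power of $h/h(z)$, and both invoke the Steinness of the Riemann domain (as a non-compact Riemann surface) to solve the interpolation problem prescribing the value $1$ at $z$ and $0$ on $A\setminus\{z\}$, finally multiplying the two functions. The only cosmetic difference is that the paper interpolates directly on the analytic set $A$ via the Stein interpolation theorem, whereas you first observe that $A$ is discrete and mention the Cousin-problem formulation; the substance is identical.
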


\begin{proof}
	Since $A$ is an analytic set with $A\neq\Omega$, $z$ is an isolated point of $A$. Therefore the function
	\begin{equation*}
		\begin{split}
			g:\ A \ &\xlongrightarrow[\hskip1cm]{}\ \mathbb{C},
			\\ \xi\ &\shortmid\!\xlongrightarrow[\hskip1cm]{}\
			\begin{cases}
				1,\qquad &\xi=z,\\
				0,\qquad &\mbox{otherwise},
			\end{cases}
		\end{split}
	\end{equation*}
	is holomorphic. Since every non-compact Riemann surface is a Stein manifold, the Riemann domain $\Omega$ over $\mathbb{C}$ is a Stein manifold. By interpolation theorem (e.g. \cite[Theorem 4.5.11]{Noguchi2016001}), there is $\widetilde{g}\in\mathscr{O}(\Omega)$ such that $\widetilde{g}|_A=g$. Since $z\notin K=K_\Omega^\land$, there is $h\in\mathscr{O}(\Omega)$ such that $|h(z)|>\sup_{K}|h|$. Then there is $\lambda\in\mathbb{N}$ such that
	\begin{equation*}
		\sup_{K}\left|\left(\frac{h}{h(z)}\right)^\lambda\right|<\frac{\varepsilon}{\max_K|\widetilde{g}|+1}.
	\end{equation*}
	It is easy to check that $f:=\widetilde{g}\cdot\left(\frac{h}{h(z)}\right)^\lambda\in\mathscr{O}(\Omega)$ satisfies \eqref{eq-f1fab}.
\end{proof}

\begin{lem}\label{pr-loitkso}
	Let $\Omega\in\tau_s(\mathbb{H})$ be $2$-path-symmetric, $K\subset\Omega$ be invariant, $\varepsilon>0$, $A\subset\Omega$ be stem-finite, and $g:A\rightarrow\mathbb{H}$ be an $\Omega$-path-slice function with $A\backslash[p]_{\approx_\Omega}, A_{\scriptscriptstyle\Omega,\widetilde{2}}\cap K\subset\mathcal{Z}(g_{\scriptscriptstyle\Omega,\widetilde{2}})$ for some $p\in A$. Then there is $f\in\mathcal{SR}(\Omega)$ such that
	\begin{equation}\label{eq-faqm}
		f|_A=g,\qquad\mbox{and}\qquad \sup_K|f|<\varepsilon.
	\end{equation}
\end{lem}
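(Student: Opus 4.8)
The plan is to transfer the problem to the associated Riemann domain $\breve{\Omega}$ over $\mathbb{C}$, to solve there a \emph{holomorphic} interpolation-with-smallness problem by Lemma \ref{pr-lopbar}, and then to push the solution back to a slice regular function on $\Omega$ through Proposition \ref{prop-construct slice regular functions}, controlling its size on $K$ by Proposition \ref{pr-loitps}. We may assume $g\not\equiv 0$. By the hypothesis $A\setminus[p]_{\approx_\Omega}\subset\mathcal{Z}(g_{\scriptscriptstyle\Omega,\widetilde{2}})$ the only points of $A$ carrying nonzero data lie on the single class $[p]_{\approx_\Omega}$, and since $A$ is stem-finite the set $\psi_{\scriptscriptstyle\Omega}^\mathbb{S}(A)\subset\breve{\Omega}$ is finite. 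Choose $q\in[p]_{\approx_\Omega}\cap A$ with $g(q)\neq 0$; then $q\in\Omega\backslash K$ (since $A_{\scriptscriptstyle\Omega,\widetilde{2}}\cap K\subset\mathcal{Z}(g_{\scriptscriptstyle\Omega,\widetilde{2}})$), and I write $\{\mu,\overline{\mu}\}=\psi_{\scriptscriptstyle\Omega}^\mathbb{S}(\{q\})$ for the stem pair lying over $[p]_{\approx_\Omega}$, recalling \eqref{eq-psoz}. Applying Proposition \ref{pr-loitsm} to $q$ gives a dichotomy: either $[p]_{\approx_\Omega}\cap K=\varnothing$, in which case $\mu,\overline{\mu}\notin\psi_{\scriptscriptstyle\Omega}^\mathbb{S}(K)$, or $[p]_{\approx_\Omega}\cap K=\{z^J\}$, in which case $\overline{\mu}\in\breve{\Omega}^{\scriptscriptstyle[J]}\backslash(\breve{\Omega}^{\scriptscriptstyle[J]})^{\sigma(K,\mathbb{H}\backslash\Omega)}$. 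In the second case I fix the base imaginary unit to be this $J$ and work in $\breve{\Omega}^{\scriptscriptstyle[J]}$; in the first case any base unit $I$ will do.

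Next I set up the interpolation on $\breve{\Omega}^{\scriptscriptstyle[I]}$, which is a (one-dimensional) Riemann domain over $\mathbb{C}$ and hence Stein. Writing $r:=\sigma(K,\mathbb{H}\backslash\Omega)$, Proposition \ref{pr-loitsh} shows that the image $K_{\breve{\scriptscriptstyle\Omega}}^{r,I}$ of the invariant set $K$ is compact and holomorphically convex, $K_{\breve{\scriptscriptstyle\Omega}}^{r,I}=(K_{\breve{\scriptscriptstyle\Omega}}^{r,I})^{\land}_{\breve{\Omega}^{\scriptscriptstyle[I]}}$, while the finite set $\psi_{\scriptscriptstyle\Omega}^\mathbb{S}(A)\cap\breve{\Omega}^{\scriptscriptstyle[I]}$ is an analytic set different from $\breve{\Omega}^{\scriptscriptstyle[I]}$. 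Lemma \ref{pr-lopbar} then produces holomorphic functions on $\breve{\Omega}^{\scriptscriptstyle[I]}$ taking prescribed values at $\mu$ and $\overline{\mu}$, vanishing at every other point of $\psi_{\scriptscriptstyle\Omega}^\mathbb{S}(A)$, and as small as we please on $K_{\breve{\scriptscriptstyle\Omega}}^{r,I}$. The values at $\mu,\overline{\mu}$ are chosen so that the reconstructed slice function reproduces the ($\Omega$-path-slice) values of $g$ on $[p]_{\approx_\Omega}\cap A$; since a quaternionic value need not lie in $\mathbb{C}_I$, I allow two such functions $G_1,G_2$, to be combined afterwards by right multiplication with a constant, which preserves slice regularity.

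With $G_1,G_2$ in hand I set $f:=(G_1)_{\scriptscriptstyle\Omega}^{I}+(G_2)_{\scriptscriptstyle\Omega}^{I}\,c$ for a suitable constant $c\in\mathbb{H}$; by Proposition \ref{prop-construct slice regular functions} each summand is slice regular, hence so is $f$. The vanishing of $G_1,G_2$ at the stem pairs $\psi_{\scriptscriptstyle\Omega}^\mathbb{S}(A\backslash[p]_{\approx_\Omega})$ forces, through the zero-propagation \eqref{eq-vblmsm}, that $f\equiv 0$ on $A\backslash[p]_{\approx_\Omega}$. On $[p]_{\approx_\Omega}$ the values of $G_1,G_2$ at $\mu,\overline{\mu}$ were arranged so that $f$ agrees with $g$ at two---and hence at all---points of $[p]_{\approx_\Omega}\cap A$: here I use that $[p]_{\approx_\Omega}\subset\Omega^{2ps}$ when $|[p]_{\approx_\Omega}|>1$, so that $f$ obeys the representation formula of Remark \ref{rmk-2ps}\ref{it-o2p} on this sphere and therefore coincides with the path-slice function $g$ as soon as it does so on two of its slices (the degenerate case $|[p]_{\approx_\Omega}|=1$ only requires matching the single value $g(q)$). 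Thus $f|_A=g$. Finally, Proposition \ref{pr-loitps}, together with Proposition \ref{pr-U I in U sigma I}, bounds $\sup_{K}|f|$ by $\sup_{K_{\breve{\scriptscriptstyle\Omega}}^{r,I}}|G_1|+\sup_{K_{\breve{\scriptscriptstyle\Omega}}^{r,I}}|G_2|\,|c|$, which is $<\varepsilon$ by the smallness arranged above.

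The main obstacle is the case $[p]_{\approx_\Omega}\cap K\neq\varnothing$, where $f$ must simultaneously equal the \emph{nonzero} function $g$ on $[p]_{\approx_\Omega}$ and be uniformly smaller than $\varepsilon$ on $K$. This is feasible only because the construction $(\,\cdot\,)_{\scriptscriptstyle\Omega}^{I}$ couples each stem $\mu$ to its conjugate $\overline{\mu}$: the value forced at the point $z^J\in[p]_{\approx_\Omega}\cap K$ is $0$ (by $A_{\scriptscriptstyle\Omega,\widetilde{2}}\cap K\subset\mathcal{Z}(g_{\scriptscriptstyle\Omega,\widetilde{2}})$), so the nonzero interpolation datum can be loaded entirely onto $\overline{\mu}$, which Proposition \ref{pr-loitsm} places \emph{outside} the holomorphically convex hull $K_{\breve{\scriptscriptstyle\Omega}}^{r,J}$; Lemma \ref{pr-lopbar} can then prescribe a large value at $\overline{\mu}$ while keeping $|G_1|,|G_2|$ small on $K_{\breve{\scriptscriptstyle\Omega}}^{r,J}$. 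Reconciling this coupling with the merely $\Omega$-path-slice, $\mathbb{H}$-valued datum $g$---and verifying that the two interpolation values at $\mu,\overline{\mu}$ genuinely realize the required behaviour on the whole class $[p]_{\approx_\Omega}$---is the delicate heart of the argument; the invariance of $K$ (so that $\psi_{\scriptscriptstyle\Omega}^\mathbb{S}(K)$ is its own holomorphic hull) and the finiteness of $\psi_{\scriptscriptstyle\Omega}^\mathbb{S}(A)$ are precisely what make the holomorphic interpolation step legitimate.
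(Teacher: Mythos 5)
Your proposal is correct and follows essentially the same route as the paper's proof: transfer to the Riemann domain $\breve{\Omega}^{\scriptscriptstyle[I]}$, holomorphic interpolation-with-smallness via Lemma \ref{pr-lopbar} against the hull $K_{\breve{\scriptscriptstyle\Omega}}^{r,I}$ (using Propositions \ref{pr-loitsh} and \ref{pr-loitsm} exactly as the paper does, with the nonzero datum loaded on the stem lying outside the hull and the value $0$ forced at the stem over the point of $K$), then reconstruction by Proposition \ref{prop-construct slice regular functions} and the estimate of Propositions \ref{pr-loitps} and \ref{pr-U I in U sigma I}. The only deviation is organizational: the paper splits into four cases and, when both stems are free and two independent quaternionic values must be matched, uses two functions on the conjugate domains $\breve{\Omega}^{\scriptscriptstyle[I]}$ and $\breve{\Omega}^{\scriptscriptstyle[-I]}$, whereas you use two functions on the single domain $\breve{\Omega}^{\scriptscriptstyle[I]}$ combined with a right quaternionic constant $c$ (so that targets are written in $\mathbb{C}_I\oplus\mathbb{C}_I c$) --- an equivalent device.
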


\begin{proof}
	If $A\cap[p]_{\approx_\Omega}\subset K$, then $g\equiv 0$ and \eqref{eq-faqm} holds for $f\equiv 0$. Otherwise, let
	\begin{equation}\label{eq-ziap}
		z^I\in A\cap[p]_{\approx_\Omega}\backslash K,\qquad\mbox{for some}\quad z\in\mathbb{C}\quad\mbox{and}\quad I\in\mathbb{S}.
	\end{equation}
	Denote $\mu:=[(z^I,I)]_{\sim_\Omega}$. By \eqref{eq-rge0}, $r:=\sigma(K,\mathbb{H}\backslash\Omega)>0$.
	
	(i) If $|[p]_{\approx_\Omega}|=1$, then $[p]_{\approx_\Omega}=\{z^I\}$. Since $A$ is stem-finite,
	\begin{equation*}
		|A_{\scriptscriptstyle\breve{\Omega}}^{\scriptscriptstyle[I]}|=|\psi_{\scriptscriptstyle\Omega}^{\scriptscriptstyle\mathbb{S}\backslash\{-I\}} (A)|\le|\psi_{\scriptscriptstyle\Omega}^{\scriptscriptstyle\mathbb{S}} (A)|<\infty.
	\end{equation*}
	Thence $A_{\scriptscriptstyle\breve{\Omega}}^{\scriptscriptstyle[I]}$ is an analytic set in the Riemann domain $\breve{\Omega}^{\scriptscriptstyle[I]}$. Proposition
	\ref{pr-loitsm} implies that $\mu\notin K_{\breve{\scriptscriptstyle{\Omega}}}^{r,I}\subset\psi_{\scriptscriptstyle \Omega}^\mathbb{S}(K)$. According to Proposition \ref{pr-loitsh}, $K_{\breve{\scriptscriptstyle{\Omega}}}^{r,I}=\left( K_{\breve{\scriptscriptstyle{\Omega}}}^{r,I}\right)_{\breve{\Omega}^{\scriptscriptstyle[I]}}^\land$. According to Lemma \ref{pr-lopbar}, there is $h_1\in\mathscr{O}(\breve{\Omega}^{\scriptscriptstyle[I]})$ such that
	\begin{equation}\label{eq-hhh}
		h_1(\mu)=1,\qquad h_1|_{A_{\scriptscriptstyle\Omega}^{\scriptscriptstyle[I]}\backslash\{\mu\}}\equiv 0,\qquad\mbox{and}\qquad\sup_{K_{\breve{\scriptscriptstyle{\Omega}}}^{r,I}} |h_1|<\frac{\varepsilon}{|g(z^I)|+1}.
	\end{equation}
	Let
	\begin{equation}\label{eq-fh1}
		f:=(h_1)_{\scriptscriptstyle\Omega}^I\cdot g(z^I).
	\end{equation}
	Then
	\begin{equation*}
		\begin{split}
			f(z^I)=&(h_1)_{\scriptscriptstyle\Omega}^I(z^I)\cdot g(z^I)
			=\Psi_i^I((h_1)_\mu(z))\cdot g(z^I)
			\\=&\Psi_i^I(h_1(\mu))\cdot g(z^I)
			=\Psi_i^I(1)\cdot g(z^I)
			=g(z^I).
		\end{split}
	\end{equation*}
	If $A=\{z^I\}$, then $f|_A=g$. Otherwise, there is $w^L\in A\backslash [p]_{\approx_\Omega}=A\backslash\{z^I\}\subset \mathcal{Z}(h_1)$ with $L\in\mathbb{S}\backslash\{-I\}$, and $\nu:=[(w^L,L)]_{\sim_\Omega}$. According to \eqref{eq-sim}, $\nu\notin\{\mu,\overline{\mu}\}$. It follows from
	\begin{equation*}
		\nu=[(w^L,L)]_{\sim_\Omega}
		\in [A\times\{L\}]_{\sim_\Omega}
		\subset\psi_{\scriptscriptstyle\Omega}^{\scriptscriptstyle \mathbb{S}\backslash\{-I\}}(A)=A_{\breve{\scriptscriptstyle\Omega}}^{\scriptscriptstyle [I]}
	\end{equation*}
	that $\nu\in A_{\breve{\scriptscriptstyle\Omega}}^{\scriptscriptstyle[I]}\backslash\{\mu\}$. If $J=I$, then
	\begin{equation*}
		\begin{split}
			f(w^J)=&(h_1)_{\scriptscriptstyle\Omega}^I(w^J)\cdot g(z^I)
			=\Psi_i^I((h_1)_\nu(w))\cdot g(z^I)
			\\=&\Psi_i^I(h_1(\nu))\cdot g(z^I)
			=\Psi_i^I(0)\cdot g(z^I)
			=0=g_{\scriptscriptstyle\Omega,\widetilde{2}}(w^J)=g(w^J).
		\end{split}
	\end{equation*}
	Otherwise, $J\neq I$. Then
	\begin{equation*}
		\overline\nu=[(w^L,-L)]_{\sim_\Omega}\subset[U_{-L}\times\{-L\}]_{\sim_\Omega}\subset\psi_{\scriptscriptstyle\Omega}^{\scriptscriptstyle \mathbb{S}\backslash\{-I\}}(A)=A_{\breve{\scriptscriptstyle\Omega}}^{\scriptscriptstyle [I]},
	\end{equation*}
	implies that $\overline{\nu}\in A_{\breve{\scriptscriptstyle\Omega}}^{\scriptscriptstyle [I]}\backslash\{\mu\}$ and
	\begin{equation*}
		 \nu\in{\Conj}_{\breve{\scriptscriptstyle\Omega}}( A_{\breve{\scriptscriptstyle\Omega}}^{\scriptscriptstyle [I]}\backslash\{\mu\})\subset{\Conj}_{\breve{\scriptscriptstyle\Omega}} (\mathcal{Z}(h_1)).
	\end{equation*}
	By \eqref{eq-vblmsm},
	\begin{equation*}
		w^J\in \varphi_{\breve{\scriptscriptstyle\Omega}}^0
		\left(\{\nu\}\right)
		\subset \varphi_{\breve{\scriptscriptstyle\Omega}}^0
		\left(\mathcal{Z}(h_1)\cap {\Conj}_{\breve{\scriptscriptstyle\Omega}}\circ\mathcal{Z}(h_1)\right)
		\subset \mathcal{Z}((h_1)_{\scriptscriptstyle\Omega}^I).
	\end{equation*}
	It is clear that
	\begin{equation*}
		f(w^J)=(h_1)_{\scriptscriptstyle\Omega}^I(w^J)\cdot g(z^I)=0\cdot g(z^I)=0=g(w^J).
	\end{equation*}
	In summary,
	\begin{equation}\label{eq-faga}
		f|_{A\backslash[p]_{\approx_\Omega}}\equiv 0 \equiv g|_{A\backslash[p]_{\approx_\Omega}}\qquad\mbox{and}\qquad f|_A=g.
	\end{equation}
	
	By Proposition \ref{pr-U I in U sigma I}, $K_{\breve{\scriptscriptstyle\Omega}}^{[I]}\subset K_{\breve{\scriptscriptstyle\Omega}}^{r,I}$. It follows from Proposition \ref{pr-loitps} and \eqref{eq-hhh} that
	\begin{equation*}
		\sup_K \left|(h_1)_{\scriptscriptstyle{\Omega}}^{\scriptscriptstyle{I}}\right|
		\le\sup_{K_{\scriptscriptstyle{\breve{\Omega}}}^{\scriptscriptstyle [I]}}|h_1|
		\le\sup_{K_{\scriptscriptstyle{\breve{\Omega}}}^{r,I}}|h_1|
		<\frac{\varepsilon}{|g(z^I)|+1}.
	\end{equation*}
	According to \eqref{eq-fh1}, $\sup_K \left|f\right|< \varepsilon$. Hence \eqref{eq-faqm} holds.
	
	(ii) If $|[p]_{\approx_\Omega}|>1$ and $[p]_{\approx_\Omega}\cap K\neq\varnothing$, then by Lemma \ref{lem_Jexist},
	\begin{equation*}
		[p]_{\approx_\Omega}\cap K=\left\{z^{-J}\right\},\qquad\mbox{for some}\qquad J\in\mathbb{S},
	\end{equation*}
	and
	\begin{equation*}
		\mu=[(z^I,I)]_{\sim_\Omega}=[(z^J,J)]_{\sim_\Omega}\notin K_{\breve{\scriptscriptstyle{\Omega}}}^{[J]}.
	\end{equation*}
	Since $|[p]_{\approx_\Omega}|>1$, $z\notin \mathbb{R}$ and $\mu\neq\overline{\mu}$. According to \eqref{eq-kbso} and Lemma \ref{pr-lopbar}, there is $h_2\in\mathscr{O}(\breve{\Omega}^{\scriptscriptstyle[J]})$ such that
	\begin{equation}\label{eq-hhqm}
		h_2(\mu)=1,\qquad h_2|_{A_{\breve{\scriptscriptstyle\Omega}}^{\scriptscriptstyle[J]}\cup\{\overline{\mu}\}\backslash\{\mu\}}\equiv 0,\qquad\mbox{and}\qquad\sup_{K_{\breve{\scriptscriptstyle{\Omega}}}^{r,J}} |h_2|<\frac{\varepsilon}{|g(z^I)|+1}.
	\end{equation}
	
	Define
	\begin{equation}\label{eq-flh2r}
		f:=\left(h_2\right)_{\scriptscriptstyle{\Omega}}^J \cdot (1,J) \begin{pmatrix}
			1 & I\\ 1 & -J
		\end{pmatrix}^{-1}\begin{pmatrix}
			g(z^I)\\ 0
		\end{pmatrix}.
	\end{equation}
	By \eqref{eq-gioxm} and \eqref{eq-hhqm},
	\begin{equation*}
		\begin{cases*}
			\left(h_2\right)_{\scriptscriptstyle{\Omega}}^J(z^J)=h_2(\mu)=1,\\
			\left(h_2\right)_{\scriptscriptstyle{\Omega}}^J(z^{-J})=\left(h_2\right)_{\scriptscriptstyle{\Omega}}^J(\overline{z}^J)=h_2([(\overline{z}^J,J)]_{\sim_\Omega})=h_2(\overline{\mu})=0.
		\end{cases*}
	\end{equation*}
	It implies by  \eqref{eq-flh2r} that
	\begin{equation*}
		f(z^L)=(1,L)\begin{pmatrix}
			1 & I\\ 1 & -J
		\end{pmatrix}^{-1}\begin{pmatrix}
			g(z^I)\\ 0
		\end{pmatrix},\qquad L=\pm J,
	\end{equation*}
	and then
	\begin{equation*}
		\begin{split}
			f(z^I)
			=&(1,I)\begin{pmatrix}
				1&J\\1&-J
			\end{pmatrix}^{-1}\begin{pmatrix}
				f(z^J)\\f(z^{-J})
			\end{pmatrix}
			\\=&(1,I)\begin{pmatrix}
				1&J\\1&-J
			\end{pmatrix}^{-1}\begin{pmatrix}
				1&J\\1&-J
			\end{pmatrix}\begin{pmatrix}
				1 & I\\ 1 & -J
			\end{pmatrix}^{-1}\begin{pmatrix}
				g(z^I)\\ 0
			\end{pmatrix}=g(z^I).
		\end{split}
	\end{equation*}

	If $|[p]_{\approx_\Omega}\cap A|=1$, then by \eqref{eq-ziap}, $[p]_{\approx_\Omega}\cap A=\{z^I\}$. It implies that
	\begin{equation}\label{eq-fpsoca}
		f|_{[p]_{\approx_\Omega}\cap A}=g|_{[p]_{\approx_\Omega}\cap A}.
	\end{equation}
	
	Otherwise, $|[p]_{\approx_\Omega}\cap A|>1$. Then $p\in A_{\scriptscriptstyle\Omega,2}$ and $z^{-J}\in A_{\scriptscriptstyle\Omega,\widetilde{2}}\cap K\subset\mathcal{Z}(g_{\scriptscriptstyle\Omega,\widetilde{2}})$. It implies that
	\begin{equation*}
		\begin{cases*}
			f(z^I)=g(z^I)=g_{\scriptscriptstyle\Omega,\widetilde{2}}(z^I),\\
			f(z^J)=0=g(z^J).
		\end{cases*}
	\end{equation*}
	Since $f$ and $g_{\scriptscriptstyle\Omega,\widetilde{2}}$ are $\Omega$-path-slice, $f=g_{\scriptscriptstyle\Omega,\widetilde{2}}$ on $[p]_{\approx_\Omega}$. Hence \eqref{eq-fpsoca} holds. By similar method in (i), \eqref{eq-faga} holds and $\sup_K \left|f\right|< \varepsilon$. Hence \eqref{eq-faqm} holds.
	
	(iii) If $|[p]_{\approx_\Omega}|> 1$, $[p]\cap K=\varnothing$ and $|[p]_{\approx_\Omega}\cap A|=1$, then by \eqref{eq-ziap}, $[p]_{\approx_\Omega}\cap A=\{z^I\}$. By similar method in (i), there is $h_1\in\mathscr{O}(\breve{\Omega}^{\scriptscriptstyle[I]})$ satisfying \eqref{eq-hhh}, and $f:=(h_1)_{\scriptscriptstyle\Omega}^I\cdot g(z^I)$ satisfies \eqref{eq-faqm}.
	
	(iv) If $|[p]_{\approx_\Omega}|> 1$, $[p]\cap K=\varnothing$ and $|[p]_{\approx_\Omega}\cap A|>1$, then by Proposition \ref{pr-loitsm} \begin{equation*}
		\mu,\overline{\mu}\notin  K_{\breve{\scriptscriptstyle{\Omega}}}^{r,I}\cup K_{\breve{\scriptscriptstyle{\Omega}}}^{r,-I}\subset\psi_{\scriptscriptstyle \Omega}^\mathbb{S}(K).
	\end{equation*}
	By similar method in (i), there is $h_3\in\mathscr{O}(\breve{\Omega}^{\scriptscriptstyle[J]})$ and $h_4\in\mathscr{O}(\breve{\Omega}^{\scriptscriptstyle[-J]})$ such that
	\begin{equation*}
		h_3(\mu)=h_4(\overline{\mu})=1,\qquad h_3|_{A_{\scriptscriptstyle\Omega}^{\scriptscriptstyle[I]}\backslash\{\mu\}}\equiv 0,\qquad
		h_4|_{A_{\scriptscriptstyle\Omega}^{\scriptscriptstyle[-I]}\backslash\{\overline{\mu}\}}\equiv 0
	\end{equation*}
	\begin{equation*}
		\sup_{K_{\breve{\scriptscriptstyle{\Omega}}}^{r,I}} |h_3|<\frac{\varepsilon}{2|g(z^I)|+1},\qquad\mbox{and}\qquad
		\sup_{K_{\breve{\scriptscriptstyle{\Omega}}}^{r,-I}} |h_4|<\frac{\varepsilon}{2|g_{\scriptscriptstyle\Omega,\widetilde{2}}(z^{-I})|+1}.
	\end{equation*}
	Define $f_3:=2\cdot(h_3)_{\scriptscriptstyle\Omega}^I\cdot g(z^I)$, $f_4:=2\cdot (h_4)_{\scriptscriptstyle\Omega}^{-I}\cdot g_{\scriptscriptstyle\Omega,\widetilde{2}}(z^{-I})$ and $f:=\frac{f_3+f_4}{2}$. By similar method in (i),
	\begin{equation}\label{eq-fagaq}
		f'|_{A\backslash [p]_{\approx_\Omega}}
		=g|_{A\backslash [p]_{\approx_\Omega}}\equiv 0,
		\qquad\mbox{and}\qquad \sup_{K} |f'|<\epsilon,\qquad f'=f_3,f_4,f.
	\end{equation}
	By similar method in (ii), $f_3(z^I)=g(z^I)=2\cdot g_{\scriptscriptstyle\Omega,\widetilde{2}}(z^I)$, $f_4(z^{-I})=2\cdot g_{\scriptscriptstyle\Omega,\widetilde{2}}(z^{-I})$, and $f_3(z^{-I})=f_4(z^I)=0$. Thence
	\begin{equation*}
		\begin{cases*}
			f(z^I)=\frac{f_3(z^I)+f_4(z^I)}{2}=g_{\scriptscriptstyle\Omega,\widetilde{2}}(z^I),\\
			f(z^{-I})=\frac{f_3(z^I)+f_4(z^I)}{2}=g_{\scriptscriptstyle\Omega,\widetilde{2}}(z^{-I}).
		\end{cases*}
	\end{equation*}
	By similar method in (ii), $f|_{[p]_{\approx_\Omega}}=g_{\scriptscriptstyle\Omega,\widetilde{2}}|_{[p]_{\approx_\Omega}}$ and $f|_{A\cap [p]_{\approx_\Omega}}=g|_{A\cap [p]_{\approx_\Omega}}$. Hence \eqref{eq-faqm} hold by \eqref{eq-fagaq}.
\end{proof}

\begin{prop}\label{pr-l2psia}
	Let $\Omega\in\tau_s(\mathbb{H})$ be $2$-path-symmetric, $K\subset\Omega$ be invariant, $\varepsilon>0$, $A\subset\Omega$ be stem-finite, and $g:A\rightarrow\mathbb{H}$ be an $\Omega$-path-slice function with $g_{\scriptscriptstyle\Omega,\widetilde{2}}|_{A_{\scriptscriptstyle\Omega,\widetilde{2}}\cap K}=0$. Then there is $f\in\mathcal{SR}(\Omega)$ such that \eqref{eq-faqm} holds.
\end{prop}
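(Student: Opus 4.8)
The plan is to reduce Proposition \ref{pr-l2psia} to the already-proved Lemma \ref{pr-loitkso} by splitting $g$ into finitely many summands, each supported on a single $\approx_\Omega$-class, and then adding together the slice regular functions that the Lemma produces. First I would check that stem-finiteness of $A$ forces $A$ to meet only finitely many equivalence classes. To each class $C$ with $C\cap A\neq\varnothing$ assign $\mu_C:=[(z^I,I)]_{\sim_\Omega}$ for some $z^I\in C\cap A$. This is well defined: if $|C|>1$ then $C=z^\mathbb{S}$ with $z\in\Omega_{2ps}$ by Proposition \ref{pr-losmt} \ref{it-lzio}, so $\mu_C$ is independent of $I$ by \eqref{eq-wiiso}; and plainly $\mu_C\in\psi_{\scriptscriptstyle\Omega}^\mathbb{S}(A)$. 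By \eqref{eq-zaia} the map $C\mapsto\mu_C$ is injective, whence the number $n$ of such classes is at most $|\psi_{\scriptscriptstyle\Omega}^\mathbb{S}(A)|<\infty$. I then list them as $C_1,\dots,C_n$ and fix $p_k\in A\cap C_k$.

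For each $k$ I would set $g_k:A\to\mathbb{H}$ equal to $g$ on $A\cap C_k$ and equal to $0$ on $A\setminus C_k$, so that $\sum_{k=1}^n g_k=g$ on $A$. The point that makes each $g_k$ admissible is that the defining identity of an $\Omega$-path-slice function couples only three points $\gamma^I(1),\gamma^J(1),\gamma^K(1)$ that share the value $\gamma(1)$ and therefore lie in one common class. If that class is $C_k$, then $g_k$ coincides with $g$ on the three points and the identity is inherited; if it is a different class, then $g_k$ vanishes on all three and the identity is trivially satisfied. Hence every $g_k$ is $\Omega$-path-slice.

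Next I would pin down the completion $(g_k)_{\scriptscriptstyle\Omega,\widetilde{2}}$. Consider the candidate $\widetilde{g}_k:A_{\scriptscriptstyle\Omega,\widetilde{2}}\to\mathbb{H}$ that equals $g_{\scriptscriptstyle\Omega,\widetilde{2}}$ on $A_{\scriptscriptstyle\Omega,\widetilde{2}}\cap C_k$ and $0$ on $A_{\scriptscriptstyle\Omega,\widetilde{2}}\setminus C_k$. The same class-by-class reasoning shows $\widetilde{g}_k$ is $\Omega$-path-slice, and clearly $\widetilde{g}_k|_A=g_k$, so the uniqueness in Proposition \ref{uniqext} yields $(g_k)_{\scriptscriptstyle\Omega,\widetilde{2}}=\widetilde{g}_k$. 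In particular $(g_k)_{\scriptscriptstyle\Omega,\widetilde{2}}$ vanishes off $C_k$, giving $A\setminus C_k=A\setminus[p_k]_{\approx_\Omega}\subset\mathcal{Z}((g_k)_{\scriptscriptstyle\Omega,\widetilde{2}})$; and on $A_{\scriptscriptstyle\Omega,\widetilde{2}}\cap K$ it is either $0$ (off $C_k$) or equals $g_{\scriptscriptstyle\Omega,\widetilde{2}}$ (on $C_k$), which is $0$ by the standing hypothesis $g_{\scriptscriptstyle\Omega,\widetilde{2}}|_{A_{\scriptscriptstyle\Omega,\widetilde{2}}\cap K}=0$. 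Thus $g_k$, with the marked point $p_k$, meets every hypothesis of Lemma \ref{pr-loitkso}.

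Finally, applying Lemma \ref{pr-loitkso} to $(A,g_k,K,\varepsilon/n)$ produces $f_k\in\mathcal{SR}(\Omega)$ with $f_k|_A=g_k$ and $\sup_K|f_k|<\varepsilon/n$; then $f:=\sum_{k=1}^n f_k\in\mathcal{SR}(\Omega)$ satisfies $f|_A=\sum_k g_k=g$ and $\sup_K|f|\le\sum_k\sup_K|f_k|<\varepsilon$, which is exactly \eqref{eq-faqm}. I expect the main obstacle to be the bookkeeping in the two middle steps: one must verify that the path-slice identity never relates points from two distinct classes (which holds because its three arguments all reduce to $\gamma(1)$ and hence to a single class) and that the completion of the truncated datum $g_k$ is precisely the truncation of $g_{\scriptscriptstyle\Omega,\widetilde{2}}$ to $C_k$, where invoking the uniqueness of Proposition \ref{uniqext} is what makes the argument rigorous.
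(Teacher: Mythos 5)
Your proposal is correct and takes essentially the same route as the paper's own proof: decompose $A$ into the finitely many $\approx_\Omega$-classes it meets (using stem-finiteness and the injection of classes into $\psi_{\scriptscriptstyle\Omega}^{\mathbb{S}}(A)$), truncate $g$ to each class, apply Lemma \ref{pr-loitkso} to each truncation with tolerance $\varepsilon/n$, and sum. The only difference is that you spell out what the paper leaves as ``easy to check'' --- that each truncated datum is $\Omega$-path-slice and that its completion is the truncation of $g_{\scriptscriptstyle\Omega,\widetilde{2}}$, via the uniqueness in Proposition \ref{uniqext}.
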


\begin{proof}
	Since $A$ is $\Omega$-stem-finite, $\psi_{\scriptscriptstyle\Omega}^\mathbb{S}(A)$ is finite. Then there is $z_1,...,z_m\in\mathbb{C}$ and $I_1,...,I_m\in\mathbb{S}$ and such that
	\begin{equation*}
		\psi_{\scriptscriptstyle\Omega}^\mathbb{S}(A)
		=\bigcup_{\ell=1}^m \left\{\left[\left(z_\ell^{I_\ell},I_\ell\right)\right]_{\sim_\Omega}\right\}.
	\end{equation*}
	Let $u^L\in A$. Then $[(u^L,L)]_{\sim_\Omega}\in\psi_{\scriptscriptstyle\Omega}^\mathbb{S}(A)$. It implies that there is $\ell\in\{1,...,m\}$ such that $[(u^L,L)]_{\sim_\Omega}=\left[\left(z_\ell^{I_\ell},I_\ell\right)\right]_{\sim_\Omega}$, i.e. $(u^L,L)\sim_{\scriptscriptstyle \Omega}\left(z_\ell^{I_\ell},I_\ell\right)$. According to \eqref{eq-zaia}, $u^L\approx_{\scriptscriptstyle\Omega} z_\ell^{I_\ell}$, i.e. $u^L\in\left[z_\ell^{I_\ell}\right]_{\approx_\Omega}$. It implies that
	\begin{equation*}
		A\subset \bigcup_{\imath=1}^m\left\{\left[z_{\imath}^{I_{\imath}}\right]_{\approx_\Omega}\right\}.
	\end{equation*}
	Let $v_1,...,v_{m'}\in\{1,...,m\}$ such that
	\begin{equation*}
		\bigcup_{\imath=1}^{m'}\left\{\left[z_{v_\imath}^{I_{v_\imath}}\right]_{\approx_\Omega}\right\}=\bigcup_{\imath=1}^m\left\{\left[z_{\imath}^{I_{\imath}}\right]_{\approx_\Omega}\right\}\supset A,
	\end{equation*}
	and
	\begin{equation*}
		\left[z_{v_\imath}^{I_{v_\imath}}\right]_{\approx_\Omega}\cap\left[z_{v_\jmath}^{I_{v_\jmath}}\right]_{\approx_\Omega}
		=\varnothing,\qquad \imath\neq\jmath.
	\end{equation*}
	
	Let $\imath\in\{1,...,m'\}$. By Lemma \ref{pr-loitkso}, there is $f_\ell\in\mathcal{SR}(\Omega)$ such that
	\begin{equation*}
		f_\imath|_A=g_\imath,\qquad\mbox{and}\qquad\sup_K|f_\imath|<\frac{\varepsilon}{m'},
	\end{equation*}
	where $g_\imath:A\rightarrow\mathbb{H}$ defined by
	\begin{equation*}
		g_\imath(q):=\begin{cases}
			g(q),\qquad &q\in\left[z_{v_\imath}^{I_{v_\imath}}\right]_{\approx_\Omega},\\
			0,\qquad&\mbox{otherwise}.
		\end{cases}
	\end{equation*}
	It is easy to check that $f:=\sum_{\jmath=1}^{m'} f_\jmath$ satisfying \eqref{eq-faqm}.
\end{proof}

\begin{prop}\label{pr-increasing subsets}
	Let $\Omega\in\tau_s(\mathbb{H})$ be $2$-path-symmetric, then there are $\Omega$-invariant subsets $K_\ell$, $\ell\in\mathbb{N}$ of $\Omega$ such that
	\begin{equation}\label{eq-keis}
		(K_\ell)_I\subset (K_{\ell+1})_I^\circ,\qquad\bigcup_{\ell=0}^{+\infty} (K_\ell)^\circ_I=\Omega_I,\qquad\forall\ I\in\mathbb{S},
	\end{equation}
	where $(K_\ell)_I^\circ$ denote the set of interior points of $(K_\ell)_I$.
\end{prop}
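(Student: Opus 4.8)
The plan is to transport a holomorphic exhaustion of the Riemann domain $\breve{\Omega}$ down to $\Omega$ through the correspondence between $\varphi_{\breve{\scriptscriptstyle\Omega}}^{\rho}$ and $\psi_{\scriptscriptstyle\Omega}^{\mathbb{S}}$ recorded in \eqref{eq-cvsrc}. By Theorem~\ref{thm-Riemann doamin} and the remark after it, $\breve{\Omega}$ is a disjoint union of Riemann domains over $\mathbb{C}$; each connected component is a non-compact Riemann surface, hence $\sigma$-compact (Rad\'o) and Stein (Behnke--Stein), so $\breve\Omega$ is a $\sigma$-compact $1$-manifold on which holomorphically convex hulls of compact sets are compact (when $\Omega$ is an st-domain, $\breve\Omega$ is a single such surface). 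The target is a sequence of $\breve\Omega$-invariant sets $\breve K_\ell$ in the sense of Definition~\ref{def-invariant}, whose images under $\varphi_{\breve{\scriptscriptstyle\Omega}}$ will be the desired $K_\ell$.

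First I would build a nested, conjugation-symmetric, holomorphically convex compact exhaustion $\{\breve K_\ell\}$. The key observation is that $\Conj_{\breve{\scriptscriptstyle\Omega}}$ covers $z\mapsto\overline z$, so it is anti-holomorphic and $f\mapsto\overline{f\circ\Conj_{\breve{\scriptscriptstyle\Omega}}}$ is an $|\cdot|$-preserving involution of $\mathscr{O}(\breve\Omega)$ intertwining $\mu\leftrightarrow\overline\mu$; a direct check then gives $\Conj_{\breve{\scriptscriptstyle\Omega}}\bigl(S_{\breve{\scriptscriptstyle\Omega}}^{\land}\bigr)=\bigl(\Conj_{\breve{\scriptscriptstyle\Omega}}(S)\bigr)_{\breve{\scriptscriptstyle\Omega}}^{\land}$. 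Hence, for any compact $\breve L$, the set $\bigl(\breve L\cup\Conj_{\breve{\scriptscriptstyle\Omega}}(\breve L)\bigr)_{\breve{\scriptscriptstyle\Omega}}^{\land}$ is compact, holomorphically convex and conjugation-invariant. Starting from any compact exhaustion of $\breve\Omega$ and symmetrizing/convexifying at each stage, using $L^\circ\subset(\,L_{\breve{\scriptscriptstyle\Omega}}^{\land})^\circ$ to preserve nesting, I obtain compact $\breve K_\ell=(\breve K_\ell)_{\breve{\scriptscriptstyle\Omega}}^{\land}=\Conj_{\breve{\scriptscriptstyle\Omega}}(\breve K_\ell)$ with $\breve K_\ell\subset\breve K_{\ell+1}^{\circ}$ and $\bigcup_\ell\breve K_\ell=\breve\Omega$.

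Next I choose thresholds. Since $r^{+}$ is continuous and positive (Proposition~\ref{pr-r+}) and $\breve K_\ell$ is compact, $a_\ell:=\min_{\breve K_\ell}r^{+}>0$ is non-increasing; set $t_0:=\min(a_0,1)$ and $t_\ell:=\min(a_\ell,t_{\ell-1}/2)$, so $t_\ell\searrow 0$ strictly and $\breve K_\ell\subset\breve\Omega_{+}^{t_\ell}$. Define $K_\ell:=\varphi_{\breve{\scriptscriptstyle\Omega}}^{t_\ell}(\breve K_\ell)\subset\Omega$. Using \eqref{eq-cvsrc}, conjugation-invariance and $\breve K_\ell\subset\breve\Omega_{+}^{t_\ell}$,
$\psi_{\scriptscriptstyle\Omega}^{\mathbb{S}}(K_\ell)=\phi_{\breve{\scriptscriptstyle\Omega}}^{t_\ell}(\breve K_\ell)=\bigl(\breve K_\ell\cup\Conj_{\breve{\scriptscriptstyle\Omega}}(\breve K_\ell)\bigr)\cap\breve\Omega_{+}^{t_\ell}=\breve K_\ell$,
which is $\breve\Omega$-invariant, while $\phi_{\scriptscriptstyle\Omega}^{t_\ell}(K_\ell)=\varphi_{\breve{\scriptscriptstyle\Omega}}^{t_\ell}\circ\psi_{\scriptscriptstyle\Omega}^{\mathbb{S}}(K_\ell)=\varphi_{\breve{\scriptscriptstyle\Omega}}^{t_\ell}(\breve K_\ell)=K_\ell$. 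Thus each $K_\ell$ is $\Omega$-invariant. From \eqref{eq-psoz}, the definition of $\varphi_{\breve{\scriptscriptstyle\Omega}}^{t_\ell}$, $\Conj_{\breve{\scriptscriptstyle\Omega}}(\breve K_\ell)=\breve K_\ell$, and $\sigma(z^I,\mathbb{H}\backslash\Omega)=\delta_I(z^I,\mathbb{C}_I\backslash\Omega_I)$ (Proposition~\ref{pr-lbes} and \eqref{eq-rmi}), I get
\[
(K_\ell)_I=\bigl\{z^I\in\Omega_I:\ [(z^I,I)]_{\sim_\Omega}\in\breve K_\ell\ \text{and}\ \sigma(z^I,\mathbb{H}\backslash\Omega)\ge t_\ell\bigr\}.
\]
Since $\imath_{\scriptscriptstyle\Omega}^{I}$ is an open embedding (Proposition~\ref{pr-homeomorphism}) and $z^I\mapsto\sigma(z^I,\mathbb{H}\backslash\Omega)$ is continuous, both defining conditions persist on a $\mathbb{C}_I$-neighborhood wherever $[(z^I,I)]_{\sim_\Omega}\in\breve K_\ell^{\circ}$ and $\sigma>t_\ell$ strictly. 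For (a): if $z^I\in(K_\ell)_I$ then $[(z^I,I)]_{\sim_\Omega}\in\breve K_\ell\subset\breve K_{\ell+1}^{\circ}$ and $\sigma(z^I,\mathbb{H}\backslash\Omega)\ge t_\ell>t_{\ell+1}$, so a neighborhood lies in $(K_{\ell+1})_I$, giving $(K_\ell)_I\subset(K_{\ell+1})_I^{\circ}$. For (b): for $z_0^I\in\Omega_I$ one has $s_0:=\sigma(z_0^I,\mathbb{H}\backslash\Omega)>0$; as $\bigcup_\ell\breve K_\ell^{\circ}=\breve\Omega$ and $t_\ell\searrow0$, for large $\ell$ both $[(z_0^I,I)]_{\sim_\Omega}\in\breve K_\ell^{\circ}$ and $s_0>t_\ell$, whence $z_0^I\in(K_\ell)_I^{\circ}$ and $\bigcup_\ell(K_\ell)_I^{\circ}=\Omega_I$.

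The main obstacle is twofold. One part is the conjugation-symmetric holomorphically convex exhaustion, which relies on $\Conj_{\breve{\scriptscriptstyle\Omega}}$ being anti-holomorphic so that it commutes with the holomorphic-hull operation, and on compactness of hulls on the Stein surface $\breve\Omega$. The more delicate, genuinely quaternionic, point is the choice of $t_\ell$: the $\sigma$-distance condition carving out $\varphi_{\breve{\scriptscriptstyle\Omega}}^{t_\ell}$ is controlled by the slicewise radius $r_\mu^{I}\le r_\mu^{+}$, not by $r^{+}$ itself, so one must drive $t_\ell\to0$ while still keeping $t_\ell\le\min_{\breve K_\ell}r^{+}$ (to retain $\Omega$-invariance through \eqref{eq-cvsrc}); this is exactly what guarantees that every $z_0^I$, whose slice radius $s_0$ may be far smaller than $r^{+}$ at the corresponding point of $\breve\Omega$, is eventually captured in the interior of some $(K_\ell)_I$.
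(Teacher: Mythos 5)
Your proposal is correct and follows essentially the same route as the paper's proof: the paper likewise builds a conjugation-invariant, holomorphically convex compact exhaustion $K''_\ell$ of $\breve\Omega$ (symmetrizing Demailly's exhaustion via $\Conj_{\breve{\scriptscriptstyle\Omega}}$ and taking hulls) and then sets $K_\ell:=\varphi_{\breve{\scriptscriptstyle\Omega}}^{r_\ell}(K''_\ell)$ with the threshold $r_\ell:=r^+(K''_\ell)/(\ell+1)\le r^+(K''_\ell)$ tending to $0$, verifying $\Omega$-invariance through \eqref{eq-cvsrc} and the slice inclusions by the same continuity-of-$\sigma$ and local-homeomorphism argument you give. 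The only deviations are cosmetic: you symmetrize with a union where the paper uses $K'\cap\Conj_{\breve{\scriptscriptstyle\Omega}}(K')$ before taking hulls, and your threshold sequence $t_\ell=\min(a_\ell,t_{\ell-1}/2)$ replaces the paper's $r^+(K''_\ell)/(\ell+1)$, both serving the identical purpose.
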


\begin{proof}
	According to \cite[Remark 6.8]{Demailly2012001}, there are compact sets $K'_\ell \subset\breve{\Omega}$, $\ell\in\mathbb{N}$ such that
	\begin{equation*}
		K_\ell'\subset \left(K_{\ell+1}'\right)^\circ,\qquad (K_\ell')^\land_{\scriptscriptstyle\breve{\Omega}}=K_{\ell}',\qquad\mbox{and}\qquad\bigcup_{\ell=0}^{+\infty} \left(K_\ell'\right)^\circ=\breve{\Omega}.
	\end{equation*}
	By similar method of proof of \cite[Remark 6.8]{Demailly2012001}, there are $\breve{\Omega}$-invariant subsets $K''_\ell$, $\ell\in\mathbb{N}$ of $\breve{\Omega}$ such that
	\begin{equation}\label{eq-K ell property}
		K''_\ell\subset (K''_{\ell+1})^\circ,
		\qquad\mbox{and}\qquad
		\bigcup_{\ell=0}^{+\infty} \left(K''_\ell\right)^\circ=\breve{\Omega}:
	\end{equation}
	We may define $K_\ell''$ inductively by
	\begin{equation*}
		K_0'':=\left[K_0'\cap \Conj_{\breve{\Omega}}(K_0')\right]_{\breve{\Omega}}^\land
	\end{equation*}
	$\imath_\ell > \ell$ being the smallest nature number such that $K_\ell''\subset \left(K'_{\imath_\ell}\right)^\circ$, and
	\begin{equation}\label{eq-def K ell}
		K''_{\ell+1}:=\left[K_{\imath_\ell}'\cap \Conj_{\breve{\Omega}}(K_{\imath_\ell}')\right]_{\breve{\Omega}}^\land.
	\end{equation}
	It is easy to check that $K''_\ell$ is compact and
	\begin{equation*}
		K''_\ell=(K''_\ell)_{\scriptscriptstyle\breve{\Omega}}^\land={\Conj}_{\scriptscriptstyle\breve{\Omega}}(K_\ell'').
	\end{equation*}
	Therefore, $K_\ell$ is $\breve{\Omega}$-invariant. According to \eqref{eq-def K ell}, $K''_\ell\subset \left(K'_{\imath_\ell}\right)^\circ \subset \left(K''_{\ell+1}\right)^\circ$ and
	\begin{equation}\label{eq-union K'' to breve Omega}
		\bigcup_{\ell=0}^{+\infty} \left(K''_\ell\right)^\circ
		\supset
		\bigcup_{\ell=0}^{+\infty} \left(K'_{\imath_\ell}\right)^\circ
		\supset
		\bigcup_{\ell=0}^{+\infty} \left(K'_{\ell}\right)^\circ
		=\breve{\Omega}.
	\end{equation}
	Therefore, \eqref{eq-K ell property} holds.
	
	(i) Let
	\begin{equation*}
		K_\ell:=\varphi_{\breve{\scriptscriptstyle\Omega}}^{r_\ell}(K''_\ell),\qquad\mbox{where}\qquad r_\ell:=\frac{r^+(K''_\ell)}{\ell+1}.
	\end{equation*}
	Since $K''_\ell$ is $\breve{\Omega}$-invariant, $K''_\ell=\Conj_{\breve{\scriptscriptstyle\Omega}}(K''_\ell)$. It follows from \eqref{eq-cvsrc} and
	\begin{equation*}
		K''_\ell\subset\Omega_+^{r^+(K''_\ell)}\subset\Omega_+^{r_\ell}.
	\end{equation*}
	that
	\begin{equation*}
		\begin{split}
			\psi_{\scriptscriptstyle\Omega}^\mathbb{S}(K_\ell)
			=&\psi_{\scriptscriptstyle\Omega}^\mathbb{S}\circ\varphi_{\breve{\scriptscriptstyle\Omega}}^{r_\ell}(K''_\ell)
			=\phi_{\breve{\scriptscriptstyle\Omega}}^{r_\ell}(K''_\ell)
			\\=&\left[K''_\ell\cup\Conj_{\breve{\scriptscriptstyle\Omega}}(K''_\ell)\right]\cap\Omega_+^{r_\ell}
			=K''_\ell\cap\Omega_+^{r_\ell}
			=K''_\ell.
		\end{split}
	\end{equation*}
	According to \eqref{eq-cvsrc},
	\begin{equation*}
		\phi_{\scriptscriptstyle\Omega}^{r_\ell}(K_\ell)=\varphi_{\scriptscriptstyle\breve{\Omega}}^{r_\ell}\circ \psi_{\scriptscriptstyle\Omega}^{\mathbb{S}}(K_\ell)=\varphi_{\scriptscriptstyle\breve{\Omega}}^{r_\ell}(K''_\ell)=K_\ell.
	\end{equation*}
	It implies that $K_\ell$ is $\Omega$-invariant.
	
	(ii) Let $I\in\mathbb{S}$ and $q\in (K_\ell)_I$. Then $q=z^I$, for some $z\in\mathbb{C}$. Then
	\begin{equation*}
		\mu:=[(z^I,I)]_{\sim_\Omega}\in [(K_\ell)_I\times\{I\}]_{\sim_\Omega}\subset\psi_{\scriptscriptstyle\Omega}^\mathbb{S}(K_\ell)=K''_\ell\subset \left(K''_{\ell+1}\right)^\circ.
	\end{equation*}
	Since $z^I\in K_\ell=\varphi_{\breve{\scriptscriptstyle\Omega}}^{r_\ell}(K''_\ell)\subset\Omega^{r_\ell}$,
	\begin{equation*}
		\begin{split}
			r_\mu^I
			=&\delta_I(z_I,\mathbb{C}_I\backslash\Omega_I)
			=\sigma(z_I,\mathbb{H}\backslash\Omega)\ge r_\ell
			\\=&\frac{r^+(K''_\ell)}{\ell+1}>\frac{r^+(K''_\ell)}{\ell+2}
			\ge \frac{r^+(K''_{\ell+1})}{(\ell+1)+1}=r_{\ell+1}.
		\end{split}
	\end{equation*}
	Therefore, there is $r\in\left(0,r_\mu^I-r_{\ell+1}\right)$ such that $B_{\breve{\scriptscriptstyle\Omega}}(\mu,r)\subset \left(K''_{\ell+1}\right)^\circ$. Then for each $w^I\in B_I(z^I,r)$,
	\begin{equation*}
		[(w^I,I)]_{\sim_\Omega}\subset[B_I(z^I,r)\times\{I\}]_{\sim_\Omega}=B_{\breve{\scriptscriptstyle\Omega}}(\mu,r)\subset \left(K''_{\ell+1}\right)^\circ\subset K''_{\ell+1}.
	\end{equation*}
	It implies that $[(w^I,I)]_{\sim_\Omega}\in\psi_{\scriptscriptstyle\Omega}^\mathbb{S}(\{w^I\})\cap \left(K''_{\ell+1}\right)\neq\varnothing$ and
	\begin{equation*}
		\sigma(w^I,\mathbb{H}\backslash\Omega)\ge \sigma (z^I,\mathbb{H}\backslash\Omega)-\sigma(z^I,w^I)>r_\mu^I-r >r_{\ell+1}.
	\end{equation*}
	Hence $w^I\in\Omega^{r_{\ell+1}}$. By \eqref{eq-definitions}, $w^I\in\varphi_{\scriptscriptstyle\Omega}^{r_{\ell+1}}(K''_{\ell+1})=K_{\ell+1}$. It follows that $B_I(z^I,r)\subset (K_{\ell+1})_I$ and $z^I\in(K_{\ell+1})_I^\circ$. Therefore, $(K_\ell)_I\subset(K_{\ell+1})_I^\circ$.
	
	(iii) Let $z^I\in\Omega_I$. Then there is $\ell\in\mathbb{N}$ such that $\sigma(z^I,\mathbb{H}\backslash\Omega)>\frac{r^+(K''_0)}{\ell+1}$.
	Then
	\begin{equation*}
		\sigma(z^I,\mathbb{H}\backslash\Omega)>\frac{r^+(K''_0)}{\ell+1}\ge\frac{r^+(K''_{m})}{m+1}=r_m,\qquad\forall\ m>\ell.
	\end{equation*}
	According to \eqref{eq-union K'' to breve Omega}, there is $\imath>\ell$ such that $[(z^I,I)]_{\sim_\Omega}\in K''_\imath$. By similar method in (ii), $z^I\in K_\imath\subset(K_{\imath+1})^\circ$. Then $z^I\in(K_{\imath+1})^\circ$ and then $z^I\in(K_{\imath+1})_I^\circ$, for each $z^I\in\Omega_I$. It implies that $\bigcup_{\ell=0}^{+\infty} (K_\ell)^\circ_I=\Omega_I$. Therefore, Proposition \ref{pr-increasing subsets} holds by (i)-(iii).
\end{proof}

The Weierstrass factorization theorem for entire slice regular function is studied in \cite{MR2836832}, see also \cite{bookentire}. Some interpolation theorems for slice regular functions defined on axially symmetric domains are treated in \cite{MR3320522, MR4663899, MR3254690}.  Now we give an interpolation theorem for slice regular function defined on $2$-path-symmetric slice-open sets.

\begin{thm}(Interpolation Theorem)\label{th-it}
	Let $\Omega\in\tau_s(\mathbb{H})$ be $2$-path-symmetric, $A\subset\Omega$ be stem discrete, and $g:A\rightarrow\mathbb{H}$ be an $\Omega$-path-slice function. Then there is $f\in\mathcal{SR}(\Omega)$ such that $f|_A=g$.
\end{thm}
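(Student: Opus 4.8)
The plan is to realize $f$ as the sum of a series $\sum_{\ell\ge 0}f_\ell$ of slice regular functions, built by a Mittag--Leffler type exhaustion in which each $f_\ell$ corrects the interpolation on a new compact piece while staying uniformly small on the previous one; the single-stage tool is the interpolation-with-smallness Proposition \ref{pr-l2psia}.

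First I would fix the exhaustion. By Proposition \ref{pr-increasing subsets} there are $\Omega$-invariant sets $K_\ell\subset\Omega$ with $(K_\ell)_I\subset(K_{\ell+1})_I^\circ$ and $\bigcup_\ell (K_\ell)_I^\circ=\Omega_I$ for every $I\in\mathbb{S}$; denote by $r_\ell\downarrow 0$ the associated radii, so that $K_\ell\subset\Omega^{r_\ell}$ and $\psi_{\scriptscriptstyle\Omega}^{\mathbb{S}}(K_\ell)=K_\ell''$ is a $\breve\Omega$-invariant compact exhausting $\breve\Omega$. Put $A_\ell:=A\cap K_\ell$. Since $A_\ell\subset A\cap\Omega^{r_\ell}$ and $\psi_{\scriptscriptstyle\Omega}^{\mathbb{S}}(A_\ell)\subset\psi_{\scriptscriptstyle\Omega}^{\mathbb{S}}(A\cap\Omega^{r_\ell})\cap K_\ell''$, the stem-discreteness of $A$ (no accumulation in $\breve\Omega$) together with compactness of $K_\ell''$ forces $\psi_{\scriptscriptstyle\Omega}^{\mathbb{S}}(A_\ell)$ to be finite; hence each $A_\ell$ is stem-finite and $A_\ell\uparrow A$. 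A useful structural point, obtained from Propositions \ref{pr-losm} and \ref{pr-lotb2ps}, is that any $\approx_{\scriptscriptstyle\Omega}$-class meeting $A_\ell$ in at least two points lies entirely in $\Omega^{r_\ell}$, because in a class the $\sigma$-distance takes only the two values $r_\mu^-\le r_\mu^+$ and at most one direction attains the larger one; thus the completion $(A_\ell)_{\scriptscriptstyle\Omega,\widetilde 2}$ of $A_\ell$ is again stem-finite and is the natural carrier of the prescribed data at stage $\ell$.

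Next comes the induction. Writing $g_{\scriptscriptstyle\Omega,\widetilde 2}$ for the unique $\Omega$-path-slice completion of $g$ (Proposition \ref{uniqext}), I would construct $F_\ell=F_{\ell-1}+f_\ell\in\mathcal{SR}(\Omega)$ while arranging the invariant that $F_{\ell-1}$ already realizes $g_{\scriptscriptstyle\Omega,\widetilde 2}$ on the part of $(A_\ell)_{\scriptscriptstyle\Omega,\widetilde 2}$ lying in the deep compact $K_{\ell-1}$, so that the residual datum $h_\ell:=(g_{\scriptscriptstyle\Omega,\widetilde 2}-F_{\ell-1})|_{(A_\ell)_{\scriptscriptstyle\Omega,\widetilde 2}}$ is $\Omega$-path-slice with completion vanishing on $(A_\ell)_{\scriptscriptstyle\Omega,\widetilde 2}\cap K_{\ell-1}$. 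These are precisely the hypotheses of Proposition \ref{pr-l2psia}, applied with the invariant compact $K_{\ell-1}$ and $\varepsilon=2^{-\ell}$, which yields $f_\ell\in\mathcal{SR}(\Omega)$ with $f_\ell|_{(A_\ell)_{\scriptscriptstyle\Omega,\widetilde 2}}=h_\ell$ and $\sup_{K_{\ell-1}}|f_\ell|<2^{-\ell}$; the first step uses the empty (hence invariant) compact. Summing, every compact subset of a slice $\Omega_I$ is contained in some $(K_m)_I$, and for $\ell>m$ one has $\sup_{(K_m)_I}|f_\ell|<2^{-\ell}$, so $\sum_\ell f_\ell$ converges uniformly on compact subsets of $\Omega_I$; by the Weierstrass theorem for $I$-holomorphic functions the limit $f$ is $I$-holomorphic for every $I$, i.e. $f\in\mathcal{SR}(\Omega)$. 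Finally, for $q\in A$ one has $q\in A_m$ for some $m$, and $f_\ell(q)=h_\ell(q)=0$ for all $\ell>m$ since $h_\ell$ vanishes on the already-processed set; therefore $f(q)=F_m(q)=g(q)$, giving $f|_A=g$.

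The step I expect to be the main obstacle is exactly the verification, at each stage, that the completion of $h_\ell$ vanishes on $(A_\ell)_{\scriptscriptstyle\Omega,\widetilde 2}\cap K_{\ell-1}$. This is a genuinely quaternionic difficulty with no analogue in one complex variable: because of Proposition \ref{pr-losm} a single $\approx_{\scriptscriptstyle\Omega}$-class can meet the exhaustion at points of different $\sigma$-depth, so the deep (maximal-distance) representative of a class can enter the exhaustion strictly before the prescribed $A$-points of that same class become deep. Since the value of any slice regular interpolant at that deep representative is forced -- through the representation formula -- by the values of $g$ at the shallow $A$-points (even though the representative itself need not belong to $A$), one cannot naively interpolate $g$ on $A\cap K_\ell$: the forced value must already be carried by $F_{\ell-1}$ before smallness on $K_{\ell-1}$ is demanded, on pain of an irreconcilable conflict between interpolation and smallness. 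Organizing the bookkeeping so that each class is treated coherently with respect to $\sigma$-depth, while keeping every stage stem-finite, is where the distance identities \eqref{eq-cvsrc}, the correspondence between $\psi_{\scriptscriptstyle\Omega}^{\mathbb{S}}$ and $\varphi_{\scriptscriptstyle\breve\Omega}^{\rho}$, and Proposition \ref{pr-loitsm} carry the weight, and it constitutes the technical heart of the proof.
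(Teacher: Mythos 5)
Your proposal follows the paper's own proof in every structural respect: the same exhaustion by $\Omega$-invariant compacts from Proposition \ref{pr-increasing subsets}, the same single-stage tool (Proposition \ref{pr-l2psia} with smallness on the previous compact), the same telescoping series, and the same Weierstrass argument slice by slice; the only formal difference is that you exhaust cumulatively by $A_\ell=A\cap K_\ell$ and feed the completed residual into Proposition \ref{pr-l2psia}, where the paper uses the disjoint annuli $(A\cap K_{\ell+1})\setminus K_\ell$ with a datum declared to vanish on the older annuli. The problem is that the step you defer as ``bookkeeping'' is not bookkeeping: it is an invariant that your construction, as described, cannot maintain. By Proposition \ref{pr-losm} \ref{it-msb}, a class $[p]_{\approx_\Omega}=z^{\mathbb{S}}$ has at most one deep direction; take a class meeting $A$ in at least two points, whose unique deep representative $q=z^{L}$ does \emph{not} belong to $A$, and suppose $r_\ell\le r_\mu^-<r_{\ell-1}\le r_\mu^+$ and $\mu:=[(z^L,L)]_{\sim_\Omega}\in\psi_{\scriptscriptstyle\Omega}^{\mathbb{S}}(K_{\ell-1})$. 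Using the $\Omega$-invariance of the $K_\imath$ and \eqref{eq-cvsrc}, membership of a point of the class in $K_\imath$ amounts to having $\sigma$-depth at least $r_\imath$ together with $\mu\in\psi_{\scriptscriptstyle\Omega}^{\mathbb{S}}(K_\imath)$; hence every $A$-point of the class lies in $K_\ell\setminus K_{\ell-1}$, while $q\in K_{\ell-1}$. This class meets no $A_\imath$ with $\imath\le\ell-1$ and $q\notin A$, so no stage before $\ell$ imposes any condition at $q$: the value $F_{\ell-1}(q)$ is whatever Lemma \ref{pr-loitkso} happened to output, and generically differs from the forced value $g_{\scriptscriptstyle\Omega,\widetilde{2}}(q)$. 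At stage $\ell$ the whole class lies in $(A_\ell)_{\scriptscriptstyle\Omega,\widetilde{2}}$ by \eqref{eq-uso} and Proposition \ref{pr-lotb2ps}, and $[p]_{\approx_\Omega}\cap K_{\ell-1}=\{q\}$, so the hypothesis of Proposition \ref{pr-l2psia} fails at $q$; worse, the two stage-$\ell$ goals $f_\ell(q)=h_\ell(q)$ and $\sup_{K_{\ell-1}}|f_\ell|<2^{-\ell}$ are flatly incompatible whenever $|g_{\scriptscriptstyle\Omega,\widetilde{2}}(q)-F_{\ell-1}(q)|\ge 2^{-\ell}$. Such configurations genuinely occur: in $E_1$ of Example \ref{exa-e1}, take $z$ in the symmetric core close to its boundary but far from the boundary of $U_{-\frac{\pi}{2},\frac{3\pi}{4}}$, so that $z^I$ is deep and all other $z^K$ are shallow.

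The missing idea is not a reorganization of the induction but a completion of the data \emph{before} the induction starts: replace $(A,g)$ by $(A_{\scriptscriptstyle\Omega,\widetilde{2}},g_{\scriptscriptstyle\Omega,\widetilde{2}})$ once and for all, using Proposition \ref{uniqext}, and only then exhaust. Note that $\psi_{\scriptscriptstyle\Omega}^{\mathbb{S}}(A_{\scriptscriptstyle\Omega,\widetilde{2}})=\psi_{\scriptscriptstyle\Omega}^{\mathbb{S}}(A)$, but stem discreteness of $A_{\scriptscriptstyle\Omega,\widetilde{2}}$ is not automatic and needs an argument (classes contributing to $\psi_{\scriptscriptstyle\Omega}^{\mathbb{S}}(A_{\scriptscriptstyle\Omega,\widetilde{2}}\cap\Omega^r)$ whose common shallow value $r_\mu^-$ stays bounded below are controlled by the stem discreteness of $A$; an accumulating sequence with $r_{\mu_k}^-\to 0$ would force some slice distance to vanish at the limit point of $\breve{\Omega}$, contradicting openness of the slices). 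With the completed data, the deep representative $q$ above is itself a data point, enters the scheme at the stage at which it enters the exhaustion, and receives its forced value there, before smallness on any compact containing $q$ is ever demanded; your invariant then propagates, and the rest of your argument (stem-finiteness of each stage, path-sliceness of the residuals, uniform convergence on $(K_m)_I$, evaluation on $A$) is correct as written. I add for fairness that the paper's own write-up of Theorem \ref{th-it} invokes Proposition \ref{pr-l2psia} for its $g_\ell$ without spelling out this verification either, so you have put your finger on the real crux of the proof; but locating the crux is not closing it, and as it stands your proposal leaves exactly this step open.
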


\begin{proof}
	According to Proposition \ref{pr-increasing subsets}, there are $\Omega$-invariant subsets $K_\ell$, $\ell\in\mathbb{N}$ of $\Omega$ such that \eqref{eq-keis} holds. Define
	\begin{equation*}
		A_\ell:=(A\cap K_{\ell+1})\backslash K_\ell,
	\end{equation*}
	and $g_\ell:\cup_{\imath=1}^\ell A_\imath\rightarrow\mathbb{H}$ by
	\begin{equation*}
		g_\ell(q)=\begin{cases}
			g(q)-\sum_{\imath=1}^{\ell-1}f_\imath(q),\qquad& q\ \in A_\ell,\\
			0, &\mbox{otherwise}.
		\end{cases}
	\end{equation*}
	Here, by Proposition \ref{pr-l2psia}, we can construct a function $f_\ell\in\mathcal{SR}(\Omega)$ such that
	\begin{equation*}
		f_\ell|_{\cup_{\imath=1}^\ell A_\imath}=g_\ell,\qquad\mbox{and}\qquad \sup_{K_{\ell-1}}|f_\ell|<\frac{\varepsilon}{2^\ell}.
	\end{equation*}
	Let $f:=\sum_{\imath=1}^{+\infty} f_\imath$. Since $\sum_{\imath=1}^{+\infty} f_\imath$ converges uniformly on $(K_\jmath)_I$, $\jmath\in\mathbb{N}\backslash\{0\}$, we have $f_I$ is $I$-holomorphic. It implies that $f$ is slice regular.
	
	Let $a\in A$. Then $a\in A_\imath$ for some $\imath\in\mathbb{N}$. Then
	\begin{equation*}
		\begin{split}
			f(a)
			=&\sum_{\jmath=1}^{+\infty} f_\jmath(a)
			=\sum_{\jmath=1}^{\imath-1}f_\jmath(a)+f_\imath(a)+\sum_{\jmath=\imath+1}^{+\infty} f_\jmath(a)
			\\=&\sum_{\jmath=1}^{\imath-1}f_\jmath(a)+\left[g(a)-\sum_{\jmath=1}^{\imath-1}f_\jmath(a)\right]+0
			=g(a).
		\end{split}
	\end{equation*}
	It implies that $f|_A=g$. Hence Theorem \ref{th-it} holds.
\end{proof}

\section{Proof of Cartan-Thullen Theorem}\label{sec-Cartan-Thullen}

In this section, we will prove the counterpart of the Cartan-Thullen theorem in the theory of slice regular functions. As in the complex case, we need a notion of domain of existence, see Definition \ref{dommex}. The preparation results provided in the previous sections together with the description of slice regularly convex domains in the next proposition allow to prove the main Theorem \ref{thm-ct}.

\begin{prop}\label{eq-oits}
A set	$\Omega\in\tau_s(\mathbb{H})$ is slice regularly convex, if and only if $\Omega$ is $2$-path-symmetric.
\end{prop}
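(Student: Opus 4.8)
The plan is to prove the two implications separately. The substantial, analytic direction is (iv)$\Rightarrow$(iii) (2-path-symmetric implies slice regularly convex), which I would establish by transporting the problem to the associated Riemann domain $\breve{\Omega}$ of Theorem \ref{thm-Riemann doamin} and exploiting that $1$-dimensional Riemann domains are Stein. The reverse implication I would prove by contraposition, producing a finite-non-symmetric compact set whose slice regularly convex hull fails to be relatively compact in one slice.

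For (iv)$\Rightarrow$(iii), fix a finite-non-symmetric compact $K\subset\Omega$ and $I_0\in\mathbb{S}$; the goal is $(\widehat{K}_\Omega)_{I_0}\Subset\Omega_{I_0}$. First I would record that $|\mathbb{S}^{2ps}_K|<\infty$ forces $\psi_{\scriptscriptstyle\Omega}^{\mathbb S}(K)$, hence $K_{\breve{\scriptscriptstyle\Omega}}^{\scriptscriptstyle[I_0]}=\psi_{\scriptscriptstyle\Omega}^{\mathbb S\setminus\{-I_0\}}(K)$, to be compact in $\breve{\Omega}$ (the generic slices collapse to a single sheet over the compact $K_{2ps}$, and only finitely many sheets are added); in particular $\sigma(K,\mathbb{H}\setminus\Omega)>0$ by Propositions \ref{pr-lbes} and \ref{pr-losm}. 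Now take $z^{I_0}\in\widehat{K}_\Omega$ and set $\mu=[(z^{I_0},I_0)]_{\sim_{\scriptscriptstyle\Omega}}\in\breve{\Omega}^{\scriptscriptstyle[I_0]}$. For every $g\in\mathscr{O}(\breve{\Omega}^{\scriptscriptstyle[I_0]})$, the function $g_{\scriptscriptstyle\Omega}^{I_0}\in\mathcal{SR}(\Omega)$ of Proposition \ref{prop-construct slice regular functions} satisfies $|g_{\scriptscriptstyle\Omega}^{I_0}(z^{I_0})|=|g(\mu)|$, while Proposition \ref{pr-loitps} gives $\sup_K|g_{\scriptscriptstyle\Omega}^{I_0}|\le\sup_{K_{\breve{\scriptscriptstyle\Omega}}^{\scriptscriptstyle[I_0]}}|g|$. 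Applying the definition of $\widehat{K}_\Omega$ to $f=(g^n)_{\scriptscriptstyle\Omega}^{I_0}$ yields $|g(\mu)|^n\le C_{z^{I_0}}\big(\sup_{K_{\breve{\scriptscriptstyle\Omega}}^{\scriptscriptstyle[I_0]}}|g|\big)^n$; taking $n$-th roots and letting $n\to\infty$ kills the point-dependent constant $C_{z^{I_0}}$ and shows $|g(\mu)|\le\sup_{K_{\breve{\scriptscriptstyle\Omega}}^{\scriptscriptstyle[I_0]}}|g|$ for all $g$, i.e. $\mu$ lies in the \emph{ordinary} holomorphic hull of $K_{\breve{\scriptscriptstyle\Omega}}^{\scriptscriptstyle[I_0]}$ in $\breve{\Omega}^{\scriptscriptstyle[I_0]}$. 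Since $\breve{\Omega}^{\scriptscriptstyle[I_0]}$ is an open Riemann domain over $\mathbb{C}$, hence Stein, this hull is compact and the boundary-distance identity \cite{Noguchi2016001} gives $\delta_{\breve{\Omega}^{\scriptscriptstyle[I_0]}}(\mu)\ge\delta_{\breve{\Omega}^{\scriptscriptstyle[I_0]}}(K_{\breve{\scriptscriptstyle\Omega}}^{\scriptscriptstyle[I_0]})\ge\sigma(K,\mathbb{H}\setminus\Omega)>0$. Reading the left-hand side through the $I_0$-chart gives $\delta_{\breve{\Omega}^{\scriptscriptstyle[I_0]}}(\mu)=\delta_{I_0}(z^{I_0},\mathbb{C}_{I_0}\setminus\Omega_{I_0})=\sigma(z^{I_0},\mathbb{H}\setminus\Omega)$, while compactness of the hull bounds $z_\mu=\pi_{\breve{\scriptscriptstyle\Omega}}(\mu)$; thus $(\widehat{K}_\Omega)_{I_0}$ is bounded and stays a fixed positive distance from $\partial\Omega_{I_0}$, so $(\widehat{K}_\Omega)_{I_0}\Subset\Omega_{I_0}$.

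For (iii)$\Rightarrow$(iv) I argue by contraposition. If $\Omega$ is not $2$-path-symmetric there are $\gamma\in\mathscr{P}_0(\mathbb{C})$, $(I,J)\in\mathbb{S}^2_*$ and $K\in\mathbb{S}$ with $\gamma^I,\gamma^J\subset\Omega$ but $\gamma^K\not\subset\Omega$; necessarily $K\neq I$ and $K\neq J$, and the matrix $\begin{pmatrix}1&I\\1&J\end{pmatrix}$ is invertible since $I\neq J$. Let $t^*$ be the first exit time of $\gamma^K$, so $\gamma^K(t)\in\Omega$ for $t<t^*$ and $p^K:=\gamma^K(t^*)\notin\Omega_K$; since $p^I\in\Omega$ one checks $p=\gamma(t^*)\notin\mathbb{R}$, hence there is $s_1\in(0,t^*)$ with $\gamma([s_1,t^*])\cap\mathbb{R}=\varnothing$. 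I would then take $K_0:=\gamma^I([s_1,t^*])\cup\gamma^J([s_1,t^*])\subset\Omega$. Because $K_0$ meets no real point, no element of $\mathscr{P}_0(\mathbb{C})$ has a lift contained in $K_0$, so $(K_0)_{2ps}=\varnothing$ and $\mathbb{S}^{2ps}_{K_0}=\{\pm I,\pm J\}$ is finite; thus $K_0$ is finite-non-symmetric compact. For each $t\in[s_1,t^*)$ the restricted path $\gamma|_{[0,t]}$ has all three lifts in $\Omega$, so the path-representation formula for slice regular functions (see \cite{Dou2020001}) gives, for every $f\in\mathcal{SR}(\Omega)$,
\[
f(\gamma^K(t))=(1,K)\begin{pmatrix}1&I\\1&J\end{pmatrix}^{-1}\begin{pmatrix}f(\gamma^I(t))\\ f(\gamma^J(t))\end{pmatrix},
\]
whence $|f(\gamma^K(t))|\le C\sup_{K_0}|f|$ with $C$ depending only on $I,J,K$. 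Hence $\gamma^K([s_1,t^*))\subset\widehat{K_0}_\Omega$, and these points converge in $\mathbb{C}_K$ to $p^K\notin\Omega_K$; therefore $(\widehat{K_0}_\Omega)_K$ is not relatively compact in $\Omega_K$ and $\Omega$ is not slice regularly convex.

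The main obstacle is the dictionary in (iv)$\Rightarrow$(iii): reconciling the point-dependent constant $C_x$ built into $\widehat{K}_\Omega$ with genuine holomorphic convexity on $\breve{\Omega}^{\scriptscriptstyle[I_0]}$, which is exactly where the power trick $g\mapsto g^n$ is needed, together with the precise matching of the Riemann-domain boundary distance $\delta_{\breve{\Omega}^{\scriptscriptstyle[I_0]}}$ with the $\sigma$-distance via the $I_0$-chart. A secondary but essential technical point on both sides is the compactness of $\psi_{\scriptscriptstyle\Omega}^{\mathbb S}(K)$ (resp. of $K_0$): this is precisely the role of the finite-non-symmetric hypothesis, and it is what forces, in the contrapositive construction, the test arcs to be taken off the real axis so that their $2ps$-completion stays trivial.
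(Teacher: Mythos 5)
Your contrapositive half (slice regularly convex $\Rightarrow$ $2$-path-symmetric) is correct and matches the paper's argument, and your truncation of the two arcs to a parameter interval on which $\gamma$ avoids $\mathbb{R}$ is a genuine improvement rather than a cosmetic one: it is exactly what makes $(K_0)_{2ps}=\varnothing$ and hence $\mathbb{S}^{2ps}_{K_0}=\{\pm I,\pm J\}$ finite, whereas arcs issuing from $\gamma(0)\in\mathbb{R}$ satisfy $K_{2ps}\supset\gamma([0,1])$ and therefore $K_L\neq (K_{2ps})^L$ for every $L\neq\pm I,\pm J$, so the finite-non-symmetric requirement would fail.

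The other half ($2$-path-symmetric $\Rightarrow$ slice regularly convex) has a genuine gap, located exactly at the step you describe as reading the boundary distance ``through the $I_0$-chart''. The power trick itself is sound: it does show that $\mu:=[(z^{I_0},I_0)]_{\sim_\Omega}$ lies in the ordinary holomorphic hull $H:=\bigl(K_{\breve{\scriptscriptstyle\Omega}}^{\scriptscriptstyle[I_0]}\bigr)^{\land}_{\breve{\Omega}^{\scriptscriptstyle[I_0]}}$, that $H$ is compact, and, via Proposition \ref{pr-U I in U sigma I} and \cite[Lemma 7.5.5]{Noguchi2016001}, that $\delta_{\breve{\Omega}^{\scriptscriptstyle[I_0]}}\bigl(H,\partial\breve{\Omega}^{\scriptscriptstyle[I_0]}\bigr)\ge\sigma(K,\mathbb{H}\backslash\Omega)$. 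But the identity $\delta_{\breve{\Omega}^{\scriptscriptstyle[I_0]}}(\mu)=\delta_{I_0}(z^{I_0},\mathbb{C}_{I_0}\backslash\Omega_{I_0})$ is false: the boundary distance of a Riemann domain at $\mu$ is the radius of the largest univalent disc around $\mu$ \emph{anywhere} in $\breve{\Omega}^{\scriptscriptstyle[I_0]}$, and when $z_\mu\in\Omega_{2ps}$ the point $\mu$ belongs to every sheet $[\Omega_L\times\{L\}]_{\sim_\Omega}$ with $L\neq -I_0$, so the maximal disc may live in a sheet with $L\neq I_0$. What is true in general is only $\delta_{I_0}(z^{I_0},\mathbb{C}_{I_0}\backslash\Omega_{I_0})=r_\mu^{I_0}\le\delta_{\breve{\Omega}^{\scriptscriptstyle[I_0]}}(\mu)$, which is the wrong direction for your purposes. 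Concretely, take $\Omega=D^{\mathbb{S}}\cup F^{J}$ in the spirit of Example \ref{exa-e1}, with $D$ a disc centered on $\mathbb{R}$ and $F$ an open sector in the upper half plane meeting $D$, with $F\cap\Conj_{\mathbb{C}}(F)=\varnothing$; this is a $2$-path-symmetric st-domain. Let $K=\Gamma^{J}$, where $\Gamma\subset D\cup F$ is a closed curve avoiding $\mathbb{R}$, running partly through $F$, whose inner arc hugs $\partial D$ from inside while staying far from $\partial(D\cup F)$. Then $K$ is finite-non-symmetric compact ($K_{2ps}=\varnothing$, $\mathbb{S}_K^{2ps}=\{\pm J\}$), and by the maximum principle $H$ contains the points $\mu$ over all encircled $z\in D$; these have $I_0\in\mathbb{S}_\mu$ and large univalent discs (sitting in the $J$-sheet), yet $r_\mu^{I_0}=\delta_{\mathbb{C}}(z,\mathbb{C}\backslash D)$ is arbitrarily small compared with $\sigma(K,\mathbb{H}\backslash\Omega)$. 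So ``$\mu\in H$ with boundary distance bounded below'' cannot yield $(\widehat{K}_\Omega)_{I_0}\Subset\Omega_{I_0}$.

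Moreover the gap is structural, not a slip in bookkeeping. At a point $z^{I_0}$ of the $I_0$-slice over $\Omega_{2ps}$, every function in your test class satisfies, by \eqref{eq-gioxm}, $(g^{n})_{\scriptscriptstyle\Omega}^{I_0}(z^{I_0})=\Psi_i^{I_0}\bigl(g(\mu)^{n}\bigr)$, so the only constraint this class can ever extract from $z^{I_0}\in\widehat{K}_\Omega$ is precisely ``$\mu\in H$''; in the example above the encircled points satisfy this, although (as the theorem asserts) they do not lie in $\widehat{K}_\Omega$. Expelling them requires slice regular functions that treat $\mu$ and $\overline{\mu}$ asymmetrically — in the example, functions whose $J$-slice restriction is large at $\overline{z}^{J}$ — and producing such functions, small on $K$ and equal to $1$ at a prescribed point, is exactly what the paper's peaking machinery supplies: the $\Omega$-invariant set $K''=\varphi_{\breve{\scriptscriptstyle\Omega}}^{r^+(K')}(K')$, Lemma \ref{pr-loitkso} (note case (ii), where $h_2(\mu)=1$ and $h_2(\overline{\mu})=0$), and Proposition \ref{pr-l2psia}, from which $\widehat{K}_\Omega\subset K''$ follows with compact slices. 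A minor additional point: $\sigma(K,\mathbb{H}\backslash\Omega)>0$ does not follow just from Propositions \ref{pr-lbes} and \ref{pr-losm}; it is true, but needs an argument (for instance that $r_\mu^-=\delta_{\mathbb{C}}(z_\mu,\mathbb{C}\backslash\Omega_{2ps})$ over the $2ps$ locus, combined with finiteness of $\mathbb{S}_K^{2ps}$).
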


\begin{proof}
Let $\Omega$ be a slice regularly convex slice-open set and let us suppose, by absurd, that $\Omega$ is not $2$-path-symmetric. Then there exists $\gamma\in\mathscr{P}_0(\mathbb{C})$ and $I_1,I_2,J\in\mathbb{S}$, $I_1\neq I_2$, such that $\gamma^{I_1},\gamma^{I_2}\subset\Omega$ but $\gamma^J\nsubseteq\Omega$. Since $\gamma^J(0)=\gamma^{I_1}(0)\in\Omega$, there is $t\in(0,1]$ such that  $\gamma^J\big([0,t)\big)\subset\Omega$ and $\gamma^J\big([0,t]\big)\nsubseteq\Omega$. Let
	\begin{equation*}
		K:=\gamma^{I_1}([0,1])\cup\gamma^{I_2}([0,1])
	\end{equation*}
	be a finite-non-symmetric compact subset. Then $\gamma^{I_1},\gamma^{I_2}\subset K$. By the representation formula,
	\begin{equation*}
		\sup_{\gamma^{J}([0,t))}|f|\le |c_1|\sup_{\gamma^{I_1}([0,t))}|f|+|c_2|\sup_{\gamma^{I_2}([0,t))}|f|,\qquad\forall\ f\in\mathcal{SR}(\Omega),
	\end{equation*}
	where
	\begin{equation*}
		(c_1,c_2)=(1,J)\begin{pmatrix}
			1&I_1\\1&I_2
		\end{pmatrix}^{-1}.
	\end{equation*}
	It implies that $\gamma^{J}([0,t))\subset \widehat{K}_\Omega\cap\Omega_J=(\widehat{K}_\Omega)_J$ and $(\widehat{K}_\Omega)_J$ is not relatively compact in $\Omega_J$. Thus $\Omega$ is not slice regularly convex, contradicting the hypothesis.
	
To prove the converse, suppose that $\Omega$ is $2$-path-symmetric and let $K\subset\Omega$ be a finite-non-symmetric compact. According to \eqref{eq-o2ps1}, $K_{2ps}\subset\Omega_{2ps}$. By Remark \ref{rmk-2ps} \ref{it-Omega 2ps}, $K^{2ps}\subset\Omega^{2ps}$. Let $I,J\in\mathbb{S}\backslash\mathbb{S}_K^{2ps}$ and $\mu\in [K_I\times\{I\}]_{\sim_\Omega}$. According to \eqref{eq-s omega 2ps}, $z_\mu^{I}\in K_I=(K_{2ps})^I\subset(\Omega_{2ps})^I$. Therefore $z_\mu\in K_{2ps}\subset \Omega_{2ps}$, and $z_\mu^J\in (K_{2ps})^J=K_J$. By \eqref{eq-two}, $(z_\mu^I,I)\sim_\Omega(z_\mu^J,J)$ and
	\begin{equation*}
		[(z_\mu^I,I)]_{\sim_\Omega}=[(z_\mu^J,J)]_{\sim_\Omega}\subset [K_J\times\{J\}]_{\sim_\Omega}.
	\end{equation*}
	It implies that $[K_I\times\{I\}]_{\sim_\Omega}\subset[K_J\times\{J\}]_{\sim_\Omega}$. Similarly, $[K_J\times\{J\}]_{\sim_\Omega}\subset[K_I\times\{I\}]_{\sim_\Omega}$. Hence
	\begin{equation}\label{eq-eq}
		[K_I\times\{I\}]_{\sim_\Omega}=[K_J\times\{J\}]_{\sim_\Omega},\qquad\forall\ I,J\in \mathbb{S}\backslash\mathbb{S}_{K}^{2ps}.
	\end{equation}
	Let $L\in\mathbb{S}$. Since $K_L$ is compact in $\tau(\mathbb{C}_L)$, $K_L\times {L}$ is compact in $\tau(\mathbb{C}_L)\times\{L\}$. According to Proposition \ref{pr-homeomorphism}, $\imath_{\scriptscriptstyle \Omega}^I$ is a homeomorphism. Therefore $[K_I\times\{I\}]_{\sim_\Omega}=\imath_{\scriptscriptstyle \Omega}^I(K_L\times {L})$ is compact in $[\Omega_I\times\{I\}]_{\sim_\Omega}$ and also compact in $\breve{\Omega}\supset[\Omega_I\times\{I\}]_{\sim_\Omega}$. Let $J_0\in\mathbb{S}\backslash\mathbb{S}_K^{2ps}$. Since $\mathbb{S}_K^{2ps}$ is finite and \eqref{eq-eq},
	\begin{equation*}
		\begin{split}
			\psi_{\scriptscriptstyle\Omega}^\mathbb{S}(K)=&\bigcup_{I\in\mathbb{S}}[K_I\times\{I\}]_{\sim_\Omega}
			\\=&\left(\bigcup_{I\in\mathbb{S}_K^{2ps}}[K_I\times\{I\}]_{\sim_\Omega}\right)\bigcup\left(\bigcup_{I\in\mathbb{S}\backslash\mathbb{S}_K^{2ps}}[K_I\times\{I\}]_{\sim_\Omega}\right)
			\\=&\left(\bigcup_{I\in\mathbb{S}_K^{2ps}}[K_I\times\{I\}]_{\sim_\Omega}\right)\bigcup[K_{J_0}\times\{J_0\}]_{\sim_\Omega}
		\end{split}
	\end{equation*}
	is compact in $\breve{\Omega}$. It follows from $\Conj_{\breve{\scriptscriptstyle \Omega}}\left([K_I\times\{I\}]_{\sim_\Omega}\right)=[K_{-I}\times\{-I\}]_{\sim_\Omega}$ that  $\Conj_{\breve{\scriptscriptstyle \Omega}}\left[\psi_{\scriptscriptstyle\Omega}^\mathbb{S}(K)\right]=\psi_{\scriptscriptstyle\Omega}^\mathbb{S}(K)$. By similar method in the proof of Proposition \ref{pr-increasing subsets}, $K':=\left[\psi_{\scriptscriptstyle\Omega}^\mathbb{S}(K)\right]^\land_{\breve{\scriptscriptstyle\Omega}}$ is $\breve{\Omega}$-invariant and $K'':=\varphi_{\breve{\scriptscriptstyle\Omega}}^{r^+(K')}(K')$ is $\Omega$-invariant with $K'=\psi_{\scriptscriptstyle\Omega}^\mathbb{S}(K'')$. Next we show that $(K'')_I$ is compact. Let $I\in\mathbb{S}$ and $\{w_\ell^I\}_{\ell\in\mathbb{N}}$ be a Cauchy sequence in $K''_I$ with
	\begin{equation*}
		w^I:=\lim_{\ell\rightarrow +\infty} w_\ell^I.
	\end{equation*}
	Since $K''=\varphi_{\breve{\scriptscriptstyle\Omega}}^{r^+(K')}(K')\subset\Omega^{r^+(K')}$, $w^I\in\Omega^{r^+(K')}\subset\Omega_I$.
	Then $\{(w_\ell^I,I)\}_{\ell\in\mathbb{N}}$ is a Cauchy sequence in $K''_I\times\{I\}$. By Proposition \ref{pr-homeomorphism}, $\{[(w_\ell^I,I)]_{\sim_\Omega}=\imath_\Omega^I((w_\ell^I,I))\}$ is a Cauchy sequence in $\breve{\Omega}$ and
	\begin{equation*}
		\lim_{\ell\rightarrow +\infty} [(w_\ell^I,I)]_{\sim_\Omega}=\imath_\Omega^I\left(\lim_{\ell\rightarrow +\infty}(w_\ell^I,I)\right)=\imath_\Omega^I((w^I,I))=[(w^I,I)]_{\sim_\Omega}.
	\end{equation*}
	Since $[(w_\ell^I,I)]_{\sim_\Omega}\in\psi_{\scriptscriptstyle\Omega}^{\mathbb{S}}(\{w_\ell^I\})\subset \psi_{\scriptscriptstyle\Omega}^{\mathbb{S}}(K'')=K'$ and $K'$ is compact, $[(w^I,I)]_{\sim_\Omega}\in K'$. It implies that
	\begin{equation*}
		[(w^I,I)]_{\sim_\Omega}\in\psi_{\scriptscriptstyle\Omega}^\mathbb{S}(\{w^I\})\cap K'\neq\varnothing.
	\end{equation*}
	By \eqref{eq-definitions}, $w^I\in\varphi_{\breve{\scriptscriptstyle\Omega}}^{r^+(K')}(K')=K''$. It implies that $w^I\in (K'')_I$ and $(K'')_I$ is compact.
	
	Let $p\in\Omega\backslash K''$. If $p\in\widehat{K}_\Omega$, then there is $C_p>0$ such that
	\begin{equation}\label{eq-flcp}
		|f(p)|\le C_p\sup_{q\in K}|f(q)|,\qquad\forall\ f\in\mathcal{SR}(\Omega).
	\end{equation}
	However, by Proposition \ref{pr-l2psia}, there is $h\in\mathcal{SR}(\Omega)$ such that
	\begin{equation*}
		h(p)=1,\qquad\mbox{and}\qquad \sup_{q\in K} |h(q)|\le \sup_{q\in K''} |h(q)|<\frac{1}{C_p+1},
	\end{equation*}
	a contradiction to \eqref{eq-flcp}. Hence $\widehat{K}_\Omega\subset K''$ and then $(\widehat{K}_\Omega)_I\subset (K'')_I\Subset \Omega_I$, for each $I\in\mathbb{S}$. It implies that $K$ is slice regularly convex.
\end{proof}

\begin{defn}\label{dommex}
	Let $I\in\mathbb{S}$, $U\in\tau(\mathbb{C}_I)$ and $\mathcal{F}\subset\mathscr{O}_I(U)$. $U$ is called an \textit{\textbf{$\mathcal{F}$-domain of existence}}\index{$\mathcal{F}$-domain of slice existence} if there are no $U_1,U_2\in\tau(\mathbb{C}_I)$ with the following properties:
	\begin{enumerate}[label=(\roman*)]
		\item $\varnothing\neq U_1\subset U_2\cap U$.
		\item $U_2$ is connected in $\tau(\mathbb{C}_I)$ and $U_2\nsubseteq U$.
		\item For each $f\in\mathcal{F}$, there is $\widetilde{f}\in\mathcal{SR}(U_2)$ such that $f=\widetilde{f}$ in $U_1$.
	\end{enumerate}
	
	If there are sets $U,U_1,U_2\in\tau(\mathbb{C}_I)$ satisfying (i)-(iii), then we call $(U,U_1,U_2)$ a \textit{\textbf{$\mathcal{F}$-triple}}\index{Slice-triple}. Moreover, we say that  $(U,U_1,U_2)$ is \textit{\textbf{special}}\index{Special slice-triple}, if $U_1$ is a connected component of $U\cap U_2$ in $\mathbb{C}_I$.
\end{defn}

We now have all the needed terminology and arguments to prove the main result of this paper, namely Theorem \ref{thm-ct}, that is repeated here for the reader's convenience:
\begin{thm} (Cartan-Thullen)
	Let $\Omega\in\tau_s(\mathbb{H})$. Then the following statements are equivalent:
	\begin{enumerate}[label=(\roman*)]
		\item $\Omega$ is the domain of existence of some slice regular function.
		\item $\Omega$ is a domain of slice regularity.
		\item $\Omega$ is slice regularly convex.
		\item $\Omega$ is $2$-path-symmetric.
	\end{enumerate}
\end{thm}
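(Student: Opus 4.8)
The plan is to establish Theorem \ref{thm-ct} through the cycle (i)$\,\Rightarrow\,$(ii)$\,\Rightarrow\,$(iv)$\,\Rightarrow\,$(i), supplemented by the equivalence (iii)$\,\Leftrightarrow\,$(iv), which is exactly Proposition \ref{eq-oits} and can be quoted directly. The implication (i)$\,\Rightarrow\,$(ii) is purely formal: if $\Omega$ is the domain of existence of some $f\in\mathcal{SR}(\Omega)$, then since $\{f\}\subset\mathcal{SR}(\Omega)$ and condition (iii) in the definition of an $\mathcal{F}$-domain of existence only becomes harder to satisfy as $\mathcal{F}$ grows, every $\mathcal{SR}(\Omega)$-triple is in particular an $\{f\}$-triple; as none of the latter exist, $\Omega$ admits no $\mathcal{SR}(\Omega)$-triple and is a domain of slice regularity. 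For (ii)$\,\Rightarrow\,$(iv) I would argue by contraposition: if $\Omega$ is not $2$-path-symmetric then, by Definition \ref{def-2ps}, there are $\gamma\in\mathscr{P}_0(\mathbb{C})$ and $I,J,K$ with $\gamma^I,\gamma^J\subset\Omega$ but $\gamma^K\nsubseteq\Omega$; taking the first exit time of $\gamma^K$ and applying the path-representation formula across the exit point produces a special triple $(\Omega,\Omega_1,\Omega_2)$ continuing every $f\in\mathcal{SR}(\Omega)$, so $\Omega$ is not a domain of slice regularity. This necessity direction is recorded in \cite[Proposition 9.7]{Dou2020001} and may simply be cited.

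The substance of the theorem is the remaining implication (iv)$\,\Rightarrow\,$(i): every $2$-path-symmetric slice-open set is the domain of existence of a single slice regular function. I would mimic the classical Cartan--Thullen construction, transported to the associated Riemann domain $\breve{\Omega}$. First, invoke Proposition \ref{pr-increasing subsets} to fix an exhaustion of $\Omega$ by $\Omega$-invariant sets $K_\ell$ satisfying $(K_\ell)_I\subset(K_{\ell+1})_I^\circ$ and $\bigcup_\ell(K_\ell)_I^\circ=\Omega_I$ for all $I$. Working with the $\sigma$-distance and the sublevel sets $\Omega^r=\{q:r\le\sigma(q,\mathbb{H}\backslash\Omega)\}$, I would choose a sequence $\{q_j\}\subset\Omega$ whose saturation $A$ is stem discrete in the sense of Definition \ref{def-stem discrete} (so that $\psi_{\scriptscriptstyle\Omega}^{\mathbb{S}}(A\cap\Omega^r)$ has no accumulation point in $\breve{\Omega}$ for each $r>0$), yet arranged so that the $q_j$ approach the entire frontier of $\Omega$: concretely, selecting finitely many points in each $K_{\ell+1}\setminus K_\ell$ so that $r^+_{\psi_{\scriptscriptstyle\Omega}^{\mathbb{S}}(q_j)}\to 0$ and every boundary point of $\breve{\Omega}$ is clustered by the images $\psi_{\scriptscriptstyle\Omega}^{\mathbb{S}}(\{q_j\})$.

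I would then define an $\Omega$-path-slice function $g:A\rightarrow\mathbb{H}$ with $|g(q_j)|\to+\infty$ and apply the Interpolation Theorem \ref{th-it} to obtain $f\in\mathcal{SR}(\Omega)$ with $f|_A=g$; the existence of the ``good'' functions in Proposition \ref{pr-l2psia}, which are uniformly small on the invariant sets $K_\ell$ and encode slice regular convexity, is what makes this interpolation converge. Finally, I would rule out any extension of $f$. Supposing a triple $(\Omega,\Omega_1,\Omega_2)$ existed with $\Omega_2$ slice-connected, $\Omega_2\nsubseteq\Omega$, and $\widetilde{f}\in\mathcal{SR}(\Omega_2)$ agreeing with $f$ on $\Omega_1$, I would restrict to a complex slice $\mathbb{C}_L$, reduce (via Definition \ref{dommex}) to an $\{f_L\}$-triple across a frontier point $b$, and derive a contradiction: since the $q_j$ accumulate at $b$ with $|f(q_j)|\to+\infty$, the holomorphic extension $\widetilde{f}_L$ would be unbounded near the interior point $b$ of $\Omega_2$, which is impossible. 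The hard part will be exactly this last step, where the slice-topological notion of ``$A$ approaches the full boundary'' must be made precise through the $\sigma$-distance and the continuous function $r^+$ on $\breve{\Omega}$ (Proposition \ref{pr-r+}) rather than the Euclidean distance, and where one must verify that a putative extension on a slice-connected $\Omega_2$ genuinely forces boundary contact along a single complex slice — it is here that stem discreteness, the invariant exhaustion, and conjugation symmetry of $r^+$ (Proposition \ref{pr-rrcb}) replace the straightforward Euclidean estimates of the several-complex-variables argument.
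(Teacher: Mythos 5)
Your reduction scheme coincides with the paper's: (iii)$\Leftrightarrow$(iv) is Proposition \ref{eq-oits}, (i)$\Rightarrow$(ii) is formal, (ii)$\Rightarrow$(iv) is \cite[Proposition 9.7]{Dou2020001}, and (iv)$\Rightarrow$(i) is to be obtained by interpolating, via Theorem \ref{th-it}, a slice regular function that blows up along a stem discrete set approaching the boundary. Up to that point there is no objection. The problem is the mechanism you propose for the final non-extendability step, which has a genuine gap.

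You want the interpolation points to have $\psi_{\scriptscriptstyle\Omega}^{\mathbb{S}}$-images clustering at \emph{every boundary point of $\breve{\Omega}$}, and then to contradict a putative slice-wise extension $\widetilde{f}_L$ by unboundedness. This cannot handle what one may call \emph{hidden} slice-boundary points: points $z^L\in\partial_L(\Omega_L)$ with $z^L\notin\Omega$ but $z^I\in\Omega$ for some other $I\in\mathbb{S}$ (then necessarily $z\notin\Omega_{2ps}$, $z\in\partial(\Omega_{2ps})$ and $I\in\mathbb{S}_\Omega^{2ps}$). Such points must be ruled out: since a small disc around the non-real point $z^L$ in $\mathbb{C}_L$ avoiding $\mathbb{R}$ is slice-open and slice-connected, an extension of $f_L$ across $z^L$ alone already produces an $\{f\}$-triple. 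Yet these points are invisible to your construction. Indeed, for $L\notin\mathbb{S}_\Omega^{2ps}$ one has $\Omega_L=(\Omega_{2ps})^L$, so any sequence $w_n^L\to z^L$ in $\Omega_L$ satisfies $w_n\in\Omega_{2ps}$, and by \eqref{eq-wiiso} and Proposition \ref{pr-homeomorphism} the points $[(w_n^L,L)]_{\sim_\Omega}=[(w_n^I,I)]_{\sim_\Omega}$ converge to the \emph{interior} point $[(z^I,I)]_{\sim_\Omega}$ of $\breve{\Omega}$: there is no boundary point of $\breve{\Omega}$ recording $z^L$, so clustering at $\partial\breve{\Omega}$ says nothing about it. Moreover, blow-up of $f$ at $z^L$ cannot be arranged directly: by the representation formula on $\Omega^{2ps}$ (Remark \ref{rmk-2ps} \ref{it-o2p}), $f(w_n^L)$ is a fixed linear combination of $f(w_n^I)$ and $f(w_n^{-I})$, and $f(w_n^I)\to f(z^I)$ stays bounded, so any unboundedness at $z^L$ must be imported from the behaviour of $f_{-I}$ at $z^{-I}$ — something your point selection, concentrated on $\partial\breve{\Omega}$, does not control. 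The paper closes exactly this case by a different argument: it first seeds genuine blow-up only in the \emph{countably many} slices $\mathbb{S}_\Omega^{2ps}\cup\{J_0\}$ — this is where Proposition \ref{pr-osmba}, which your sketch never invokes, is essential — establishing \eqref{eq-domainofexist}; then, at a hidden point, it shows that a hypothetical extension of $f_L$ across $z^L$ would, via the representation formula, yield an $I$-holomorphic extension of $f_I$ near $z^{-I}\in\partial_I(\Omega_I)$, contradicting \eqref{eq-domainofexist}. Blow-up in the remaining, uncountably many, slices is transmitted not by choosing points there but by adjoining full orbits $[b_\imath]_{\approx_\Omega}$ as in \eqref{ef-bimath def}, whose stem discreteness rests on the condition $\mathbb{S}^+_{[(b_\imath,I_\imath)]_{\sim_\Omega}}=\mathbb{S}$ and Proposition \ref{pr-omegar}; and the final passage from ``each $\Omega_I$ is the domain of existence of $f_I$'' to ``$\Omega$ is the domain of existence of $f$'' still requires the argument of \cite[Proposition 9.4]{Dou2020001}. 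Without this two-case analysis and the countability input, your construction does not complete (iv)$\Rightarrow$(i).
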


\begin{proof}
	(iii)$\Leftrightarrow$(iv)  follows directly by Proposition \ref{eq-oits}.
	
	(i)$\Rightarrow$(ii) follows by definition.
	
	(ii)$\Rightarrow$(iv) Directly by \cite[Proposition 9.7]{Dou2020001}.
	
	(iv)$\Rightarrow$(i) Let $\Omega$ be $2$-path-symmetric. We only need to prove that every slice-connected component of $\Omega$ is a domain of existence of some slice regular function;  without loss of generality, we can assume that $\Omega$ is slice-connected.
	
	Let $J_0\in\mathbb{S}\backslash\mathbb{S}^{2ps}_{\Omega}$ and consider
	\begin{equation*}
		A_{[I]}:=\{q\in\Omega_I: q=x+yI,\mbox{ for some }x,y\in\mathbb{Q}\},\qquad\forall\ I\in\mathbb{S}
	\end{equation*}
	and
	\begin{equation*}
		A:=\bigcup_{I\in\mathbb{S}_{\Omega}^{2ps}\cup\{J_0\}} A_{[I]}\times\{I\}.
	\end{equation*}
	By Proposition \ref{pr-osmba}, $\mathbb{S}_{\Omega}^{2ps}$ is countable, and so is $A$. Let $\vartheta_A:A\rightarrow\mathbb{N}$ be a bijection and $(a_\imath, I_\imath):=\vartheta_A^{-1}(\imath)$.
	
	Let
	\begin{equation}\label{eq-bi def}
		b_\imath\in B_{I_\imath}(a_\imath,\sigma(a,\mathbb{H}\backslash\Omega))
	\end{equation}
	with $\sigma(b_\imath,\mathbb{H}\backslash\Omega)<\frac{1}{\imath+1}$ and $[b_\imath]_{\approx_\Omega}\neq [b_\jmath]_{\approx_\Omega}$ for $\jmath<\imath$. Set
	\begin{equation}\label{ef-bimath def}
		B_\imath:=\begin{cases}
			[b_\imath]_{\approx_\Omega},\qquad & \mathbb{S}_{[(b_\imath,I_\imath)]_{\sim_\Omega}}^+=\mathbb{S},
			\\\{b_\imath\},\qquad & \mbox{otherwise},
		\end{cases}
	\end{equation}
and
$$
B:=\bigcup_{\imath\in\mathbb{N}} B_\imath.
$$

According to Proposition \ref{pr-omegar}, $B_\ell\cap\Omega^{\frac{1}{\imath+1}}=\varnothing$, for all $\ell\ge \imath$. It implies that
	\begin{equation*}
		B\cap\Omega^{\frac{1}{\imath+1}}\subset\bigcup_{\ell=0,1,...,\imath-1}[b_\ell]_{\approx_\Omega}.
	\end{equation*}
	It follows from \eqref{eq-sqmbo}, \eqref{eq-suh} and Proposition \ref{pr-losmt} \ref{it-lp} that
	\begin{equation*}
		\psi_{\scriptscriptstyle\Omega}^{\mathbb{S}}\left(B\cap\Omega^{\frac{1}{\imath+1}}\right)
		\subset\bigcup_{\ell=0,1,...,\imath-1}\bigg\{[(b_\ell,I_\ell)]_{\sim_\Omega},[(b_\ell,-I_\ell)]_{\sim_\Omega}\bigg\}.
	\end{equation*}
	Then $\left|\psi_{\scriptscriptstyle\Omega}^{\mathbb{S}}\left(B\cap\Omega^{\frac{1}{\imath+1}}\right)\right|\le 2\imath$, i.e. the cardinality of $\psi_{\scriptscriptstyle\Omega}^{\mathbb{S}}\left(B\cap\Omega^{\frac{1}{\imath+1}}\right)$ is finite and has no accumulation point in $\breve{\Omega}$. It implies that $B$ is stem-discrete. Define $g:B\rightarrow\mathbb{H}$ by
	\begin{equation*}
		g(q)=\imath,\qquad q\in[b_\imath]_{\approx_\Omega}.
	\end{equation*}
	By Interpolation Theorem \ref{th-it}, there is $f\in\mathcal{SR}(\Omega)$ such that $f|_B=g$.
	
	By the fact in complex analysis, for each $I\in\mathbb{S}_{\Omega}^{2ps}$, $f_I$ can not extend to a holomorphic function near any point of $\partial_I(\Omega_I)$, i.e.
	\begin{equation}\label{eq-domainofexist}
		\Omega_I\mbox{ is the domain of existence of }f_I,\qquad\mbox{ for each }I\in\mathbb{S}_{\Omega}^{2ps}.
	\end{equation}
	
	Suppose that there is $J\in\mathbb{S}\backslash\mathbb{S}_{\Omega}^{2ps}$ such that $\Omega_J$ is not the domain of existence of $f_J$. Then there is a special $\{f_J\}$-triple $(\Omega_J,U_1,U_2)$ and a $J$-holomorphic functions $h:U_2\rightarrow\mathbb{H}$ such that $h|_{U_1}=f_J|_{U_1}$.
	
	Let $z^J\in\partial_J(\Omega_J)\cap\partial_J(U_1)\cap U_2$. If $z^I\in\Omega$ for some $I\in\mathbb{S}$, then it follows from $z^J\notin\Omega$ that $z\notin\Omega_{2ps}$. Therefore $I\in\mathbb{S}_{\Omega}^{2ps}$ and $z^{-I}\in\partial_I(\Omega_I)$. By representation formula with similar method in the proof of \cite[Proposition 9.7]{Dou2020001}, $f_I$ can extend to a $I$-holomorphic function near $z^{-I}\in\partial_I(\Omega_I)$, a contradiction to \eqref{eq-domainofexist}.
	
	Otherwise, $z^I\notin\Omega$ for all $I\in\mathbb{S}$. Then there is $r>0$ such that $B_J(z^J,2r)\subset U_2$. Denote
	\begin{equation*}
		W:=\Psi_J^i(U_1\cap B_J(z^J,r)).
	\end{equation*}
	Let $w\in W$ and $L\in\mathbb{S}$. Since $|w^L-z^L|<r$ and $z^L\in\partial_L(\Omega_L)$, we have $\sigma(w^L,\mathbb{H}\backslash\Omega)<r$. It follows from
	\begin{equation*}
		B_L(w^L,\sigma(w^L,\mathbb{H}\backslash \Omega))\subset B_L(z^J,2r)\cap\Omega_L\subset \Psi_J^L(U_2)\cap\Omega_L=\Psi_J^L(U_2\cap\Omega),
	\end{equation*}
	the connectedness of $B_L(w^L,\sigma(w^L,\mathbb{H}\backslash \Omega))$ and $(\Omega_J,U_1,U_2)$ being special that
	\begin{equation}\label{eq-in U1}
		B_L(w^L,\sigma(w^L,\mathbb{H}\backslash \Omega))\subset \Psi_J^L(U_1).
	\end{equation}
	It implies from $U_1\subset\Omega_J\subset\Omega^{2ps}\subset\Omega$ that
	\begin{equation*}
		\Psi_J^I(U_1)\subset\Omega,\qquad\mbox{and}\qquad \sigma(w^L,\mathbb{H}\backslash \Omega)=\sigma(w^L,\Psi_J^L(U_1))=\sigma(W^J,U_1).
	\end{equation*}
	Therefore,
	\begin{equation}\label{eq-splus=s}
		\mathbb{S}_{[(w^L,L)]_{\sim_\Omega}}^+=\mathbb{S},\qquad\forall\ w\in W\mbox{ and } L\in\mathbb{S}.
	\end{equation}
	
	Denote
	\begin{equation*}
		W_*:=\Psi_J^i\left(U_1\cap B_J\left(z^J,\frac{r}{2}\right)\right)
	\end{equation*}
	Let $w_1\in W_*$ and $L_1\in\mathbb{S}$. By $\sigma(w_1^{L_1},\mathbb{H}\backslash\Omega)\le |w_1^{L_1}-z^{L_1}|<\frac{r}{2}$, we have
	\begin{equation*}
		B_{L_1}\left(w_1^{L_1},\sigma\left(w_1^{L_1},\mathbb{H}\backslash\Omega\right)\right)\subset B_{L_1}(z^{L_1},r).
	\end{equation*}
	According to \eqref{eq-in U1},
	\begin{equation}\label{eq-bw_1in ws}
		B_{L_1}\left(w_1^{L_1},\sigma\left(w_1^{L_1},\mathbb{H}\backslash\Omega\right)\right)\subset W^\mathbb{S}.
	\end{equation}
	
	Let $J_1\in\mathbb{S}\backslash\mathbb{S}_{\Omega}^{2ps}$. Then there is $\{c_\ell\}_{\ell\in\mathbb{N}}$ with $c_\ell\in A_{[J_1]}\cap W_*^{J_1}$ such that
	\begin{equation*}
		\lim_{\ell\rightarrow +\infty} c_\ell=z^{J_1}.
	\end{equation*}
	Since $c_\ell\in\Omega_{J_1}\subset\Omega^{2ps}$,
	\begin{equation*}
		\Psi_{J_1}^{J_0}(c_\ell)\in A_{[J_0]}\subset A.
	\end{equation*}
	Then there is $\{v_\ell\}_{\ell\in\mathbb{N}}$ with $v_\ell\in\mathbb{N}$ such that $a_{v_\ell}=\Psi_{J_1}^{J_0}(c_\ell)$. It follows from
	\begin{equation*}
		a_{v_\ell}=\Psi_{J_1}^{J_0}(c_\ell)\in W_*^\mathbb{S},
	\end{equation*}
	\eqref{eq-bi def} and \eqref{eq-bw_1in ws} that
	\begin{equation*}
		b_{v_\ell}\in B\left(a_{v_\ell},\sigma\left(a_{v_\ell},\mathbb{H}\backslash\Omega\right)\right)\subset W^\mathbb{S}.
	\end{equation*}
	According to \eqref{eq-splus=s} and \eqref{ef-bimath def}, $B_{v_\ell}=[b_{v_\ell}]_{\approx_\Omega}$. Since $b_{v_\ell}\in\Omega_{J_0}\subset\Omega^{2ps}$,
	\begin{equation*}
		d_{v_\ell}:=\Psi_{J_0}^{J_1}(b_{v_\ell})\in B_{v_\ell}.
	\end{equation*}
	Then
	\begin{equation*}
		\lim_{\imath\rightarrow +\infty}d_{v_\imath}
		=\lim_{\imath\rightarrow +\infty}\left(\Psi_{J_1}^{J_0}(b_{v_\imath})\right)
		=\Psi_{J_1}^{J_0}\lim_{\imath\rightarrow +\infty}b_{v_\imath}
		=\Psi_{J_1}^{J_0}\lim_{\imath\rightarrow +\infty}a_{v_\imath}
		=\lim_{\imath\rightarrow +\infty}c_{v_\imath}=z^{J_1}.
	\end{equation*}
	By definition,
	\begin{equation*}
		\begin{split}
			h(z^J)
			=&\lim_{\imath\rightarrow +\infty}h(\Psi_I^J(d_{v_\imath}))
			=\lim_{\imath\rightarrow +\infty} f(\Psi_I^J(d_{v_\imath}))
			\\=&\lim_{\imath\rightarrow +\infty} g(\Psi_I^J(d_{v_\imath}))
			=\lim_{\imath\rightarrow +\infty} v_\imath
			=+\infty,
		\end{split}
	\end{equation*}
	a contradiction. Therefore $\Omega_J$ is the domain of existence of $f_J$.
	
	In summary, $\Omega_I$ is the domain of existence of $f_I$ for each $I\in\mathbb{S}$. Following the method in the proof of \cite[Proposition 9.4]{Dou2020001}, $\Omega$ is the domain of existence of $f$.
\end{proof}

{\bf Conflict of Interest}. The authors declare that they have no competing interests regarding the publication of this paper.

{\bf Availability of data}. There are no data associated with the research in this paper.

\bibliographystyle{amsplain}
\bibliography{mybibfile}
\printindex

\end{document}